\newenvironment {proof}{{\noindent\bf Proof }}{\hfill $\Box$ \medskip}
\newtheorem{theorem}{Theorem}[section]
\newtheorem{lemma}[theorem]{Lemma}
\newtheorem{proposition}[theorem]{Proposition}
\newtheorem{remark}[theorem]{Remark}
\newtheorem{corollary}[theorem]{Corollary}
\newtheorem{definition}[theorem]{Definition}
\newtheorem{notation}[theorem]{Notation}
\newtheorem{assumptions}[theorem]{Assumptions}
\newtheorem*{terminology-non}[theorem]{Terminology}
\newcommand{\revpoint}[2]{}
\newcommand{\llabel}[1]{}
\def \hat{\widehat}
\def \tilde{\widetilde}
\def \bar{\overline}
\newcommand{\IP}{\mathbb P}
\newcommand{\IE}{\mathbb E}
\newcommand{\IR}{\mathbb R}
\newcommand{\IN}{\mathbb N}
\newcommand{\ind}{\mathbf{1}}
\newcommand{\bigO}{\mathcal{O}}
\newcommand{\grad}{\nabla}
\newcommand{\DG}{\mathcal{B}}  
\newcommand{\DD}{\mathcal{D}}  
\newcommand{\meanq}{\vec b}    
\newcommand{\covq}{\mathbf{C}}     
\newcommand{\kernel}{\rho}  
\newcommand{\smooth}[1]{\kernel_{#1} \! * \!}  
\newcommand{\wavespeed}{\mathfrak{c}}    
\newcommand{\Lgen}{\mathcal{L}}    
\newcommand{\Pgen}{\mathcal{P}}    
\newcommand{\lp}{\xi}              
\newcommand{\labelspace}{\mathcal{I}} 
\newcommand{\concat}{\oplus}   
\newcommand{\measures}{\mathcal{M}_F(\IR^d)} 
\newcommand{\cmeasures}{\mathcal{M}_F(\overline{\IR}^d)} 
\newcommand{\lpmeasures}{\mathcal{M}(\overline{\IR^d} \times [0,\infty))} 
\newcommand{\flpmeasures}{\mathcal{M}(\IR^d \times [0,\infty))} 
\numberwithin{equation}{section}
\begin{document}

\title{\large{\bf
Looking forwards and backwards: dynamics and genealogies of locally regulated populations
}}

\author{ \begin{small}
\begin{tabular}{ll}                              
Alison M. Etheridge 
 & Thomas G. Kurtz \\   
Department of Statistics & Departments of Mathematics and Statistics\\       
Oxford University & University of Wisconsin - Madison \\                   
24-29 St Giles & 480 Lincoln Drive\\                                                         
Oxford OX1 3LB & Madison, WI  53706-1388\\
UK & USA \\                        
etheridg@stats.ox.ac.uk & kurtz@math.wisc.edu     \\
\url{http://www.stats.ox.ac.uk/~etheridg/} & 
\url{http://www.math.wisc.edu/~kurtz/}  \\       \\
\\
Ian Letter&  Peter L. Ralph 
\\   
Department of Statistics & Departments of Mathematics and Biology\\
Oxford University &University of Oregon\\                   
24-29 St Giles & Fenton Hall\\
Oxford OX1 3LB & Eugene, OR 97403-1222\\
UK & USA \\
restucci@stats.ox.ac.uk  & plr@oregon.edu \\
\url{https://www.stats.ox.ac.uk/~restucci/}&
\url{https://math.uoregon.edu/profile/plr} \\
\\
Terence Tsui Ho Lung
 &  \\   
Department of Statistics & \\
Oxford University & \\                   
24-29 St Giles& \\
Oxford OX1 3LB & \\
UK & \\
terence.tsui@sjc.ox.ac.uk &      \\
\url{https://www.maths.ox.ac.uk/people/terence.tsui}&  \\
\end{tabular}
\end{small}}

\date{\today}
\maketitle

\begin{abstract}
We introduce a broad class of mechanistic spatial models to describe how spatially heterogeneous populations live, die, and reproduce.  Individuals are represented by points of a point measure, whose birth and death rates can depend both on spatial position and local population density, defined at a location to be the convolution of the point measure with a suitable non-negative integrable kernel centred on that location. We pass to three different scaling limits: an interacting superprocess, a nonlocal partial differential equation (PDE), and a classical PDE.  The classical PDE is obtained both by a two-step convergence argument, in which we first scale time and population size and pass to the nonlocal PDE, and then scale the kernel that determines local population density; and in the important special case in which the limit is a reaction-diffusion equation, directly by simultaneously scaling the kernel width, timescale and population size in our individual based model.

A novelty of our model is that we explicitly model a juvenile phase.  The number of juveniles produced by an individual depends on local population density at the location of the parent; these juvenile offspring are thrown off in a (possibly heterogeneous, anisotropic) Gaussian distribution around the location of the parent; they then reach (instant) maturity with a probability that can depend on the local population density at the location at which they land.  Although we only record mature individuals, a trace of this two-step description remains in our population models, resulting in novel limits in which the spatial dynamics are governed by a nonlinear diffusion. 

Using a lookdown representation, we are able to retain information about genealogies relating individuals in our population and, in the case of deterministic limiting models, we use this to deduce the backwards in time motion of the ancestral lineage of an individual sampled from the population. We observe that knowing the history of the population density is not enough to determine the motion of ancestral lineages in our model. We also investigate (and contrast) the behaviour of lineages for three different deterministic models of a population expanding its range as a travelling wave: the Fisher-KPP equation, the Allen-Cahn equation, and a porous medium equation with logistic growth.

\vspace{.1in}

\noindent {\bf Key words:}  population model, interacting superprocess, 
lookdown construction, porous medium equation, 
reaction-diffusion equation, travelling waves, genealogies,
Fisher-KPP equation

\vspace{.1in}

\noindent {\bf MSC 20}10 {\bf Subject Classification:}  Primary:  
\\Secondary:   
 
\end{abstract}
\tableofcontents
\newpage

\section{Introduction}
\label{introduction}

As one takes a journey, long or short, the landscape changes:
forests thicken or thin or change their composition;
even in flat plains,
springtime prairies host intergrading mosaics of different types of flowers.
The aim of this paper is to introduce and study
a broad class of mechanistic spatial models
that might describe how spatially heterogeneous populations live, die, and reproduce.
Questions that we (start to) address include:
How does population density change across space and time?
How might we learn about the underlying dynamics from genealogical or genetic data?
And, how does genetic ancestry spread across geography
when looking back through time in these populations?

Reproduction of individuals naturally leads to spatial branching process models,
including branching random walk, 
branching Brownian motion, and the Dawson-Watanabe superprocesses.  
However, as a result of the branching assumption (once born, individuals behave 
independently of one another), 
a population evolving according to any of these models will
either die out or grow without bound and, in so doing, can
develop clumps of arbitrarily large density and extent. 
Our starting point here is an individual-based model of a single species 
in continuous space in which
birth, death, and establishment may all depend on local population density
as well as on spatial location,
allowing for stable populations through density-dependent feedback.
The model generalizes those introduced to the ecology literature by
\citet{bolker/pacala:1997} and \citet{law/murrell/dieckmann:2003},
and our work follows various others in the mathematical literature
(e.g., \citet{etheridge:2004,fournier2004microscopic}).

Although it is often mathematically convenient to assume that individuals follow
Brownian motion during their lifetime, 
in our model, offspring are thrown off according to some 
spatial distribution centred on the location of the parent
and do not subsequently move.
This is particularly appropriate for modelling plant populations, in which
this dispersal of offspring around the parent is the only source of
spatial motion.

Often models do not distinguish between juveniles and adults, so,
for example, the number of adults produced by a single parent is determined only
by the degree of crowding at the location of the parent.
Although we shall similarly only follow the adult population, in formulating the dynamics of the
models we shall distinguish
between production of juveniles, which will depend upon the location of 
the adult, and their successful establishment, which will depend on the
location in which a juvenile lands. The result is that not only the absolute 
number, but also the spatial distribution
around their parent, 
of those offspring that survive to adulthood
will depend upon the local population 
density. 

We shall consider three different classes of scaling limits for our model.
The first yields a class of (generalised) superprocesses in which coefficients 
governing both the spatial motion and the branching components of the process can depend
on local population density; the second is a corresponding class of deterministic
non-local differential equations; and the third are classical PDEs.
We measure local population density around a point by convolving with
a smooth kernel $\rho(\cdot)$, which may differ
for the two stages of reproduction. 
When the limiting population process is deterministic,
it is a (weak) solution of an equation of the form
\begin{equation}
	\label{general deterministic limit}
        \partial_t \varphi_t(x)
        =
        r\left(x, \varphi_t \right)
        \DG^* \left[
            \varphi_t(\cdot)
            \gamma\big(\cdot, \varphi_t \big)
        \right](x)
        +
        \varphi_t(x)
        F\left(x, \varphi_t \right)
        ,
\end{equation}
where
$\varphi_t(x)$ can be thought of as the population density at $x$
(although the limit may be a measure without a density), and
$\DG^*$ is (the adjoint of) a strictly uniformly elliptic second order differential operator, typically the Laplacian.
The dependence of each of the terms $r$, $\gamma$, and $F$ on $\varphi$ is only through the local density at $x$,
e.g., $F(x, \varphi) = F(x, \smooth{} \varphi(x))$.
We shall be more specific about the parameters below. 

By replacing $\rho$ by
$\rho^\epsilon(\cdot)=\rho(\cdot/\epsilon)/\epsilon^d$, 
we can also scale the ``width'' of the 
region over which we measure local population density. 
When the population follows~(\ref{general deterministic limit}), 
we expect that if we take a second limit of $\epsilon \to 0$,
thus scaling the kernels appearing in $r$, $\gamma$, and $F$ and making interactions pointwise,
we should recover a nonlinear PDE.
We verify that this is indeed the case in two important examples:
a special case of the porous medium equation with a logistic growth term, in which
the limiting equation takes the form 
\begin{equation}
	\label{PME1}
	\partial_t \varphi = \Delta (\varphi^2)+\varphi(1-\varphi);
\end{equation}
and a wide class of semi-linear PDEs of the form 
\begin{equation}
\label{semilinear PDE}
\partial_t \varphi = \DG^*\varphi+ \varphi F(\varphi),
\end{equation}
which includes the Fisher-KPP equation and the Allen-Cahn equation.
Equations of this form have been studied extensively
in the context of spatial ecology
(see for instance \citet{lam2023introduction} and \citet{cantrell2004spatial})
and in many other fields;
for instance, \citet{ghosh2022emergent}
derive a stochastic version of~\eqref{semilinear PDE} to describe abundances of mutant bacteria strains
along the human gut,
\citet{li/buenzli/simpson:2022} study the effects of nonlinear diffusion
on long-term survival of a lattice-based interacting particle system,
and \citet{birzu/hallatschek/korolev:2017} describe genetic variation in expanding waves
using both forwards and backwards-time arguments.
Most of this work is theoretical; for empirical studies see for instance
\citet{adler2018interspecific} or \citet{zhu2023density}.
We do not study the effect of movement of adults,
which can additionally affect the limiting equations:
see for instance \citet{holmes/lewis/banks/veit:1994} or \citet{potts/borger:2023},
especially if movement depends on population density
(as in \citet{perkins:1992} and \citet{birzu2019genetic}).

It is of interest to understand under what conditions we can replace the two-step limiting
process described above by one in which we simultaneously scale the 
kernels
and the other parameters in 
our population model to arrive at the PDE limit.
This is mathematically much more challenging, but we establish such 
one-step convergence in cases for which the limit is a classical 
reaction-diffusion equation of the form~(\ref{semilinear PDE}) with 
$\DG=\Delta$,
and $\rho$ is a Gaussian density. We allow a wide class of reaction terms, $F$, so
that the Fisher-KPP equation 
(that is equation~(\ref{semilinear PDE}) with $\DG=\Delta$ and $F(\varphi)=1-\varphi$)
emerges as a special case.

Such results on (one-step) convergence to reaction-diffusion 
equation limits have been
achieved for a variety of interacting particle systems. 
Following the now classical contributions 
of~\citet{demasi/ferrari/lebowitz:1986, demasi/presutti:1991, oelschlaeger:1985},
much of this work has focused on lattice based models with one particle
per site, or on systems with a fixed number, $N$,
of interacting diffusions as $N\to\infty$.
For systems of proliferating particles,
as considered for example by 
\citet{oelschlaeger:1989, flandoli/leimbach/olivera:2019, flandoli/huang:2021}, 
an additional challenge (also apparent in our models), is 
the control of concentration of particles. 
We follow 
\citet{oelschlaeger:1989, flandoli/leimbach/olivera:2019} in considering
`moderate interactions', meaning that
the number of individuals in the neighbourhood over which we measure local
population density tends to infinity, whereas 
\citet{flandoli/huang:2021} also consider the situation in which
that number remains finite. 
We refer to \citet{flandoli/huang:2021} for a more thorough 
literature review, 
but note that both our model and scaling differ from those considered
in the body of work discussed there: whereas in those settings, the only 
scalings are the number of particles in the system and the size of the
neighbourhood over which individuals interact with one another, in
keeping with the vast literature on continuous state branching models, we 
also scale time and so must ensure that births are 
adequately compensated by deaths to prevent the population from exploding. 

The history of a natural population is often only accessible indirectly, 
through patterns of genetic diversity:
from genetic data, one can try to infer the genealogical trees that describe how
individuals in a sample from the population are related, and these have been shaped 
by its history (see e.g., \citet{neigel1993application,kelleher2019inferring}).
It is therefore of interest to establish information about the distribution of genealogical trees 
under our population model, which we do with a lookdown construction. 
Lookdown constructions were first introduced in~\cite{donnelly/kurtz:1996} to provide 
a mechanism for retaining information about genealogical relationships between individuals
sampled from a population evolving according to the Moran model when passing to the 
infinite population limit.
Since then, they have been extended to a wide range of models. Of particular
relevance to our work here are the papers ~\cite{kurtz/rodrigues:2011} 
and~\cite{etheridge/kurtz:2019}, in which
lookdown constructions are provided for a wide variety of population models, including
spatially structured branching processes.

In general, even armed with a lookdown construction, calculation of relevant statistics 
of the genealogy remains a difficult question. 
However, in special
circumstances, some progress can be made. As an illustration, 
we shall consider a scenario that
has received a great deal of attention in recent years, in which 
a population is expanding into new territory as a travelling wave. 
In Section~\ref{ancestral lineages for travelling waves}
we shall describe the motion of a single ancestral lineage relative to
three different (deterministic) wavefronts across $\IR^1$.

Most work on the topic of ``waves'' of expanding populations
has focused on models that caricature the classical Fisher-KPP equation with a 
stochastic term, i.e.
$$dw=\big(\Delta w +sw(1-w)\big)dt +\sqrt{\frac{\alpha(w)}{N}}W(dt,dx),$$
where $W$ is space-time white noise, and $N$ is a measure of the 
local population density. The coefficient
$\alpha(w)$ is generally taken to be either
$w$, corresponding to a superprocess limit, or $w(1-w)$ giving a 
spatial analogue of a Wright-Fisher diffusion. 
Starting with the pioneering work of~\cite{brunet/derrida/mueller/munier:2006},
a considerable body of evidence has been amassed to underpin the
conjecture that for this, and a wide class of related models, 
genealogies converge on suitable timescales in the infinite density limit
to a Bolthausen-Sznitman coalescent. 
This reflects the fact that, for this equation, 
ancestral lineages become trapped in 
the wavefront, where the growth rate of the population is highest. 
Once there, they will experience 
rapid periods of coalescence 
corresponding to significant proportions of individuals in the front being 
descended from particularly reproductively successful ancestors. 

If one replaces the logistic growth term of the classical
Fisher-KPP equation with a nonlinearity that reflects cooperative
behaviour in the population, such as
\begin{equation}
	\label{Birzu nonlinearity}
	wF(w)=w(1-w)(Cw-1),
\end{equation}
then, for sufficiently large $C$ (strong cooperation),
the nature of the deterministic
wave changes from ``pulled'' to ``pushed'',
\citep{birzu/hallatschek/korolev:2017, birzu/hallatschek/korolev:2021},
and so the genealogies will be quite different from the Fisher-KPP case. 
For example, \citet{etheridge/penington:2022}
show that for a discrete space model corresponding to this 
nonlinearity with $C>2$, after suitable scaling, the genealogy of a
sample converges not to a Bolthausen-Sznitman coalescent, but to
a Kingman coalescent. 
The reason, roughly, is that ancestral lineages
settle to a stationary distribution relative to the position of the 
wavefront which puts very little weight close to the `tip' of the wave, so
that when ancestral lineages meet
it is typically at a location in which population density is
high, where no single ancestor produces a disproportionately large number of 
descendants in a short space of time. 

The shape of the wave is not determined solely by the reaction term.
For example, as a result of the nonlinear diffusion, 
for suitable initial conditions, the solution to the one-dimensional 
porous medium equation with logistic growth~(\ref{PME1}) converges to a travelling
wave with a sharp cut-off; i.e., in contrast to the classical 
Fisher KPP equation, the solution at time $t$
vanishes beyond $x=x_0+ct$ for some constant wavespeed $c>0$ \citep{kamin/rosenau:2004}.
As a first step towards understanding what we should expect in models with
nonlinear diffusion, one can ask about the position of an ancestral lineage
relative to the wavefront in the deterministic models. In 
Section~\ref{ancestral lineages for travelling waves}
we shall see that in our framework, even with logistic growth, the nonlinear diffusion 
corresponding to the porous medium equation results
in a stationary distribution for the ancestral lineage
that is concentrated behind the wavefront, leading 
us to conjecture that in the stochastic equation
the cooperative behaviour captured by the nonlinear diffusion will also 
result in a qualitatively different pattern of coalescence to that seen under the 
stochastic Fisher-KPP equation. Indeed, we believe that it should be feasible to 
show that in an appropriate limit one recovers a Kingman coalescent.

\paragraph{Structure of the paper}

In this paper we study scaling limits of spatial population models,
obtaining convergence of both the population process
(i.e., the population density as a function of time,
although strictly speaking it is a measure that may not have a density)
and of lineages traced back through such a population.
We retain information about lineages as we pass to the scaling limit
by means of a lookdown construction.

In what follows we first study various scaling limits of the spatial population process,
and then turn our attention to lineages traced back through these populations.
First, in Section~\ref{sec: Model and main results},
we describe the model and the main results,
Theorems~\ref{thm:nonlocal_convergence}, \ref{thm:local_convergence}, and~\ref{thm:lineages}.
Next, in Section~\ref{sec:applications}, we discuss a few striking consequences of these results
regarding the behavior of genealogies in travelling waves,
the appearance of periodic ``clumps'' in seemingly homogeneous population models,
and identifiability of the underlying dynamics from a stationary population profile.
In Section~\ref{sec:heuristics}, we provide heuristic explanations
of why the theorems ought to be true,
and some key ideas behind them,
and in Section~\ref{sec:lookdown} we define and discuss the lookdown construction.
Proofs of the results begin in Section~\ref{sec:proofs},
which proves results for population models with nonlocal interactions,
while Section~\ref{subsec:one step convergence proof} gives the more difficult proof
for the case when interaction distances also go to zero in the limit.
Finally, Section~\ref{sec:lookdown_proofs} gives proofs for convergence of the lookdown process
and the associated results for the motion of lineages.
The Appendix contains a few more technical and less central lemmas.
The results are illustrated in a few places with individual-based simulations,
made using SLiM \citep{haller_slim_2019},
but these are provided for visualization and we do not embark on numerical study.

\section{Model and main results}
    \label{sec: Model and main results}

Our model is one of individuals distributed across a continuous space 
which we shall take to be $\IR^d$. 
For applications, $d=1$ or $d=2$
(or even $d=3$ for cells within the body),
but our main results apply more generally.
At time zero, the population is distributed over a bounded region, with 
$\bigO(N)$ individuals per unit area in that region,
so the total number of individuals will also be $\bigO(N)$.
The population changes in continuous time, and
we encode the state of the population at time $t$ by a counting measure $X(t)$,
which assigns one unit of mass to the location of each individual.

Population dynamics are controlled by three quantities,
birth ($\gamma$), establishment ($r$), and death ($\mu$),
each of which can depend on spatial location and local population density
in a way specified below.
Each individual gives birth at rate $\gamma$ to a single (juvenile) offspring,
which is dispersed according to a kernel $q(x, \cdot)$ away from the location $x$ of the parent.
We assume that $q$ is the density of a multivariate Gaussian,
allowing a nonzero mean and anisotropic variance.
Both the mean and covariance of $q$ can change across space,
but do not depend on population density.
The offspring does not necessarily survive to be counted in the population:
it ``establishes'' with probability $r$,
or else it dies immediately.
Independently, each individual dies with rate $\mu$.

We aim to capture universal behaviour by passing to a scaling limit. Specifically, 
we shall take the ``density'', $N$, to infinity,
and also scale time by a factor of $\theta = \theta(N)$,
in such a way that defining $\eta^N(t) = X(\theta t) / N$,
the process $\{\eta^N(t)\}_{t \ge 0}$
will converge to a suitable measure-valued process
as $N$ and $\theta$ tend to infinity,
with the nature of the limit depending on how they tend to infinity together.
Evidently, we also need to scale the dispersal kernel if we are to obtain a
nontrivial limit, for which we use $q_\theta(x,\cdot)$,
the density of the multivariate Gaussian obtained by
multiplying the mean and variance components of $q(x,\cdot)$ by $1/\theta$.

Birth, establishment, and death can depend on the location of the individual
and the local population density.
Since we would like the population density to scale with $N$,
these are functions of $X/N$, i.e.,
the counting measure with mass $1/N$ placed at the location of each individual.
First consider birth rates, defined by
a nonnegative function $\gamma(x, m) : \IR^d \times \IR_{\ge 0} \to \IR_{\ge 0}$
of location $x$ 
and local population density $m$.
Local population density is defined as the convolution of $X/N$ with 
a smooth (non-negative, integrable) kernel $\rho_\gamma(\cdot)$.
We write this convolution as $\smooth{\gamma} X/N$.
Then, when the state of the population is $X$, an individual at location $x$ 
gives birth to a single juvenile offspring at rate
$\gamma(x, \smooth{\gamma} X(x) / N)$.
Similarly, the establishment probability of an offspring at location $y$ is
is $r(y, \smooth{r} X(y) / N)$,
where $r(y, m) : \IR^d \times \IR_{\ge 0} \to [0, 1]$
and again $\smooth{r} X$ is the convolution of 
$X/N$ with the smooth kernel $\rho_r$.

We shall write $\mu_\theta(x, X/N)$ for the per-capita death rate of a mature individual
at $x$ in the population.
In order for the 
population density to change over timescales of order $\theta$, we should like the net 
per capita reproductive rate 
to scale as $1/\theta$. 
In classical models, in which $r$, $\gamma$, and $\mu$ are constant, 
this quantity is simply $r\gamma -\mu$. 
Here, because production of juveniles and their establishment are
mediated by population density measured relative to different points, 
the net reproductive rate will take a more complicated form. 
In particular, the total rate of production of mature offspring by an individual at $x$ will be
\begin{equation} \label{eqn:total_repro}
	\gamma\big(x,\smooth{\gamma}X(x)/N\big)\int r\big(y, \smooth{r}X(y)/N\big)q_{\theta}(x,dy).
\end{equation}
Nonetheless, it will be convenient to define the death rate $\mu_\theta$ in terms of 
its deviation from $r\gamma$. 
To this end, we define
the death rate of an individual at $x$,  \revpoint{1}{4}
using a function $F(x, m) : \IR^d \times \IR_{\ge 0} \to \IR$,
as
\begin{align} \label{eqn:mu_defn}
    \mu_\theta(x, X/N)
    =
        r(x, \smooth{r} X(x) / N) \gamma(x, \smooth{\gamma} X(x) / N)
        - \frac{1}{\theta} F(x, \smooth{F} X(x) / N)
    ,
\end{align}
where $\rho_F$ is again a smooth kernel. 
(We will also assume that parameters are chosen so that this is always nonnegative,
a point we return to later.)
The function $F$ is nearly
the net per capita reproductive rate, scaled by $\theta$,
and would be equal to it in a nonspatial model;
but, as can be seen from \eqref{eqn:total_repro},
differs because an offspring's establishment probability is measured at their new location
rather than that of their parent.
For the most part, we work with $F$ instead of $\mu_\theta$.

So, each of the three demographic parameters $r$, $\gamma$, and $F$,
depends on local density, measured by convolution with a smooth kernel,
each of which can be different.
As a result, death rate depends (in principle) on population densities measured in 
three different ways, 
so that we could write 
$\mu_\theta(x) = \mu_\theta(x, \smooth{\gamma} X(x) / N, \smooth{r} X(x) / N, \smooth{F} X(x) / N)$.
This may seem unnecessarily complex.
However, not only is it natural from a 
biological perspective, it also turns out to be convenient for
capturing nontrivial examples in the scaling limit.

\begin{remark}
Although this model allows fairly general birth and death mechanisms,
there are a number of limitations.
Perhaps most obviously, to simplify the notation
individuals give birth to only one offspring at a time,
although this restriction could be easily lifted
(as in Section 3.4 of \cite{etheridge/kurtz:2019}).
Furthermore, individuals do not move during their lifetime,
and the age of an individual does not affect its fecundity or death rate.
Finally, there is no notion of mating
(although limitations on reproduction due to availability of mates can be incorporated into the birth rate, $\gamma$),
so the lineages we follow will be uniparental.
For these reasons, the model is most obviously applicable to bacterial populations or selfing plants,
although we do not anticipate that incorporation of these complications
will change the general picture.
\end{remark}

For each $N$ and $\theta$, we study primarily the process
with mass scaled by $N$ and time scaled by $\theta$,
$$ \big(\eta^N_t\big)_{t\geq 0} := \big(X(\theta t)/N\big)_{t\geq 0} , $$ 
which takes values in the space of c\`adl\`ag paths in
$\measures$ (the space of finite measures on $\IR^d$
endowed with the weak topology). In fact $\eta_t^N$ will be a purely atomic
measure comprised of atoms of mass $1/N$.

\begin{notation}
Expressions like $\gamma(x, \smooth{\gamma} \eta(x))$ will appear repeatedly in what follows.
To make formulae more readable, we overload notation to define
$$
    \gamma(x, \eta) := \gamma(x, \smooth{\gamma} \eta(x)) ,
$$
and similarly write $r(x, \eta)$ for $r(x, \smooth{r} \eta(x))$,
$F(x, \eta)$ for $F(x, \smooth{F} \eta(x))$,
and $\mu_\theta(x, \eta)$ for the expression of equation~\eqref{eqn:mu_defn}.
When convenient, we may also suppress the arguments completely,
writing simply $\gamma$, $r$, $F$, and $\mu_\theta$ for these quantities.
\end{notation}

\begin{remark}
In our prelimiting model,
the population is represented by a point measure in which each
individual is assigned a mass $1/N$. We use the term ``population density'' for
this process, as it is supposed to measure population size relative to a 
nominal occupancy of $N$ individuals per unit area. There is no implication 
that the measure representing the population is absolutely continuous with
respect to Lebesgue measure; indeed in the prelimit it is certainly not.
\end{remark} 

In summary, at each time $t$, $\eta^N_t$ is purely atomic, consisting of atoms of mass $1/N$
(which are the individuals).
At instantaneous rate $\theta \gamma(x, \eta^N_t) N \eta^N_t(dx)$ an offspring of mass $1/N$ is 
produced at location $x$, which
immediately disperses to a location $y$ offset from $x$
by an independent Gaussian random variable,
and once there establishes instantaneously \revpoint{1}{5}
with probability $r(y, \eta^N_t)$, or else dies. 
The distribution of the dispersal displacement (i.e., $y-x$) may depend on $x$,
and is specified by functions defining the mean $\meanq(x) / \theta$ \revpoint{1}{6}
and covariance matrix $\covq(x) / \theta$.
At instantaneous rate $\theta \mu_\theta(x, \eta^N_t) N \eta^N_t(dx)$ an individual at location 
$x$ dies.
Note that the process 
$\left(\eta^N_t\right)_{t\ge 0}$, which
records numbers and locations of adult individuals, is just a scaled spatial birth and 
death process. If, for example, we insist that $\gamma(x,m)$ is bounded, then 
existence (and in particular non-explosion) is guaranteed by comparison with
a pure birth process. We do not dwell on this, as we shall require more stringent conditions
if we are to pass to the limit as $\theta$ and $N$ tend to infinity.

It is convenient to characterise the process as a solution to a martingale problem.
We write $C_b^\infty(\IR^d)$ for the space of bounded smooth functions on $\IR^d$, and, where
convenient, we
write $\langle f, \eta \rangle = \int_{\IR^d} f(x) \eta(dx)$.

\begin{definition}[Martingale Problem Characterisation]
    \label{defn:mgale_construction}
	For each value of $N$ and $\theta$, and each purely atomic 
$\eta_0^N\in\measures$ with atoms of mass $1/N$,
$(\eta^N_t)_{t \geq 0}$ is the (scaled) empirical measure of a birth-death 
process with c\`adl\`ag paths in $\measures$ for which,
for all $f \in C^{\infty}_{b}(\IR^d)$,
writing $q_\theta(x, dy)$ for the Gaussian kernel
with mean $x + \meanq(x)/\theta$ and covariance $\covq(x) / \theta$,
\begin{equation}
    \label{eqn:eta_martingale}
\begin{aligned}
M^N_t(f)
&:=  \langle f, \eta^N_t \rangle
        -\langle f, \eta^N_0 \rangle
 \\ &\qquad {}
 -  \int_{0}^{t}\bigg\{
        \bigg\langle \left( \int \theta
             \left(
                f(z)r(z, \eta^N_{s})
                - f(x)r(x, \eta^N_{s})
            \right)
        q_\theta(x,dz) \right)
         \gamma(x, \eta^N_{s}), \eta^N_{s}(dx) \bigg\rangle
\\ & \qquad \qquad \qquad \qquad \qquad {}
    + \bigg\langle f(x) F(x, \eta^N_{s}),
    \eta^N_{s}(dx) \bigg\rangle
    \bigg\} ds
\end{aligned}    
\end{equation}
is a martingale (with respect to the natural filtration), 
with angle bracket process
    \begin{equation} \label{eqn:prelimit_martingale_variation}
\begin{aligned} \relax
\left\langle M^N(f) \right\rangle_t =& 
\frac{\theta}{N} \int_{0}^{t}\bigg\{
    \Big\langle \gamma(x, \eta^N_{s})
        \int f^2(z)r(z, \eta^N_{s}) q_\theta(x,dz) 
    , \eta^N_{s}(dx) \Big\rangle \\
& \qquad \qquad {}
    + \Big\langle \mu_\theta(x, \eta^N_{s}) f^2(x) 
    , \eta^N_{s}(dx)\Big\rangle
    \bigg\}ds. 
\end{aligned}    
\end{equation}
\end{definition}

The angle bracket process (or, ``conditional quadratic variation'')
is the unique previsible process making $(M^N(f)_t)^2 - \langle M^N(f) \rangle_t$ a martingale
with respect to the natural filtration.
It differs from the usual quadratic variation
(usually denoted $[M^N(f)]_t$)
because the process has jumps;
for the (continuous) limit the two notions will coincide.
The use of angle brackets for both integrals and this process
is unfortunately standard but should not cause confusion,
since the angle bracket process always carries a subscript for time.

The form of~(\ref{eqn:eta_martingale}) and~(\ref{eqn:prelimit_martingale_variation})
is explained in Section~\ref{sec:heuristics}.
Note that since (juvenile) individuals are produced at rate $N \gamma \eta$, but each
has mass $1/N$,
these factors of $N$ cancel in~\eqref{eqn:eta_martingale}. 
Under our scaling, $N$ and $\theta=\theta(N)$ will tend to infinity in such a 
way that $\alpha:=\lim_{N\to\infty}\theta(N)/N$ exists and is finite.
From the expression~(\ref{eqn:prelimit_martingale_variation}) it is easy to guess that 
whether the limiting processes will be deterministic or stochastic
is determined by whether $\alpha$ is zero or nonzero.

It is convenient to record some notation for the 
generator of the diffusion 
limit of a random walk with jump distribution determined by $q_\theta(x,dy)$.

\begin{definition}[Dispersal generator]
    \label{def:dispersal_generator}
    As above, we define the dispersal kernel,
    $q_\theta(x, dy)$,
    to be the density of a multivariate Gaussian
    with mean $\meanq(x)/\theta$ and covariance matrix $\covq(x)/\theta$
    (although often we omit the dependence of $\meanq$ and $\covq$ on $x$).
    Furthermore, we define for $f \in C_b^\infty(\IR^d)$,
    \begin{align}
    \DG f(x)
        =
        \frac{1}{2} \sum_{ij} \covq(x)_{ij} \partial_{x_i} \partial_{x_j} f(x)
        + \sum_i \meanq(x)_i \partial_{x_i} f(x)
    \end{align}
    and denote the adjoint of $\DG$ by
    \begin{align*}
    \DG^* f(x)
        &=
        \frac{1}{2} \sum_{ij} \partial_{x_i} \partial_{x_j} (\covq(x)_{ij} f(x))
        - \sum_i \partial_{x_i} (f(x) \meanq(x)_i) 
        \\
        &=
        \frac{1}{2} \sum_{ij} C_{ij}(x) \partial_{x_i} \partial_{x_j} f(x)
        + \sum_i \left(
            \frac{1}{2} \sum_j \partial_{x_j} C_{ij}(x) - \meanq_i(x)
        \right) \partial_{x_i} f(x)
        \\ & \qquad {}
        + \left(
            \frac{1}{2} \sum_{ij} \partial_{x_i} \partial_{x_j} C_{ij}(x)
            -
            \sum_i \partial_{x_i} \meanq_i(x)
        \right) f(x) .
    \end{align*}
\end{definition}

\begin{remark}
    \label{rem:DG_limit}
    $\DG$ is defined so that
    \begin{align*}
        \theta \int \left(
            f(y) - f(x)
        \right) q_\theta(x, dy)
    \to \DG f(x) 
        \qquad \text{as } \theta \to \infty .
    \end{align*}
\end{remark}

\begin{remark}
\label{ancestral lineages: first guess}
An equivalent way to describe the model would be to say that
when the state of the population is $\eta$,
an individual at $x$ gives birth at rate
$$
    \theta \gamma(x, \eta)
    \int r(y, \eta) q(x, dy) ,
$$
and that offspring disperse according to the kernel
$q_\theta^\mathfrak{m}$ (with superscript $\mathfrak{m}$ because it is post-\textbf{m}ortality),
defined by: \revpoint{1}{7}
$$
    q_\theta^\mathfrak{m}(x,\eta,dy)
    :=
    \frac{
        r(y, \eta) q_\theta(x, dy)
    }{
        \int r(z, \eta) q_\theta(x, dz)
    } .
$$
Clearly, the random walk driven by this dispersal kernel
is biased towards regions of higher establishment probability.
For comparison with future results,
it is interesting to write down the limiting generator:
\begin{align}
	\label{weighted dispersal}
    \lim_{\theta \to \infty}
    \theta \int (f(y) - f(x)) q_\theta^\mathfrak{m}(x, \eta, dy)
    &=
    \frac{
        \DG\left[ f(\cdot) r(\cdot, \eta) \right](x)
        - 
        f(x) \DG\left[ r(\cdot, \eta) \right](x)
    }{
        r(x, \eta)
    } .
\end{align}
In the simplest case of unbiased isotropic dispersal
(i.e.,~$\meanq = 0$ and $\covq = I$), $\DG = \Delta/2$,
and so~(\ref{weighted dispersal}) is equal to
\begin{align*}
    \frac{1}{2} \Delta f(x) + \grad f(x) \cdot \grad \log r(\cdot, \smooth{r} \eta(\cdot))(x) .
\end{align*}
One might guess that the spatial motion described by following  
the ancestral lineage of an individual 
back through time
would be described (in the limit) by the adjoint of this generator.
However, we will see in Section~\ref{sec:ancestral lineages}
that this is not in fact the case.
\end{remark}

In order to pass to a scaling limit, we will need to impose some 
conditions on the parameters of our model.

\begin{assumptions}
\label{def:model_setup}
We shall make the following assumptions on the parameters of our model.

{\bf Dispersal generator:}
We assume that
\begin{enumerate}
    \item $\meanq(x)$ and $\covq(x)$ are
        $\beta$-H\"older continuous for some $\beta \in (0, 1]$
        and uniformly bounded
        in each component, and
    \item the operator $\DG$ is uniformly strictly elliptic,
        i.e., $\inf_x \inf_{y:\|y\|=1} \sum_{ij} y_i C(x)_{ij} y_j > 0$.
\end{enumerate}

{\bf Reproduction parameters:}
We assume that 
\begin{enumerate}
            \addtocounter{enumi}{2}
\item The function $F(x,m)$ satisfies 
    \begin{enumerate}
    \item $F(x,m)$ is locally Lipschitz in $m$;
    \item $F(x,m)$ is uniformly bounded above (but not necessarily below);
    \item for each fixed $m$, $\sup_{x\in\IR^d}\sup_{k\leq m}|F(x,k)|<\infty$;
    \end{enumerate}
\item The functions $r(x,m)$, $\gamma(x,m)$ have bounded first and second 
    derivatives in both arguments;
    \label{r_gamma_derivs_condition}
\item $\gamma(x,m)$ is uniformly bounded;
    \label{gamma_bounded_condition}
\item For each $f \in C_b^2(\IR^d)$,
    there is a $C_f$ such that
    $$
        |\gamma(x, \eta) \theta \int 
        (r(y,\eta) f(y) - r(x, \eta) f(x))
        q_\theta(x, dy)| \le C_f (1 + |f(x)|)
    $$
    for all $x \in \IR^d$ and $\eta \in \measures$.
    Furthermore, $C_f$ only depends on the norm of the first two derivatives of $f$,
    i.e., 
    \[
        C_f = C(\sup_x \sup_{\|z\|=1} \max( \sum_i z_i \partial_{x_i} f(x), \sum_{ij} z_i z_j \partial_{x_i x_j} f(x))) .
    \] \label{gamma_B_condition} \revpoint{1}{9}
\item To keep expressions manageable, we shall also assume that  \revpoint{1}{1}
    the death rate (as defined in~\eqref{eqn:mu_defn}) is nonnegative, i.e., that
    \[
    \mu_\theta(x) = r(x,\eta) \gamma(x,\eta) - \frac{1}{\theta} F(x,\eta) \ge 0 .
    \]
\end{enumerate}
\end{assumptions}

Since $F$ is bounded above,
the final assumption that $\mu_\theta \ge 0$ --
or, equivalently, that $F(x,\eta) \le \theta r(x,\eta) \gamma(x,\eta)$ --
will always be true for large enough $\theta$ 
as long as $r$ and $\gamma$ are bounded away from zero.

Since we take bounded $f$,
for most situations the bound $C_f(1+|f(x)|)$ in Condition~\ref{gamma_B_condition} above
can be safely replaced simply by $C_f$;
however, this will be useful
in certain situations where we consider a sequence of $f$ with increasing upper bounds.
We now give two concrete situations in which Condition~\ref{gamma_B_condition} is satisfied.
The proof is in Section~\ref{sec:preliminary_proofs}.

\begin{lemma}
    \label{lem:conditions on r}
    Assume that Conditions~\ref{def:model_setup} are satisfied,
    except for Condition~\ref{gamma_B_condition}.
    If either
\begin{enumerate}
\item
\label{control through r} 
    $|\partial_{x_i}r(x,\eta)|$ and 
    $|\partial_{x_ix_j}r(x,\eta)|$
    are uniformly bounded for $x\in\IR^d$, $\eta\in\measures$;
\item 
\label{control through gamma}
    or, $m^2\gamma(x,m)$ is uniformly bounded and there exists $C<\infty$
    such that for $\theta$ sufficiently large, and all $x\in\IR^d$, $\eta\in\measures$,
    \[
    \theta\int\big(\smooth{r}\eta(y)-\smooth{r}\eta(x)\big)q_\theta(x,dy)\leq C\smooth{\gamma}\eta(x),
    \]
    and
    \[\theta\int\big(\smooth{r}\eta(y)-\smooth{r}\eta(x)\big)^2q_\theta(x,dy)
    \leq C(\smooth{\gamma}\eta(x))^2 ,
    \]
\end{enumerate} 
    then Condition~\ref{gamma_B_condition} is also satisfied.
\end{lemma}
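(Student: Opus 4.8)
The plan is to reduce Condition~\ref{gamma_B_condition} to uniform bounds on two scalar functionals of the field $y\mapsto r(y,\eta)$ under the dispersal kernel: the ``drift'' $D_1(x,\eta):=\theta\int(r(y,\eta)-r(x,\eta))\,q_\theta(x,dy)$ and the ``squared increment'' $D_2(x,\eta):=\theta\int(r(y,\eta)-r(x,\eta))^2\,q_\theta(x,dy)$. Throughout I use the elementary scaled-moment identities $\theta\int(y-x)\,q_\theta(x,dy)=\meanq(x)$, $\theta\int(y-x)_i(y-x)_j\,q_\theta(x,dy)=\covq(x)_{ij}+\meanq_i(x)\meanq_j(x)/\theta$, and $\theta\int\|y-x\|^k\,q_\theta(x,dy)\to0$ for each integer $k\ge3$; since $\meanq$ and $\covq$ are bounded (Conditions~\ref{def:model_setup}), all of these are bounded uniformly in $x\in\IR^d$ and $\theta\ge1$. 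Writing $h:=r(\cdot,\eta)$ (so $0\le h\le1$) and splitting $h(y)f(y)-h(x)f(x)=h(y)(f(y)-f(x))+f(x)(h(y)-h(x))$, a second-order Taylor expansion of $f$ about $x$ together with $0\le h\le1$, the moment identities, and Cauchy--Schwarz (applied to $\grad f(x)\cdot\theta\int(h(y)-h(x))(y-x)\,q_\theta(x,dy)$, bounded by $C\|\grad f\|_\infty\,D_2(x,\eta)^{1/2}$) gives
\[
    \Big|\theta\!\int\!\big(h(y)f(y)-h(x)f(x)\big)\,q_\theta(x,dy)\Big|\le C\big(\|\grad f\|_\infty+\|\grad^2 f\|_\infty\big)\big(1+D_2(x,\eta)^{1/2}\big)+|f(x)|\,\big|D_1(x,\eta)\big| ,
\]
with $C$ depending only on the model constants. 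Since $\gamma$ is uniformly bounded (Condition~\ref{gamma_bounded_condition}) and $\gamma D_2^{1/2}=(\gamma^2 D_2)^{1/2}$, Condition~\ref{gamma_B_condition} --- with $C_f$ depending only on $\|\grad f\|_\infty$ and $\|\grad^2 f\|_\infty$, as required --- will follow once, in each case, we show $\gamma(x,\eta)^2 D_2(x,\eta)\le C$ and $\gamma(x,\eta)\,|D_1(x,\eta)|\le C$ uniformly in $x$ and $\eta$.

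Case~\ref{control through r} is then immediate: its hypotheses say exactly that $h=r(\cdot,\eta)$ is $C^2$ with first two derivatives bounded uniformly in $x,\eta$, so a first-order Taylor expansion gives $|D_1(x,\eta)|\le\|\grad h\|_\infty\|\meanq\|_\infty+\tfrac12\|\grad^2 h\|_\infty\,\theta\!\int\!\|y-x\|^2q_\theta\le C$, while $|h(y)-h(x)|\le\|\grad h\|_\infty\|y-x\|$ gives $D_2(x,\eta)\le C$; boundedness of $\gamma$ finishes it (in effect $D_1$ and $D_2$ are controlled here by Remark~\ref{rem:DG_limit} and the moment identities alone).

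For Case~\ref{control through gamma}, the role of the hypotheses ``$m^2\gamma(x,m)$ and $\gamma$ bounded'' is that $\gamma(x,\eta)\big(1+(\smooth{\gamma}\eta(x))^2\big)\le C$, so it suffices to prove $D_2(x,\eta)\le C\big(1+(\smooth{\gamma}\eta(x))^2\big)$ and $|D_1(x,\eta)|\le C\big(1+(\smooth{\gamma}\eta(x))^2\big)$. For $D_2$, Condition~\ref{r_gamma_derivs_condition} gives $|r(y,\eta)-r(x,\eta)|\le\|\partial_x r\|_\infty\|y-x\|+\|\partial_m r\|_\infty|\smooth{r}\eta(y)-\smooth{r}\eta(x)|$, and then the second assumed inequality yields $D_2(x,\eta)\le C\,\theta\!\int\!\|y-x\|^2q_\theta+C\,\theta\!\int\!(\smooth{r}\eta(y)-\smooth{r}\eta(x))^2q_\theta\le C(1+(\smooth{\gamma}\eta(x))^2)$. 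For $D_1$, I would Taylor-expand $r$ in \emph{both} of its arguments about $(x,\smooth{r}\eta(x))$, using Condition~\ref{r_gamma_derivs_condition}: the purely spatial first- and second-order contributions are uniformly bounded by the moment identities, the mixed and purely-density second-order contributions are bounded after Cauchy--Schwarz by $C\big(\theta\!\int\!\|y-x\|^2q_\theta+\theta\!\int\!(\smooth{r}\eta(y)-\smooth{r}\eta(x))^2q_\theta\big)\le C(1+(\smooth{\gamma}\eta(x))^2)$, and there remains the single first-order term $\partial_m r(x,\smooth{r}\eta(x))\,\theta\!\int\!(\smooth{r}\eta(y)-\smooth{r}\eta(x))\,q_\theta(x,dy)$.

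The main obstacle is exactly this last term. Its upper bound is precisely the first inequality assumed in Case~\ref{control through gamma}, but because $f(x)$ and $\partial_m r$ may have either sign one also needs a matching lower bound for $\theta\int(\smooth{r}\eta(y)-\smooth{r}\eta(x))\,q_\theta(x,dy)$ that does not reintroduce a power of $\theta$. I expect this to come from combining the non-negativity of $\smooth{r}\eta$, the uniform ellipticity of $\DG$ --- which, via the squared-increment hypothesis, forces $|\grad[\smooth{r}\eta](x)|$ to be of order $\smooth{\gamma}\eta(x)$ --- and the fact that $\DG$ takes a non-negative value at any zero of a non-negative $C^2$ function; making this precise for finite (large) $\theta$, while controlling the Taylor remainder in terms of the assumed second moment, is the delicate step of the argument. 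It is also worth noting \emph{why} Condition~\ref{r_gamma_derivs_condition} rather than a chain-rule bound is used: one cannot differentiate $r(y,\eta)=r(y,\smooth{r}\eta(y))$ directly in $y$, because $\grad[\smooth{r}\eta](x)=(\grad\rho_r*\eta)(x)$ is not uniformly bounded over $\eta\in\measures$; Condition~\ref{r_gamma_derivs_condition} is used only to peel off the spatial argument of $r$, and all $\eta$-dependence is routed through the two assumed bounds on $\smooth{r}\eta$. Finally, the reduction above is exactly why Condition~\ref{gamma_B_condition} carries the factor $(1+|f(x)|)$ rather than a constant: the term $|f(x)|\,|D_1(x,\eta)|$ cannot be absorbed into a constant, but is harmless once $\gamma(x,\eta)|D_1(x,\eta)|$ is shown to be bounded.
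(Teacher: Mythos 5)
Your reduction to the two functionals $D_1(x,\eta)=\theta\int(r(y,\eta)-r(x,\eta))q_\theta(x,dy)$ and $D_2(x,\eta)=\theta\int(r(y,\eta)-r(x,\eta))^2q_\theta(x,dy)$, and the master bound $|B^\theta_f|\le C_f(1+D_2^{1/2})+|f(x)|\,|D_1|$, is sound and is essentially the paper's own decomposition (the paper inserts the intermediate quantity $r(x,\smooth{r}\eta(y))$ and expands the two arguments of $r$ in two separate steps, rather than in one joint Taylor expansion, but the two organisations are equivalent). Your treatment of Case~\ref{control through r} is complete and matches the paper, and your bound on $D_2$ in Case~\ref{control through gamma} is correct.

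The gap is that in Case~\ref{control through gamma} you never actually bound $\gamma(x,\eta)|D_1(x,\eta)|$. You correctly isolate the problematic term $\partial_m r(x,\smooth{r}\eta(x))\,\theta\int(\smooth{r}\eta(y)-\smooth{r}\eta(x))q_\theta(x,dy)$ and correctly observe that the first displayed hypothesis, read literally, is only an upper bound while a two-sided bound is needed; but your proposed remedy --- extracting a matching lower bound from non-negativity of $\smooth{r}\eta$ together with uniform ellipticity --- is left entirely as a conjecture and is not obviously workable: non-negativity constrains the drift of $\smooth{r}\eta$ only near its zeros, and the second-moment hypothesis controls increments of $\smooth{r}\eta$ only in mean square, so neither gives pointwise control of the negative part of the first moment (Cauchy--Schwarz alone loses a factor of $\sqrt{\theta}$, as you implicitly note). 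The paper's proof simply bounds this term by $\|r'\|_\infty\,\big|\theta\int(\smooth{r}\eta(y)-\smooth{r}\eta(x))q_\theta(x,dy)\big|$ and invokes the hypothesis, i.e.\ it reads the hypothesis as two-sided; that this is the intended reading is confirmed by Lemma~\ref{lem:gamma_bound}, which verifies Condition~\ref{control through gamma} for Gaussian kernels by proving the first-moment estimate~\eqref{eqn:first_moment_rho} \emph{with the absolute value on the left-hand side}. So the correct move is to use the two-sided form of the hypothesis and conclude directly, at which point $m^2\gamma(x,m)$ bounded gives $\gamma|D_1|\le C$; the detour you sketch is both unnecessary and, as it stands, not a proof.
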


The purpose of the conditions that we have placed on the reproduction parameters is to 
ensure that the net per capita reproduction rate (before time scaling)
is order $1/\theta$. As remarked above, 
because of the non-local reproduction mechanism, it no longer suffices \revpoint{1}{10}
to assume that 
$r(x,\eta)\gamma(x,\eta)-\mu_\theta(x)$ is of order $1/\theta$.
Perhaps the simplest example in which we can see
that non-local reproduction can lead to rapid growth even when $r\gamma=\mu$
is where $\gamma \equiv 1$ and $F \equiv 0$, so that $\mu_\theta = r$,
and $\eta = \delta_x$
(i.e., the population has all individuals at a single location),
so that $\smooth{r} \eta(y) = \rho_r(y)$.
In this case, the mean rate of change of the total population size
is $\int (r(y, \rho_r(y) ) - r(x, \rho_r(x))) q_\theta(x, dy)$;
the first condition of Lemma~\ref{lem:conditions on r} would ensure this is of order $1/\theta$.

If $r(x,m)$ is independent of $m$, then the conditions are easy to 
satisfy; they just require some regularity of $r$ as a function of $x$. 
Condition~\ref{control through r} of Lemma~\ref{lem:conditions on r} is also satisfied if for example
$\|\nabla\rho_r\|\leq C\rho_r$ 
and $m\partial_mr(x,m)$, $m^2\partial_{mm}r(x,m)$ are bounded.
This is the case, for instance, if $\rho_r$ decays exponentially.
On the other hand, it might seem more natural to take $\rho_r$ to be a Gaussian 
density with parameter $\sigma_r$, say. Then, 
as we check in Lemma~\ref{lem:gamma_bound},
Condition~\ref{control through gamma} of Lemma~\ref{lem:conditions on r}
is satisfied if $\rho_\gamma$ is also
Gaussian with parameter $\sigma_\gamma$ and $\sigma_\gamma>\sigma_r$. For large enough
$\theta$, this condition guarantees that 
$\sigma_r+1/\theta <\sigma_\gamma$, so that the establishment probability of a juvenile
is controlled by individuals that are already `felt' by the fecundity-regulating kernel $\rho_\gamma$
at the location of their parent.

\subsection{Scaling limits of the population process}

Our main results depend on two dichotomies:
Is the limiting process deterministic or a (generalized) superprocess?
And, are interactions pointwise in the limit or nonlocal?
See Figure \ref{fig:super_vs_det_2d} for snapshots of the population
from direct simulation of the process using SLiM \citep{haller_slim_2019}
illustrating this first dichotomy.
Below we have results for deterministic limits with pointwise and nonlocal interactions,
and for superprocess limits with nonlocal interactions.

\begin{figure}
    \begin{center}
        \includegraphics[width=2.5in]{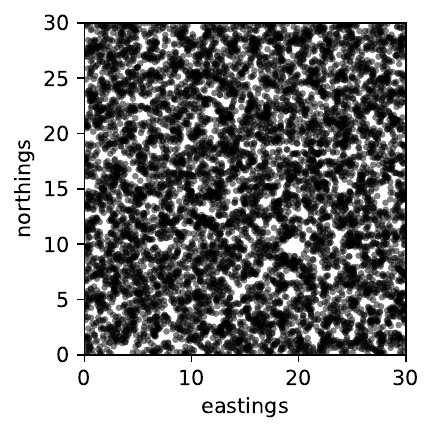}
        \includegraphics[width=2.5in]{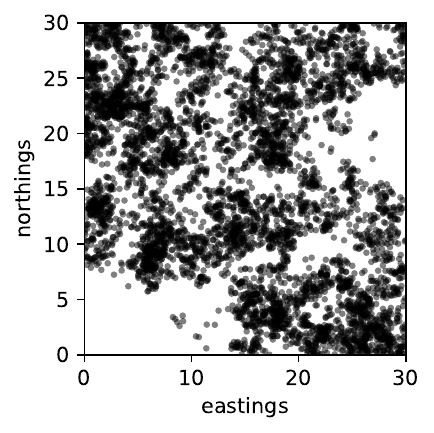}
    \end{center}
    \caption{
        Snapshots of two simulations, with small $\alpha=\theta/N$ (left) and large 
	$\alpha = \theta/N$ (right).
        Simulations are run with a Fisher-KPP-like parameterization:
        birth and establishment are constant, while death increases linearly with density,
        at slope $1/\theta$.
        Left: $\alpha=0.1$. Right: $\alpha=10$.
        Other parameters were the same:
        dispersal ($q_\theta$) and interactions (here, only $\rho_F$)
        are Gaussian with standard deviation 1,
        and the equilibrium density ($N$) is 10 individuals per unit area.
        The remaining parameters are constant: $r \equiv \gamma \equiv 1$.
        \label{fig:super_vs_det_2d}
    }
\end{figure}

\paragraph{Scaling limits with nonlocal interactions:}
Recall that the process $(\eta_t^N)_{t\geq 0}$ takes its values in the space
${\mathcal D}_{[0,\infty)}(\measures)$ of c\`adl\`ag
paths on $\measures$. We endow $\measures$ with the topology of weak convergence
and 
${\mathcal D}_{[0,\infty)}(\measures)$ with the Skorohod topology.
A sequence of processes taking values in 
${\mathcal D}_{[0,\infty)}(\measures)$ is said to be tight if the corresponding
sequence of distributions is tight,
i.e., if any infinite subsequence has a weakly convergent subsubsequence.
Our first main result establishes tightness of our rescaled population processes in 
the case in which interactions remain nonlocal under the scaling, and 
characterises limit points as solutions to a martingale problem.

\begin{theorem} \label{thm:nonlocal_convergence}
    Let $(\eta^N_t)_{t \geq 0}$
    be as defined in Definition~\ref{defn:mgale_construction}
	and assume that as $N \to \infty$, $\theta(N) \to \infty$
	in such a way that $\theta(N)/N \to \alpha$.
    (However, the kernels $\rho_r$, $\rho_\gamma$, and $\rho_F$
    remain fixed.) 
    Suppose that Assumptions~\ref{def:model_setup} hold and, 
    further, that $\{\eta_0^N\}_{N\geq 1}$ is a sequence of 
    purely atomic measures, with $\eta_0^N$ comprised of atoms of mass $1/N$,
    which is tight in $\measures$.
    Also assume there exists a nonnegative $f_0 \in C(\IR^d)$
    with uniformly bounded first and second derivatives
    (i.e., with $\sup_x \sup_{\|z\|=1} \sum_i \partial_{x_i} f_0(x) z_i$
    and $\sup_x \sup_{\|z\|=1} \sum_{ij} \partial_{x_i x_j} f_0(x) z_i z_j$ both finite)
    and $f_0(x) \to \infty$ as $|x| \to \infty$ for which
    $\langle f_0(x), \eta_0^N(dx)\rangle<C<\infty$
    for some $C$ independent of $N$.
    Then the sequence of processes
    $(\eta^N_t)_{t \geq 0}$ is tight, and for any limit point
    $(\eta_t)_{t \geq 0}$, for every $f \in C^\infty_b(\IR^d)$,
    \begin{align} \label{eqn:limiting_mgale_problem}
        \begin{split}
        M_t(f)
            &:=
            \langle f(x), \eta_t(dx) \rangle
            -
            \langle f(x), \eta_0(dx) \rangle
            \\ & \qquad
            -
            \int_0^t \big\langle
                \gamma(x, \eta_s)
                \mathcal{B}\left(
                    f(\cdot) r(\cdot, \eta_s)
                \right)(x)
                +
                f(x)
                F(x, \eta_s),
                \eta_{s}(dx)
            \big\rangle ds
        \end{split}
    \end{align}
    is a martingale (with respect to the natural filtration),
    with angle bracket process
    \begin{align} \label{eqn:limiting_mgale_variation}
        \left\langle M(f) \right\rangle_t
        =
        \alpha
        \int_0^t
        \big\langle
	    2\gamma\left( x, \eta_{s} \right)
            r\left(x, \eta_{s} \right)
            f^2(x),
            \eta_{s} (dx)
        \big\rangle ds. 
    \end{align}
    If $\alpha = 0$ the limit is deterministic.
\end{theorem}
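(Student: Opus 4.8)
The plan is to prove tightness of $(\eta^N)_{N\ge 1}$ in $\mathcal D_{[0,\infty)}(\measures)$ and then identify any limit point via the martingale problem.

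\emph{Step 1: A priori control of total mass.} The first step is to use the test function $f_0$ (with bounded first and second derivatives and $f_0\to\infty$ at infinity) to get uniform control of $\sup_{s\le T}\langle f_0,\eta^N_s\rangle$. Writing the semimartingale decomposition~\eqref{eqn:eta_martingale} for $f=f_0$, Condition~\ref{gamma_B_condition} bounds the dispersal-reproduction drift term by $C_{f_0}(1+|f_0(x)|)$ pointwise, and $F$ is uniformly bounded above, so the drift is at most $C(1+\langle f_0,\eta^N_s\rangle)$. Taking expectations and applying Gronwall gives $\sup_N\sup_{s\le T}\IE[\langle f_0,\eta^N_s\rangle]<\infty$; a maximal inequality applied to the martingale part (whose angle bracket, from~\eqref{eqn:prelimit_martingale_variation}, is $O(\theta/N)=O(1)$ times an integral of a similar quantity) upgrades this to control of the supremum in $s$. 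Since $f_0$ has compact sublevel sets, this yields tightness of the one-dimensional distributions $\{\eta^N_t\}$ on $\measures$, and more importantly a uniform bound on expected total mass $\sup_{s\le T}\IE[\eta^N_s(\IR^d)]$.

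\emph{Step 2: Tightness of the processes.} With mass under control, I would verify the Aldous--Rebolledo criterion for each $\langle f,\eta^N\rangle$, $f\in C_b^\infty(\IR^d)$: the finite-variation part has derivative bounded (uniformly in $N$, using Step 1 and Conditions~\ref{def:model_setup}) by a constant depending on $\|f\|$ and $\langle f_0,\eta^N_s\rangle$, hence is equicontinuous in the required stopping-time sense; and the angle bracket~\eqref{eqn:prelimit_martingale_variation} is likewise controlled, being $O(\theta/N)$ times an integral of $\gamma r f^2$ against $\eta^N$. This gives tightness of $\langle f,\eta^N\rangle$ in $\mathcal D_{[0,\infty)}(\IR)$ for each $f$ in a convergence-determining class, which together with the compact-containment estimate from Step 1 gives tightness of $(\eta^N)$ in $\mathcal D_{[0,\infty)}(\measures)$ (e.g.\ via Jakubowski's criterion or Roelly's theorem for measure-valued processes).

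\emph{Step 3: Identification of limit points.} For a limit point $\eta$ along a subsequence, I would pass to the limit in~\eqref{eqn:eta_martingale}. The key convergence is $\theta\int(f(z)r(z,\eta^N_s)-f(x)r(x,\eta^N_s))q_\theta(x,dx)\to \DG(f(\cdot)r(\cdot,\eta_s))(x)$ by Remark~\ref{rem:DG_limit}, uniformly in $x$ given the boundedness and H\"older regularity in Assumptions~\ref{def:model_setup}; combined with continuity of $m\mapsto r,\gamma,F$ and the continuous-mapping/Skorohod argument (plus uniform integrability from Step 1) this shows the drift integral converges to that in~\eqref{eqn:limiting_mgale_problem}. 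For the martingale part, $\langle M^N(f)\rangle_t = \frac{\theta}{N}\int_0^t(\cdots)ds$; since $\theta/N\to\alpha$ and the bracketed integrand converges (using $\mu_\theta\to r\gamma$ and $q_\theta\to\delta_x$, so the first and second terms in~\eqref{eqn:prelimit_martingale_variation} both contribute $\gamma r f^2$) one gets $\langle M^N(f)\rangle_t\to \alpha\int_0^t\langle 2\gamma r f^2,\eta_s\rangle ds$, which is~\eqref{eqn:limiting_mgale_variation}. A standard argument (uniform integrability of $M^N_t(f)$ from the bracket bound, plus the jumps of $M^N(f)$ being $O(1/N)\to 0$) shows the limit $M(f)$ is a continuous martingale with the stated angle bracket. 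In particular, when $\alpha=0$ the angle bracket vanishes, so $M(f)\equiv 0$ for every $f\in C_b^\infty(\IR^d)$, which forces $\eta$ to satisfy~\eqref{eqn:limiting_mgale_problem} with zero noise, i.e.\ $\eta$ is deterministic (it solves the weak form of~\eqref{general deterministic limit}).

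\emph{Main obstacle.} The delicate point is Step 1 together with the uniform-in-$x$ control of the reproduction drift: because $F$ is only bounded \emph{above} and the reproduction term is genuinely nonlocal, naive estimates allow the drift to be large where density is large, and one must exploit Condition~\ref{gamma_B_condition} (equivalently Lemma~\ref{lem:conditions on r}) precisely so that the drift is dominated by $\langle f_0,\eta^N\rangle$ itself rather than by some higher moment. Propagating this through the maximal inequality for the (jump) martingale, and ensuring the compact-containment estimate is uniform in $N$, is where the real work lies; the identification step in Step 3 is then comparatively routine given the machinery of~\citet{etheridge/kurtz:2019}.
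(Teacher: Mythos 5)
Your overall architecture (compact containment, Aldous--Rebolledo for $\langle f,\eta^N\rangle$, identification of limit points via convergence of generators) matches the paper's, and Steps 2 and 3 are essentially the argument given in Lemmas~\ref{lem:eta_projections_tightness} and~\ref{lem:limit_mgale}. However, there is a genuine gap in Step 1. You propose to control $\sup_{s\le T}\langle f_0,\eta^N_s\rangle$ by applying a maximal inequality to the martingale part of the semimartingale decomposition for $f_0$, asserting that its angle bracket is $O(\theta/N)$ times ``an integral of a similar quantity.'' It is not: by~\eqref{eqn:prelimit_martingale_variation} the angle bracket of $M^N(f_0)$ integrates $f_0^2$ against $\eta^N_s$, and since $f_0$ is unbounded (it must tend to infinity for your compact-containment argument to work), no bound on $\langle f_0^2,\eta^N_s\rangle$ is available from the hypothesis $\langle f_0,\eta_0^N\rangle<C$. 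So the Burkholder--Davis--Gundy/Doob $L^2$ route to the pathwise supremum fails, and with it your compact containment in $\measures$, which genuinely requires a uniform-in-time statement. The paper sidesteps this entirely: it compactifies $\IR^d$, so that compact containment in $\cmeasures$ reduces to controlling the total mass $\langle 1,\eta^N_t\rangle$ (for which $f\equiv 1$ and the angle bracket is harmless, Lemma~\ref{lem:eta_compact_containment}), proves tightness in ${\mathcal D}_{[0,\infty)}(\cmeasures)$, and only afterwards uses $f_0$ --- through fixed-time first-moment bounds and Markov's inequality (Lemma~\ref{no mass at infty}) --- to show the limit points put no mass at infinity and hence live in $\measures$.

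Your approach is repairable without second moments: since the drift of $Z_t:=\langle 1+f_0,\eta^N_t\rangle$ is bounded above by $CZ_t$ (Condition~\ref{gamma_B_condition} plus $F$ bounded above), $e^{-Ct}Z_t$ is a nonnegative local supermartingale, hence a supermartingale, and the supermartingale maximal inequality gives $\IP\{\sup_{t\le T}Z_t\ge K\}\le e^{CT}\IE[Z_0]/K$ with no reference to the quadratic variation. But that is a different argument from the one you wrote down, and as stated your Step 1 does not go through. A second, smaller gloss: in Step 3 the domination needed to pass to the limit in the drift term is delicate precisely because $F$ is only bounded above; the paper handles this by writing $|F|\le K-F$ and reusing the semimartingale identity to show $\int_0^t\langle -F,\eta^N_s\rangle ds$ has bounded expectation, a step your ``uniform integrability from Step 1'' does not supply on its own.
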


Recall when interpreting~\eqref{eqn:limiting_mgale_problem}
that, for instance, $r(x, \eta_s) = r(x, \smooth{r} \eta_s(x))$,
and so $\DG(fr)(x) = \DG(f(\cdot) r(\cdot, \smooth{r} \eta_s(\cdot)))(x)$.
The proof of this theorem appears in Section~\ref{sec:population_density_proof}.

Theorem \ref{thm:nonlocal_convergence} provides tightness of the
rescaled processes. If the limit points are unique, then this
is enough to guarantee convergence.

\begin{corollary} \label{cor:superprocess_uniqueness}
    Under the assumptions of Theorem~\ref{thm:nonlocal_convergence},
    if the martingale problem
    defined by equations~\eqref{eqn:limiting_mgale_problem} 
and~\eqref{eqn:limiting_mgale_variation}
    has a unique solution,
    then $(\eta^N_t)_{t \ge 0}$ converges weakly
to that solution
    as $N \to \infty$.
\end{corollary}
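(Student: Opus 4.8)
The plan is the standard \emph{Prohorov‑plus‑uniqueness} argument: tightness gives relative compactness of the laws, Theorem~\ref{thm:nonlocal_convergence} identifies every subsequential limit as a solution of the martingale problem, and the assumed uniqueness forces all these limits to coincide, whence the whole sequence converges.

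First I would invoke Theorem~\ref{thm:nonlocal_convergence} to conclude that the family of laws $\{\text{Law}(\eta^N)\}_{N\ge 1}$ on $\mathcal{D}_{[0,\infty)}(\measures)$, equipped with the Skorohod topology, is tight; since $\measures$ with the weak topology is Polish, so is $\mathcal{D}_{[0,\infty)}(\measures)$, and Prohorov's theorem then gives that $\{\text{Law}(\eta^N)\}_{N\ge 1}$ is relatively compact for the topology of weak convergence of probability measures. Hence along any subsequence $N_k \to \infty$ there is a further subsequence along which $\eta^{N_k}$ converges in distribution to some $\mathcal{D}_{[0,\infty)}(\measures)$‑valued process $\eta$.

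Second, I would identify each such subsequential limit $\eta$ as a solution of the martingale problem appearing in the hypothesis of the corollary. This is exactly the content of the conclusion of Theorem~\ref{thm:nonlocal_convergence}: for every $f \in C_b^\infty(\IR^d)$ the process $M_t(f)$ of~\eqref{eqn:limiting_mgale_problem} is a martingale with angle bracket process given by~\eqref{eqn:limiting_mgale_variation}, and $\eta$ has càdlàg $\measures$‑valued paths as a Skorohod limit of such paths. Its initial value is the corresponding limit of $\eta_0^{N_k}$; here one uses that $\eta_0^N \Rightarrow \eta_0$ (a mild strengthening of the tightness of $\{\eta_0^N\}$ already assumed in Theorem~\ref{thm:nonlocal_convergence}; alternatively, one assumes the martingale problem has a unique solution for each initial law arising as a subsequential limit of $\{\eta_0^N\}$ and that these solutions agree). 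By the uniqueness hypothesis, the law of $\eta$ on path space is therefore a fixed probability measure $P^\star$, independent of the subsequence chosen.

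Finally I would close the argument with the elementary fact that a relatively compact sequence in a metric space (here, the space of Borel probability measures on the Polish space $\mathcal{D}_{[0,\infty)}(\measures)$) all of whose subsequential limits coincide is convergent: we have shown every subsequential limit of $\text{Law}(\eta^N)$ equals $P^\star$, so $\eta^N \Rightarrow P^\star$, which is the claimed convergence. There is no substantive obstacle beyond bookkeeping; the one point that genuinely needs care is that ``uniqueness of the solution to the martingale problem'' must be understood as uniqueness in law on the whole path space $\mathcal{D}_{[0,\infty)}(\measures)$, not merely of one‑dimensional marginals, and that it is applied with the correct limiting initial condition. Both are automatic once one knows that the characterisation~\eqref{eqn:limiting_mgale_problem}--\eqref{eqn:limiting_mgale_variation} (together with the initial law) pins down the finite‑dimensional distributions, which is precisely what the hypothesis of the corollary grants.
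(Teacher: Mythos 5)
Your proposal is correct and follows essentially the same route the paper intends: the corollary is the standard Prohorov-plus-uniqueness argument, with tightness and the identification of subsequential limits supplied by Theorem~\ref{thm:nonlocal_convergence} (the paper states exactly this just before the corollary and gives no further proof). Your observation that one needs convergence, not merely tightness, of the initial conditions $\eta_0^N$ is well taken and matches the hypothesis the paper itself imposes in Lemma~\ref{lem:limit_mgale} when characterising limit points.
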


When $\alpha>0$, the limit points can be thought of as interacting superprocesses. For
example, when $r$ and $\gamma$ are constant, and 
$F(x,\eta_s)=1-\smooth{F} \eta_s(x)$,
we recover a superprocess with nonlinear death rates corresponding to logistic growth
\citep{etheridge:2004}
that is a continuous limit of the Bolker-Pacala model \citep{bolker/pacala:1997,bolker/pacala:1999}.
We are not aware of a general result to determine when we will have uniqueness of 
solutions to the martingale problem of Theorem~\ref{thm:nonlocal_convergence} when $\alpha>0$.
We do not address the question of uniqueness here,
but (as stated below in Proposition~\ref{thm:mmt_application}),
a consequence of the Markov Mapping Theorem (Theorem~\ref{thm:mmt}) is that 
uniqueness of the martingale problem for the (yet to be defined) lookdown process would imply
uniqueness of solutions to this process.

Alternatively, the Dawson--Girsanov transform could be used to show uniqueness \revpoint{1}{12}
in the special case of a superprocess with nonlinear death rates:
if $r$ and $\gamma$ only depend on $x$ (not $\eta$),
then the process with $F=0$ is a heterogeneous branching superprocess
and hence the corresponding martingale problem is unique (see Section 4.3 of \cite{dawson:1993}).
Then, the Dawson--Girsanov transform (Theorem 7.2.2 of \cite{dawson:1993},
extended to measures with nonconstant mass as in Section 10.1.2 of \cite{dawson:1993})
would provide the Radon-Nikodym derivative of the law of the process with more general $F$
relative to the law of the process with $F=0$
under suitable conditions.

In a different but related setting, the Perkins stochastic calculus
(and its adaptation to a lookdown setting \cite{donnelly/kurtz:1999})
provides uniqueness for a different but related class of processes,
in which interactions affect the dispersal mechanism
(rather than reproduction) of the superprocess \cite{perkins:1992}.

For the deterministic case of $\alpha=0$,
the limiting process is a weak solution to a nonlocal PDE.
We next describe some situations in which more is known
about uniqueness and whether the solution is close to the corresponding local PDE.
First, recall the following notion of solution to a PDE.
\begin{definition}[Weak solutions]
    \label{defn:weak_solutions}
    We say that $(\eta_t)_{t \ge 0}$, with $\eta_t \in \measures$,
    is a \emph{weak solution} to the PDE
    \begin{equation} \label{eqn:nonlocal_pde}
        \partial_t \varphi = r \DG^*(\gamma \varphi) + \varphi F
    \end{equation}
	(where $r$, $\gamma$ and $F$ can all be functions of $\varphi$)
    if, for all $f \in C_b^\infty(\IR^d)$,
    \[
        \frac{d}{dt} \langle f, \eta_t \rangle
        =
        \langle
	    \gamma \DG(rf) + f F,
            \eta_t
        \rangle .
    \]
\end{definition}
The notation $\varphi$ is meant to be suggestive of a density,
and recall that equation~\eqref{eqn:nonlocal_pde} has made dependencies on $x$ and $\varphi$ implicit;
written out more explicitly, \eqref{eqn:nonlocal_pde} is
\[
    \partial_t \varphi_t(x)
    =
    r\left(x, \smooth{r} \varphi_t(x) \right)
    \DG^* \left[
        \varphi_t(\cdot)
        \gamma\big( x, \smooth{\gamma} \varphi_t(\cdot) \big)
    \right](x)
    +
    \varphi_t(x)
    F\left(x, \smooth{F} \varphi_t(x) \right)
    .
\]
Because Theorem~\ref{thm:nonlocal_convergence} only tells us about 
weak convergence, in the case $\alpha=0$ we can only deduce that 
any limit point $\eta_t$ is a weak solution to this nonlocal PDE.

Specialising the results of 
\citet{kurtz/xiong:1999} to the deterministic setting
provides general conditions under which we
have existence and uniqueness of solutions to~(\ref{eqn:nonlocal_pde}) which
have an $L^2$-density with respect to Lebesgue measure. 
Recall that the Wasserstein metric, defined by 
\[\rho (\nu_1,\nu_2)=\sup\Big\{\Big|\int fd\nu_1-\int fd\nu_2\Big|:\sup_x|f(x
)|\leq 1,|f(x)-f(y)|\leq \|x-y\|\Big\},\]
determines the topology of weak convergence on $\measures$.
We write 
$r(x,\eta) \gamma(x,\eta) \covq(x) = J(x,\eta) J(x,\eta)^T$, and 
$\beta(x,\eta) = r(x,\eta) \gamma(x,\eta) \big(\meanq(x)+\covq(x)\nabla\log r(x,\eta)\big)$
(quantities that will appear in Proposition~\ref{prop:limiting_construction}).
If $J$, $\beta$, and $F$ are bounded and Lipschitz in the sense 
that
\begin{equation}
\label{lipschitz for kurtz xiong}
|J (x_1, \nu_1)-J (x_2, \nu_2)|,|\beta (x_1,\nu_1)-\beta 
	(x_2,\nu_2)|, |F(x_1,\nu_1)-F(x_2,\nu_2)|
	\leq C(\|x_1-x_2\|+\rho (\nu_ 1,\nu_2))
\end{equation}
for some $C>0$,   
the methods of \citet{kurtz/xiong:1999} show that 
if the initial condition $\eta_0$ for our 
population process has
an $L^2$ density, then so does $\eta_t$ for $t>0$. 
Although the necessary estimates (for which we refer to the original paper)
are highly nontrivial, 
the idea of the proof is simple. Take a solution to the
equation and use it to calculate the coefficients $r$, $\gamma$ and $F$ that depend
on local population density. Then $\eta$ solves the 
{\em linear} equation obtained by regarding those values of $r$, $\gamma$ and $F$ as given.
It remains to prove that the solution to the linear equation has a density
which is achieved by obtaining
$L^2$ bounds on its convolution with the heat semigroup at time $\delta$ and letting 
$\delta\to 0$.
We also have the following uniqueness result.
\begin{theorem}[Special case of \citet{kurtz/xiong:1999}, Theorem~3.5]
	\label{thm:kurtzxiong}
Suppose  $J$, $\beta$, and $F$ are bounded and Lipschitz in the sense 
of~(\ref{lipschitz for kurtz xiong}).
If $\eta_0$ has an $L^2({\mathbb R}^d)$-density, then there 
exists a unique $L^2({\mathbb R}^d)$-valued  
	solution of (\ref{eqn:nonlocal_pde}) in the sense of 
	Definition~\ref{defn:weak_solutions}.
\end{theorem}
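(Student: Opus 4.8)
The plan is to reduce the nonlocal equation to a family of linear parabolic equations, solve each by a fixed-point argument, and then close the loop with a contraction in a suitable metric. First I would set up the map $\Phi$ on an appropriate space of trajectories: given a candidate solution $t \mapsto \psi_t \in L^2(\IR^d)$, form the (now $\psi$-dependent but otherwise ``frozen'') coefficients $J(x, \psi_t)$, $\beta(x,\psi_t)$, and $F(x,\psi_t)$ by convolving $\psi_t$ with the relevant kernels and plugging in; by the Lipschitz hypothesis~\eqref{lipschitz for kurtz xiong} and the fact that convolution with a fixed integrable kernel is a bounded operation $L^2 \to L^\infty$ (or at least $L^2\to L^2$ after pairing against test functions), these coefficients are bounded, measurable in $t$, and Lipschitz in $x$ uniformly in $t$. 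Then $\eta_t = \Phi(\psi)_t$ is \emph{defined} to be the unique weak $L^2$-solution of the linear equation $\partial_t \varphi = \sum_{ij}\partial_i\partial_j(\tfrac12 (JJ^T)_{ij}\varphi) - \sum_i \partial_i(\beta_i \varphi/2) + F\varphi$ with initial data $\eta_0$, where $J$, $\beta$, $F$ are the frozen coefficients just constructed. Existence, uniqueness, and the propagation of the $L^2$ bound for this linear problem are exactly the ingredients supplied by the estimates of \citet{kurtz/xiong:1999} sketched above (solve the linear equation, control $\|\eta_t * p_\delta\|_{L^2}$ via the heat semigroup $p_\delta$, let $\delta \to 0$); I would invoke these rather than redo them.

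Next I would show $\Phi$ is a contraction on $C([0,T]; L^2)$ for $T$ small, or more robustly that it satisfies a Gronwall-type estimate directly. Given two inputs $\psi^1, \psi^2$ with outputs $\eta^1 = \Phi(\psi^1)$, $\eta^2 = \Phi(\psi^2)$, the difference $w_t = \eta^1_t - \eta^2_t$ solves a linear equation whose coefficients are those built from $\psi^1$, plus a forcing term that is linear in $\eta^2_t$ and controlled by the coefficient differences $\|J(\cdot,\psi^1_t) - J(\cdot,\psi^2_t)\|$, etc. The Lipschitz bound~\eqref{lipschitz for kurtz xiong}, combined with the Wasserstein-to-$L^2$ comparison (pairing against $1$-Lipschitz bounded $f$ is dominated by an $L^2$-type norm once we have an $L^2$ density with a uniform bound, which we do), gives $\rho(\psi^1_t, \psi^2_t) \le C\|\psi^1_t - \psi^2_t\|_{L^2}$, so the forcing is bounded by $C \|\eta^2_t\|_{L^2} \|\psi^1_t - \psi^2_t\|_{L^2}$. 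An energy estimate for $w$ (multiply by $w$, integrate, use uniform ellipticity of $JJ^T$ to absorb the second-order term and boundedness of the drift and of $F$ from above to handle the rest) then yields $\tfrac{d}{dt}\|w_t\|_{L^2}^2 \le C(\|w_t\|_{L^2}^2 + \|\psi^1_t - \psi^2_t\|_{L^2}^2)$, and Gronwall closes the contraction for $T$ small depending only on the uniform constants; iterating over successive time intervals (the $L^2$ bound does not blow up, by the propagation estimate) extends to arbitrary $T$. The fixed point of $\Phi$ is then the desired solution, and uniqueness in the class of $L^2$-valued weak solutions follows because any such solution is a fixed point of $\Phi$ and the contraction estimate forces fixed points to coincide.

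One technical point I would be careful about is matching the weak formulation of Definition~\ref{defn:weak_solutions}, which is stated against $f \in C_b^\infty$ in the ``$\gamma \DG(rf) + fF$'' form, with the divergence-form linear equation that the \citet{kurtz/xiong:1999} machinery naturally produces; this is just the integration-by-parts identity $\langle r\DG^*(\gamma\varphi), f\rangle = \langle \varphi, \gamma\DG(rf)\rangle$ together with the identification of $JJ^T$ and $\beta$ given in the excerpt, but one should check the first-order terms bookkeep correctly (the $\nabla \log r$ drift appearing in $\beta$ is exactly what comes out of expanding $\DG^*(\gamma\varphi)$ and regrouping), and that the test-function class $C_b^\infty$ is rich enough to characterize the $L^2$ solution uniquely, which it is by density.

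The main obstacle, in my view, is not the fixed-point architecture but verifying the Wasserstein-versus-$L^2$ interplay cleanly: the hypothesis~\eqref{lipschitz for kurtz xiong} is phrased in the Wasserstein metric $\rho$ on $\measures$, while the energy estimates live in $L^2$, and one needs that (a) the solutions stay in a set where these two are comparable in the direction we need, and (b) the convolution structure $\eta \mapsto \smooth{r}\eta$ etc.\ turns Wasserstein-closeness of measures into pointwise/$L^\infty$-closeness of the convolved densities so that the frozen coefficients are genuinely Lipschitz in the right norm. Both are true — convolution with a fixed smooth integrable kernel is Lipschitz from $(\measures, \rho)$ to $C_b(\IR^d)$ when the kernel is Lipschitz — but threading this through every coefficient and every term of the energy estimate is where the real work sits, and it is precisely the part that \citet{kurtz/xiong:1999} handle in full; for our purposes it suffices to cite their Theorem~3.5 once the coefficients have been identified as above.
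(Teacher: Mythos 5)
The paper does not actually prove this statement: it is quoted as a special case of Theorem~3.5 of \citet{kurtz/xiong:1999}, with only a one-paragraph sketch of the idea (freeze the density-dependent coefficients to obtain a linear equation, and obtain the $L^2$ density by bounding the convolution with the heat semigroup and letting the mollification parameter tend to zero). Your proposal fleshes out exactly that idea with a standard fixed-point wrapper and then, as the paper does, defers all the genuinely hard estimates to the same citation, so it is essentially the same approach. One caveat: the inequality $\rho(\psi^1_t,\psi^2_t)\le C\|\psi^1_t-\psi^2_t\|_{L^2}$ is false on all of $\IR^d$ (the bounded-Lipschitz metric is controlled by the $L^1$ distance of densities, which $L^2$ does not dominate on an unbounded domain), so the contraction would have to be run in a norm that Kurtz and Xiong actually use rather than the one you wrote down; you correctly identify this interface as the delicate point, but the fix is to adopt their function-space setup rather than to patch the $L^2$ energy estimate.
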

\begin{remark}
\citet{kurtz/xiong:1999} considers an infinite system of stochastic differential equations
for the locations and weights of a collection of particles that interact through their 
weighted empirical measure, which is shown to be the unique solution to a stochastic
	PDE. As we shall see through our lookdown representation in 
Section~\ref{sec:lookdown}, the
	solution to our
	deterministic equation can be seen as the empirical measure of a 
	countable number of particles (all with the same weight)
which, in the notation above, evolve according
	to 
	\[X(t)=X(0)+\int_0^t\beta\big(X(s), \eta_s\big)ds
+\int_0^tJ\big(X(s), \eta_s\big)dW(s)\]
	(with an independent Brownian motion $W$ for each particle).
\end{remark}


\paragraph{Two-step convergence to PDE:}
Although the coefficients at $x$ in \eqref{eqn:nonlocal_pde} are nonlocal,
we can choose our kernels $\rho_\gamma$, $\rho_r$, and $\rho_F$
in such a way that they depend only on the population in a region close to $x$,
and so we expect that under rather general conditions
solutions of the nonlocal PDE will be close to the corresponding 
classical PDE.
The following propositions provide two concrete situations in which this is true.
In the first, the PDE is a reaction-diffusion equation, and
in the proof in Section~\ref{two-step convergence to FKPP} we borrow
an idea from~\cite{penington:2017} to express the solutions to both the nonlocal
equation and the classical PDE through a Feynman-Kac formula. 

\begin{proposition}
    \label{prop:nonlocal_to_local}
Let $\rho^\epsilon_F(x)=\rho_F\big(x/\epsilon)/\epsilon^d$.
Assume $\varphi_0\in L^2(\IR^d)$ is a positive, uniformly Lipschitz, and uniformly bounded function. 
Suppose that $\varphi^\epsilon\in L^2(\IR^d)$ is a weak solution to the equation
\begin{equation}
\label{nonlocalPDEv1} 
\partial_t \varphi^\epsilon = \DG^* \varphi^\epsilon + 
\varphi^\epsilon F(\rho_F^\epsilon*\varphi^\epsilon),  
\qquad x \in \mathbb{R}^d,\, t >0, 
\end{equation}
with initial condition $\varphi_0(\cdot)$,
and that $\varphi$ is a weak solution to the equation
 \begin{equation}
\label{localPDE} 
\partial_t \varphi = \DG^* \varphi + \varphi F(\varphi),  
\qquad x \in \mathbb{R}^d,\, t >0, 
\end{equation}
also with initial condition $\varphi_0(\cdot)$.
Suppose further that $F$ is a Lipschitz function which is bounded above,
and that $\meanq(x)$ and $\covq(x)$, the drift and covariance matrix of $\DG$,
satisfy the conditions of Assumptions~\ref{def:model_setup}
and are such that $\DG^* 1 = 0$ (see Definition~\ref{def:dispersal_generator}).
Then, for all $T>0$ there exists a constant $K=K(T, \Vert \varphi_0 \Vert_\infty) < \infty$
and a function $\delta(\epsilon)$ (dependent on $\rho_F$) with $\delta(\epsilon)\to 0$ 
as $\epsilon\to 0$,
such that, for all $0 \leq t \leq T$, and $\epsilon$ small enough,
\[ 
\Vert \varphi_t(\cdot) - \varphi_t^\epsilon(\cdot) \Vert_\infty\leq K\delta(\epsilon). 
\]
In particular, as $\epsilon\to 0$, we have that $\varphi^\epsilon$ converges 
uniformly in compact intervals of time to $\varphi$.
\end{proposition}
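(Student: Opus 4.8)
The plan is to follow~\citet{penington:2017}: represent both $\varphi$ and $\varphi^\epsilon$ as Feynman--Kac functionals of a single diffusion, so that the only discrepancy between the two is in the potential, and then close a Gr\"onwall estimate. Let $Y=(Y_s)_{s\ge 0}$ denote the diffusion associated with $\DG^*$ (strongly Markov and well defined since $\covq$ is uniformly elliptic and the coefficients of $\DG^*$ are bounded and continuous; in the main examples $\DG^*=\DG=\Delta$ and $Y$ is Brownian motion), and write $\IE_x$ for expectation started from $x$. Since $F$ is bounded above and the zeroth-order part of $\DG^*$ is bounded, the Feynman--Kac exponential is bounded above, so a comparison argument gives, for $t\le T$, a uniform bound $\|\varphi_t\|_\infty\vee\|\varphi^\epsilon_t\|_\infty\le M:=\|\varphi_0\|_\infty e^{\bar cT}$ with $\bar c$ depending only on the model constants; hence on the range $[0,M]$ of the solutions we may treat $F$ as globally Lipschitz, with constant $L_F$, and bounded. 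Under the (expected) identification of the weak solutions with their probabilistic representations, discussed below,
\[
\varphi_t(x)=\IE_x\!\Big[\varphi_0(Y_t)\exp\!\Big(\!\int_0^t F\big(\varphi_{t-s}(Y_s)\big)\,ds\Big)\Big],
\]
with the analogous formula for $\varphi^\epsilon_t$ obtained by replacing $F(\varphi_{t-s})$ by $F(\rho_F^\epsilon*\varphi^\epsilon_{t-s})$; the precise time-indexing of the potential is immaterial for what follows, since it is the same in both representations.

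Set $h(t):=\sup_{0\le s\le t}\|\varphi_s-\varphi^\epsilon_s\|_\infty$ and $\delta(\epsilon):=\sup_{0\le s\le T}\|\rho_F^\epsilon*\varphi_s-\varphi_s\|_\infty$, and let $C_T:=e^{\bar cT}$. Subtracting the two representations and using $|e^a-e^b|\le(e^a\vee e^b)|a-b|$ together with the uniform upper bound on the exponents, we get for $t\le T$
\[
\|\varphi_t-\varphi^\epsilon_t\|_\infty\le C_T\,\sup_x\IE_x\!\Big[\!\int_0^t\big|F\big(\varphi_{t-s}(Y_s)\big)-F\big(\rho_F^\epsilon*\varphi^\epsilon_{t-s}(Y_s)\big)\big|\,ds\Big].
\]
Write $F(\varphi_{t-s})-F(\rho_F^\epsilon*\varphi^\epsilon_{t-s})=\big[F(\varphi_{t-s})-F(\rho_F^\epsilon*\varphi_{t-s})\big]+\big[F(\rho_F^\epsilon*\varphi_{t-s})-F(\rho_F^\epsilon*\varphi^\epsilon_{t-s})\big]$. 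Since $F$ is Lipschitz and convolution with $\rho_F^\epsilon$ does not increase the sup norm, the first bracket is at most $L_F\delta(\epsilon)$ and the second at most $L_F\|\varphi_{t-s}-\varphi^\epsilon_{t-s}\|_\infty\le L_F h(t-s)$. Hence $h(t)\le C_TL_F\big(T\delta(\epsilon)+\int_0^t h(s)\,ds\big)$, and Gr\"onwall's inequality gives $h(T)\le K\delta(\epsilon)$ with $K=C_TL_FT\exp(C_TL_FT)$, depending only on $T$ and the model constants (and, through $M$, on $\|\varphi_0\|_\infty$). This is the asserted estimate.

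It remains to show $\delta(\epsilon)\to 0$, which concerns only the local solution $\varphi$. Using that $\varphi_0$ is uniformly Lipschitz and bounded, one propagates this regularity through~\eqref{localPDE}: from the Duhamel formula $\varphi_t=S_t\varphi_0+\int_0^t S_{t-s}\big(\varphi_sF(\varphi_s)\big)\,ds$ (with $S$ the semigroup of $\DG^*$), the gradient estimate $\mathrm{Lip}(S_tg)\le e^{ct}\mathrm{Lip}(g)$, the bound $\mathrm{Lip}\big(\varphi_sF(\varphi_s)\big)\le(ML_F+\sup_{[0,M]}|F|)\mathrm{Lip}(\varphi_s)$, and Gr\"onwall, one obtains $L_T:=\sup_{s\le T}\mathrm{Lip}(\varphi_s)<\infty$. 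Then, using $\int\rho_F=1$,
\[
\|\rho_F^\epsilon*\varphi_s-\varphi_s\|_\infty\le\int\big(L_T\epsilon|z|\wedge 2M\big)\,\rho_F(z)\,dz,
\]
which tends to $0$ as $\epsilon\to0$ by dominated convergence; this defines the function $\delta(\epsilon)$, manifestly depending on $\rho_F$ (and equal to $O(\epsilon)$ if $\rho_F$ has a finite first moment). Uniform convergence of $\varphi^\epsilon$ to $\varphi$ on compact time intervals is then immediate.

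The Gr\"onwall comparison is routine once set up; the real work is in the preliminary steps, and that is where I expect the main obstacle to lie. First, one must justify that the weak solutions of~\eqref{nonlocalPDEv1} and~\eqref{localPDE} coincide with the Feynman--Kac objects above: construct the probabilistic solution, check it solves the equation in the sense of Definition~\ref{defn:weak_solutions}, and invoke uniqueness. This is delicate because the stated hypotheses give only H\"older (not Lipschitz) regularity of $\covq$, so Theorem~\ref{thm:kurtzxiong} does not apply verbatim, and one needs either a direct mild-solution fixed-point argument or a tailored uniqueness statement (together with a careful treatment of the diffusion associated with $\DG^*$ under H\"older coefficients). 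Second, one needs the uniform $L^\infty$ bound and the uniform-in-time Lipschitz bound on $\varphi$, the latter being the quantitative input that forces $\delta(\epsilon)\to 0$. If one prefers to avoid a regularity statement for $\varphi$, the same conclusion follows from an $\epsilon$-uniform equicontinuity estimate for the family $\{\varphi^\epsilon\}$, obtained by De Giorgi--Nash--Moser applied to the linear equation with the uniformly bounded potential $\varphi^\epsilon F(\rho_F^\epsilon*\varphi^\epsilon)$, whose constants depend only on ellipticity, dimension and the $L^\infty$ bound.
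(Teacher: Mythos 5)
Your proposal is correct in outline and follows the same overall strategy as the paper: a Feynman--Kac representation of both solutions, the decomposition $F(\varphi)-F(\rho_F^\epsilon*\varphi^\epsilon)=[F(\varphi)-F(\rho_F^\epsilon*\varphi)]+[F(\rho_F^\epsilon*\varphi)-F(\rho_F^\epsilon*\varphi^\epsilon)]$, a uniform $L^\infty$ bound (your $M$ is the paper's Lemma~\ref{BoundednessVarphis}), and a Gr\"onwall closure. The cosmetic difference is that you use the multiplicative Feynman--Kac form for the main comparison while the paper uses the additive Duhamel form; the one substantive difference is how the modulus of continuity of $\varphi$ (needed to show $\delta(\epsilon)\to 0$) is obtained. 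The paper does not propagate a Lipschitz bound: following \citet{penington:2017}, it restarts the Feynman--Kac formula at time $t-\delta^2$ and uses the $L^1$-continuity in the starting point of the transition density of the diffusion associated with $\DG^*$ (Lemmas~\ref{regularityForX1} and~\ref{regularityForX2}, which rest on the heat-kernel gradient estimates of \citet{sheu:1991} valid under H\"older coefficients), yielding only a $1/3$-H\"older-type modulus and hence $\delta(\epsilon)=C(I(\epsilon)+\epsilon^{1/4})$ in Lemma~\ref{ContinuityConvolution}. Your route instead propagates $\mathrm{Lip}(\varphi_0)$ through the Duhamel formula. Be careful with the estimate $\mathrm{Lip}(S_tg)\le e^{ct}\mathrm{Lip}(g)$: under merely $\alpha$-H\"older coefficients there is no reason for the constant to tend to $1$ as $t\to 0$, and a synchronous-coupling proof of that form is unavailable. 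What \emph{is} true, and follows from the same estimate of \citet{sheu:1991} the paper uses (write $\nabla S_tg(x)=\int\nabla_xf_t(x,z)(g(z)-g(x))\,dz$ and use $\|\nabla_xf_t(x,z)\|\le Kt^{-1/2}p_{\lambda t}(x,z)$), is $\mathrm{Lip}(S_tg)\le K'\,\mathrm{Lip}(g)$ uniformly for $t\in(0,T]$ with some $K'\ge 1$; this is all your Gr\"onwall argument needs, and it gives the sharper rate $\delta(\epsilon)=O(\epsilon)$ when $\rho_F$ has a finite first moment. Your closing caveats about identifying the weak solutions with their probabilistic representations are fair, but the paper is no more rigorous on that point than you are.
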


\begin{remark}
Note that Theorem~\ref{thm:kurtzxiong} guarantees uniqueness of solutions
to equation~(\ref{nonlocalPDEv1}). 
\end{remark}

\begin{remark}
    Instead of putting the fairly strong constraint that $\DG^* 1(x) = 0$ for all $x$,
    it would be enough to assume instead that $\DG^* 1(x)$ is uniformly bounded,
    so that $f \mapsto \DG^*f - f \DG^* 1$ is the generator of a conservative diffusion.
    Since then $\varphi$ solves
    $$ \partial_t \varphi = (\DG^* \varphi - \varphi \DG^* 1) + \varphi (F(\varphi) + \DG^* 1) , $$
    the proof goes through essentially unchanged,
    with only $F(\varphi(x))$ replaced with $F(\varphi(x)) + \DG^*1(x)$,
    a bounded perturbation.
\end{remark}

Our second example 
in which we know solutions to the nonlocal PDE converge to
solutions of the local PDE as interaction distances go to zero
is a nonlocal version of a porous medium equation with logistic growth.
That is, we consider non-negative solutions to the equation
\begin{equation}
\label{mollified equation}
\partial_t \psi^\epsilon=
\Delta\left(\psi^\epsilon \, \rho^{\epsilon}_\gamma*\psi^\epsilon\right)
+\psi^\epsilon\left(1-\rho^{\epsilon}_\gamma*\psi^\epsilon\right).
\end{equation}
The case without the reaction term (and with $\IR^d$ replaced by a torus) is considered 
by~\cite{lions/mas-gallic:2001} who use
it as a basis for a particle method for numerical solution of the porous medium equation.
Of course this does not quite fit into our framework, since in the notation of our 
population models this would necessitate $\gamma(x,m)=\smooth{\epsilon}m$ which is
not bounded. However, this can be overcome by an additional layer of approximation
(c.f.~our numerical experiments of Section~\ref{beyond linear diffusion})
and we do not allow this to detain us here. Existence and uniqueness of solutions 
to~(\ref{mollified equation}) can be obtained using the approach
of~\cite{lions/mas-gallic:2001}, so
we should like to prove that as $\epsilon\to 0$ we have
convergence to the solution to the porous medium equation with
logistic growth:
\begin{equation}
\label{PME}
\partial_t\psi=
\Delta\left(\psi^2\right)
+\psi\left(1-\psi\right).
\end{equation}

\begin{notation}
We use $\rightharpoonup$ to denote weak convergence in the sense of analysts;
that is, $\psi^\epsilon\rightharpoonup \psi$ in $L^1$ means 
$\int \psi^\epsilon v d x
\rightarrow\int \psi v d x$ for all $v\in L^\infty$.

We write $L_t^2(H^1)$ for functions for which the $H^1$ norm in
space is in $L^2$ with respect to time, i.e.
$$\int_0^T\int \left\{\psi_t(x)^2+\left\| \nabla \psi_t(x)\right\|^2\right\}
dx dt <\infty,$$
and $C_t(L^1)$
will denote functions for which the $L^1$ norm in space is continuous in time.
\end{notation}

\begin{proposition}
	\label{nonlocalPME to PME}
Suppose that
we can write $\rho_\gamma=\zeta*\check{\zeta}$, where $\check{\zeta}(x)=\zeta(-x)$ and
$\zeta\in\mathcal{S}(\IR^d)$ (the Schwartz space of rapidly decreasing functions).
Furthermore, suppose that 
$\psi_0^\epsilon\geq 0$ is such that
there exists $\lambda\in (0,1)$ for which
\begin{equation*}
\sup_\epsilon \int\exp(\lambda \|x\|)\psi_0^\epsilon(x)dx < \infty,\quad\mbox{ and }
\sup_\epsilon\int \psi_0^\epsilon|\log \psi_0^\epsilon|dx < \infty,
\end{equation*}
with
$\psi_0^\epsilon\rightharpoonup \psi_0$ as $\epsilon\to 0$. Then
writing $\psi^\epsilon$ for the solution to~(\ref{mollified equation})
on $[0,T]\times \IR^d$ with
initial condition $\psi_0^\epsilon$,
$\psi^\epsilon\rightharpoonup \psi$ as $\epsilon\to 0$ where
$\psi\in L_t^2(H^1)\cap C_t(L^1)$, $\int \psi|\log \psi| dx<\infty$, and
$\psi$ solves~(\ref{PME}) on $[0,T]\times \IR^d$. 
\end{proposition}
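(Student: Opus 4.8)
The plan is to follow the compactness strategy that \citet{lions/mas-gallic:2001} use for the reaction-free equation on the torus, adapting it in two ways: to accommodate the logistic reaction term, and to work on all of $\IR^d$, which is where the moment hypotheses on $\psi_0^\epsilon$ enter. Throughout write $\zeta^\epsilon(x)=\epsilon^{-d}\zeta(x/\epsilon)$, so that $\rho_\gamma^\epsilon=\zeta^\epsilon*\check\zeta^\epsilon$, and recall that for each fixed $\epsilon$ the borrowed existence theory provides a solution $\psi^\epsilon$ that is nonnegative, smooth, and rapidly decaying in space, so that all the integrations by parts below are legitimate (after the usual regularisation of $\log\psi^\epsilon$ by $\log(\psi^\epsilon+\delta)$, $\delta\downarrow 0$). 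We may assume $\int\rho_\gamma=1$, so that $\rho_\gamma$ is even and $\rho_\gamma^\epsilon$ is an approximate identity.

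\textbf{A priori bounds uniform in $\epsilon$.} Integrating \eqref{mollified equation} in $x$ kills the diffusion term and gives $\frac{d}{dt}\int\psi^\epsilon_t\,dx=\int\psi^\epsilon_t\,dx-\int\psi^\epsilon_t(\rho_\gamma^\epsilon*\psi^\epsilon_t)\,dx$, whence $\sup_{t\le T}\int\psi^\epsilon_t\le e^T\sup_\epsilon\int\psi_0^\epsilon=:C_0$ and $\int_0^T\!\!\int\psi^\epsilon(\rho_\gamma^\epsilon*\psi^\epsilon)\,dx\,dt\le C_0(1+T)$. Next, multiplying by a smooth weight $w$ comparable to $e^{\lambda\|x\|}$ with $\Delta w\le w+C\mathbf{1}_{B_R}$: the diffusion term contributes $\int(\Delta w)\psi^\epsilon(\rho_\gamma^\epsilon*\psi^\epsilon)$ and the reaction term contributes $\int w\psi^\epsilon-\int w\psi^\epsilon(\rho_\gamma^\epsilon*\psi^\epsilon)$, so the uncontrolled nonlocal term $\int w\psi^\epsilon(\rho_\gamma^\epsilon*\psi^\epsilon)$ cancels, leaving $\frac{d}{dt}\int w\psi^\epsilon\le\int w\psi^\epsilon+C\int\psi^\epsilon(\rho_\gamma^\epsilon*\psi^\epsilon)$; Gr\"onwall and the mass bound then give $\sup_{t\le T}\int e^{\lambda\|x\|}\psi^\epsilon_t\,dx\le C_1$ uniformly in $\epsilon$. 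Finally, multiplying by $1+\log\psi^\epsilon$ yields
\begin{equation*}
\frac{d}{dt}\int\psi^\epsilon\log\psi^\epsilon\,dx
=-\int\frac{|\nabla\psi^\epsilon|^2}{\psi^\epsilon}(\rho_\gamma^\epsilon*\psi^\epsilon)\,dx
-\big\|\nabla(\check\zeta^\epsilon*\psi^\epsilon)\big\|_{L^2}^2
+\int(1+\log\psi^\epsilon)\,\psi^\epsilon\big(1-\rho_\gamma^\epsilon*\psi^\epsilon\big)\,dx ,
\end{equation*}
where the manifestly good second term comes from the symmetric factorisation $\rho_\gamma=\zeta*\check\zeta$, via $\int \nabla\psi^\epsilon\cdot(\rho_\gamma^\epsilon*\nabla\psi^\epsilon)=\|\check\zeta^\epsilon*\nabla\psi^\epsilon\|_{L^2}^2=\|\nabla(\check\zeta^\epsilon*\psi^\epsilon)\|_{L^2}^2$. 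Discarding the degenerate (negative) first term, and bounding the reaction contribution by splitting $\psi\log\psi$ into positive and negative parts — the term $-\int(\log^+\psi^\epsilon)\psi^\epsilon(\rho_\gamma^\epsilon*\psi^\epsilon)$ has the right sign, the remaining pieces are absorbed by the mass and the $\int\psi^\epsilon\rho_\gamma^\epsilon*\psi^\epsilon$ bounds, and $\int\psi^\epsilon\log^-\psi^\epsilon\le C\int(\psi^\epsilon)^{1/2}\le C(\int e^{\lambda\|x\|}\psi^\epsilon)^{1/2}(\int e^{-\lambda\|x\|})^{1/2}\le C_1'$ by Cauchy--Schwarz — we obtain $\sup_{t\le T}\int\psi^\epsilon_t|\log\psi^\epsilon_t|\,dx\le C_2$ and $\int_0^T\|\nabla(\check\zeta^\epsilon*\psi^\epsilon_t)\|_{L^2}^2\,dt\le C_3$, uniformly in $\epsilon$.

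\textbf{Compactness and passage to the limit.} These bounds give that $\{\psi^\epsilon\}$ is equi-integrable (de la Vall\'ee--Poussin, via the entropy bound) and tight (via the exponential moment), hence weakly precompact in $L^1([0,T]\times\IR^d)$, and that $\{\check\zeta^\epsilon*\psi^\epsilon\}$, equivalently $\{\rho_\gamma^\epsilon*\psi^\epsilon\}$, is bounded in $L^2_t(H^1_{loc})\cap L^\infty_t(L^1)$. Since $\partial_t(\rho_\gamma^\epsilon*\psi^\epsilon)=\rho_\gamma^\epsilon*\partial_t\psi^\epsilon$ is bounded in $L^1_t$ of a negative-order Sobolev space (by the equation and the $L^1$ bounds), the Aubin--Lions--Simon lemma yields a subsequence along which $\rho_\gamma^\epsilon*\psi^\epsilon\to\psi$ strongly in $L^2_t(L^2_{loc})$ and a.e., and $\psi^\epsilon\rightharpoonup\psi$ in $L^1([0,T]\times\IR^d)$; the two limits coincide because $\rho_\gamma^\epsilon$ is an approximate identity ($\int(\rho_\gamma^\epsilon*\psi^\epsilon)\phi=\int\psi^\epsilon(\rho_\gamma^\epsilon*\phi)\to\int\psi\phi$ for $\phi\in C_c^\infty$), and the uniform exponential moment rules out loss of mass at infinity. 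Writing \eqref{mollified equation} weakly against $\phi\in C_c^\infty(\IR^d)$ and passing to the limit — the linear terms by weak $L^1$ convergence, and the quadratic term $\psi^\epsilon(\rho_\gamma^\epsilon*\psi^\epsilon)$ by combining strong $L^2_{loc}$ convergence of one factor with equi-integrability of the other, so that the product converges to $\psi\cdot\psi=\psi^2$ in $L^1_{loc}$ — shows that $\psi$ is a weak solution of \eqref{PME}, with initial datum $\psi_0$ by the hypothesis $\psi_0^\epsilon\rightharpoonup\psi_0$. The claimed regularity follows from the uniform bounds and weak lower semicontinuity: $\psi\in L^2_t(H^1)$ from the $L^2_t(H^1)$ bound on $\check\zeta^\epsilon*\psi^\epsilon$; $\int\psi|\log\psi|<\infty$ from the entropy bound (negative part again controlled by the propagated exponential moment); and $\psi\in C_t(L^1)$ from $\psi\in L^\infty_t(L^1)$, $\psi\ge 0$, weak-in-time continuity of $t\mapsto\langle\psi_t,\phi\rangle$, and tightness.

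\textbf{Main obstacle.} The crux is the entropy estimate, and specifically extracting from it an $\epsilon$-robust coercive bound. The natural dissipation $\int|\nabla\psi^\epsilon|^2(\rho_\gamma^\epsilon*\psi^\epsilon)/\psi^\epsilon$ is genuinely degenerate — it vanishes where $\psi^\epsilon$ does and carries no uniform lower bound — so one must rely entirely on $\|\nabla(\check\zeta^\epsilon*\psi^\epsilon)\|_{L^2}^2$, whose appearance hinges on the hypothesis $\rho_\gamma=\zeta*\check\zeta$ with $\zeta\in\mathcal S$; closing the estimate then forces the simultaneous propagation of the exponential moment (needed to control the negative part of the entropy), which itself works only because of the fortuitous cancellation, in the weighted equation, between the unbounded nonlocal diffusion contribution and the quadratic sink of the logistic term. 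A secondary difficulty is identifying the nonlinear limit $\psi^\epsilon(\rho_\gamma^\epsilon*\psi^\epsilon)\to\psi^2$ without an $L^2$ bound on $\psi^\epsilon$ itself, which is why the compactness argument is routed through the mollified field $\check\zeta^\epsilon*\psi^\epsilon$ rather than through $\psi^\epsilon$ directly; and, as usual for such entropy computations, the $\log(\psi^\epsilon+\delta)$ regularisation and the $\delta\downarrow0$ limit must be carried out with care, using the smoothness and decay of $\psi^\epsilon$ for fixed $\epsilon$.
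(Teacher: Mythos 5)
Your a priori estimates coincide with the paper's: the mass identity, the propagation of the exponential moment (where you correctly identify the cancellation between the weighted nonlocal diffusion term and the quadratic sink, which in the paper appears as the factor $(\lambda^2-1)<0$), and the entropy inequality whose only usable coercive term is $\|\nabla(\check\zeta^\epsilon*\psi^\epsilon)\|_{L^2}^2$, extracted via the factorisation $\rho_\gamma=\zeta*\check\zeta$. Your control of the negative part of the entropy by $\int\psi^\epsilon|\log\psi^\epsilon|\ind_{\psi^\epsilon\le1}\le C\int(\psi^\epsilon)^{1/2}$ and Cauchy--Schwarz against the exponential weight is a slightly slicker route than the paper's interval-by-interval Jensen argument (Lemma~\ref{lower bound on ulogu}), but it proves the same bound. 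Up to that point the proof is correct and essentially the paper's.

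The gap is in the passage to the limit in the quadratic term. You justify $\psi^\epsilon(\rho_\gamma^\epsilon*\psi^\epsilon)\to\psi^2$ in $L^1_{loc}$ by ``combining strong $L^2_{loc}$ convergence of one factor with equi-integrability of the other.'' That is not a valid convergence principle: if $f^\epsilon\rightharpoonup f$ weakly in $L^1$ (equi-integrable) and $g^\epsilon\to g$ strongly in $L^2_{loc}$, the products $f^\epsilon g^\epsilon$ need not converge to $fg$, and need not even be weakly compact in $L^1$ --- take $f^\epsilon=\epsilon^{-1+\delta}\ind_{[0,\epsilon]}$ and $g^\epsilon=\epsilon^{-1/2+\delta'}\ind_{[0,\epsilon]}$ with $\delta+\delta'<1/2$; both factors tend to zero in their respective topologies while $\int f^\epsilon g^\epsilon\to\infty$. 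What saves the argument here is that the two factors are not independent: $\rho_\gamma^\epsilon*\psi^\epsilon=\zeta^\epsilon*(\check\zeta^\epsilon*\psi^\epsilon)$, so one can move half of the mollifier across the pairing,
\[
\int \Delta\phi\;\psi^\epsilon\,(\rho_\gamma^\epsilon*\psi^\epsilon)\,dx
=\int \bigl(\check\zeta^\epsilon*(\psi^\epsilon\,\Delta\phi)\bigr)\,(\check\zeta^\epsilon*\psi^\epsilon)\,dx
=\int \Delta\phi\,(\check\zeta^\epsilon*\psi^\epsilon)^2\,dx+\text{commutator},
\]
where the commutator $\check\zeta^\epsilon*(\psi^\epsilon\Delta\phi)-\Delta\phi\,(\check\zeta^\epsilon*\psi^\epsilon)$ is $\mathcal{O}(\epsilon)$ pointwise in an averaged sense because $\phi$ is smooth and $\zeta\in\mathcal{S}(\IR^d)$, and its pairing with the $L^2$-bounded field $\check\zeta^\epsilon*\psi^\epsilon$ is then $\mathcal{O}(\epsilon)$. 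This is exactly the computation the paper performs (the terms labelled~(\ref{error 1}) and~(\ref{error 2}), leading to~(\ref{eqn:weak_approx_pme})): it converts the asymmetric product into the square of the half-mollified field, for which your strong $L^2_{loc}$ convergence via Aubin--Lions does close the argument. So the missing step is available and consistent with the rest of your proof, but as written the identification of the nonlinear limit does not go through; you must replace the ``strong times equi-integrable'' claim with the symmetrisation-plus-commutator estimate. The remaining organisational difference --- you identify the limit by compactness, the paper by showing $\zeta^\epsilon*\psi^\epsilon$ is an approximate weak solution and invoking uniqueness of~(\ref{PME}) --- is immaterial once this is repaired.
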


The example that we have in mind for the kernel $\rho_\gamma$ is a Gaussian kernel.
For the proof, see Section~\ref{sec:pme}.

\begin{remark} \label{remark_on_nonlocal_to_local}
Although it seems hard to formulate an all-encompassing result,
Propositions~\ref{prop:nonlocal_to_local} and~\ref{nonlocalPME to PME} are by no
means exhaustive. When the scaling limit is deterministic, one can 
expect analogous results under rather general conditions. However, when the limit
points are stochastic, they resemble ``nonlinear superprocesses'' and so one cannot
expect a density with respect to Lebesgue measure in $d\geq 2$. It is then not
reasonable to 
expect to be able to make sense of the limit if we scale the kernels in this way.
Moreover, in one dimension, where the classical superprocess does have a density with 
respect to Lebesgue measure, the form of~(\ref{eqn:limiting_mgale_problem})
suggests that even if one can remove the local averaging from $\gamma$, it
will be necessary \revpoint{1}{14}
to retain averaging of $r$ in order to obtain a well-defined limit 
(since otherwise the term $\DG(f(\cdot)r(\cdot,\eta))(x)$
may not be well-defined).
\end{remark}

\paragraph{One-step convergence to PDE:}
Theorem \ref{thm:nonlocal_convergence},
combined with Proposition \ref{prop:nonlocal_to_local} or~\ref{nonlocalPME to PME}
implies that we can take the limit $N \to \infty$
followed by the limit $\epsilon \to 0$
to obtain solutions to the PDE \eqref{localPDE}.
However, it is of substantial interest to know whether
we can take those two limits simultaneously.
The general case seems difficult,
but we prove such ``diagonal'' convergence in the following situation.
The proof is provided in Section~\ref{subsec:one step convergence proof}.

\begin{theorem}[Convergence to a PDE]
    \label{thm:local_convergence}
    Let $(\eta^N_t)_{t \geq 0}$
    be as defined in Definition~\ref{defn:mgale_construction} with
$r(x,m)\equiv 1\equiv \gamma(r,m)$, $F(x,m)\equiv F(m)$, 
$\rho^\epsilon_F$ a symmetric Gaussian density with variance
parameter $\epsilon^2$,
and $\DG=\Delta/2$. 
Further suppose that $F(m)$ is a polynomial with $F(m)\ind_{m\geq 0}$ bounded above.
Assume that $\langle 1,\eta_0^N\rangle$ is uniformly bounded, and that for all 
$x\in\IR^d$ and $k\in \IN$,
	\[
		\limsup_{\epsilon\to 0} 
		\IE\big[\rho_F^\epsilon*\eta_0(x)^k\big]<\infty,
	\]
	and 
\[
    \limsup_{\epsilon \to 0} \int \IE\big[\rho_F^\epsilon*\eta_0(x)^k\big] dx<\infty.
\]
Finally assume that $N \to \infty$, $\theta \to \infty$
and $\epsilon \to 0$ in such a way that
\begin{equation}
\frac{1}{\theta\epsilon^2}+\frac{\theta}{N\epsilon^d}\to 0.
\end{equation}
Then the sequence of 
    ${\mathcal D}_{[0,\infty)}(\measures)$-valued
stochastic processes $\big(\rho_F^\epsilon*\eta^N_t(x)dx\big)_{t \ge 0}$
converges weakly
to a measure-valued process with a density $\varphi(t, x)$
that solves
\begin{align}
	\label{target equation in local convergence}
        \partial_t \varphi(t,x) = \frac{1}{2} \Delta \varphi(t,x) +\varphi(t,x) F(\varphi(t,x)).
\end{align}
\end{theorem}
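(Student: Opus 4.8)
The plan is to work directly with the mollified process $\zeta^{N}_t(x) := \rho_F^\epsilon * \eta^N_t(x)$ and show it converges, via a martingale problem argument, to the unique bounded solution of~\eqref{target equation in local convergence}. The starting point is to apply the martingale characterisation of Definition~\ref{defn:mgale_construction} with the test function $f = f_x := \rho_F^\epsilon(x - \cdot)$ (for each fixed $x$), which is smooth and bounded; since $r \equiv \gamma \equiv 1$ and $\DG = \Delta/2$, equation~\eqref{eqn:eta_martingale} gives
\begin{align*}
  \zeta^N_t(x)
  &= \zeta^N_0(x)
     + \int_0^t \Big\langle \theta \int \big( f_x(z) - f_x(y) \big) q_\theta(y, dz) + f_x(y) F(\rho_F^\epsilon * \eta^N_s(y)), \eta^N_s(dy) \Big\rangle\, ds
     + M^N_t(f_x).
\end{align*}
By Remark~\ref{rem:DG_limit} (with quantitative control of the error in terms of derivatives of $f_x$, which scale like powers of $1/\epsilon$), the dispersal term is $\tfrac12 \Delta \zeta^N_t(x) + O(1/(\theta \epsilon^2))$ uniformly, which vanishes by hypothesis; and the angle-bracket process~\eqref{eqn:prelimit_martingale_variation} is of order $(\theta/N)\langle f_x^2, \eta^N_s\rangle = O(\theta/(N\epsilon^d))$, which also vanishes. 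So I would aim to show $\zeta^N$ is tight and that any limit point is a weak solution of~\eqref{target equation in local convergence}, and then invoke uniqueness for that PDE (a polynomial nonlinearity bounded above on $[0,\infty)$, with bounded nonnegative initial data) to conclude.

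The first real step is to establish the needed moment bounds: uniformly in $N$ (and $\epsilon$, along the scaling), $\sup_{t \le T} \IE[\zeta^N_t(x)^k]$ and $\sup_{t\le T}\int \IE[\zeta^N_t(x)^k]\,dx$ are finite for every $k$. This is where the polynomial assumption on $F$ and the hypotheses on the initial data enter. The idea is to compute, via the martingale problem applied to suitable functionals (e.g.\ $\langle f_x, \eta\rangle^k$ or $\langle g, \eta\rangle$ for $g$ an exponentially-growing weight as in Theorem~\ref{thm:nonlocal_convergence}), a closed system of differential inequalities for these moments; the drift contribution of $F$ is controlled because $F(m)\ind_{m\ge 0}$ is bounded above, so the positive part of the growth is linear, while the martingale term contributes a variance of lower order. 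A Gronwall argument then closes the estimates. These bounds simultaneously give (i) tightness of $\{\zeta^N\}$ in $\mathcal D_{[0,\infty)}(\measures)$ — via an Aldous-type criterion applied to $t \mapsto \langle f, \zeta^N_t\rangle$ for $f \in C^\infty_b$, using the decomposition above to bound the modulus of continuity — and (ii) enough uniform integrability to pass the nonlinear term $\langle f_x F(\zeta^N_s), \eta^N_s\rangle = \int f_x(y) F(\zeta^N_s(y)) \eta^N_s(dy)$ to the limit once we know $\zeta^N_s \to \varphi_s$. A subtle point is that the nonlinear term involves $\zeta^N$ evaluated against $\eta^N$ rather than against Lebesgue measure; I would rewrite $\langle f_x F(\zeta^N), \eta^N\rangle$ in terms of $\zeta^N$ itself by a second mollification, showing $\int f_x(y) F(\zeta^N(y))\eta^N(dy) \approx \int \rho_F^\epsilon*\big(F(\zeta^N)\big)(x)\cdot(\text{density})$ up to errors controlled by the same moment bounds and the width $\epsilon \to 0$, so that in the limit it becomes $\varphi_t(x)F(\varphi_t(x))$.

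Passing to the limit then follows the usual scheme: extract a convergent subsequence $\zeta^{N}\to\bar\varphi$ in law; by Skorokhod representation work on a common probability space; use the moment bounds to upgrade to the convergence of the nonlinear and diffusion terms (the latter requiring that $\Delta\zeta^N \to \Delta\bar\varphi$ in a suitable weak sense, obtained by moving derivatives onto test functions); conclude that $\bar\varphi$ satisfies, for all $f\in C^\infty_b$, $\frac{d}{dt}\langle f,\bar\varphi_t\rangle = \langle \tfrac12\Delta f + fF(\bar\varphi_t), \bar\varphi_t\rangle$ with the right initial condition, hence is a weak solution of~\eqref{target equation in local convergence}. Uniqueness of bounded weak solutions of this semilinear heat equation (standard, via the Feynman–Kac / mild-solution representation and Gronwall, using that $F$ is locally Lipschitz and bounded above on $[0,\infty)$, plus an a priori $L^\infty$ bound inherited from the moment estimates) identifies the limit and gives convergence of the full sequence. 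I expect the \textbf{main obstacle} to be the moment estimates together with the control of the nonlinear term: one must propagate bounds on all polynomial moments of $\zeta^N$ through a nonlinearity, uniformly along the joint scaling $1/(\theta\epsilon^2)+\theta/(N\epsilon^d)\to0$, while simultaneously ruling out concentration of $\eta^N$ at scales finer than $\epsilon$ — this is the ``control of concentration of particles'' issue flagged in the introduction, and it is precisely what forces the restriction to $r\equiv\gamma\equiv1$, $\DG=\Delta/2$ and Gaussian $\rho_F^\epsilon$, where the heat-kernel structure makes the relevant convolution estimates tractable.
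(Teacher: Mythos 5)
Your overall architecture (work with the mollified process $\zeta^N_t=\rho_F^\epsilon*\eta^N_t$, prove moment bounds by Gronwall, tightness, identify the limit, invoke PDE uniqueness) matches the paper's Section~\ref{subsec:one step convergence proof}, and you correctly locate the central difficulty in the nonlinear term. However, two load-bearing ingredients are missing, and the second is one you explicitly name as the obstacle without resolving.

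First, the moment bounds. With the \emph{fixed} test function $f_x=\rho_F^\epsilon(x-\cdot)$, the drift in your decomposition of $\zeta^N_t(x)^k$ retains a term $k\,\zeta^N_s(x)^{k-1}\langle \mathcal{L}^\theta f_x,\eta^N_s\rangle\approx k\,\zeta^N_s(x)^{k-1}\tfrac12\Delta\zeta^N_s(x)$. The quantity $\Delta\zeta^N_s(x)$ involves second derivatives of the kernel at scale $\epsilon$ and is of order $\epsilon^{-2}$ pointwise, with no sign control, so the proposed ``closed system of differential inequalities'' for $\IE[\zeta^N_t(x)^k]$ does not close and Gronwall cannot be applied. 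The paper's fix is to use the \emph{time-dependent} test function $\psi^{\epsilon,x}_{t-s}$ solving $\partial_t\psi=\mathcal{L}^\theta\psi$ with initial condition $p_{\epsilon^2}(x-\cdot)$ (equations~\eqref{AlmostHeatEquation} and~\eqref{expn rhoepsilon}), so that the dispersal term cancels exactly and one is left with a Duhamel formula involving only $F$ and a martingale; the induction in Lemma~\ref{bounds on moments} and the bound $\|\psi^{\epsilon,x}_t\|_\infty\le C(\epsilon^2+t)^{-d/2}$ of Lemma~\ref{PsiBoundHS} both hinge on this.

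Second, and more seriously, the passage of the nonlinearity to the limit. Skorokhod representation plus uniform moment bounds yields weak convergence of $\zeta^N_t(x)\,dx$ and uniform integrability, but this does not imply $\int f\,F(\zeta^N_t)\zeta^N_t\,dx\to\int f\,F(\varphi_t)\varphi_t\,dx$: for example $\zeta^N(x)=1+\sin(Nx)$ has all moments bounded and converges weakly to $1$, yet $\int f\,(\zeta^N)^2dx\to\tfrac32\int f\,dx$. What is needed is quantitative spatial equicontinuity of $\zeta^N$, uniform along the scaling. The paper obtains this from the estimate~\eqref{difference in rhos}, which rests on the heat-kernel comparison Lemmas~\ref{lem:poisson_ld}--\ref{Lemma:ContinuityHS} (this is where $\theta\epsilon^2\to\infty$ is actually consumed), and then identifies the limit of each power $\int f\,(\zeta^N)^k\,dx$ by inserting an auxiliary mollifier $\rho_\tau$ at a fixed scale $\tau$, passing $\epsilon\to0$ at fixed $\tau$, and sending $\tau\to0$ last (Propositions~\ref{CuadraticApprox} and~\ref{quadratic convergence}). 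Your ``second mollification'' remark corresponds only to the replacement of $\eta^N(dy)$ by $\zeta^N(y)\,dy$ (the content of Proposition~\ref{CuadraticApprox}); the equicontinuity estimate and the two-scale $\tau$ argument are the actual mechanism for handling the nonlinearity, and they are absent from the proposal.
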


\begin{remark}
In fact,
our proof goes through without significant change under the conditions that
$F(m)\ind_{m\geq 0}$ is bounded above (but not necessarily below), and that
for all $m,n\in[0,\infty)$
\[
|F(m)|\leq\sum_{j=1}^ka_jm^j, \quad\mbox{ and }
|F(n)-F(m)|
\leq|n-m|\sum_{j=1}^{k'}b_j\Big(n^j+m^j\Big),
\]
for some non-negative constants $\{a_j\}_{j=0}^k$, $\{b_j\}_{j=0}^{k'}$.
We take $F$ to be polynomial to somewhat simplify notation in the proof.
\end{remark}

\subsection{Ancestral lineages in the scaling limit}
\label{sec:ancestral lineages}

Now that we have established what we can say about how 
population density changes with time,
we turn to results on ancestral lineages,
i.e., how genealogical ancestry can be traced back across the landscape.
Informally,
a \emph{lineage} $(L_t^N)_{t \ge 0}$,
begun at a spatial location $L_0^N = x$
where there is a focal individual in the present day,
can be obtained for each time $t$ by
setting $L_t^N$ to be the spatial location of the individual alive at time $t$
before the present from
whom the focal individual is descended.
Since in our model individuals have only one parent, this is unambiguous.
Although we did not explicitly retain such information,
it is clear that for finite $N$, since individuals are born one at a time, 
one could construct the lineage $(L_t^N)_{t=0}^T$
given the history of the population $(\eta^N_t)_{t = 0}^T$,
for each starting location to which $\eta^N_T$ assigns positive mass.
It is less clear, however, how to rigorously retain such information when we 
pass to the scaling limit.

However, the \emph{lookdown construction} in Section~\ref{sec:lookdown}
does just this -- \llabel{whatis_lineage}
the construction enables us to recover information about ancestry
in the infinite population limit,
and thus gives a concrete meaning to $(L_t)_{t \ge 0}$. \llabel{Linf_mention}
Roughly speaking,
each particle is assigned a unique ``level'' from $[0,\infty)$
that functions as a label and thus allows reconstruction of lineages.
The key to the approach is that levels are assigned in such a way as to be exchangeable, 
so that sampling a finite number, $k$ say, of
individuals from a given 
region is equivalent to looking at the individuals in that region with the $k$ lowest levels.
Moreover, as we pass to the infinite population limit,
the collection of (individual, level) pairs converges,
as we show in Theorem~\ref{thm:lookdown_convergence}.
See~\citet{etheridge/kurtz:2019} for an introduction to these ideas.
In particular, even in the infinite
population limit, we can sample an individual
from a region (it will be the individual in that region with the lowest level) 
and trace its line of descent. 
This will allow us to calculate, for each $x$ and $y\in\IR^d$, the proportion of
the population at location $x$ in the present day population that is descended from 
a parent who was at location $y$ at time $t$ in the past. To make
sense of this in our framework, in Section~\ref{sec:lineages_proof}, 
we justify a weak reformulation of this idea.

We are interested in two questions about the limiting process.
First, when is the motion of an ancestral 
lineage, given complete knowledge of the population process, a well-defined process?
In other words, is knowledge of the process $(\eta_t)_{t=0}^T$ that records 
numbers of individuals
but not their ancestry sufficient to define the distribution of $(L_t)_{t=0}^T$? \revpoint{1}{15}
Second, does the process have a tractable description?

We focus on the simplest situation, that in which the population process is deterministic.
However, the results here apply when the population process solves 
either a nonlocal or a classical PDE.
There will be no coalescence of ancestral lineages in the deterministic limit,
but understanding motion of single lineages is useful in practice,
and our results can
be seen as a first step towards understanding genealogies for high population densities. 
Since the time scale on which coalescence occurs \revpoint{1}{17}
goes to infinity in the deterministic limit,
an important question to answer will be whether this description of lineage motion
is a good approximation over such a long time scale.
Other information may be important -- for instance, in \citet{etheridge/pennington:2022}
the form of the coalescent that is obtained depends
on fluctuations happening on a longer time scale than the mixing time of a lineage.

Proofs of results in this section are found in Section~\ref{sec:lookdown_proofs}.

\begin{definition}[Ancestral lineage] 
\label{def:lineage_generator}
    Let $(\varphi_t(x))_{0 \le t \le T}$
    denote the density of the scaling limit of our population model, 
solving \eqref{eqn:nonlocal_pde},
    let $y$ be a point with $\varphi_T(y) > 0$,
    and suppose we have sampled an individual from location $y$ at time $T$.
    We define $(L_s)_{s=0}^T$,
    the ancestral lineage of that sampled individual by setting $L_0=y$
    and $L_s$ to be the position of the unique ancestor of that individual at time $T - s$.
    We define
    $(Q_s)_{s \geq 0}$
    to be the time inhomogeneous semigroup satisfying
    \begin{align*}
        Q_s f(y) := \IE_y[ f(L_s) ] .
    \end{align*}
\end{definition}

The precise sense in which we can look at ``the lineage of a sampled individual'' \revpoint{1}{2}
in the scaling limit is made clear by the introduction of the lookdown construction,
in Section~\ref{sec:lookdown}.
(For now, we can take the definition to refer to the distribution obtained
from a scaling limit of the finite-$N$ process.)
It turns out that in the scaling limit,
the process is Markovian,
and our next result identifies the ancestral lineage as a diffusion
by characterizing its generator.

\begin{theorem} \label{thm:lineages}
    For $\varphi: \IR^d \to \IR$, define
    \begin{align}
        \label{eqn:lineage_generator}
        \Lgen_\varphi f
        &=
        \frac{r}{\varphi}
        \left[
            \DG^*(\gamma \varphi f) 
            - f \DG^* (\gamma \varphi)
        \right] \\
        &= \label{eqn:lineage_generator2}
        r\gamma
        \left[
            \frac{1}{2} \sum_{ij} \covq_{ij} \partial_{x_ix_j} f
            + \sum_j \vec{m}_j \partial_{x_j} f
        \right] ,
    \end{align}
    where $\vec{m}$ is the vector
    $$
    \vec{m}_j
    =
    \sum_i C_{ij} \partial_{x_i} \log(\gamma \varphi)
    + \sum_i \partial_{x_i} C_{ij}
    - \meanq_j .
    $$
    Then the generator of the semigroup $Q_s$
    of Definition~\ref{def:lineage_generator} is given by
	$\partial_sQ_sf(y)= \Lgen_{\varphi_{T-s}}Q_sf(y).$ 
\end{theorem}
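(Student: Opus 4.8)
The plan is to derive the generator of the lineage semigroup by a duality (or ``tilting'') argument against the forward equation, exploiting the lookdown construction of Section~\ref{sec:lookdown}. The starting point is the observation that, in the deterministic limit, the population at time $T$ is the empirical measure of a countable family of particles each carrying a level, and the level-ordering is exchangeable; consequently, for a bounded test function $g$, the quantity $\langle g(\cdot)\,\varphi_{T}(\cdot), \cdot\rangle/\langle\varphi_T, \cdot\rangle$ computed over the sub-population in a neighbourhood of $y$ picks out the sampled individual, and the position of its ancestor at time $T-s$ is exactly $L_s$. So I would first set up the weak reformulation promised in Section~\ref{sec:lineages_proof}: rather than fixing a single point $y$, I test against a density $h(y)\varphi_T(y)\,dy$ and compute $\int h(y)\,Q_sf(y)\,\varphi_T(y)\,dy$, which by the exchangeability of levels equals the expected value of $\sum$ (over sampled lineages) weighted appropriately — in the deterministic limit this becomes an integral against the time-$(T-s)$ population of the ``backward-transported'' weight.

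The key computation is then to differentiate this in $s$. The forward dynamics tell us that mass at $x$ at time $t$ is being created by dispersal of offspring from nearby parents at rate $\gamma(x,\varphi_t)\,\DG^*[\varphi_t(\cdot)r(\cdot,\varphi_t)](x)$-type terms (equation~\eqref{eqn:nonlocal_pde}); reading this backwards, an ancestral lineage currently at $x$ jumps (in the prelimit) to the parent's location with a law that is the time-reversal of the dispersal-plus-establishment kernel, weighted by the relative density of potential parents. Concretely, the prelimit backward one-step kernel from a child at $z$ is proportional to $\gamma(x,\eta)\,q_\theta(x,dz)\,\varphi_{t}(x)$ over the normalization $\int\gamma(x',\eta)q_\theta(x',dz)\varphi_t(x')\,dx'$; since this denominator is $\varphi_{t+dt}(z)/dt$ up to the reaction term and the establishment factor $r(z,\eta)$ cancels between forward creation and backward selection, one gets in the $\theta\to\infty$ limit precisely the operator $\Lgen_\varphi$ written as in~\eqref{eqn:lineage_generator}: the generator of the $h$-transform of $\DG^*$ by $\gamma\varphi$, multiplied by the ``clock speed'' $r$. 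The algebraic identity
\[
\frac{r}{\varphi}\left[\DG^*(\gamma\varphi f)-f\,\DG^*(\gamma\varphi)\right]
=
r\gamma\left[\sum_{ij}\covq_{ij}\partial_{x_ix_j}f+\sum_j\vec m_j\partial_{x_j}f\right]
\]
with $\vec m$ as stated is then a direct expansion using the explicit second-order form of $\DG^*$ from Definition~\ref{def:dispersal_generator}; the zeroth-order terms cancel because they multiply $f$ in both pieces, which is the whole point of the commutator structure, and the factor $r$ survives as an overall time-change reflecting that an individual at $x$ produces established offspring at rate proportional to $r\gamma$.

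Making this rigorous requires passing from the prelimit lookdown process to the limit and interchanging the limit with the time-derivative; I would do this by the martingale-problem machinery of Section~\ref{sec:lookdown}, showing tightness of the level-marked empirical measures (Theorem~\ref{thm:lookdown_convergence}), identifying limit points, and then extracting the lineage semigroup as a functional of the limiting marked measure. The honest obstacle — and the step I expect to spend the most care on — is the justification that ``knowing $(\varphi_t)_{t\le T}$ suffices to determine the law of $(L_s)$'' together with the identification of the backward kernel: one must argue that the level of the sampled individual's ancestor, conditioned on the whole population path, follows the claimed $h$-transformed dynamics, and that the establishment factor $r(z,\eta)$ genuinely cancels rather than surviving as a bias (this is the content of Remark~\ref{ancestral lineages: first guess}, where the naive guess $\Delta+2\nabla\log r\cdot\nabla$ is explicitly stated to be \emph{wrong}). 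Getting the cancellation right means being careful that dispersal is centred on the \emph{parent} while establishment is evaluated at the \emph{child}, so that in the backward direction the $r$-weighting appears symmetrically in numerator and normalizer of the one-step transition and drops out, leaving only the $\gamma\varphi$ tilt. Once that is pinned down, differentiating the semigroup identity and matching against $\Lgen_{\varphi_{T-s}}$ is routine, and the two displayed forms of $\Lgen_\varphi$ follow by the expansion above.
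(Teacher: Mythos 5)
Your overall strategy --- a duality/time-reversal computation against the forward flow, followed by the commutator expansion of $\frac{r}{\varphi}\left[\DG^*(\gamma\varphi f)-f\,\DG^*(\gamma\varphi)\right]$ --- is in the same family as the paper's, and your reading of the cancellation structure is right: $r$ survives only as a multiplicative clock while the drift is tilted by $\gamma\varphi$ alone, consistent with Remark~\ref{ancestral lineages: first guess}. The algebraic endgame (zeroth-order terms cancelling in the commutator, expansion via the explicit form of $\DG^*$) also matches what the paper does.

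The gap is in the central probabilistic step, which you defer to ``extracting the lineage semigroup as a functional of the limiting marked measure'' but never actually supply. Your key computation is the time-reversal of the prelimit one-step kernel; this is precisely the paper's \emph{heuristic} of Section~\ref{sec:heuristics} (equations \eqref{eqn:informal_rates}--\eqref{eqn:heuristic_lineage_generator}), and the paper is explicit that it cannot be made rigorous as stated: $\eta^N$ has no density in the prelimit, and one must justify treating the sampled lineage as a ``randomly chosen individual''. The paper's actual proof bypasses the prelimit reversal entirely by exploiting the line-of-descent construction of Section~\ref{sec:limiting_lines_of_descent}: conditional on the spatial path $(Y_u)$ of a line of descent (a diffusion whose generator $\mathcal{L}^Y_s g=\gamma\left(\DG(rg)-g\,\DG r\right)$ was already identified there), its \emph{level} evolves by the ODE $\dot u=-(F+\gamma\DG r)\,u$, so the intensity of time-$t$ descendants of time-$s$ ancestors at $x$ is exactly the Feynman--Kac functional $\varphi_s(x)\,\IE_{s,x}\!\left[\exp\left(\int_s^t(F+\gamma\DG r)(Y_u)\,du\right)\ind_{Y_t=y}\right]$ of equation~\eqref{eqn:formal_intensity}. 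The backward generator then falls out of differentiating the duality identity \eqref{eqn:integrated_semigroups} in $t$ at $t=s$ and taking the adjoint of $\mathcal{L}^Y_s$. Without this device (or an equivalent one), your plan does not close the gap it correctly identifies. Note also that your normalizer $\int\gamma(x')q_\theta(x',dz)\varphi_t(x')\,dx'$ describes the conditional law of the parent \emph{given} that the individual at $z$ is a newborn, whereas the lineage jumps only at the per-capita $O(\theta)$ rate at which births land at $z$; the correct normalizer in \eqref{eqn:informal_rates} is $\varphi_t(y)$ itself, and conflating the two loses the rate information that produces the overall factor $r\gamma$ in the limiting generator.
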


\begin{remark}
As usual,
to make the generator readable, we've written it in concise notation,
omitting the dependencies on location and population density,
which itself changes with time.
When interpreting this,
remember that everything depends on location and density at that location and time --
for instance, ``$r$'' is actually $r(x, \varphi(x))$ (in the classical case),
or $r(x, \smooth{r} \eta(x))$ (in the nonlocal case).

Moreover, we haven't proved any regularity of the population density process $\varphi$,
so, as written, the generator~(\ref{eqn:lineage_generator})
may not make sense. Instead, it should be interpreted in a weak sense which is made precise
in Section~\ref{sec:lineages_proof}.
\end{remark}

\begin{corollary} \label{cor:lineages_simple}
    In addition to the assumptions of Theorem~\ref{thm:lineages},
    if the covariance of the dispersal process is isotropic
    (i.e., $\covq = \sigma^2 I$),
    then
    \begin{equation}
        \Lgen_\varphi f
        =
        \frac{\sigma^2}{2}
        r \gamma
        \left(
            \Delta f
            +
            \left(
                2 \grad \log(\gamma \varphi)
                - \frac{2 \meanq}{\sigma^2}
            \right)
            \cdot \grad f
        \right) .
    \end{equation}
    (However, $\meanq$ can still depend on location.)
\end{corollary}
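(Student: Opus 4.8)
The plan is to read the result off Theorem~\ref{thm:lineages} by a direct substitution, with no new analytic work required. Under the hypothesis $\covq = \sigma^2 I$ --- where, as the closing parenthetical signals, $\sigma^2$ is a constant while $\meanq$ is still allowed to depend on $x$ --- one simply inserts $\covq_{ij} = \sigma^2 \delta_{ij}$ into the concise form~\eqref{eqn:lineage_generator2} of the generator $\Lgen_\varphi$. Since specialising the coefficients does not change the weak sense in which~\eqref{eqn:lineage_generator} and~\eqref{eqn:lineage_generator2} are to be understood (as set out in Section~\ref{sec:lineages_proof}), it is enough to verify the algebra.

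First I would handle the second-order part: $\sum_{ij} \covq_{ij} \partial_{x_i x_j} f = \sigma^2 \sum_i \partial_{x_i}^2 f = \sigma^2 \Delta f$. Then I would evaluate the drift vector $\vec m$ of Theorem~\ref{thm:lineages} term by term: the contribution $2\sum_i C_{ij}\partial_{x_i}\log(\gamma\varphi)$ becomes $2\sigma^2 \partial_{x_j}\log(\gamma\varphi)$; the contribution $2\sum_i \partial_{x_i} C_{ij}$ vanishes because $\sigma^2$ is constant; and the remaining term is $-\meanq_j$. Hence $\vec m_j = 2\sigma^2\partial_{x_j}\log(\gamma\varphi) - \meanq_j$, so that $\sum_j \vec m_j \partial_{x_j} f = \big(2\sigma^2 \grad\log(\gamma\varphi) - \meanq\big)\cdot\grad f$. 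Adding the two pieces and pulling out the common prefactor $r\gamma$ yields the claimed identity.

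There is no real obstacle: the corollary is pure bookkeeping on top of Theorem~\ref{thm:lineages}. The only point to keep in mind is the standing assumption that $\sigma^2$ is spatially constant --- if it varied, the term $2\sum_i \partial_{x_i} C_{ij}$ would add an extra $2\partial_{x_j}\sigma^2$ to the drift, i.e.\ an additional $2\grad\sigma^2 \cdot \grad f$ inside the bracket. As an internal consistency check one can instead start from the divergence form~\eqref{eqn:lineage_generator}, write $\DG^* g = \sigma^2 \Delta g - \grad\cdot(g\meanq)$, expand $\Delta(\gamma\varphi f) = f\Delta(\gamma\varphi) + 2\grad(\gamma\varphi)\cdot\grad f + \gamma\varphi \Delta f$, use $\grad(\gamma\varphi)/\varphi = \gamma\grad\log(\gamma\varphi)$, and recover the same formula; this reconfirms that the two expressions for the generator agree in the isotropic case.
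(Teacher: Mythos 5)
Your proposal is correct and is essentially what the paper intends: the corollary is an immediate specialisation of the generator in~\eqref{eqn:lineage_generator2}, obtained by setting $\covq_{ij}=\sigma^2\delta_{ij}$ with $\sigma^2$ spatially constant, so that the second-order part becomes $\sigma^2\Delta f$, the $2\sum_i\partial_{x_i}C_{ij}$ term in $\vec m$ vanishes, and the drift reduces to $2\sigma^2\grad\log(\gamma\varphi)-\meanq$; the paper gives no separate proof because this is exactly the bookkeeping you carried out. Your remark about the extra $2\grad\sigma^2\cdot\grad f$ term for spatially varying $\sigma^2$, and the consistency check against the divergence form~\eqref{eqn:lineage_generator}, are both sound but not needed.
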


In other words, 
the lineage behaves as a diffusion driven by Brownian motion 
run at speed $\sigma^2$ multiplied by the local per-capita production of 
mature offspring ($r \gamma$)
with mean displacement in the direction of $\grad \log(\varphi_s \gamma) - \meanq/\sigma^2$.
In particular, lineages are drawn to regions of high fecundity (production of juveniles), but their
speed is determined by the rate of production of mature offspring.
This can be compared to Remark~\ref{ancestral lineages: first guess}.

\begin{corollary} \label{cor:stationary_dist}
    In addition to the assumptions of Corollary~\ref{cor:lineages_simple},
    if the population process is stationary (so $\varphi_t \equiv \varphi$),
    and $\meanq(x) = \grad h(x)$ for some function $h$,
    then $Y$ is reversible with respect to
    \begin{align}
        \pi(x)
        =
        \frac{\gamma}{r} \varphi(x)^2 e^{-2 h(x) / \sigma^2} .
    \end{align}
\end{corollary}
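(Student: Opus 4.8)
\textbf{Proof proposal for Corollary~\ref{cor:stationary_dist}.}

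The plan is to verify directly that $\pi$ is a reversible (invariant) measure for the diffusion with generator $\Lgen_\varphi$ from Corollary~\ref{cor:lineages_simple}, by checking the detailed-balance condition, namely that the operator $\Lgen_\varphi$ is symmetric in $L^2(\pi)$. Since the population process is stationary, $\varphi_t \equiv \varphi$ and the lineage semigroup $Q_s$ is time-homogeneous with generator $\Lgen_\varphi$, so it suffices to show $\int (\Lgen_\varphi f) g \, \pi \, dx = \int f (\Lgen_\varphi g) \, \pi \, dx$ for $f, g$ smooth and compactly supported; reversibility with respect to $\pi$ then follows in the usual way.

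The key computational step is to recognize $\Lgen_\varphi$ as a divergence-form (Dirichlet-form) operator after multiplication by $\pi$. From Corollary~\ref{cor:lineages_simple}, with $\covq = \sigma^2 I$,
\[
    \Lgen_\varphi f
    =
    r\gamma \sigma^2 \Delta f
    + r\gamma \left( 2\sigma^2 \grad \log(\gamma\varphi) - \meanq \right)\cdot \grad f .
\]
Writing $\meanq = \grad h$, I would check that with $\pi = (\gamma/r)\,\varphi^2 e^{-h/\sigma^2}$ one has the identity
\[
    \pi \,\Lgen_\varphi f
    =
    \sigma^2 \,\grad\cdot\!\left( a(x)\, \grad f \right),
    \qquad
    a(x) := r\gamma\,\pi = \gamma^2 \varphi^2 e^{-h/\sigma^2} .
\]
To see this, expand $\grad\cdot(a\grad f) = a\,\Delta f + \grad a \cdot \grad f$ and compute
$\grad \log a = 2\grad\log(\gamma\varphi) - \grad h/\sigma^2 = 2\grad\log(\gamma\varphi) - \meanq/\sigma^2$,
so $\grad a \cdot \grad f = a\left(2\grad\log(\gamma\varphi) - \meanq/\sigma^2\right)\cdot\grad f$; multiplying by $\sigma^2$ and dividing by $\pi = a/(r\gamma)$ recovers exactly $\Lgen_\varphi f$. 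Once this divergence-form representation is in hand, symmetry is immediate: $\int (\Lgen_\varphi f)\, g\, \pi\, dx = \sigma^2 \int g\, \grad\cdot(a\grad f)\, dx = -\sigma^2 \int a\, \grad f \cdot \grad g\, dx$, which is symmetric in $f$ and $g$ after integration by parts (boundary terms vanishing for compactly supported test functions, or by the decay assumptions built into the setup).

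The main obstacle is not the algebra but the regularity and weak-formulation issues flagged in the remarks: we have not established that $\varphi$ is smooth or even continuous, so $\grad\log(\gamma\varphi)$ and the generator $\Lgen_\varphi$ are only defined in the weak sense set up in Section~\ref{sec:lineages_proof}. Thus the rigorous version of the argument should be carried out at the level of the weak/martingale characterization of $L$: one shows that the carr\'e-du-champ form associated with $\Lgen_\varphi$ is the symmetric Dirichlet form $\mathcal{E}(f,g) = \sigma^2\int a\,\grad f\cdot\grad g\,dx$, and that $\pi$ is the speed measure, so that the computation above is interpreted distributionally. A secondary point is to confirm $\pi$ can be normalized to a probability measure (or at least is a bona fide $\sigma$-finite invariant measure) under whatever integrability $\varphi$ enjoys; the statement as given only asserts reversibility, so finiteness of $\pi$ need not be assumed, and the detailed-balance identity above is the real content.
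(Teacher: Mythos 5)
Your proposal is correct and follows essentially the same route as the paper: both verify symmetry of $\Lgen_\varphi$ in $L^2(\pi)$ by writing $\pi\,\Lgen_\varphi f = \sigma^2\,\grad\cdot(a\,\grad f)$ with $a = r\gamma\pi = e^{2\log(\gamma\varphi)-h/\sigma^2}$ and integrating by parts (the paper phrases this as the identity $\int e^{H} f(\Delta + \grad H\cdot\grad)g\,dx = -\int e^{H}\grad f\cdot\grad g\,dx$ with $H = 2\log(\gamma\varphi)-h/\sigma^2$, which is the same computation). Your additional remarks on weak formulation and on not needing $\pi$ to be normalizable are sensible but not part of the paper's argument.
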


The long-term reproductive value of an individual
is proportional to the fraction of lineages from the distant future that pass through the individual,
and hence the total long-term reproductive value at a location is proportional to
the stationary distribution of $Y$ there, if it exists.
Therefore,
if $\pi$ is integrable then the per-capita long-term reproductive value of an individual at $x$
is proportional to $\pi(x) / \varphi(x)$.

\begin{corollary} \label{cor:wavefront}
    In addition to the assumptions of Corollary~\ref{cor:lineages_simple},
    suppose that the population process is described by a travelling wave with 
velocity $\wavespeed$,
    i.e., the population has density
    $\varphi(t, x) = w(x - t \wavespeed)$
    where $w$ solves
    \begin{align*}
        r \DG^* (\gamma w) + w F + \wavespeed \cdot \grad w = 
        \frac{1}{2} r\sigma^2\Delta (\gamma w) -\meanq \cdot\grad (\gamma w) +\wavespeed \cdot \grad w =0 .
    \end{align*}
    Then the semigroup $Q_s$
    of the motion of a lineage in the frame that is moving at speed $\wavespeed$
    is time-homogeneous with generator
    \begin{align}
\label{Lgen}
        \Lgen f
        &=
        \frac{1}{2} \sigma^2 r \gamma
        \left(
            \Delta f
            +
            2 \grad \log (\gamma w)
            \cdot \grad f
        \right)
        + (\wavespeed - r\gamma \meanq) \cdot \grad f .
    \end{align}
\end{corollary}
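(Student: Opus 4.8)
The plan is to derive the travelling-wave form of the lineage generator directly from Theorem~\ref{thm:lineages} (equivalently Corollary~\ref{cor:lineages_simple}) by a change of coordinates into the co-moving frame, and then check that the resulting generator no longer depends on time. First I would recall that, since $\covq = \sigma^2 I$, Corollary~\ref{cor:lineages_simple} gives, for the (time-inhomogeneous) lineage at physical position $L_s$,
\[
    \Lgen_{\varphi_{T-s}} f(x)
    =
    r\gamma\Big(
        \sigma^2 \Delta f(x)
        + \big(2\sigma^2 \grad\log(\gamma\varphi_{T-s})(x) - \meanq(x)\big)\cdot\grad f(x)
    \Big),
\]
with all coefficients evaluated at $(x, \varphi_{T-s}(x))$. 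Now substitute the travelling-wave ansatz $\varphi_t(x) = w(x - t\wavespeed)$. Define the lineage position in the moving frame by $\widetilde L_s := L_s - (T-s)\wavespeed$; equivalently, test functions transform as $g(z) = f(z + (T-s)\wavespeed)$. I would compute the generator of $(\widetilde L_s)_s$ by applying the standard rule for generators under a deterministic time-dependent shift: if $\widetilde L_s = L_s + v(s)$ with $v(s) = -(T-s)\wavespeed$, then the generator of $\widetilde L$ acting on $g$ picks up the extra first-order term $v'(s)\cdot\grad g = \wavespeed\cdot\grad g$ (since $\tfrac{d}{ds}[-(T-s)\wavespeed] = \wavespeed$), in addition to the spatial part of $\Lgen_{\varphi_{T-s}}$ re-expressed in the $z$-variable.

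The key simplification is that, under the ansatz, $\varphi_{T-s}(x) = w(x - (T-s)\wavespeed) = w(z)$ when $x = z + (T-s)\wavespeed$, so $\smooth{r}\varphi_{T-s}(x)$, $\smooth{\gamma}\varphi_{T-s}(x)$ and $\grad\log(\gamma\varphi_{T-s})(x)$ all become functions of $z$ alone — the explicit $s$-dependence cancels. (Here one uses that $r$, $\gamma$ depend on location and density, and that $\meanq$, $\covq$ depend only on location; the subtle point, which I would flag, is that strictly $r = r(x,\cdot)$ and $\meanq = \meanq(x)$ still carry dependence on the \emph{physical} coordinate $x = z + (T-s)\wavespeed$, not on $z$. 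This is why the statement of Corollary~\ref{cor:wavefront} keeps writing $r\gamma$, $\meanq$ without substituting; I would interpret the claim as asserting time-homogeneity of the generator \emph{as an operator in $z$ with coefficients that are the pullbacks of the original coefficients along the moving frame}, which is what one gets, and is automatic once one notes $w$ is $s$-independent.) Collecting terms:
\[
    \Lgen g(z)
    =
    \sigma^2 r\gamma\big(\Delta g(z) + 2\grad\log(\gamma w)(z)\cdot\grad g(z)\big)
    + \big(\wavespeed - r\gamma\,\meanq\big)\cdot\grad g(z),
\]
which is precisely~\eqref{Lgen}. Finally I would note that the PDE satisfied by $w$ is just the travelling-wave reduction of the nonlocal PDE~\eqref{eqn:nonlocal_pde}: plugging $\varphi_t(x) = w(x - t\wavespeed)$ into $\partial_t\varphi = r\DG^*(\gamma\varphi) + \varphi F$ gives $-\wavespeed\cdot\grad w = r\DG^*(\gamma w) + wF$, i.e.\ $r\DG^*(\gamma w) + wF + \wavespeed\cdot\grad w = 0$, and expanding $\DG^* = \sigma^2\Delta - \meanq\cdot\grad$ (valid here because $\covq = \sigma^2 I$ is constant, so the zeroth- and cross-derivative correction terms in Definition~\ref{def:dispersal_generator} vanish) yields the stated identity $r\sigma^2\Delta(\gamma w) - \meanq\cdot\grad(\gamma w) + \wavespeed\cdot\grad w = 0$.

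The main obstacle is not the algebra but making the change-of-frame argument rigorous at the level of generators, given that (as the remark after Theorem~\ref{thm:lineages} warns) $\varphi$ has no proven regularity and $\Lgen_\varphi$ is only defined weakly. So the honest version of the proof must work with the weak formulation of Section~\ref{sec:lineages_proof}: express the semigroup $Q_s$ through the weak martingale-problem characterization of the lineage, apply the deterministic time-change/space-shift $x \mapsto x - (T-s)\wavespeed$ to that characterization, and verify that the shifted process solves the time-homogeneous weak martingale problem associated to~\eqref{Lgen}. The shift adds a bounded-variation drift $\wavespeed\cdot\grad$ to the test-function dynamics, which is handled by the usual product/chain rule for the pairing $\langle f, \cdot\rangle$; the genuinely new input is simply the observation that, in the moving frame, the time-dependent coefficients become the \emph{fixed} profile $w$, so uniqueness of the weak solution (inherited from the setup in Section~\ref{sec:lineages_proof}) then forces time-homogeneity. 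I would therefore structure the proof as: (i) state the weak formulation from Theorem~\ref{thm:lineages}; (ii) perform the frame change and read off the new generator; (iii) substitute the ansatz to see the coefficients lose their $s$-dependence; (iv) derive the travelling-wave ODE for $w$ for completeness.
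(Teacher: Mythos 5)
Your proposal is correct and matches the argument the paper intends (the paper gives no separate proof of this corollary, treating it as immediate from Corollary~\ref{cor:lineages_simple} via the change to the co-moving frame, which contributes the extra drift $\wavespeed\cdot\grad f$ and freezes the time-dependence of the coefficients through $\varphi_{T-s}(x)=w(z)$). Your caveat that genuine time-homogeneity requires the explicit spatial dependence of $r$, $\gamma$, $\meanq$ to be trivial (as it is in all the paper's travelling-wave examples) is a fair and worthwhile observation, not a gap.
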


\section{Examples and applications}
\label{sec:applications}

We now discuss some consequences of these results.

\subsection{Beyond linear diffusion}
\label{beyond linear diffusion}

Equation~(\ref{eqn:nonlocal_pde}) is a nonlocal version of a reaction-diffusion equation;
the diffusion is nonlinear if $\gamma$ depends on population density:
in other words, if the diffusivity of the population depends on the population density.
Passing to the classical limit, we recover equations like~(\ref{PME}).
Such equations are widely used in a number of contexts in biology in which
motility within a population varies with population density.
For example, density dependent dispersal is a common feature in spatial
models in ecology, eukaryotic cell biology, and avascular tumour growth;
see~\cite{sherratt:2010} and references therein for further discussion. 
In particular, such equations have been suggested as a model for
the expansion of a certain type of bacteria 
on a thin layer of agar in a Petri dish 
\citep{cohen/golding/kozlovsky/benjacob/ron:1999}. 
We shall pay particular attention to the case in which the equation can be 
thought of as modelling the density of an expanding population. 
We focus on the monostable reaction of~(\ref{PME}).

Comparing with \eqref{eqn:nonlocal_pde},
we see that to set up a limit in which the population density $\varphi$ follows the 
porous medium equation with logistic growth of~(\ref{PME}),
we need $r=1$,
$\gamma = \varphi$, and $F = 1 - \varphi$.
Consulting equation~\eqref{eqn:mu_defn},
this implies that $\mu_\theta = (1 + 1/\theta) \varphi - 1/\theta$.
In other words,
establishment is certain
and birth rates increase linearly with population density,
but to compensate, death rates increase slightly faster (also linearly).
Alert readers will notice that
the condition from Assumptions~\ref{def:model_setup} 
that $\gamma(x,m)$ be uniformly bounded 
is violated.
This can be corrected by use of a cut-off, and in fact the downwards drift
provided by the logistic control of the population size prevents $m$ from 
getting too big. 
In practice the simulations shown in Figure~\ref{fig:pme_waves}
take discrete time steps of length $dt$ (with $dt$ suitably small),
and have each individual reproduce and die with probabilities, respectively,
\begin{align*}
    p_\text{birth}(m) = \left(1 - e^{- m dt}\right)
    \qquad 
    p_\text{death}(m) = \left(1 - e^{- (m (1+1/\theta) - 1/\theta) dt} \right) ,
\end{align*}
where $m$ is the local density at their location.
This makes $\gamma(x, m) = p_\text{birth}(m)/dt \approx m$
and
\begin{align*}
    F(x, m) = \theta(\gamma(x, m) - \mu(x, m)) / dt \approx 1 - m .
\end{align*}
Birth and death rates are equal at density $m = 1$,
corresponding to an unscaled density of $N$ individuals per unit area.

\begin{figure}
    \begin{center}
        \includegraphics[width=4.5in]{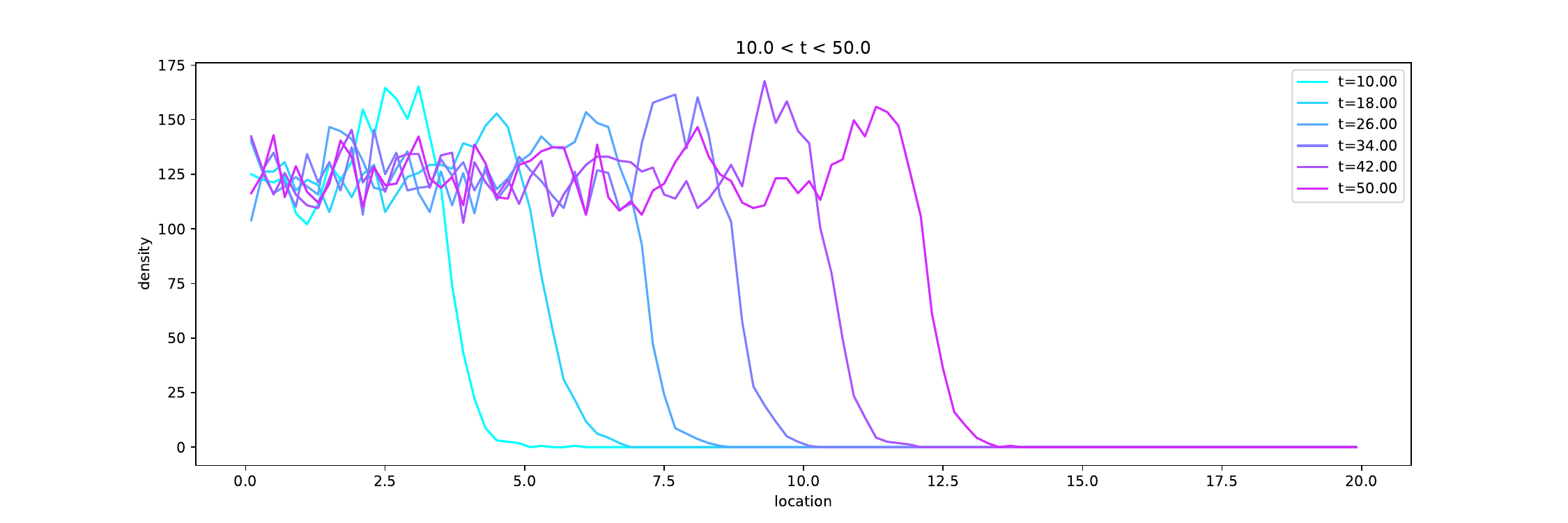}
        \includegraphics[width=4.5in]{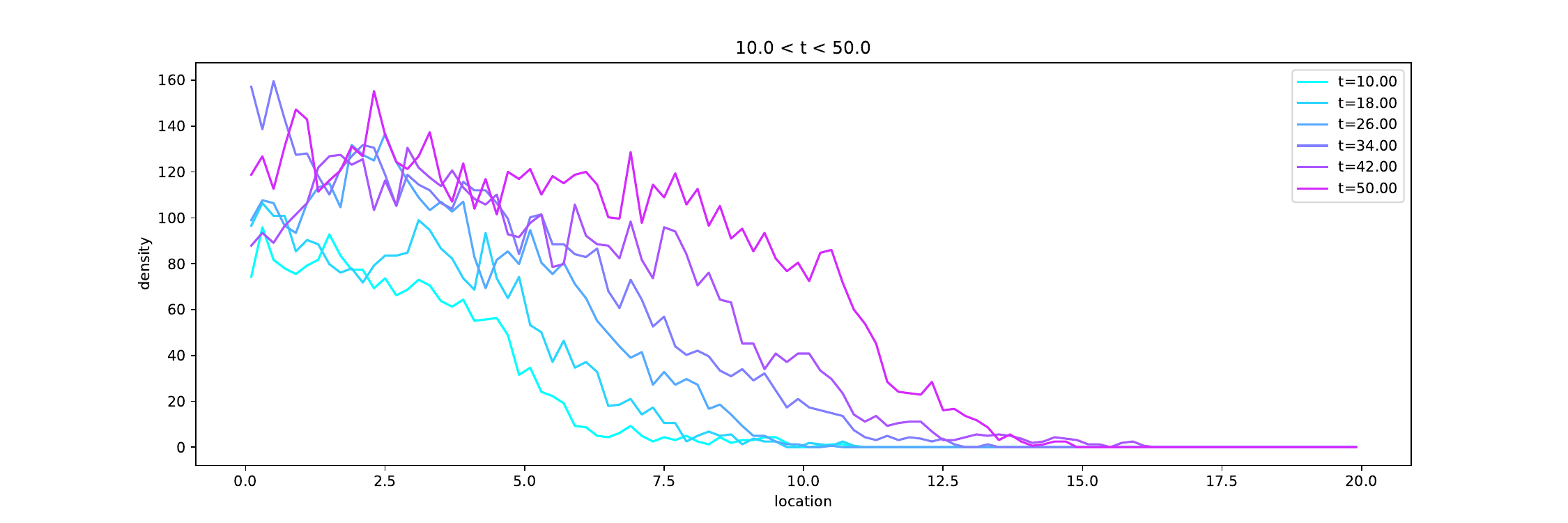}
    \end{center}
    \caption{
        Simulated populations under a porous medium equation with logistic growth~(\ref{PME}) in $d=1$,
        $\theta/N$ small on the top; large on the bottom.
        Values of $\theta$ in top and bottom figures
        are 1 and 100, respectively, and both have $N$ set
        so that the density is roughly 100 individuals per unit of habitat
        (as displayed on the vertical axis).
        See text for details of the simulations.
        \label{fig:pme_waves}
    }
\end{figure}

In one dimension, equation~(\ref{PME}) has an explicit travelling wave solution
\begin{align} \label{eqn:pme_wave}
    w^P(t, x)
    :=
    \left( 1 - e^{ \frac{1}{2} (x - x_0 - t) } \right)_+ .
\end{align}
Notice that the wave profile has a sharp boundary at $x = x_0 + t$.
There are also travelling wave solutions with $c>1$ \citep{gilding/kersner:2005},
which lack this property.
However, for initial conditions that decay sufficiently rapidly at infinity,
such as one might use in modelling a population invading new territory,
the solution converges to \eqref{eqn:pme_wave} \citep{kamin/rosenau:2004}.
In Figure~\ref{fig:pme_waves} we show simulations of the individual based model described above,
which display travelling wave solutions qualitatively similar to solutions of~(\ref{PME}),
with better agreement for smaller $\theta/N$
(but in both cases, $N$ is reasonably large).

\subsection{Ancestry in different types of travelling waves}
\label{ancestral lineages for travelling waves}


Although it remains challenging to establish the distribution of genealogical
trees relating individuals sampled from our population model, as described in
the introduction, we can gain some insight by investigating the motion of a
single ancestral lineage. Here we do that in the context of a one-dimensional
population expanding into new territory as a travelling wave. 
We focus on three cases in which we have 
explicit information about the shape of the travelling wave profile:
the Fisher-KPP equation,
a special case of the Allen-Cahn equation with a bistable nonlinearity,
and the porous media equation with logistic growth, equation~(\ref{PME}).
We work here in one dimension,
and take $\sigma^2 = 2$ and $\meanq = 0$.

Ancestry in travelling waves for populations described by reaction-diffusion equations
has been studied before by various authors, \revpoint{1}{16}
although most work assumes the diffusion term is linear
(in our notation, $r$ and $\gamma$ are constant,
but $F$ can depend on density).
For instance, following \citet{hallatschek/nelson:2008},
various authors (e.g., \citet{roques2012allee,birzu/hallatschek/korolev:2017})
describe genetic diversity
and ancestry in (possibly noisy) travelling waves for reaction-diffusion equations
in a situation that covers our first two examples below.
However, we are not aware of previous work covering the third case 
with nonlinear diffusive term (the porous medium equation).

\paragraph{Fisher-KPP equation:}
Consider the classical Fisher-KPP equation,
\begin{align} \label{eqn:fkpp}
    \partial_t\varphi = \partial_{xx}\varphi + \varphi (1-\varphi) .
\end{align}
Even though we do not have an explicit formula for the wave shape in this case,
our methods provide information about ancestral lineages.
The equation has non-negative travelling wave solutions of 
speed $c$ for all $c \geq 2$, 
but, started from any compact perturbation of a Heaviside function, the 
solution will converge to the profile $w^F$ with the minimal wavespeed, $c=2$
\citep{kolmogorov/petrovsky/piscounov:1937,fife1977approach,bramson:1983}.
No matter what initial condition,
for any $t>0$ the support of the 
solution will be the whole real line. 
In this case, we must have $r = \gamma = 1$,
and $F(x, m) = 1 - m$ so $\mu_\theta(x, m) = 1 + (m-1)/\theta$.
By Corollary~\ref{cor:wavefront},
the generator of the motion of an ancestral lineage is
\begin{equation} \label{eqn:fkpp_generator}
    \Lgen_F  f
    =
    \partial_{xx}f + 2 \frac{\partial_xw^F}{w^F} \partial_x f + 2 \partial_xf .
\end{equation}
Near the tip of the wave (for $x$ large), $w^F(x) \sim e^{-x}$,
so \eqref{eqn:fkpp_generator} implies that
the motion of a lineage is close to unbiased Brownian motion.
On the other hand, in the ``bulk'', 
a lineage behaves approximately as
Brownian motion with drift at rate two to the right.
This implies that
ancestral lineages are pushed into the tip of the wave,
and there is no stationary distribution,
so that long-term dynamics of genetic inheritance
depend on the part of the wave not well-approximated by a smooth profile,
in agreement with the previous results referred to in the Introduction.

\paragraph{Allen-Cahn equation:}
Now take the Allen-Cahn equation:
\begin{align} \label{eqn:allen_cahn}
    \partial_t\varphi = \partial_{xx}\varphi + \varphi(1-\varphi)(2\varphi-1+s),
\end{align}
for a given $s \in (0,2)$.
Once again we have taken $r=\gamma=1$, but now the reaction term
$F(x,m) = (1-m) (2m-1+s)$ is bistable.
This equation can be used to model the motion of so-called 
hybrid zones in population genetics; see, for example,
\citet{barton:1979}, \citet{gooding:2018}, and \citet{etheridge/gooding/letter:2022}.
This equation has an explicit travelling wave solution with speed $s$
and shape
\[ w^A(x) = (1+e^{x})^{-1} , \]
i.e., $\phi_t(x)=w^A(x-st)$ solves~(\ref{eqn:allen_cahn}).
Substituting $w^A$ in place of $w^F$ in~(\ref{eqn:fkpp_generator}),
we find that the generator of an ancestral lineage relative to the wavefront is now, 
\begin{align*}
\Lgen_A f
    &=
    \partial_{xx}f
    + 
    2 \frac{\partial_xw^A}{w^A} \partial_xf
    +
    s \partial_x f\\
    \qquad &=
    \partial_{xx}f
    -
    2 \frac{e^x}{1+e^x} \partial_xf 
    + 
    s \partial_xf,
\end{align*}
so lineages in the tip are pushed leftwards into the bulk of the 
wave at a rate $s-2e^x/(1+e^{x})$.
The density of the speed measure for this diffusion is
$$
    m_A(x) \propto e^{sx}(1+e^x)^{-2},
$$
which is integrable, and so determines the unique stationary distribution.
Thus the position of the ancestral lineage relative to the wavefront will converge to a stationary 
distribution which is maximised away from the extreme tip of the wave.
This is consistent with~\cite{etheridge/penington:2022}, who consider an analogous
stochastic population model, although the stronger result there (that the genealogy
of a sample from behind the wavefront is approximately a Kingman coalecsent) requires
the stronger condition $s<1$.


\paragraph{Porous Medium equation with logistic growth:}
Finally, consider equation~(\ref{PME}). Setting $x_0=0$ (for definiteness) and substituting
the form of $w^P$ from equation~\eqref{eqn:pme_wave}
into Corollary~\ref{cor:wavefront},
with $c=1$,
$\gamma(x, m) = m$,
$r(x,w) = 1$,
and $F(x, m) = (1 - m)$,
the generator of the diffusion governing the position of the ancestral lineage relative to the wavefront
is, for $x < 0$,
\begin{align*}
    \Lgen_P f
    &=
        w^P
        \left(
        \partial_{xx} f
         +
         2 \frac{\partial_x((w^P)^2)}{(w^P)^2} \partial_xf
        \right)
        + \partial_xf \\
    &=
        \left(1 - e^{\frac{1}{2} x} \right)
        \partial_{xx}f
        -
        2 e^{\frac{1}{2} x} \partial_xf
        +
        \partial_xf .
\end{align*}
The speed measure corresponding to this diffusion has density
\begin{align*}
    m_P(\xi)
    &\propto
        \frac{ 1 }{ 2 (1 - e^{\xi/2}) }
        \exp\left(
            \int_\eta^\xi \left\{
                1 - \frac{e^{x/2}}{1 - e^{x/2}}
            \right\} dx
        \right) \\
    &\propto
        e^\xi\left(1-e^{\xi/2}\right),
        \quad
        \text{for } \xi < 0 
\end{align*}
and $m_P(\xi) = 0$ for $\xi \ge 0$,
which is integrable and so when suitably normalised gives the unique stationary distribution.
Notice that even though we have the same reaction term as in the Fisher-KPP equation, with this 
form of nonlinear diffusion, at stationarity
the lineage will typically be significantly behind the front, 
suggesting a different genealogy.

\subsection{Clumping from nonlocal interactions}

Simulating these processes and exploring parameter space,
one sooner or later comes upon a strange observation:
with certain parameter combinations, the population spontaneously forms a regular grid
of stable, more or less discrete patches, separated by areas with nearly no individuals,
as shown in Figure~\ref{fig:clumping}.
The phenomenon is discussed in Section~16.15 of \citet{haller2022evolutionary},
and has been described in similar models, e.g.,
by \citet{britton1990spatial,sasaki1997clumped,hernandezgarcia2004clustering,young2001reproductive}, and \citet{berestycki2009nonlocal}.
For example, if the density-dependent effects of individuals extend farther (but not too much farther)
than the typical dispersal distance,
then depending on the interaction kernel
new offspring landing between two clumps
can effectively find themselves in competition with 
\emph{both} neighbouring clumps,
while individuals within a clump compete with only one.

More mathematically, consider the case in which
$\DG = \sigma^2 \Delta$
(so that dispersal variance is $2 \sigma^2$)
and all parameters are spatially homogeneous,
so that $r(x, \eta) = r(\smooth{r}\eta(x))$, and similarly for $\gamma$ and $F$.
If $\varphi_0$ is such that $F(\varphi_0)=0$ and $F'(\varphi_0)<0$, then the
constant solution $\varphi\equiv\varphi_0$ is a nontrivial equilibrium
of~\eqref{general deterministic limit}.
However, this constant solution may not be unique, it may be unstable,
and a stable solution may have oscillations on a scale determined by the interaction distance.

To understand the stability of the constant solution $\varphi\equiv\varphi_0$, 
we linearise~\eqref{general deterministic limit} around $\varphi_0$:
let $\varphi_t(x) = \varphi_0 + \psi_t(x)$,
and (informally) $r(x) \approx r(\varphi_0) + r'(\varphi_0) \smooth{r} \psi(x)$.
Recall that in this section we are in $d=1$.
Writing $r_0 = r(\varphi_0)$ and $r'_0 = r'(\varphi_0)$,
with analogous expressions for $\gamma$ and $F$,
\begin{align*}
    \partial_t \psi
    &\approx
    \sigma^2 \varphi_0 r_0 \gamma_0' \Delta \smooth{\gamma} \psi
    + \sigma^2 r_0 \gamma_0 \Delta \psi
    + \varphi_0 F'_0 \smooth{F} \psi .
\end{align*}
Letting $\hat f(u) = \int e^{i u x} f(x) dx / \sqrt{2 \pi}$ denote the Fourier transform,
\begin{equation}
	\label{fourier transform psi}
    \partial_t \hat \psi(u)
    \approx
	\left\{
	- u^2\sigma^2 \varphi_0 r_0 \gamma_0' \hat{\rho}_\gamma(u) 
        - u^2 \sigma^2 r_0 \gamma_0
	+ \varphi_0 F'_0 \hat{\rho}_F(u) 
	\right\} \hat \psi(u) .
\end{equation}
In the simplest case, in which $\gamma$ is constant, so $\gamma_0'=0$,
this reduces to
\begin{equation} 
	\label{simplest FT}
	\partial_t \hat \psi(u)
    \approx
    \left(
        - u^2 \sigma^2 r_0 \gamma_0
	+ \varphi_0 F'_0 \hat{\rho}_F(u) 
    \right) \hat \psi(u) .
\end{equation} 
If we take $\rho_F=p_{\epsilon^2}$, then 
$\hat{\rho}_F(u)=\exp(-\epsilon^2 u^2 / 2) / \sqrt{2 \pi}$ and (recalling 
that $F_0'<0$) the term in brackets is always negative, and we recover
the well-known fact that in this case the constant solution is stable.
If, on the other hand, $\hat{\rho}_F$ changes sign, there may be values of $u$ for which the corresponding quantity is positive. For example, if $d=1$ and
$\rho_F(x)={\mathbf 1}_{[-\epsilon, \epsilon]}(x)/2\epsilon$, then 
$\hat{\rho}_F(u)=\sin(\epsilon u)/(\sqrt{2\pi}\epsilon u)$, which is 
negative for $u\in (\pi/\epsilon, 2\pi/\epsilon)$ (and periodically 
repeating intervals). Setting $v=\epsilon u$, the bracketed term on 
the right hand side of~\eqref{simplest FT} becomes 
\[
    \varphi_0F_0'\frac{1}{\sqrt{2\pi} v} \sin(v)
	-\frac{\sigma^2}{\epsilon^2}v^2r_0\gamma_0,
\]
and we see that if $\sigma^2/\epsilon^2$ is sufficiently small, there are
values of $v$ for which this is positive. In other words, in keeping
with our heuristic above, if dispersal is sufficiently short range relative to
the range over which individuals interact, there are unstable frequencies that
scale with the interaction distance $\epsilon$. In two dimensions, replacing 
the indicator of an interval by that of a ball of radius $\epsilon$, a
similar analysis applies, except that the sine function is replaced by a 
Bessel function.

Now suppose that $\gamma$ is not constant. Then, 
from~\eqref{fourier transform psi},
if we take 
$\rho_\gamma=\rho_F=p_\epsilon^2$,
\begin{equation*}
	\partial_t \hat \psi(u) \approx
    \frac{1}{\sqrt{2\pi}}
    e^{-\epsilon^2 u^2/2} 
	\left\{
		-\sigma^2 \varphi_0 r_0 \gamma_0' u^2
        -\sigma^2 r_0 \gamma_0 u^2 \sqrt{2\pi} e^{\epsilon^2 u^2/2}
        +\varphi_0 F_0'
	\right\}
	\hat{\psi}(u).
\end{equation*}
If we make the (reasonable)
assumption that $\gamma_0'<0$, then we see that even when the Fourier
transform of $\rho$ does not change sign, there may be parameter values
for which the constant solution is unstable.
As before, we set $v=\epsilon u$. The term in brackets becomes
\[
	\frac{\sigma^2}{\epsilon^2}v^2 r_0
    \left(
        -\varphi_0 \gamma_0' - \gamma_0 \sqrt{2\pi} e^{v^2/2}
    \right) + \varphi_0F_0',
\] 
and, provided $-\varphi_0\gamma_0'/\gamma_0\sqrt{2\pi} >1$, for sufficiently small $v$ 
the term in round brackets is positive. We now see that if 
$\sigma^2/\epsilon^2$ is sufficiently {\em large},
the equilibrium state $\varphi\equiv\varphi_0$ 
is unstable. As before, the unstable frequencies will scale with $\epsilon$
and for given $F$, $r$ and $\gamma$, whether or not such unstable
frequencies exist will be determined by $\sigma^2/\epsilon^2$, but 
in this case of Gaussian kernels, 
it is interaction distance being sufficiently small relative
to dispersal that will lead to instability.

\begin{figure}
    \begin{center}
        \includegraphics[width=2.5in]{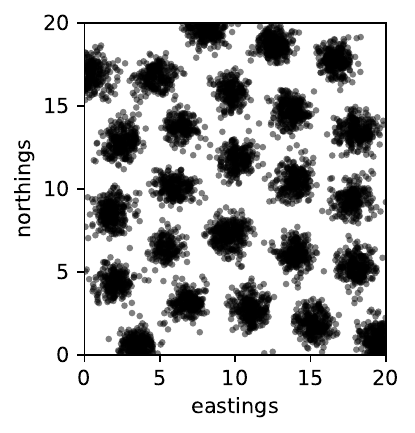}
        \includegraphics[width=2.5in]{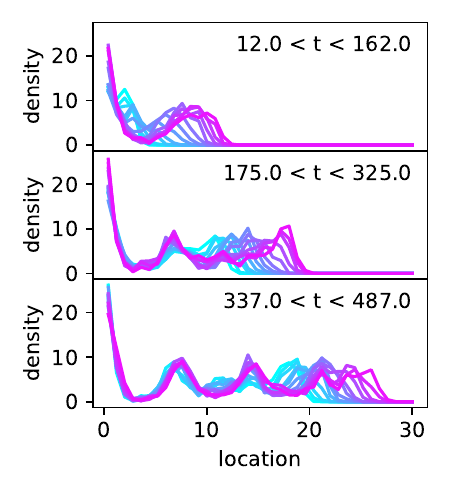}
    \end{center}
    \caption{
        \textbf{Left:} A snapshot of individual locations in a two-dimensional simulation
        in which the constant density is unstable
        and a stable, periodic pattern forms.
        \textbf{Right:} Population density in an expanding wave in a one-dimensional simulation
        forming a periodic pattern;
        each panel shows the wavefront in three periods of time;
        within each period of time the wavefront at earlier times is shown in blue
        and later times in pink.
        In both cases, $\gamma(m) = 3/(1 + m)$, $\mu \equiv 0.3$, and $r \equiv 1$;
        dispersal is Gaussian with $\sigma=0.2$ and density is measured with $\rho_\gamma(x) = p_9(x)$,
        i.e., using a Gaussian kernel with standard deviation 3.
        \label{fig:clumping}
    }
\end{figure}

\subsection{Lineage motion distinguishes different models with the same equilibrium density}
\label{sec:nonunique_lineage}

It is natural for applications to wonder about identifiability:
when can the observed quantities like population density
or certain summaries of lineage movement uniquely determine
the underlying demographic parameters?
Consider a deterministic,
continuous population generated by parameters $\gamma$, $r$, and $F$,
with $\meanq = 0$ and $\covq = 2I$.
Suppose it has a stationary profile $w(x)$, that must satisfy
$$
   r \Delta(\gamma w) + F w = 0 .
$$
It is easy to see that $w$ does not uniquely specify $\gamma$, $F$, and $r$:
let $\lambda(x)$ be a smooth, nonnegative function on $\IR^d$,
and let $\widetilde{r}(x, m) = \lambda(x) r(x, m)$ and $\widetilde{F}(x, m) = \lambda(x) F(x, m)$
(and, let $\widetilde{\gamma} = \gamma$).
Since $\mu = r \gamma - F/\theta$,
this corresponds to multiplying both establishment probabilities and death rates by $\lambda$.
Then the population with parameters $\widetilde{\gamma}$, $\widetilde{r}$, and $\widetilde{F}$
has the same stationary profile(s) as the original population.

Can these two situations be distinguished from summaries of lineage movement?
The first has lineage generator
\[
    f \mapsto \Lgen f = r \gamma \left( \Delta f + 2 \grad \log(\gamma w) \cdot \grad f \right),
\]
while the second has lineage generator $f \mapsto \lambda(x) \Lgen f(x)$.
In other words,
although the stationary profile of the population is unchanged when we scale
local establishment and death by $\lambda$,
the motion of lineages is sped up locally by $\lambda$.
This corresponds to making areas with $\lambda > 1$ more ``sink-like'' 
and $\lambda < 1$ more ``source-like'':
if $\lambda(x) > 1$, then at $x$ both the death rate and probability of 
establishment of new individuals are higher.
As a result, lineages in the second model spend more time in areas with $\lambda < 1$,
i.e., those areas have higher reproductive value,
something that is, in principle, discernible from genetic data
(because, for instance, making reproductive value less evenly distributed
reduces long-term genetic diversity \citep{ewens:1982}). \revpoint{1}{18}

\section{Heuristics}
    \label{sec:heuristics}

In this section we perform some preliminary calculations and use them 
to provide heuristic arguments for our main results,
to build intuition before the proofs.

\subsection{The population density}
    \label{sec:population_heuristics}

{\em We reiterate that in our prelimiting model,
the population is represented by a point measure $\eta^N$ in which each
individual is assigned a mass $1/N$. We use the term ``population density'' for
this process, as it is supposed to measure population size relative to a 
nominal occupancy of $N$ individuals per unit area, but it is not
absolutely continuous with respect to Lebesgue measure.}

We write $\Pgen^N$
for the generator of the scaled population process $\eta^N$
of Definition~\ref{defn:mgale_construction}
acting on test functions of the form $G( \langle f, \eta \rangle )$,
where $f \geq 0$ is smooth and bounded on $\IR^d$ and 
$G \in C^\infty ([0,\infty))$.
Recall that $\theta =\theta(N) \to \infty$ as $N\to\infty$ in such 
a way that $\theta(N)/N\to\alpha$.

A Taylor expansion allows us to write
\begin{multline}
	\label{generator prelimit}
    \Pgen^N
    G(\langle f,\eta \rangle)
    =
    G'(\langle f, \eta \rangle)
    \lim_{\delta t\downarrow 0} \frac{1}{\delta t}
    \IE\left[
        \left. \langle f, \eta_{\delta t} \rangle
        -
        \langle f, \eta \rangle
        \right| \eta_0=\eta
    \right]
    \\
    \qquad {}
    + \frac{1}{2}
        G''(\langle f,\eta\rangle)
    \lim_{\delta t\downarrow 0}\frac{1}{\delta t}
    \IE\left[
        \left.\big(\langle f,\eta_{\delta t}\rangle
        -
        \langle f, \eta\rangle\big)^2 \right|\eta_0=\eta
    \right]
    +
    \epsilon_N(f, G, \eta),
\end{multline}
where the terms that make up 
$\epsilon_N(f, G, \eta)$
will be negligible in our scaling limit (at least if $G'''<\infty$). 

\subsubsection*{Mean measure}

Recall that in our parameterization only death rates $\mu_\theta$
and the dispersal kernel $q_\theta$ depend on $\theta$.
For a suitable test function $f$, we find
\begin{equation} \label{mean measure}
    \begin{split}
    \Pgen^N \langle f, \eta \rangle
    &=
    \lim_{\delta t\downarrow 0} \frac{1}{\delta t}
    \IE\left[ \left.
        \langle f, \eta_{\delta t} \rangle
        -
        \langle f, \eta\rangle
        \right| \eta_0 = \eta
    \right]
    \\
    &=
    \theta \int
        \int f(z) r(z,\eta) q_\theta(x,dz)
    \gamma(x, \eta) \eta(dx)
    -
    \theta \int f(x)\mu_\theta(x, \eta)
    \eta(dx).
    \end{split}
\end{equation}
The first term is the increment in $\langle f,\eta\rangle$
resulting from a birth event (recalling that
we don't kill the parent) integrated against the rate of such events,
and the second reflects death events.
The factor of $\theta$ appears from the time rescaling.
In both terms,
the rate of events has a factor of $N$ (because events happen at a rate 
proportional to the number of individuals,
whereas $\eta$ has mass $1/N$ for each individual)
which is offset by the fact that  
the birth or loss of a single 
individual at the point $y$, say, changes $\langle f,\eta\rangle$
by $f(y)/N$.

We use the fact that $\int q_\theta(x,dz)=1$ to rewrite~(\ref{mean measure})
as 
\begin{equation}
\label{eqn:rewritten mean measure}
\begin{split}
    \int\left(
        \int \theta \left( f(z) r(z,\eta)- f(x) r(x,\eta) \right) q_\theta(x,dz)
    \right)
    \gamma(x,\eta)
    \eta(dx)
    \\
    + \int f(x) \theta \Big(
        r(x,\eta) \gamma(x,\eta)
        - \mu_\theta(x,\eta)
    \Big) \eta(dx).
\end{split}
\end{equation}
We have defined $\mu_\theta$ so that the second term is simple:
\begin{align*}
    \theta \Big( r(x,\eta) \gamma(x,\eta) - \mu_\theta(x,\eta) \Big)
    = F(x, \eta) .
\end{align*}
Furthermore, recall from Remark~\ref{rem:DG_limit} that
\begin{align} \label{eqn:near_critical}
    \int \theta \Big(
        r(z,\eta) f(z)
        -
        r(x,\eta) f(x)
    \Big) q_\theta(x,dz) 
    \qquad \stackrel{\theta\to\infty}{\longrightarrow} \qquad  
    \DG \big(r(\cdot,\eta)f(\cdot)\big)(x) .
\end{align}
In particular,
if dispersal is determined by a standard multivariate Gaussian
with mean zero and covariance $\sigma^2 I / \theta$,
then $\DG = \sigma^2 \Delta / 2$, where $\Delta$ denotes the Laplacian.

In summary, equation~\eqref{eqn:rewritten mean measure} converges to
\begin{equation} \label{limit of mean measure equation}
\int \gamma(x,\eta)
\DG \big(f(\cdot)r(\cdot,\eta)\big)(x)
\eta(dx)
+
\int f(x)
F(x,\eta)
\eta(dx) ,
\end{equation}
which explains the form of the martingale of Theorem~\ref{thm:nonlocal_convergence}.

\subsubsection*{Quadratic variation}


We now look at the second order term in~(\ref{generator prelimit}),
which will converge to the quadratic variation of the limiting process.
An individual at location $x$ gives birth 
to a surviving offspring at $y$ at rate
$$
\gamma(x,\eta) r(y,\eta) q_{\theta}(x, dy) ,
$$
and since this increments $\langle f, \eta \rangle$ by $f(y) / N$,
the contribution to the quadratic variation from birth events,
which occur at rate $\theta$ per individual 
(so, rate $N\theta |\eta|$ overall), is
$$
\int
    N \theta \gamma(x,\eta)
    \int \frac{1}{N^2} f^2(y) r(y,\eta)
    q_\theta(x,dy) 
\eta(dx) .
$$
Similarly, the increment in $\langle f, \eta\rangle$ resulting from 
the death of an individual at $x$ is $-f(x)/N$, and so combining with the 
above, the second order term in the generator takes the form
\begin{align*}
& G''(\langle f,\eta\rangle)
\frac{1}{2} N \theta
\left\{
    \int
        \gamma(x,\eta)
        \int \frac{1}{N^2}f^2(y)r(y,\eta)q_\theta(x,dy) 
    \eta(dx)
    +
    \int
        \mu_\theta(x,\eta)\frac{1}{N^2}f^2(x) 
    \eta(dx)
\right\} \\
&\qquad
= \frac{1}{2} G''(\langle f, \eta \rangle)
    \frac{\theta}{N}
	\int\left\{
		\gamma(x, \eta) \int f^2(y) r(y, \eta) q_\theta(x, dy) + f^2(x) \mu_\theta(x, \eta)
    \right\} \eta(dx) .
\end{align*}
Since $\int f^2(y) r(y, \eta) q_\theta(x, dy) \to f^2(x) r(x, \eta)$
and $r\gamma  + \mu_\theta = 2 r \gamma  - F / \theta \to 2 r \gamma$
as $\theta \to \infty$,
this converges to
\begin{align*}
\frac{\alpha}{2} G''(\langle f, \eta \rangle)
    \big\langle
    2 r(x, \eta) \gamma(x, \eta) f(x)^2,
        \eta(dx)
    \big\rangle .
\end{align*}

An entirely analogous argument shows that if $G'''$ is bounded, then
the term $\epsilon_{\theta,N}(f, G, \eta)$ 
in~(\ref{generator prelimit}) will be $\bigO(\theta/N^2)$. 

If we hold $\rho_\gamma$, $\rho_r$, $\rho_F$ fixed, then 
by taking $\theta/N \rightarrow 0$, the second order term in the generator 
will vanish and we expect a deterministic limit,
for which $\partial_t \langle f, \eta_t \rangle$ is equal to~\eqref{limit of mean measure equation}.
In other words, the limit is a weak solution to the deterministic equation
\begin{equation}
\label{deterministic limit}
\partial_t\varphi_t(x)
=
    r(x,\varphi_t)
    \DG\big(
        \gamma(\cdot,\varphi_t) \varphi_t(\cdot)
    \big)(x)
    + F(x, \varphi_t) \varphi_t(x) 
\end{equation}
in the sense of Definition~\ref{defn:weak_solutions},
where $\varphi_t$ is the density of $\eta_t$, if it has a density.
On the other hand, if $N = \alpha \theta$ for some $\alpha > 0$,
the second order term remains, and we expect a ``generalised superprocess'' limit.
The limiting quadratic variation
is exactly as seen in Theorem~\ref{thm:nonlocal_convergence}.

\paragraph{One-step convergence:} \label{sec:one_step_heuristics} \llabel{one_step_heuristics}
In order to pass directly to a classical PDE limit
in Theorem~\ref{thm:local_convergence}
we impose the stronger condition
that $\theta/(N\epsilon^d) \to 0$ and also require that
$\theta\epsilon^2\to\infty$. 
Recall that in this case, we take $\rho_F^\epsilon$ 
to be a symmetric Gaussian density with variance 
$\epsilon^2$. The condition $\theta\epsilon^2\to\infty$ 
ensures that $\epsilon^2$ is large enough relative to $1/\theta$
that the regularity gained by smoothing our population density by convolution with
$\rho_\epsilon$ is preserved under the dynamics dictated by $q_{\theta}$.
To understand the first condition, note that we are aiming to obtain a 
deterministic expression for the limiting population density. 
It is helpful to think
about a classical Wright-Fisher model (with no spatial structure and just two types, say). 
We know then that if the
timescale $\theta$ is on the same order as population size $N$, we see stochastic
fluctuations in the frequencies of the two types in the limit as $N\to\infty$; to 
obtain a deterministic limit, we look over timescales that are short relative to population
size. In our setting, the total population size is replaced by the local population
size, as measured by convolution with $\rho_{\epsilon}$, which we expect to be 
of order $N\epsilon^d$, and so in order to ensure a deterministic limit we 
take $\theta/(N\epsilon^d)\to 0$.

\subsection{Motion of ancestral lineages}
\label{sec:lineage_motion}

Although our proof of Theorem~\ref{thm:lineages}
uses an explicit representation in terms of the lookdown process,
the result can be understood through informal calculations.
Suppose that we have traced a lineage back to an individual 
at location $y$ at time $t$.
Looking further back through time, at the time of the birth of that 
individual, the lineage will jump to the location of the parent of the
individual.
Now, the rate at which new individuals are born to parents at $x$ 
and establish at $y$ is
$$
\theta N\eta^N_t(dx)    \gamma(x, \eta^N_t) q_\theta(x, dy) r(y, \eta^N_t) .
$$
Suppose that $\eta^N$ did have a density (in the prelimit it does not),
say $\eta^N_t(dx) = \varphi^N_t(x) dx$.
Informally,
since the number of individuals near $y$ is $N\varphi^N_t(y) dy$,
the probability that a randomly chosen individual near $y$
is a new offspring from a parent at $x$ in $[t, t+dt)$ is
\begin{equation} \label{eqn:informal_rates}
\frac{\theta
    \varphi^N_t(x) \gamma(x, \eta^N_t) r(y, \eta^N_t)
}{
    \varphi^N_t(y)
} \frac{ q_\theta(x, dy) }{ dy } dx dt .
\end{equation}
Leaving aside questions of whether a lineage can be treated 
as a randomly chosen individual, we define
a continuous-time jump process 
whose transition rates, conditional on $(\varphi^N_t)_{t=0}^T$, 
are given by \eqref{eqn:informal_rates}. Because we are tracing the
lineage backwards 
in time we make the substitution $s=T-t$ and write
$(L^N_s)_{s=0}^T$ for the location of a lineage that moves 
according to these jump rates.
Then, abusing notation to write 
$q_\theta(x, y)$ for the density of $q_\theta(x,dy)$,
\begin{align} \label{eqn:lineage_generator_Edt}
    \begin{split}
    &\IE[f(L^N_{s+ds}) - f(y) \;|\; L^N_s = y]
    \\&\qquad 
    =
    ds\, \theta \int \left(f(x) - f(y)\right)
    \frac{
        \varphi^N_{T-s}(x) \gamma(x, \eta^N_{T-s}) r(y, \eta^N_{T-s})
    }{
        \varphi^N_{T-s}(y)
    }
    q_\theta(x, y) dx .
    \end{split}
\end{align}
(Note that this integral is with respect to $x$.)
Referring back to Remark~\ref{rem:DG_limit},
a quick calculation shows that as $N \to \infty$,
\begin{align*}
    &
    \theta \int \big(f(x) - f(y)\big) g(x) q_\theta(x, y) dx \\
    &\qquad =
    \theta \int \big\{
        (f(x) g(x) - f(y) g(y)) - f(y) (g(x) - g(y))
    \big\} q_\theta(x, y) dx \\
    &\qquad \to
        \DG^*(fg)(y) - f(y) \DG^* g(y) . 
\end{align*}
Applying this to~\eqref{eqn:lineage_generator_Edt} with $g = \varphi_{T-s} \gamma$,
this suggests that the generator of the limiting process is
\begin{align} \label{eqn:heuristic_lineage_generator}
    \Lgen_s f
    &=
    \frac{r}{\varphi_{T-s}}
    \left\{
        \DG^*(\gamma \varphi_{T-s} f) - f \DG^*(\gamma \varphi_{T-s})
    \right\} .
\end{align}
This agrees with Theorem~\ref{thm:lineages}.

\section{The lookdown process}
    \label{sec:lookdown}

Our characterisation of the motion of lines of descent (from which we establish
that of ancestral lineages) when we pass
to the scaling limit in our model will be justified via a lookdown construction.
In this section 
we present such a construction for the general population model 
of Definition~\ref{defn:mgale_construction}. It will be
in the spirit of~\cite{kurtz/rodrigues:2011}. 
The general set-up is as follows.
Each individual will be labelled with a ``level'', a number in $[0, N]$.
We will still encode the process embellished by these levels
as a point measure:
if the $i^\mathrm{th}$ individual's spatial location is $x_i$
and level is $u_i$, then we will write
\[
    \lp^N = \sum_i \delta_{x_i, u_i} ,
\]
which is a measure on $\IR^d \times [0, N]$.
Note that each individual contributes mass 1 to the measure,
not $1/N$ as above. If we assign mass $1/N$ to each individual and 
ignore the levels we will recover our population model. 
Moreover, at any time, the levels of 
individuals in a given spatial region 
will be exchangeable and conditionally uniform on $[0, N]$:
in particular, choosing the $k$ individuals with the lowest levels 
in that region
is equivalent to taking a uniform random sample of size $k$ 
from the population in the region.
However, this exchangeability is only as regards the \emph{past}:
an individual's level encodes information about their future reproductive output,
since individuals with lower levels tend to live longer, and have more offspring.
For more explanation of the set-up and how this is possible,
see \citet{kurtz/rodrigues:2011} and \citet{etheridge/kurtz:2019} 
(and note that our $N$ corresponds to the $\lambda$ of those papers).
The power of this approach is that we can pass to a limit under the
same scalings as described in Theorem~\ref{thm:nonlocal_convergence}, 
and 
the limiting ``spatial-level'' process will still be a point measure,
and so we explicitly retain the notion of individuals and lineages
in the infinite-population limit.

\subsection{Lookdown representation of the model of 
Definition~\ref{defn:mgale_construction}}
\label{sec:lookdown_defn}

{\em For the remainder of this subsection, when there is no risk of ambiguity we shall
suppress the superscript $N$ on the processes $\eta$ and $\xi$.}

In this subsection,
we'll define the process $(\lp_t)_{t \ge 0}$ in terms of the 
dynamics of labelled particles,
and write down its generator.
The dynamics depend on the spatial locations of particles,
and in this section $\eta_t$ is the corresponding spatial measure,
i.e.,
$$
    \eta_t(\cdot) = \frac{1}{N} \lp_t(\cdot \times [0, N])  .
$$
A nontrivial consequence of the way we define $\lp_t$ will be that
the process $(\eta_t)_{t \ge 0}$ defined in this way \revpoint{1}{19}
has the same distribution as the process $(\eta_t)_{t \ge 0}$ 
of Definition~\ref{defn:mgale_construction},
which provides our justification for using the same notation for both.

Following~\cite{etheridge/kurtz:2019}, we build the generator step by step
from its 
component parts. 
Suppose that the initial population is composed of $O(N)$ particles
with levels uniformly distributed on $[0, N]$,
and that the current state of the population is $\lp$,
with spatial projection $\eta$.

An individual at spatial location $x$ with level $u$
produces one juvenile offspring at rate 
$$
2 \theta \left(1 - \frac{u}{N}\right) \gamma(x, \eta) ,
$$
which disperses to a location relative to $x$ drawn 
from the kernel $q_{\theta}(x, \cdot)$.
Averaging over the uniform distribution of the level $u$,
we recover the birth rate $\theta\gamma(x, \eta)$.
This juvenile -- suppose its location is $y$ --
either survives, with probability $r(y, \eta)$, or immediately dies.
(As before, ``maturity'' is instantaneous.)
If it survives, 
a new level $u_1$ is sampled independently and uniformly from $[u,N]$,
and the parent and the offspring are assigned in random order to the 
levels $\{u, u_1\}$.
This random assignment of levels to parent
and offspring will ensure that assignment of individuals to levels remains exchangeable.

Evidently this mechanism increases the proportion
of individuals with higher levels.
To restore the property that
the distribution of levels is conditionally uniform
given $\eta$,
we impose that 
the level $v$ of an individual at location $x$
evolves according to the differential equation
$$
    \dot{v}
    =
    -\theta \frac{v}{N} \left(N - v\right)
    \gamma(x, \eta) \int_{\IR^d} r(y, \eta) q_\theta(x, dy) .
$$
Since $v \in [0, N]$, this moves levels down;
see~\cite{etheridge/kurtz:2019}, Section~3.4 for a detailed explanation.

This drift does not allow levels to cross below 0, \revpoint{1}{20}
while we will declare that particles whose levels move above $N$ are regarded as dead
(and are removed from the population).
Therefore, in order to incorporate death,
the level of the individual at location $x$ with level $u$
moves upwards at an additional rate $\theta\mu_\theta(x,\eta) u$.
Since levels are uniform,
it is easy to check that if $\mu_\theta$ were constant,
this would imply an exponential lifetime for each individual;
see~\cite{etheridge/kurtz:2019}, Section~3.1
for more general justification.

Putting these together,
the level $u$ of an individual at $x$ evolves according to:
\begin{equation} \label{eqn:dot_u}
    \dot u
    =
    - \theta\frac{u}{N} \left(N - u\right)
    \gamma(x, \eta) \int_{\IR^d} r(y, \eta) q_\theta(x, dy) 
    +
    \theta\mu_\theta(x,\eta) u .
\end{equation}
We shall write 
\begin{align*}
    b_\theta(x, \eta)
    :=
    \theta\left(
    \gamma(x,\eta) \int_{\IR^d} r(y, \eta) q_\theta(x, dy)
    -
    \mu_\theta(x,\eta)
    \right) ,
\end{align*}
which captures the local net difference between reproduction and death,
and
\begin{align} \label{c_defn}
    c_\theta(x, \eta)
    :=
    \frac{\theta}{N} \gamma(x, \eta) \int_{\IR^d} r(y, \eta) q_\theta(x, dy) ,
\end{align}
which captures the local rate of production of successful offspring.
Recall from equation~\eqref{eqn:mu_defn} that \revpoint{1}{21}
$F(x,\eta) = \theta(r(x,\eta)\gamma(x,\eta) - \mu_\theta(x,\eta))$,
and so 
\begin{equation}  \label{b_defn}
b_\theta(x, \eta)
=    \theta \gamma(x, \eta) \int_{\IR^d} \left( r(y, \eta) - r(x, \eta) \right) 
	q_\theta(x, dy)
    + F(x, \eta). 
\end{equation}
Under Assumptions~\ref{def:model_setup}, as $\theta \to \infty$,
$c_\theta(x, \eta)$ will tend to $\alpha \gamma(x, \eta) r(x, \eta)$, and
\begin{equation}
\label{eqn:b_limit}
    b_\theta(x, \eta) \to \gamma(x, \eta) \DG r(x, \eta) + F(x, \eta) .
\end{equation}

We can then rewrite the differential equation 
governing the dynamics of the level of each individual as
\begin{align}
\dot{u}
    &=
    \theta\gamma(x,\eta) \int_{\IR^d} r(y, \eta) q_\theta(x, dy)
    \left\{
        -\frac{u}{N}\left(N - u\right)
        + u
    \right\}
    -
	b_\theta(x,\eta) u
    \nonumber \\
    &=
    c_\theta(x, \eta) u^2
    -
	b_\theta(x, \eta) u
    . \label{differential equation for level}
\end{align}

Now, we can write down the generator for $(\lp_t)_{t \ge 0}$,
the lookdown process.
In what follows, we will write sums (and, products) over ``$(x, u) \in \xi$''
to mean a sum over the (location, level) pairs of each individual in the population.
Test functions for $\lp$ will take the form
\begin{equation} \label{eqn:test_functions}
    f(\lp) = \prod_{(x,u)\in \lp}g(x,u)=\exp\left(\int \log g(x,u)\lp(dx, du)\right),
\end{equation}
where
$g(x,u)$ is differentiable in $u$ and 
smooth in $x$.
We will also assume that $0\leq g(x,u) \leq 1$ for all $u\in [0,N]$,
and $g(x,u)\equiv 1$ for $u\geq N$.
In the expressions that follow,
we shall often see one or more factor of $1/g(x,u)$;
it should be understood that if $g(x,u)=0$,
then it simply cancels 
the corresponding factor in $f(\lp)$.

First consider the terms in the generator that come from birth events.
When a birth successfully establishes,
a new level is generated above the parent's level,
and this new level is assigned to either the offspring or the parent.
Since the probability of each is 1/2,
the contribution of birth to the generator maps $f(\lp)$ to
\begin{align}
\label{partition over level assignment}
    &
    f(\lp)
    \sum_{(x, u) \in \lp}
    2 \frac{\theta}{N} \gamma(x, \eta)
    \int_u^N
    \int_{\IR^d}
    \left(
    \frac{1}{2}
    \bigg\{
            g(y, u_1)
        + \frac{ g(y, u) g(x, u_1) }{ g(x, u) }
    \bigg\}
        - 1
    \right)
    r(y, \eta) q_\theta(x, dy)
    du_1
    \\
    \begin{split} \label{eqn:birth_generator}
&=
    f(\lp)
    \sum_{(x, u) \in \lp}
    2 \gamma(x, \eta)
    \bigg\{
        \frac{1}{2 N}
        \int_u^N
        g(x, u_1) du_1
        \frac{
            \theta \int_{\IR^d} (g(y, u) - g(x, u)) r(y, \eta) q_\theta(x, dy)
        }{
            g(x, u)
        }
    \\ & \qquad \qquad \qquad {}
        + \frac{\theta}{N}
        \int_u^N \int_{\IR^d}
        \left( \frac{g(y, u_1) + g(x, u_1)}{2} - 1 \right)
        r(y, \eta) q_\theta(x, dy)
        du_1
    \bigg\}
    .
    \end{split}
\end{align}
In (\ref{partition over level assignment}),
$u_1$ is the new level and $y$ is the offspring's location,
and so the two terms in the integral correspond to the two situations:
in the first, we have added an individual at $(y, u_1)$,
while in the second, we replace an individual at $(x, u)$
by one at $(x, u_1)$ and another at $(y, u)$.
We've rewritten it in the form~(\ref{eqn:birth_generator})
because each of the two pieces
naturally converges to a separate term in the limit.

The remaining term in the generator is due to the motion of particles' levels.
Reading off from~(\ref{differential equation for level}),
it takes the form
\begin{align} \label{eqn:level_generator}
    f(\lp)
    \sum_{(x, u) \in \lp}
    \left(
        c_\theta(x, \eta) u^2
        -
        b_\theta(x, \eta) u
    \right)
    \frac{\partial_u g(x,u)}{g(x,u)} .
\end{align}

We can now define the spatial-level process
explicitly as a solution to a martingale problem,
whose generator is just the sum of
\eqref{eqn:birth_generator} and \eqref{eqn:level_generator}.
We need some notation. 
Write ${\mathcal C}={\mathcal C}(\IR^d\times [0,\infty))$ for the counting measures 
on $\IR^d\times [0,\infty)$ and ${\mathcal C}_N$ for the subset consisting of 
counting measures on $\IR^d\times [0,N]$.

\begin{definition}[Martingale Problem Characterisation]
    \label{defn:lookdown_mgale}
For given positive values of $N$ and $\theta$, define the generator $A^N$
by
\begin{equation}
	\label{eqn:lookdown_generator}
\begin{split}
& A^{N}f(\lp ) \\
&\quad =
    f(\lp)
    \,\sum_{(x,u)\in \lp}\,
    2 \gamma(x, \eta)
    \Bigg\{ \frac{1}{2N} \int_u^N g(x,u_1) du_1
            \frac{
                \theta \int_{\IR^d}
                (g(y,u) - g(x,u))
                r(y, \eta) q_{\theta}(x,dy)
            }{ g(x,u) }
        \\
    &\qquad\qquad\qquad\qquad\qquad\qquad\qquad {} +
        \frac{\theta}{N} \int_u^N
        \int_{\IR^d}\left(
            \frac{ g(y,u_1) + g(x,u_1) }{ 2 } - 1
        \right)
        r(y, \eta) q_{\theta}(x,dy)
        du_1
    \Bigg\}\\
    &\qquad\qquad {} +
    f(\lp) \sum_{(x,u)\in\lp}\,
    \left(
        c_\theta(x,\eta) u^2 - b_{\theta}(x,\eta)u
    \right)
    \frac{\partial_u g(x,u)}{g(x,u)}
    ,
\end{split}
\end{equation}
where $f(\lp) = \prod_{(x, u) \in \lp} g(x, u)$ is as defined in \eqref{eqn:test_functions},
and $\eta(\cdot) = \lp(\cdot \times [0, N]) / N$
as before. 
Given $\lp_0 \in {\mathcal C}_N$, 
we say that a ${\mathcal D}_{[0,\infty)}({\mathcal C}_N)$-valued process 
$(\xi_t)_{t \ge 0}$ is a solution to the $(A^N, \lp_0)$ martingale problem
if $f(\lp_t) - f(\lp_0)-\int_0^t A^N f(\lp_s) ds$ is a martingale (with 
respect to the natural filtration) for all 
test functions $f$ as defined above.
\end{definition}

The martingale problem for finite $N$ has a unique solution, \revpoint{1}{22}
since it is a finite-rate jump process.
Next we state the limiting martingale problem,
for which we do not necessarily have uniqueness.
As before, the parameter $\alpha$ will correspond to 
$\lim_{N\to\infty}\theta(N)/N$. Whereas for finite $N$, conditional
on the population process $\eta_t^N$, the levels of particles are
independent and 
uniformly distributed on $[0,N]$, in the infinite population limit, 
conditional on $\eta_t$, the process $\xi_t$ is Poisson distributed on 
$\IR^d\times [0,\infty)$ with mean measure $\eta_t\times\lambda$, where 
$\lambda$ 
is Lebesgue measure.

\begin{definition}[Martingale Problem Characterisation, scaling limit]
    \label{defn:limiting_lookdown_mgale}
    Fix $\alpha \in [0, \infty)$, and define  
    test functions $f$ by 
    $f(\xi) = \prod_{(x, u) \in \xi} g(x, u)$ 
	with $g$ differentiable in $u$, smooth in $x$, satisfying 
$0\leq g(x,u)\leq 1$ and
    such that there exists a $u_0$ with $g(x, u) = 1$ for all $u > u_0$.
    Then, define the operator $A$ on such test functions by 
    \begin{align} \begin{split} \label{eqn:limiting_lookdown_generator}
    A f(\lp)
    &=
        f(\lp)  \sum_{(x, u) \in \xi}
        \gamma(x, \eta)
            \frac{
                \DG(g(\cdot, u) r(\cdot, \eta))(x) - g(x,u) \DG r(x,\eta)
            }{
                g(x, u)
            }
    \\ &\qquad {} +
        f(\lp) \sum_{(x, u) \in \xi}
        2 \alpha \gamma(x, \eta) r(x, \eta) \int_u^\infty (g(x, u_1) - 1) du_1
    \\ &\qquad {} +
        f(\lp) \sum_{(x, u) \in \xi}
        \left(
            \alpha \gamma(x, \eta) r(x, \eta) u^2
            -
            \left\{
                \gamma(x, \eta) \DG r(x, \eta) + F(x, \eta)
            \right\} u
        \right)
        \frac{\partial_u g(x, u)}{ g(x,u) }  ,
    \end{split} \end{align}
    where $\eta(\cdot) = \lim_{u_0 \to \infty} \frac{1}{u_0} \xi(\cdot \times [0, u_0])$.
    We say that a ${\mathcal D}_{[0,\infty)}(\mathcal{C})$-valued process $(\xi_t)_{t \ge 0}$
    is a solution to the $(A, \lp_0)$ martingale problem
    if it has initial distribution $\lp_0$
    and $f(\lp_t) - f(\lp_0)-\int_0^t Af(\lp_s) ds$ is a martingale (with
respect to the natural filtration) for all 
test functions $f$
    as defined above.
\end{definition}

The lookdown processes have been carefully constructed so that
observations about the past spatial positions of individuals in 
the population do not give us any information about the assignment of 
individuals to levels.
In other words, the dynamics of the lookdown
process preserve the conditionally uniform (or in the limit, conditionally Poisson) structure
-- if started with uniform levels,
levels are uniform at all future times.
Moreover, if we average over levels in the expression for the generator
(equation~\eqref{eqn:lookdown_generator} or~\eqref{eqn:limiting_lookdown_generator})
we recover the generator for the population process.
Once this is verified (along with some boundedness conditions)
the Markov Mapping Theorem (Theorem~\ref{thm:mmt}; also see \citet{etheridge/kurtz:2019})
tells us that by ``removing labels'' from the lookdown process $\lp$
we recover the population process $\eta$.

To make this precise,
define the spatial projection maps
$\kappa^N: \mathcal{M}(\IR^d \times [0, N]) \to \mathcal{M}(\IR^d)$
by $\kappa^N(\xi^N)(\cdot) = \xi^N(\cdot \times [0, N]) / N$,
and $\kappa: \mathcal{M}(\IR^d \times [0, \infty)) \to \mathcal{M}(\IR^d)$
by $\kappa(\xi)(\cdot) = \lim_{u_0 \to \infty} \xi(\cdot \times [0, u_0]) / u_0$.
We will also need an inverse notion:
for a measure $\lp^N$ on $\IR^d \times [0, N]$ and a $\sigma$-field $\mathcal{F}$,
we say that \emph{$\lp^N$ is conditionally uniform given $\mathcal{F}$}
if $\kappa^N(\lp)$ is $\mathcal{F}$-measurable and 
for all compactly supported $f$,
\begin{align} \label{eqn:conditionally_uniform}
    \IE[ e^{-\langle f, \lp \rangle} \;|\; \mathcal{F} ]
    &=
    e^{-\langle H_f^N, \kappa^N(\lp) \rangle} ,
\end{align}
where
$$ H_f^N(x) = - N \log \frac{1}{N} \int_0^N e^{-f(x, u)} du . $$
In other words, the $[0, N]$ components of $\lp$
are independent, uniformly distributed on $[0, N]$, and independent of 
$\kappa^N(\lp)$.
Similarly, for a measure $\lp$ on $\IR^d \times [0, \infty)$
we say that \emph{$\lp$ is a conditionally Poisson random measure given $\mathcal{F}$}
if $\kappa(\lp)$ is $\mathcal{F}$-measurable and 
for all compactly supported $f$,
\begin{align} \label{eqn:conditionally_poisson}
    \IE[ e^{-\langle f, \lp \rangle} \;|\; \mathcal{F} ]
    &=
    e^{-\langle \int_0^\infty (1 - e^{-f(x, u)}) du, \kappa(\lp)(dx) \rangle} .
\end{align}
In other words, $\lp$ is conditionally Poisson with Cox measure $\kappa(\lp) \times \lambda$,
where $\lambda$ is Lesbegue measure.

\begin{proposition}
    \label{thm:mmt_application}
    If $\tilde \eta^N$ is a solution
    of the martingale problem of Definition~\ref{defn:mgale_construction}
    with initial distribution $\eta^N_0$
    then there exists a solution $\lp^N$
    of the $(A^N, \lp^N_0)$-martingale problem of Definition~\ref{defn:lookdown_mgale}
    such that $\eta^N = \kappa^N \circ \lp^N$
    has the same distribution on $D_{\measures}[0, \infty)$
    as $\tilde \eta^N$.
    Furthermore, for each $t$, $\lp^N_t$ is conditionally uniform 
	given $\mathcal{F}^{\eta^N}_t$
    in the sense of \eqref{eqn:conditionally_uniform}.
    If uniqueness holds for the $(A^N, \lp^N_0)$-martingale problem,
    then uniqueness also holds
    for the martingale problem of Definition~\ref{defn:mgale_construction}.

    Similarly,
    if $\tilde \eta$ is a solution
    of the limiting martingale problem of Theorem~\ref{thm:nonlocal_convergence}
    with initial distribution $\eta_0$
    then there exists a solution $\lp$
    of the martingale problem of of Definition~\ref{defn:limiting_lookdown_mgale}
    such that $\eta = \kappa \circ \lp$
    has the same distribution on $D_{\measures}[0, \infty)$
    as $\tilde \eta$.
    Furthermore, $\lp_t$ is conditionally Poisson given $\mathcal{F}^{\eta}_t$
    in the sense of \eqref{eqn:conditionally_poisson}.
    If uniqueness holds
    for the martingale problem of Definition~\ref{defn:limiting_lookdown_mgale}
    then uniqueness also holds for 
    the martingale problem of Theorem~\ref{thm:nonlocal_convergence}.
\end{proposition}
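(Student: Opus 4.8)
The plan is to apply the Markov Mapping Theorem (Theorem~\ref{thm:mmt}) twice --- once for finite $N$ and once for the limit --- with the spatial projection maps $\kappa^N$ and $\kappa$ as the mapping functions. The key identity that makes this work is that averaging the lookdown generator over conditionally uniform (resp.\ conditionally Poisson) levels recovers the population generator. First I would verify this \emph{intertwining} relation: starting from a test function $f(\xi) = \prod_{(x,u)\in\xi} g(x,u)$ with $g(x,u)\equiv 1$ for $u\ge N$, compute $\IE[A^N f(\xi)\mid \kappa^N(\xi) = \eta]$ when $\xi$ is conditionally uniform given $\kappa^N(\xi)$. Writing $G_f^N(x) = \tfrac1N\int_0^N g(x,u)\,du$, one has $\IE[f(\xi)\mid \kappa^N(\xi)=\eta] = \prod_{x\in N\eta} G_f^N(x) = \exp(N\langle \log G_f^N, \eta\rangle)$, which is of the form $G(\langle h,\eta\rangle)$ suited to the population generator $\Pgen^N$. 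Carrying out the $u$- and $u_1$-integrals in \eqref{eqn:lookdown_generator} --- the birth terms, the level-drift term driven by $c_\theta u^2 - b_\theta u$, and the death piece encoded in the levels crossing $N$ --- should collapse exactly to the right-hand side of the martingale problem in Definition~\ref{defn:mgale_construction}, i.e.\ to the generator whose action on $\langle f,\eta\rangle$ is \eqref{mean measure}. This is the computation alluded to in the remark that "if we average over levels \dots\ we recover the generator for the population process"; it is bookkeeping but must be done carefully, keeping track of the factor $N$ and of the $1/g(x,u)$ cancellations flagged in the text.

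Next I would check the hypotheses of the Markov Mapping Theorem for finite $N$: that $A^N$ is a well-posed martingale-problem generator on $\mathcal{C}_N$ (which the text asserts has a unique solution, since $\xi^N$ is just a finite particle system with bounded rates --- births happen at rate $O(N\theta)$ by boundedness of $\gamma$, levels move deterministically between jumps, and the state space of occupied levels is controlled), that the class of test functions $f = \prod g(x,u)$ is measure-determining and closed under the relevant operations, and that $\kappa^N$ is measurable with the required tightness/compact-containment properties. Given a solution $\tilde\eta^N$ of the Definition~\ref{defn:mgale_construction} martingale problem with initial law $\eta_0^N$, the theorem then produces a solution $\xi^N$ of the $(A^N,\xi_0^N)$-martingale problem, with $\xi_0^N$ obtained by randomizing levels uniformly on $[0,N]$ over $N\eta_0^N$, such that $\kappa^N\circ\xi^N \overset{d}{=} \tilde\eta^N$ on $D_{\measures}[0,\infty)$, and such that $\xi_t^N$ is conditionally uniform given $\mathcal{F}_t^{\eta^N}$ --- this last being precisely the conclusion that the Markov Mapping Theorem delivers about the conditional law of the "upstairs" process given the "downstairs" one.

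For the limiting statement the argument is structurally identical, now with generator $A$ of Definition~\ref{defn:limiting_lookdown_mgale}, projection $\kappa$, and the conditionally Poisson structure in place of conditionally uniform: one checks that averaging $Af$ over $\xi$ Poisson with intensity $\eta\times\lambda$ recovers the limiting population generator read off from \eqref{eqn:limiting_mgale_problem}--\eqref{eqn:limiting_mgale_variation} (here the two birth sub-terms of \eqref{eqn:limiting_lookdown_generator} respectively furnish the $\gamma\DG(rf)$ drift and the $\alpha$-dependent quadratic-variation contribution, via $\int_u^\infty(g(x,u_1)-1)\,du_1$ integrated against the Poisson intensity producing the $\tfrac12 G''$ term with coefficient $2\alpha\gamma r f^2$), and then invokes the Markov Mapping Theorem with a solution $\tilde\eta$ of the limiting martingale problem as input. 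The main obstacle I anticipate is not the two intertwining computations, which are essentially forced, but checking the technical regularity/boundedness hypotheses needed to \emph{apply} the Markov Mapping Theorem in the limit: because $F$ is only bounded above (not below), $b_\theta$ and hence the level drift $c_\theta u^2 - b_\theta u$ are not bounded, so one must argue non-explosion of the level dynamics and integrability of the relevant functionals --- presumably exploiting that the quadratic term $c_\theta u^2 \ge 0$ drives levels \emph{down} and thus confines the low levels that matter, together with the uniform-bound-on-initial-mass and $f_0$-tightness hypotheses inherited from Theorem~\ref{thm:nonlocal_convergence}. I would isolate this as a lemma (compact containment plus a moment bound on $\langle \mathbf{1}, \xi_t([0,u_0]\times\cdot)\rangle$) and otherwise defer to the general framework of \citet{etheridge/kurtz:2019}.
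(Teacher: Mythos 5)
Your overall strategy is exactly the paper's: both conclusions are obtained from the Markov Mapping Theorem (Theorem~\ref{thm:mmt}), with $\kappa^N$ and $\kappa$ playing the role of the projection $\gamma$, the uniform (resp.\ conditionally Poisson) level-randomization playing the role of the kernel $\alpha$, and the intertwining identity --- that averaging $A^Nf$ (resp.\ $Af$) over levels reproduces $\Pgen^N$ (resp.\ $\Pgen^\infty$) applied to the level-averaged test function --- verified by the direct computation you describe; the paper isolates that computation as Lemmas~\ref{lem:averaged_generators} and~\ref{lem:averaged_generators_limit} in the Appendix. Your account of how the two birth sub-terms of \eqref{eqn:limiting_lookdown_generator} generate, respectively, the $\gamma\DG(rf)$ drift and the $\alpha\gamma r f^2$ quadratic-variation coefficient is also correct.

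The one point where your plan as written would not discharge the hypotheses is the domination condition of Theorem~\ref{thm:mmt}. What is required is a single continuous $\psi\ge 1$, \emph{independent of the test function} $f$, with $|Af(\lp)|\le c_f\psi(\lp)$, together with integrability of $\tilde\psi$ along the projected process. The obstruction is not non-explosion of the level dynamics but the summand $f(\lp)\sum_{(x,u)\in\lp}F(x,\eta)\,u\,\partial_ug(x,u)/g(x,u)$: its natural bound $\|\partial_ug\|_\infty\sum_{(x,u)\in\lp}|F(x,\eta)|\,u\,\ind_{u\le u_0}$ involves the support level $u_0$ of $g$, which varies with $f$, so it cannot serve as $\psi$. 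The paper's fix is elementary but essential: write $\ind_{u\le u_0}\le e^{u_0}e^{-u}$, absorb $e^{u_0}$ into $c_f$, and take $\psi(\lp)=1+\langle 1+|F(x,\eta)|\,u e^{-u},\lp(dx,du)\rangle$; the required integrability of $\tilde\psi(\eta)=1+\langle 1+|F(x,\eta)|,\eta\rangle$ along the population process then follows from the bound on $\int_0^t\langle|F(x,\eta_s)|,\eta_s(dx)\rangle\,ds$ already established in the proof of Theorem~\ref{thm:nonlocal_convergence} (see equation~(\ref{integral of -F})), so no new compact-containment lemma is needed. Incidentally, in $\dot u=c_\theta u^2-b_\theta u$ the nonnegative quadratic term pushes levels \emph{up}, not down --- it is the $-b_\theta u$ term that can pull them down --- and in any case neither observation is what makes the Markov Mapping Theorem applicable here.
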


Now we can present the main convergence theorem that is analogous
to Theorem~\ref{thm:nonlocal_convergence} for the population process.

\begin{theorem}
    \label{thm:lookdown_convergence}
    Let $(\lp_t^N)$ satisfy Definition~\ref{defn:lookdown_mgale}
    and assume that as $N \to \infty$,
    $\theta \to \infty$ in such a way that $\theta/N \to \alpha$.
    Let $\eta_0^N = \kappa(\lp_0^N)$ and
    suppose also that $\eta^N_0 \to \eta_0$ in $\measures$,
    and that for each $N$, $\lp^N_0$ is conditionally uniform given $\eta_0^N$
    in the sense of \eqref{eqn:conditionally_uniform}.
    Then, $(\lp_t^N)_{t \ge 0}$ has a subsequence which converges in distribution as $N \to \infty$
    to a measure-valued process $(\lp_t)_{t \ge 0}$
    with $\lp_t$ conditionally Poisson given $\eta_t = \kappa(\lp_t)$ for each $t$
    in the sense of \eqref{eqn:conditionally_poisson},
    that is a solution to the martingale problem of Definition~\ref{defn:limiting_lookdown_mgale}.
\end{theorem}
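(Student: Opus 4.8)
# Proof Proposal for Theorem~\ref{thm:lookdown_convergence}

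\textbf{Overall strategy.} The plan is to follow the standard recipe for convergence of martingale-problem solutions: (i) establish tightness of the sequence $(\lp^N_t)_{t\ge0}$ in $\mathcal{D}_{[0,\infty)}(\mathcal{C})$, (ii) show that any subsequential limit solves the martingale problem of Definition~\ref{defn:limiting_lookdown_mgale}, and (iii) verify that the conditionally-Poisson structure is preserved in the limit. The key point throughout is that the lookdown generator $A^N$ of \eqref{eqn:lookdown_generator} converges, termwise, to the limiting generator $A$ of \eqref{eqn:limiting_lookdown_generator}, with the three pieces of $A^N$ (the ``$\DG$-type'' birth term, the ``level-shift below $N$'' birth term, and the level-motion term) each converging to the corresponding piece of $A$. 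The convergences $\theta\int(g(y,u)-g(x,u))r(y,\eta)q_\theta(x,dy)\to\DG(g(\cdot,u)r(\cdot,\eta))(x)-g(x,u)\DG r(x,\eta)$ (via Remark~\ref{rem:DG_limit}), $c_\theta\to\alpha\gamma r$, and $b_\theta\to\gamma\DG r+F$ (equation~\eqref{eqn:b_limit}) are exactly what is needed; the factor $\tfrac1N\int_u^N g(x,u_1)\,du_1\to\int_u^\infty(g(x,u_1)-1)\,du_1+$ (trivial part) and $\tfrac\theta N\int_u^N(\tfrac{g(y,u_1)+g(x,u_1)}2-1)\,du_1\to 2\alpha\gamma r\int_u^\infty(g(x,u_1)-1)\,du_1$ follow since $g(x,\cdot)\equiv1$ above $u_0$ and $\theta/N\to\alpha$.

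\textbf{Tightness.} For tightness I would use the projection $\kappa^N$: since $\kappa^N(\lp^N)=\eta^N$ and Theorem~\ref{thm:nonlocal_convergence} already gives tightness of $(\eta^N)$ in $\mathcal{D}_{[0,\infty)}(\measures)$, it suffices to control the mass of $\lp^N_t$ at high levels uniformly in $N$, so that no mass ``escapes to $u=\infty$.'' Concretely, one applies $A^N$ to test functions of the form $f(\lp)=\exp(-\langle h,\lp\rangle)$ with $h(x,u)=\phi(x)\psi(u)$, $\psi$ supported on $[u_0,\infty)$, and shows the resulting drift is bounded; combined with the conditionally-uniform structure of $\lp^N_t$ given $\mathcal{F}^{\eta^N}_t$ from Proposition~\ref{thm:mmt_application} (which pins down the level-distribution exactly for finite $N$), this gives compact containment and the Aldous--Rebolledo criterion. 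The conditional-uniformity is a genuine workhorse here: it lets us reduce statements about $\lp^N$ to statements about $\eta^N$ plus an explicit (uniform) level law.

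\textbf{Identification of the limit and the Poisson property.} Having extracted a convergent subsequence $\lp^N\Rightarrow\lp$, one passes to the limit in the martingale identity $f(\lp^N_t)-f(\lp^N_0)-\int_0^t A^Nf(\lp^N_s)\,ds$. This requires (a) continuity of $f$ and of $\lp\mapsto Af(\lp)$ in the relevant topology, away from a set of measure zero under the limit law, and (b) uniform integrability to exchange limit and expectation — both handled by the boundedness $0\le g\le1$, $g\equiv1$ above $u_0$, and the uniform bounds in Assumptions~\ref{def:model_setup} (in particular $\gamma$ bounded, $F$ bounded above, and Condition~\ref{gamma_B_condition}, which controls the $\DG$-type birth term uniformly in $\eta$). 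For the conditionally-Poisson property: at each fixed $t$, $\lp^N_t$ is conditionally uniform on $[0,N]$ given $\eta^N_t$, and the elementary fact that a uniform$[0,N]$ point process with $N\to\infty$ (mass density held fixed per unit area) converges to a Poisson process on $[0,\infty)$ with Lebesgue intensity passes through $\kappa$; one makes this rigorous by testing against $e^{-\langle f,\lp^N_t\rangle}$ using \eqref{eqn:conditionally_uniform} and observing $H_f^N(x)=-N\log(1-\tfrac1N\int_0^N(1-e^{-f(x,u)})du)\to\int_0^\infty(1-e^{-f(x,u)})\,du$, which is precisely the exponent in \eqref{eqn:conditionally_poisson}. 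Together with convergence $\eta^N_t\Rightarrow\eta_t$ this yields that $\lp_t$ is conditionally Poisson given $\eta_t=\kappa(\lp_t)$.

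\textbf{Main obstacle.} I expect the hardest part to be the tightness/compact-containment step — specifically, ruling out loss of mass to $u=+\infty$ and simultaneously to spatial infinity, uniformly in $N$. The level dynamics \eqref{differential equation for level} are quadratic in $u$ (the $c_\theta u^2$ term), so individuals' levels can move up quickly, and one must show the compensating downward drift plus the removal-at-$N$ mechanism keeps the number of low-level particles in any spatial region tight; the cleanest route is to exploit that, conditionally on $\eta^N$, the level configuration is \emph{exactly} uniform$[0,N]$ (Proposition~\ref{thm:mmt_application}), so the number of particles with levels in $[0,u_0]$ in a spatial set $B$ is $\mathrm{Binomial}(N\eta^N_t(B),u_0/N)\approx\mathrm{Poisson}(u_0\,\eta^N_t(B))$, reducing everything to the already-established tightness of $\eta^N$ (which itself requires the Lyapunov function $f_0$ from Theorem~\ref{thm:nonlocal_convergence} to prevent escape to spatial infinity). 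The remaining technical care is in justifying the interchange of limits in the martingale problem when $g$ can vanish (the $1/g(x,u)$ factors), which is handled by the convention that such factors cancel against the corresponding factor in $f(\lp)$, making the combinations $f(\lp)/g(x,u)\cdot(\text{increment})$ manifestly bounded.
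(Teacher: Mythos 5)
Your overall architecture --- compact containment plus Aldous--Rebolledo for the real-valued projections $\langle f,\lp^N\rangle$, Stone--Weierstrass to pass to a dense algebra, termwise convergence of the generator, and the Laplace-functional computation $H_f^N(x)\to\int_0^\infty(1-e^{-f(x,u)})\,du$ for the uniform-to-Poisson statement --- is the same as the paper's, and the generator limits you cite ($c_\theta\to\alpha\gamma r$, $b_\theta\to\gamma\DG r+F$, Remark~\ref{rem:DG_limit}) are exactly the ones used. Two points need repair, one of them substantive.

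The substantive one is your ``cleanest route'' to compact containment. Conditional uniformity of $\lp^N_t$ given $\mathcal{F}^{\eta^N}_t$ does make the number of particles with level in $[0,u_0]$ Binomial$(N\eta^N_t(\IR^d),u_0/N)$ \emph{at each fixed $t$}, but compact containment requires bounding $\IP\{\sup_{0\le t\le T}\langle f_k,\lp^N_t\rangle>C_k\}$, and a supremum over a continuum of times is not controlled by the one-dimensional marginals: as levels evolve, different particles dip below $u_0$ at different times, and the running maximum of the count can far exceed any fixed-time value. So the binomial reduction does not by itself ``reduce everything to the already-established tightness of $\eta^N$.'' The paper's Lemma~\ref{lem:h_containment} closes this gap pathwise: it applies the semimartingale decomposition \eqref{eqn:f_xi_mgale} to an integrable, nonincreasing $h(u)$ (so the level-drift term $(c_\theta u^2-b_\theta u)h'(u)$ can be bounded using only $c_\theta\le C_a$ and $b_\theta\le C_b$), controls $\IE[\sup_{t\le T} M^h_t]$ by Burkholder--Davis--Gundy, and only then uses conditional uniformity to convert $\IE[\langle h,\lp^N_s\rangle]$ into $\IE[\langle\tilde h_N,\eta^N_s\rangle]$ and invoke Lemma~\ref{lem:eta_f_bound}. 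Your sketch does mention bounding the drift of exponential test functions, so the ingredients are present, but the maximal-inequality step is the one you cannot skip. The second, smaller point: you take $\psi$ supported on $[u_0,\infty)$; for the vague-in-level topology the functionals that must be controlled are those with $h$ supported on $[0,u_k]$ (or dominated by an integrable decreasing $h$) --- the mass at high levels is of order $N$ and is precisely what the vague topology is designed to ignore, so as written your test functions point at the wrong end of the level axis.
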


Both results are proved in Section~\ref{sec:lookdown_proofs}.

\subsection{Explicit construction of lines of descent}
    \label{sec: individual lines of descent}

The main interest in using a lookdown construction for our population
processes is that it allows us to retain information about the 
relatedness of individuals as we pass to the infinite population limit.
In order to exploit this, in this section we write down stochastic 
equations for the locations and levels of individuals in the prelimiting lookdown model.
We will then be able to pass to the scaling limit.
This provides an explicit description of the solution to
the limiting martingale problem of Definition~\ref{defn:limiting_lookdown_mgale}
which will enable us to identify all 
individuals in the current population that are descendants of a
given ancestor at time zero. In theory at least, this allows us to
recover all the information about genealogies relating individuals 
sampled from the present day population. This idea draws on the notion
of ``tracers'', popular in statistical physics and used in population
genetics by a number of authors including 
\citet{hallatschek/nelson:2008}, \citet{durrett/fan:2016}, and \citet{biswas/etheridge/klimek:2018}.

We will construct the process using a Ulam-Harris indexing scheme.
First, we assign each individual alive at time 0 a unique label from $\IN$.
Suppose an individual with label $a$ and level $u$ reproduces,
and as a result there are two individuals, one with level $u$ and one with a new level $u_1 > u$.
The parent individual, previously labeled $a$, might be assigned either level.
We will track chains of descendant individuals forwards through time by
following levels, rather than individuals, and will call this a \emph{line of descent}.
So, after reproduction, we give a new label to \emph{only} the individual
that is given the new level $u_1$,
retaining the label $a$ for the individual with the old level $u$.
In this way, at each birth event, a unique label is 
assigned to the resulting individual with the higher level,
and the label of an individual may change throughout its lifetime.

Concretely, then: for each label $a$ in
$\labelspace = \bigcup_{k \ge 1} \IN^k$,
let $\Pi_a$ be an independent Poisson process on 
$[0, \infty)^2 \times \IR^d \times \{0,1\}$.
The mean measure of each $\Pi_a$ is a product of Lebesgue measure on $[0, \infty)^2$,
the density of the standard Gaussian on $\IR^d$, and 
$(\delta_0 + \delta_1)/2$ on $\{0, 1\}$.
It will also be convenient
to suppose that for each label $a$ we have an enumeration of the points in $\Pi_a$,
so we may refer to ``the $j^\text{th}$ point in $\Pi_a$'',
although the precise order of this enumeration is irrelevant.
If $(\tau, v, z, \kappa)$ is the $j^\text{th}$ point in $\Pi_a$,
then $\tau$ will determine a possible birth time,
$v$ will determine the level of the offspring,
$z$ will determine the spatial displacement of the offspring relative to the parent,
$\kappa$ will be used to determine whether parent or offspring is assigned the new level,
and the new label produced will be $a \concat j$,
i.e., the label $a$ with $j$ appended
(so, if $a = (a_1, \ldots, a_k)$ then $a \concat j = (a_1, \ldots, a_k, j)$).
Each label $a$ has a birth time $\tau_a$,
when it is first assigned,
and a (possibly infinite) death time $\sigma_a$, when its level first hits $N$.
For any $\tau_a \le t \le \sigma_a$ we denote by $X_a(t)$ and $U_a(t)$ the spatial location and level
of the individual carrying label $a$ at time $t$, respectively.
Furthermore, define
\begin{align*}
    \eta^N_t = \frac{1}{N} \sum_{a : \tau_a \le t < \sigma_a} \delta_{X_a(t)}
    \qquad \text{and} \qquad
    \lp^N_t = \sum_{a : \tau_a \le t < \sigma_a} \delta_{(X_a(t), U_a(t))} .
\end{align*}

Now, since we have defined labels so that the level does not jump,
$U_a$ satisfies~\eqref{differential equation for level}
	for $\tau_a \le t \le \sigma_a$, i.e.,
\begin{equation} \label{eqn:U_line_of_descent}
    \begin{split}
& U_a(t)
    =
    U_a(\tau_a) \\
&\qquad {}   
    + \int_{\tau_a}^{t}
    \left(
        c_\theta(X_a(s),\eta_s) U_a(s)^2
        -
        b_\theta(X_a(s),\eta_s) U_a(s)
    \right)
    ds ,
\end{split}
\end{equation}
and, of course, $\sigma_a = \inf\{t \ge \tau_a : U_a(t) > N\}$.

Potential reproduction events occur at times $\tau$
for each point $(\tau, v, z, \kappa) \in \Pi_a$ with $\tau_a \le \tau < \sigma_a$.
(We say ``potential'' since if the level of the resulting offspring is greater than $N$,
the event does not happen.)
If this is the $j^\text{th}$ point in $\Pi_a$,
the potential new label is $a \concat j$, the birth time is $\tau_{a \concat j} = \tau$,
and the spatial displacement of the potential offspring is $y(X(\tau-), z)$, where
$$
    y(x, z)
    :=
    \frac{1}{\theta}\meanq(x)
    +
    \frac{1}{\sqrt{\theta}}K(x) z,
$$
and $K(x)K^{T}(x) = \covq(x)$.

Next we must choose the new level created at the birth event.
We would like an individual with level $u$ and at spatial position $x$
to produce offspring at $y$ at instantaneous rate
\begin{equation}
	\label{rate of birth}
	2 \Big(1 - \frac{u}{N}\Big) \theta \gamma(x, \eta) r(x + y, \eta).
\end{equation}
To do this
we will associate the point $(\tau, v, z, \kappa) \in \Pi_a$ with 
level $u + v \ell$,
where $\ell$ is chosen so that the rate of appearance of points in 
$\Pi_a$ with level below $N$, that is points with
$v \ell <N-u$, is given by~(\ref{rate of birth}).
Since the mean measure of $\Pi_a$ is Lebesgue measure
in the $t$ and $v$ directions, we must take
\begin{equation}
	\label{expression for ell}
    \ell(x, y, \eta)
    =
    \frac{N-u}{2 (1-u/N) \theta \gamma(x, \eta) r(x + y, \eta) } 
    =\frac{1}{2N^{-1} \theta \gamma(x, \eta) r(x + y, \eta) } 
    ,
\end{equation}
and, using this, the (potential) new level is
\begin{equation*}
    U_{a \concat j}(\tau)
    =
    U_a(\tau)
    +
    v \ell\big(X_a(\tau-), y(X_a(\tau-), z), \eta_{\tau-} \big) .
\end{equation*}
If $U_{a \concat j}(\tau) < N$,
the new individual labeled $a \concat j$ is produced,
and $\kappa$ determines which label, $a$ or $a\concat j$,
is associated with the new location,
so
\begin{align*}
    X_{a \concat j}(\tau)
    &=
    X_a(\tau-) + (1 - \kappa) y\big(X_a(\tau-), z\big) .
\end{align*}
On the other hand
if $U_{a \concat j}(\tau) \ge N$, then $X_a$ is unchanged and 
$X_{a \concat j}$ is undefined,
so
\begin{align} \label{eqn:X_line_of_descent}
    X_a(\tau)
    &=
    X_a(\tau-) + \kappa y(X_a(\tau-), z) \ind_{U_{a \concat j}(\tau) < N} .
\end{align}
Recall that the parental \emph{individual} always retains their spatial location,
so that $\kappa = 1$ corresponds to the parent being assigned a new level,
and our line of descent switching to the offspring.
Combining these observations, $X_a$, for $\tau_a \le t < \sigma_a$, solves the equation 
\begin{align*}
    X_a(t)
    &=
    X_a(\tau_a) + 
    \int_{[\tau_a, t) \times [0, \infty) \times \IR \times [0, 1]}
    y(X_a(\tau-), z)
    \kappa
    \ind_{
        U_a(\tau) + v \ell(X_a(\tau-), y(X_a(\tau-), z), \eta_{\tau-})<N
    }
    d\Pi_a(\tau, v, z, \kappa) .
\end{align*}

Although we have described the evolution of a line of descent only for a given label
(i.e., for $\tau_a \le t < \sigma_a$),
we can extend the definition to times
$0 \le t < \sigma_a$ by setting
$X_a(t)$ equal to $X_{[a]_t}(t)$,
where $[a]_t$ is the label of the ancestor of label $a$ alive at time $t$,
and similarly for $U_a(t)$.
It is then straightforward, albeit tedious,
to write down the time evolution of $(X_a(t), U_a(t))$ for all time back 
to $t=0$
in terms of the driving Poisson processes.

\begin{remark}
Although we have a single construction that couples the processes across all $N$,
unlike in \citet{kurtz/rodrigues:2011} the actual trajectories, $X_a(\cdot)$,
do not necessarily coincide for different values of $N$,
since they are affected by the whole population process.
However, this does suggest approximating the genealogies in the infinite density limit
by simulating up until a sufficiently high level
that we have a good approximation to the population process.
\end{remark}


\subsection{Limiting processes for lines of descent}
\label{sec:limiting_lines_of_descent}

The previous section constructed the lookdown process
using the same underlying Poisson processes $\{\Pi_a\}_{a \in \labelspace}$ for
different values of $N$.
As a result, if the spatial projections $\eta$ converge, then
individual lines of descent converge pointwise (i.e., for each realization of $\{\Pi_a\}_{a \in \labelspace}$)
as $N \to \infty$. \revpoint{1}{23}
To see this, first note that if the Poisson processes are fixed
then the set of events with which a given label $a \in \labelspace$ is associated
is also fixed -- this is the sequence $(\tau_k, v_k, z_k, \kappa_k)$ associated 
with the label $a$.
To conclude that the lines of descent converge, 
first, we clearly need that the spatial projections $\eta$ converge.
Supposing that they do,
consider how a line of descent $(X_a(t), U_a(t))$ evolves.
It throws off a new line of descent at a higher level
when there is a point $(\tau, v, z, \kappa)$ in $\Pi_a$ with $\tau > \tau_a$ and
\begin{equation} \label{eqn:line_points}
    v < 2 \frac{\big(N - U_a(\tau)\big)}{N} \theta \gamma(X_a(\tau-), \eta_{\tau-}) 
	r\Big(X_a(\tau-) + y\big(X_a(\tau-),z\big), \eta_{\tau-}\Big) .
\end{equation}
Since the mean measure of the $v$ coordinate is Lebesgue measure,
$\theta/N \to \alpha$,
and $q_\theta(x, dy) \to \delta_x(dy)$,
this corresponds in the limit to new lines of descent being thrown off 
according to a Poisson process with intensity
$$
2 \alpha \gamma(X_a(t), \eta_t) r(X_a(t), \eta_t)dt \times du.
$$
Now consider the location of the line of descent:
at each birth event, with probability one half
the line of descent jumps to $X_a(t) + y$.
Taking $g$ to be a suitable test function on $\IR^d$, 
and rewriting~\eqref{eqn:line_points}, when the level is $u$
and the state of the population is $\eta$, the generator of the
spatial motion of the line of descent
applied to $g(x)$ is
\begin{align*}
    &
    \left(1 - \frac{u}{N}\right) \gamma(x, \eta)
    \theta \int_{\IR^d} r(x+y, \eta) (g(x+y) - g(x)) q_\theta(x, dy) \\
    &\qquad {}
    =
    \left(1 - \frac{u}{N}\right) \gamma(x, \eta)
    \bigg\{
        \theta \int_{\IR^d} (r(x+y, \eta) g(x+y) - r(x, \eta) g(x)) q_\theta(x, dy) \\
        &\qquad \qquad {}
        -
        \theta \int_{\IR^d} (r(x+y, \eta) - r(x, \eta) ) g(x) q_\theta(x, dy) 
    \bigg\} \\
    &\qquad {}
    \to
    \gamma(x, \eta)
    \left(
        \DG(rg)(x) - g(x) \DG(r)(x)
    \right) ,
    \qquad \text{as } N, \theta \to \infty .
\end{align*}
Notice that the factors of 2 have cancelled,
and that the result is independent of $u$.
Also recall that $r(x, \eta)$ depends on $\eta$ only
through $\smooth{r}\eta(x)$,
which is guaranteed to be smooth, so that $\DG(r)$ 
and $\DG(gr)$ are well-defined.

We write out the differential operator above in more detail.
Recall that $\DG g(x) = \sum_i \meanq_i \partial_i g(x) + \frac{1}{2}\sum_{ij} \covq_{ij} \partial_{ij} g(x)$,
and for the moment write $r(x)$ for $r(x, \eta)$, $\meanq(x) = \meanq$, and $\covq(x) = \covq$
so that
\begin{align}
\DG(rg)(x) - g(x) \DG(r)(x)
    &= \nonumber
    r(x) \sum_i \meanq_i \partial_i g(x)
    + \sum_{ij} \partial_i r(x) \covq_{ij} \partial_j g(x)
    + \frac{1}{2} r(x) \sum_{ij} \covq_{ij} \partial_{ij} g(x) \\
    &= \label{eqn:limiting_generator}
    r(x) \left\{
        \left(
        \meanq
        + \covq \grad \log r(x)
        \right)
        \cdot
        \grad g(x)
        +
        \frac{1}{2} \sum_{ij} \covq_{ij} \partial_{ij} g(x)
    \right\} .
\end{align}

The only thing that remains is to describe how the levels change,
but this is immediate from applying limit~\eqref{eqn:b_limit}
to equation~\eqref{differential equation for level}.

We summarize the results in a proposition.

\begin{proposition}[Line of descent construction]
    \label{prop:limiting_construction}
Define $J(x,\eta)$ and $\beta(x,\eta)$ by
\begin{gather*}
    r(x,\eta)\gamma(x,\eta)\covq(x) = J(x,\eta) J(x,\eta)^T \\
    \beta(x, \eta) = r(x,\eta)\gamma(x,\eta)\big(\meanq(x) + \covq(x) \grad \log r(x, \eta)\big) .
\end{gather*}
Associate with each label $a \in \labelspace = \cup_{k\geq 1}\IN^k$
an independent $d$-dimensional Brownian motion $W_a$
and an independent Poisson process $R_a$ on $[0, \infty)^2$
with Lebesgue mean measure,
and with points ordered in some way.
Given $\eta_0 \in \measures$,
let $(x_i, u_i)$ be the points of a Poisson process
on $\IR^d \times [0, \infty)$ 
with mean measure $\eta_0\times \lambda$
(the product of $\eta_0$ and Lebesgue measure).
For each $i$, begin a line of descent
with label $i$, location $X_i(0) = x_i$, 
level $U_i(0) = u_i$, and birth time $\tau_i = 0$.

Write $\tau_a$ for the birth time of the label $a$ and 
$\sigma_a = \lim_{u_0 \to \infty} \inf\{t \ge 0: U_a(t) > u_0\}$
the time the level hits $\infty$.
Suppose that the spatial locations and level of each line of descent $a$
solve, for $\tau_a \le t < \sigma_a$,
\begin{align}
    \label{eqn:limiting_construction}
    \begin{split}
X_a(t)
    &=
    X_a(\tau_a)
    + \int_{\tau_a}^{t}
        \beta(X_a(s), \eta_s) ds
    + \int_{\tau_a}^{t}
        J(X_a(s),\eta_s) dW_a(s)
    \\
U_a(t)
    &=
    U_a(\tau_a)
    + \int_{\tau_a}^{t}
    \bigg(
        \alpha \gamma(X_a(s),\eta_s)
        r(X_a(s), \eta_s) U_a(s)^2
\\ &\qquad \qquad \qquad {}   
        -
        \big\{
            \gamma(X_a(s),\eta_s) \DG r(X_a(s),\eta_s)
            + F(X_a(s), \eta_s)
        \big\}
        U_a(s)
    \bigg)
    ds ,
    \end{split}
\end{align}
where $\eta_t = \lim_{u_0 \to \infty} \eta_t^{[u_0]}$ and
\begin{align*}
    \eta_t^{[u_0]}
    :=
    \frac{1}{u_0}
        \sum_{a : \tau_a \le t < \sigma_a \; ; \; U_a(t) < u_0} \delta_{X_a(t)} .
\end{align*}
Each point in each $R_a$ denotes a potential birth time for $a$:
if the $j^\text{th}$ point in $R_a$ is $(\tau, v)$, with 
$\tau_a \le \tau < \sigma_a$,
then a new line of descent with label $a \concat j$ is produced,
with birth time $\tau_{a \concat j} = \tau$,
    location $X_{a \concat j}(\tau) = X_a(\tau)$, and level
\begin{align} \label{eqn:limiting_constribution_U}
    U_{a \concat j}(\tau) = U_a(\tau)
    + \frac{v}{ 2 \alpha \gamma(X_a(\tau), \eta_\tau) r(X_a(\tau), \eta_\tau) } ,
\end{align}
if this is finite.
For any solution $\{(X_a(t), U_a(t))_{t \ge 0}: a \in \labelspace \}$  \revpoint{1}{24}
to~\eqref{eqn:limiting_construction} and~\eqref{eqn:limiting_constribution_U},
the process $\eta_t$
is a solution to the martingale problem of Theorem~\ref{thm:nonlocal_convergence},
and the process
\begin{align} \label{eqn:lp_defn_XU}
    \lp_t = \sum_{a : \tau_a \le t < \sigma_a} \delta_{(X_a(t), U_a(t))} 
\end{align}
is a solution to the martingale problem of
Theorem~\ref{thm:lookdown_convergence}.
\end{proposition}

In particular, note that if $\alpha=0$,
no new lines of descent are produced.
More precisely, comparing with~(\ref{expression for ell}),
they are produced, but ``at infinity'',
and their trace is seen in the 
spatial motion of the line of descent which results from
the production of these lineages.

\begin{proof}[Proof of Proposition~\ref{prop:limiting_construction}:]
    Let $(X, U) = \{(X_a(t), U_a(t))_{t \ge 0}: a \in \labelspace \}$ \llabel{XU_clarification}
    be a solution to the system of equations~\eqref{eqn:limiting_construction}
    and~\eqref{eqn:limiting_constribution_U}.
    The fact that $\lp$ defined with these using~\eqref{eqn:lp_defn_XU}
    is a solution to the martingale problem of Theorem~\ref{thm:lookdown_convergence}
    is an application of It\^o's theorem.
    Furthermore, in Proposition~\ref{thm:mmt_application} we showed that
    the conditional Poisson property of $\lp_0$ is preserved
    (i.e., holds for $\lp_t$ for all $t$), and so
    $(\eta_t)_{t \ge 0}$ is well-defined,
    and furthermore that $\eta_t$ is a solution
    to the martingale problem of Theorem~\ref{thm:nonlocal_convergence}.

    For completeness, we should also show that $\eta_t$ defined in this way is c\`adl\`ag.
    However, this can be verified by 
    considering $\eta_t$ as a limit of the c\`adl\`ag processes $\eta^{[u_0]}_t$.
\end{proof}

\begin{remark} \label{remark_on_branching}
The process $\lp$ we consider is similar to the state-dependent branching processes \revpoint{1}{27}
of \citet{kurtz/rodrigues:2011}, so one might expect that the proofs there would carry over with little change.
However, there is an important difference:
Recall that the level $U_a(t)$ of a line of descent evolves as
\begin{equation} \label{eqn:u_evolution}
\dot u = c_\theta(x, \eta) u^2 - b_\theta(x, \eta) u,
\end{equation}
where $b_\theta(x, \eta)$ and $c_\theta(x, \eta)$ are defined in \eqref{b_defn}
and \eqref{c_defn} respectively.
Note that $c_\theta(x, \eta) \ge 0$, while $b_\theta(x, \eta)$ may take either sign.
Assumptions~\ref{def:model_setup} imply that $c_\theta(x, \eta)$ is bounded,
while $b_\theta(x, \eta)$, because of $F(x, \eta)$, is bounded above but not necessarily below.
In \citet{kurtz/rodrigues:2011}, $b_\theta$ was bounded above and $c_\theta$ was bounded away from zero,
so they noted that if $U_a(t) \ge b_\theta/c_\theta$ for some label $a$,
that line of descent would only move upwards from that time onwards.
Furthermore, coefficients did not depend on the state of the process (i.e., on $\eta$),
thus allowing the processes to be jointly and simultaneously constructed
for all values of $N$, with a pointwise embedding
of $(\lp^N_t)_{t \ge 0}$ within $(\lp^{M})_{t \ge 0}$ for $b_\theta/c_\theta < N < M$.
In other words, individuals with levels above $N > b_\theta/c_\theta$ at time $t_0$
do not affect $(\lp^N_t)_{t \ge t_0}$,
thus allowing a comparison of the number of lines of descent below level $u_0$
to a branching process.
Although we have provided a
joint construction of $\lp^N$ for all $N$ in Section~\ref{sec: individual lines of descent},
it does not have this monotonicity:
for one thing, $b_\theta$ and $c_\theta$ depend on the population process $\eta$
and so all individuals can affect all other ones (even those with lower levels).
Furthermore, in the deterministic case $\theta/N$, and hence $c$, converges to zero,
and so lines of descent with arbitrarily high level may drift back downwards.
Indeed, this must be the case if the population persists,
since in the deterministic case there is no branching.
\end{remark}

\section{Proofs of convergence for nonlocal models}
\label{sec:proofs}

In this section we present proofs of the first two of our
three scaling limits.
In Subsection~\ref{sec:population_density_proof} 
we prove Theorem~\ref{thm:nonlocal_convergence}, to obtain (both
stochastic and deterministic) limits in
which interactions between individuals in the population are nonlocal.
In Subsection~\ref{subsec:nonlocal to local}
we show how, in two important examples in which the nonlocal
limit is respectively a deterministic solution to a non-local equation of 
reaction-diffusion type and a deterministic solution to a nonlocal 
porous medium equation with an additional logistic growth term, 
one can pass to a further 
limit to obtain a classical PDE.

\subsection{Preliminaries}
\label{sec:preliminary_proofs}

Below we will have frequent use for the quantity
\begin{equation}
\label{generator Bf}
    B^\theta_f(x, \eta) = \theta \int_{\IR^d} (f(y) r(y, \eta) - f(x) r(x, \eta)) q_\theta(x, dy) .
\end{equation}

First, we prove Lemma~\ref{lem:conditions on r}.

\begin{proof}[Proof of Lemma~\ref{lem:conditions on r}:]
Here, we need to prove that $|\gamma(x, \eta) B^\theta_f(x, \eta)|$ is bounded,
uniformly over $x$ and $\eta$.
Note that Conditions~\ref{def:model_setup} assume nothing about $\eta$,
and so, for instance, although $r(x,m)$ has uniformly bounded derivatives,
it might still be the case that $r(x,\eta) = r(x, \smooth{r}\eta(x))$ changes arbitrarily rapidly;
the additional conditions of the Lemma prevent this from happening.

First suppose that
assumption~\ref{control through r} of  
Lemma~\ref{lem:conditions on r} is satisfied. \revpoint{1}{28}
We write
\begin{align*}
    r(y,\eta) f(y) - r(x,\eta) f(x)
&=
    r(y,\eta) (f(y) - f(x))
    + (r(y,\eta) - r(x,\eta)) f(x) 
\\ &=
    r(y,\eta) \left(
        \sum_i (y-x)_i \partial_{x_i} f(x)
        + \sum_{ij} (y-x)_i (y-x)_j \partial_{x_i x_j} f(z_1)
    \right)
\\ &\qquad {}
    + f(x) \left(
        \sum_i (y-x)_i \partial_{x_i} r(x,\eta)
        + \sum_{ij} (y-x)_i (y-x)_j \partial_{x_i x_j} r(z_2,\eta)
    \right)
 \\ &=
    \left( r(x,\eta) + \sum_j (y-x)_j \partial_{x_j} r(z_3,\eta) \right)
    \left(
        \sum_i (y-x)_i \partial_{x_i} f(x)
    \right)
\\ &\qquad {}
    + r(y,\eta) 
        \sum_{ij} (y-x)_i (y-x)_j \partial_{x_i x_j} f(z_1)
\\ &\qquad {}
    + f(x) \left(
        \sum_i (y-x)_i \partial_{x_i} r(x,\eta)
        + \sum_{ij} (y-x)_i (y-x)_j \partial_{x_i x_j} r(z_2,\eta)
    \right) ,
\end{align*}
for some $z_i = \kappa_i x + (1-\kappa_i) y$.
Integrating this against $q(x, dy)$, we get that
\begin{align*}
& \bigg| \theta \int \left(
    r(y,\eta) f(y) - r(x,\eta) f(x)
\right) q_\theta(x, dy) \bigg|
\\ &\qquad \le
    \bigg| \sum_i \left( r(x,\eta) \partial_{x_i} f(x) + f(x) \partial_{x_i} r(x,\eta) \right)
    \theta \int (y-x)_i q_\theta(x, dy) \bigg|
\\ &\qquad \qquad {} +
    \bigg| f(x) 
    \theta \int \sum_{ij} \partial_{x_i x_j} r(z_2,\eta) (y-x)_i (y-x)_j q_\theta(x, dy) \bigg|
\\ &\qquad \qquad {} +
    \bigg| \theta \int \sum_{ij} (y-x)_i (y-x)_j 
        \left(
            \partial_{x_i} f(x) \partial_{x_j} r(z_3,\eta)
            + r(y,\eta) \partial_{x_ix_j} f(z_1)
        \right) q_\theta(x, dy) \bigg| .
\end{align*}
Since $q_\theta(x,dy)$ is the density of a Gaussian with mean $\meanq(x)/\theta$
and covariance $\covq(x)/\theta$, and both $\meanq(x)$ and $\covq(x)$ are uniformly bounded,
$\theta \int (y-x)_i q_\theta(x,dy)$ is bounded as well.
Furthermore, a change of variables that diagonalizes $\covq(x)$
shows for any $g : \IR^d \to \IR^{d + d}$,
that if $C_g = \sup_y \sup_{\|z\| = 1} \sum_{ij} g(y)_{ij} z_i z_j$
and $\lambda_* = \sup_y \sup_{\|z\|=1} \sum_{ij} \covq(y)_{ij} z_i z_j$
then
$$ \theta \int \sum_{ij} g(y)_{ij} (y-x)_i (y-x)_j q_\theta(x, dy) \le C_g \lambda_* .  $$
Condition~\ref{control through r} 
gives uniform bounds on the derivatives of $r(x,\eta) = r(x,\rho_r*\eta(x))$
in this expression and so, provided $f$ also has uniformly bounded first and second derivatives,
we have a bound of the form
\begin{align*}
    |B^\theta_f| \leq K_1 + K_2 |f(x)| ,
\end{align*}
for suitable constants $K_1$, $K_2$ that depend only on the derivatives of $f$.

Now suppose instead that
assumption~\ref{control through gamma} of Lemma~\ref{lem:conditions on r} is satisfied.
First note that
\begin{align}
	    \nonumber
    & |B_f^\theta| =
        \left| \theta 
            \int_{\IR^n} 
                \big\{
                    f(y) r\big(y,\smooth{r}\eta(y)\big)
                    -
                    f(x) r\big(x,\smooth{r}\eta(x)\big)
                \big\}
            q_{\theta}(x,dy)
        \right|
    \\ 
 \label{eqn:q_bound_split}
	    &\qquad {} \leq
        \left| \theta 
            \int_{\IR^n} 
                \big\{
                    f(y) r\big(y,\smooth{r}\eta(y)\big)
                    -
                    f(x) r\big(x,\smooth{r}\eta(y)\big)
                \big\}
            q_{\theta}(x,dy)
        \right|
    \\ 
\nonumber
	    &\qquad \qquad {}  +
        \left| \theta 
            \int_{\IR^n} 
                \big\{
                    f(x) r\big(x,\smooth{r}\eta(y)\big)
                    -
                    f(x) r\big(x,\smooth{r}\eta(x)\big)
                \big\}
            q_{\theta}(x,dy) 
        \right| .
\end{align}
(Note the extra term introduced here,
$r(x,\smooth{r}\eta(y))$, has the two arguments to $r$
``at different locations'', contrary to the usual pattern.)

Writing
$K_3 = \sup_{x,m} \max_i |\partial_{x_i} f(x)r(x, m)|$
and 
$K_4 = \sup_{x,m} \max_{i,j} |\partial_{x_i x_j} f(x)r(x, m)|$,
the first term is bounded exactly as above. For the second, 
    \begin{multline*}
        r(x, \smooth{r}\eta(y)) - r(x, \smooth{r}\eta(x))
        \\
        =
        (\smooth{r}\eta(y) - \smooth{r}\eta(x)) r'(x,\smooth{r}\eta(x))
        + \frac{1}{2} (\smooth{r}\eta(y) - \smooth{r}\eta(x))^2 r''(x, \overline{m}),
    \end{multline*}
    where $\overline{m}= \kappa'\smooth{r}\eta(x) +(1-\kappa') 
\smooth{r}\eta(y)$ for some $0\leq \kappa'\leq 1$, and
    we have used $r'$ and $r''$ to denote the first and second derivatives of $r(x,m)$
with respect to the second argument.
    So, writing $K_5=\|r'\|_\infty$ and $K_6=\|r''\|_\infty$,
    the second term in~\eqref{eqn:q_bound_split} is bounded by $|f(x)|$ multiplied by
    \begin{align*}
       K_5
        \left|
            \theta \int_{\IR^d}
                (\smooth{r}\eta(y) - \smooth{r}\eta(x))
            q_\theta(x, dy)
        \right|
        +
       K_6
        \left|
            \theta \int_{\IR^d}
                (\smooth{r}\eta(y) - \smooth{r}\eta(x))^2
            q_\theta(x, dy)
        \right| .
    \end{align*}
Under Condition~\ref{control through gamma} of 
Lemma~\ref{lem:conditions on r}, this is bounded by 
a constant times $\smooth{\gamma}\eta(x)+(\smooth{\gamma}\eta(x))^2$ and 
$\sup_x m^2\gamma(x, m)$ is bounded.
Therefore $|\gamma(x,\eta) B^\theta_f(x,\eta)| \le K_7 + K_8 |f(x)|$,
where $K_7$ comes from $K_3$, $K_4$, and the supremum of $\gamma$,
while $K_8$ comes from $K_5$, $K_6$, and the supremum of $m^2 \gamma(x,m)$.
\end{proof}

\subsection{Proof of Theorem~\ref{thm:nonlocal_convergence}: convergence for the nonlocal process}
    \label{sec:population_density_proof}

In this section we prove Theorem~\ref{thm:nonlocal_convergence}.
This would be implied by convergence of the lookdown process
(see \citet{kurtz/rodrigues:2011} and \citet{etheridge/kurtz:2019});
however in our setting,
because the parameters in the lookdown process
depend on the empirical distribution,
we actually use tightness of the sequence of population processes
in the proofs of tightness for the corresponding lookdown processes.

\begin{proof}[Proof of Theorem~\ref{thm:nonlocal_convergence}.]
The proof follows a familiar pattern \revpoint{1}{29}
(see, for instance, Section~1.4 of~\citet{etheridge2000introduction}).
First we extend $\IR^d$ 
to its one-point compactification $\overline{\IR}^d$ and 
establish, in Lemma~\ref{lem:eta_compact_containment},
compact containment of the 
sequence of scaled population
processes in $\cmeasures$ 
(for which, since we have compactified $\IR^d$,
it suffices to consider the sequence of total masses); armed with this, 
tightness of the population processes in $\mathcal{D}_{[0,\infty)}(\cmeasures)$
follows from tightness of the real-valued processes $(H(\eta_t))_{t\geq 0}$ for a sufficiently large
class of test functions $H$, which we establish through an application of the
Aldous-Rebolledo criterion in Lemma~\ref{lem:eta_projections_tightness}.
These ingredients are gathered together in 
Proposition~\ref{tightness in one point compactification}
to deduce tightness of the scaled population processes in 
the larger space $\mathcal{D}_{[0,\infty)}(\cmeasures)$.

We then characterise
limit points as solutions to a martingale problem in
Lemma~\ref{lem:limit_mgale}; finally in Lemma~\ref{no mass at infty}
we check that 
in the process of passing to the limit, no mass `escaped to infinity', so that
in fact the limit points take values in $\mathcal{D}_{[0,\infty)}(\measures)$.
\end{proof}

As advertised, we work with the one-point compactification of $\IR^d$ and
consider $(\eta^N_t)_{t\geq 0}$ as a sequence of 
$\cmeasures$-valued processes. Since, for each $K>0$,
$\{\eta: \langle 1,\eta\rangle\leq K\}$
is a compact set in $\cmeasures$, we shall focus on 
controlling $(\langle 1,\eta^N_t\rangle)_{t\geq 0}$.
The key is that Assumptions~\ref{def:model_setup}
are precisely chosen to guarantee boundedness
of the net per-capita reproduction rate.

\begin{lemma}
    \label{lem:eta_f_bound}
    Under Assumptions~\ref{def:model_setup},
    for all $f \in C^2_b(\IR^d)$ with uniformly bounded first and 
	second derivatives, and all $T>0$, there exists a $C=C(f,T) < \infty$, 
	independent of $N$,
    such that
    \begin{equation}
\label{eta_f_bound}
        \IE[\langle f, \eta^N_t \rangle]
        \le
	    C\IE[\langle 1, \eta^N_0 \rangle]
    \end{equation}
    for all $N \geq 1$.
\end{lemma}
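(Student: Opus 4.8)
The plan is to derive a differential inequality for $\IE[\langle f, \eta^N_t\rangle]$ using the martingale problem of Definition~\ref{defn:mgale_construction}, and then apply Gronwall's lemma. Taking expectations in~\eqref{eqn:eta_martingale} (the martingale term vanishes, after a standard localization argument using the compact containment that one can bootstrap from the pure-birth domination mentioned after Definition~\ref{defn:mgale_construction}), we obtain
\[
    \IE[\langle f, \eta^N_t\rangle]
    =
    \IE[\langle f, \eta^N_0\rangle]
    + \int_0^t \IE\Big[
        \big\langle \gamma(x,\eta^N_s) B^\theta_f(x,\eta^N_s) + f(x) F(x,\eta^N_s),\, \eta^N_s(dx)\big\rangle
    \Big]\,ds,
\]
where $B^\theta_f$ is the quantity defined in~\eqref{generator Bf}. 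The two structural facts that make this work are exactly the content of Assumptions~\ref{def:model_setup}: first, Condition~\ref{gamma_B_condition} gives $|\gamma(x,\eta) B^\theta_f(x,\eta)| \le C_f(1 + |f(x)|)$ uniformly in $x$ and $\eta$; and second, Condition~3(b) gives $F(x,m) \le C_F$ uniformly (bounded above, though not below). Hence the integrand is bounded above by
\[
    \big\langle C_f(1 + |f(x)|) + C_F f(x),\, \eta^N_s(dx)\big\rangle
    \le
    C_f \langle 1, \eta^N_s\rangle + (C_f + C_F)\|f\|_\infty \langle 1, \eta^N_s\rangle
    =: K_1 \IE[\langle 1, \eta^N_s\rangle],
\]
using $f \ge 0$ and $f$ bounded. (Here it is convenient that $f$ is taken bounded, so $|f(x)| = f(x) \le \|f\|_\infty$.)

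Next I would run the same computation with the test function $f \equiv 1$: then $B^\theta_1(x,\eta) = \theta\int (r(y,\eta) - r(x,\eta))q_\theta(x,dy)$, so $\gamma B^\theta_1$ is again bounded by Condition~\ref{gamma_B_condition} (applied with the constant function, whose derivatives vanish, giving a bound $C_1$), and $F(x,\eta) \le C_F$. Therefore
\[
    \IE[\langle 1, \eta^N_t\rangle]
    \le
    \IE[\langle 1, \eta^N_0\rangle] + (C_1 + C_F)\int_0^t \IE[\langle 1, \eta^N_s\rangle]\,ds,
\]
and Gronwall gives $\IE[\langle 1, \eta^N_t\rangle] \le e^{(C_1 + C_F)T}\,\IE[\langle 1, \eta^N_0\rangle]$ for all $t \le T$, with a constant independent of $N$. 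Substituting this bound back into the inequality for $\IE[\langle f, \eta^N_t\rangle]$ and integrating yields
\[
    \IE[\langle f, \eta^N_t\rangle]
    \le \IE[\langle f, \eta^N_0\rangle] + K_1 \int_0^t e^{(C_1+C_F)T}\IE[\langle 1, \eta^N_0\rangle]\,ds
    \le \big(\|f\|_\infty + K_1 T e^{(C_1+C_F)T}\big)\IE[\langle 1, \eta^N_0\rangle],
\]
which is the claimed bound~\eqref{eta_f_bound} with $C = C(f,T) = \|f\|_\infty + K_1 T e^{(C_1+C_F)T}$.

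The main obstacle is the justification that the local martingale $M^N_t(f)$ is a genuine martingale, i.e. that its expectation is zero — a priori one only knows non-explosion by domination by a pure birth process, which does not immediately give integrability of $\langle 1, \eta^N_t\rangle$. The standard fix is to introduce stopping times $T_k = \inf\{t : \langle 1, \eta^N_t\rangle \ge k\}$, apply optional stopping to get the inequality for $\IE[\langle f, \eta^N_{t \wedge T_k}\rangle]$ (all terms are then bounded), run Gronwall at the stopped level to get a bound uniform in $k$, and let $k \to \infty$ by monotone/Fatou. Because the upper bounds on $\gamma B^\theta_f$ and on $F$ are genuine upper bounds (not just on the absolute value of $F$), this passage to the limit is clean; the one-sided nature of the bound on $F$ is no problem since we only need an upper bound on $\IE[\langle f, \eta^N_t\rangle]$. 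Everything else is routine.
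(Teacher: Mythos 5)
Your proof is correct and follows essentially the same route as the paper's: take expectations in the semimartingale decomposition, bound the drift by $C_f(1+f(x))$ using Condition~\ref{gamma_B_condition} together with the upper bound on $F$, apply Gronwall first with $f\equiv 1$ and then substitute back. The only difference is that you spell out the localization argument justifying $\IE[M^N_t(f)]=0$, which the paper leaves implicit; that is a reasonable addition but not a different approach.
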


\begin{proof}
Consider the semimartingale decomposition from equation~\eqref{eqn:eta_martingale}:
\begin{align} \label{eqn:eta_f_mgale_decomp}
    \langle f, \eta^N_t \rangle
    &=
    \langle f, \eta^N_0 \rangle
    + \int_0^t \int_{\IR^d} \big\{
        \gamma(x, \eta^N_s) B^\theta_f(x, \eta^N_s)
        + f(x) F(x, \eta^N_s)
        \big\} \eta^N_s(dx) ds
    + M^N_t(f) ,
\end{align}
where $M^N_t(f)$ is a martingale and $B^\theta_f$ is defined in \eqref{generator Bf}.
First note that Condition~\ref{gamma_B_condition} of Assumptions~\ref{def:model_setup}
stipulates that $|\gamma B^\theta_f|$ is uniformly bounded by a contant times $1 + f$,
and so recalling that $F$ is bounded above, we conclude that
under Assumptions~\ref{def:model_setup}
$\gamma(x, \eta) B^\theta_f(x, \eta) + f(x) F(x, \eta) \le C_f (1 + |f(x)|)$ for some $C_f$.

Now, taking expectations in~(\ref{eqn:eta_f_mgale_decomp}),
    \begin{align}
\label{bound on intfdeta}
        \IE\left[ \langle f, \eta^N_t \rangle \right]
        &\le
        \IE\left[ \langle f, \eta^N_0 \rangle \right]
        + C_f \int_0^t \IE\left[ \langle 1 + |f|, \eta^N_s \rangle \right] dt .
    \end{align}
The bound~(\ref{eta_f_bound}) then follows by first applying Gronwall's
inequality in the case $f=1$, which yields
\[
	\IE\big[\langle 1,\eta_t^N\rangle\big]\leq e^{Ct}\IE\big[\langle 1,\eta_0^N\rangle\big],
\]
with $C$ independent of $N$, and
substituting the resulting bound on $\IE\big[\langle 1,\eta_s^N\rangle\big]$ into the
expression above.
\end{proof}

With a bound on per-capita net growth rate in hand,
bounds on the expectation of the supremum of 
the total population size over a finite time interval also follow easily.

\begin{lemma}[Compact containment for the population process]
    \label{lem:eta_compact_containment}
    Under the assumptions of Theorem~\ref{thm:nonlocal_convergence},
    for each $T>0$, there exists some constant $C_T$,
independent of $N$,
    such that
    \begin{align}
        \label{eqn:eta_mass_bound}
        \IE\left[
            \sup_{0 \le t \le T}
            \langle 1, \eta^{N}_t \rangle
        \right]
        \le C_T \IE[\langle 1,\eta_0\rangle].
    \end{align}
    In particular, for any $\delta > 0$, there exists $K_{\delta}>0$ such that
    \begin{equation}
    \limsup_{N \to \infty}
        \IP \left\{ \sup_{s \in [0,T]}
            \langle 1 ,\eta^{N}_{s}\rangle
            > K_\delta \right\}
        \leq \frac{C_T}{K_{\delta}}
        < \delta.
    \end{equation}
\end{lemma}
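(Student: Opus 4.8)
The plan is to upgrade the expectation bound of Lemma~\ref{lem:eta_f_bound} (applied with $f \equiv 1$) to a bound on the expected supremum, by controlling the martingale part $M^N(1)$ via Doob's inequality, and then to deduce the probability estimate from Markov's inequality. First I would write the semimartingale decomposition~\eqref{eqn:eta_f_mgale_decomp} with $f \equiv 1$:
\[
    \langle 1, \eta^N_t \rangle
    =
    \langle 1, \eta^N_0 \rangle
    + \int_0^t \int_{\IR^d} \big\{ \gamma(x, \eta^N_s) B^\theta_1(x, \eta^N_s) + F(x, \eta^N_s) \big\} \eta^N_s(dx)\, ds
    + M^N_t(1) .
\]
Condition~\ref{gamma_B_condition} of Assumptions~\ref{def:model_setup} gives $|\gamma B^\theta_1| \le C_1$, and $F$ is bounded above, so the drift integrand is bounded above by some constant $C$; hence the running integral is at most $C \int_0^t \langle 1, \eta^N_s \rangle\, ds$. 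Taking the supremum over $t \in [0,T]$ and then expectations,
\[
    \IE\Big[ \sup_{0 \le t \le T} \langle 1, \eta^N_t \rangle \Big]
    \le
    \IE[\langle 1, \eta^N_0 \rangle]
    + C \int_0^T \IE[\langle 1, \eta^N_s \rangle]\, ds
    + \IE\Big[ \sup_{0 \le t \le T} |M^N_t(1)| \Big] .
\]
The middle term is controlled by Lemma~\ref{lem:eta_f_bound}, which already gives $\IE[\langle 1, \eta^N_s \rangle] \le e^{Cs} \IE[\langle 1, \eta^N_0\rangle]$ uniformly in $N$, so that integral is bounded by a constant (depending on $T$) times $\IE[\langle 1, \eta^N_0\rangle]$.

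It remains to bound the martingale term. By the Burkholder--Davis--Gundy or Doob $L^2$ inequality, $\IE[\sup_{t \le T}|M^N_t(1)|] \le \IE[\sup_{t\le T}(M^N_t(1))^2]^{1/2} \le 2\, \IE[\langle M^N(1)\rangle_T]^{1/2}$. From~\eqref{eqn:prelimit_martingale_variation} with $f \equiv 1$,
\[
    \langle M^N(1) \rangle_T
    =
    \frac{\theta}{N} \int_0^T \Big\langle \gamma(x, \eta^N_s) \int r(z, \eta^N_s) q_\theta(x, dz) + \mu_\theta(x, \eta^N_s),\ \eta^N_s(dx) \Big\rangle ds .
\]
Since $\gamma$ is uniformly bounded, $r \le 1$, and $\mu_\theta = r\gamma - F/\theta \le \|\gamma\|_\infty + \|F^+\|_\infty/\theta$ is uniformly bounded (for $\theta$ large), the integrand is bounded by a constant times $\langle 1, \eta^N_s\rangle$, so $\IE[\langle M^N(1)\rangle_T] \le C' (\theta/N) \int_0^T \IE[\langle 1, \eta^N_s\rangle]\, ds$. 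Because $\theta/N \to \alpha < \infty$, this is bounded (uniformly in $N$) by a constant times $\IE[\langle 1, \eta^N_0\rangle]$, and hence $\IE[\sup_{t\le T}|M^N_t(1)|]$ is at most a constant times $\IE[\langle 1, \eta^N_0\rangle]^{1/2}$; combined with $\sup_N \IE[\langle 1,\eta^N_0\rangle] < \infty$ (which follows from tightness of $\{\eta_0^N\}$ together with the $f_0$ hypothesis, or may simply be absorbed into $C_T$ by noting $x^{1/2} \le 1 + x$), this gives~\eqref{eqn:eta_mass_bound}. The probability bound then follows immediately: by Markov's inequality, $\IP\{\sup_{s\le T}\langle 1,\eta^N_s\rangle > K_\delta\} \le C_T \IE[\langle 1,\eta_0\rangle]/K_\delta$, which is $< \delta$ for $K_\delta$ large enough, uniformly in $N$.

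The only mild subtlety — and the one place requiring care rather than routine estimation — is making sure the martingale bound is genuinely uniform in $N$: this is exactly where the hypothesis $\theta(N)/N \to \alpha < \infty$ is used, since the angle-bracket process carries the prefactor $\theta/N$, and without it the quadratic variation (and hence the fluctuations of the total mass) would blow up. Everything else is a direct application of Gronwall, Doob, and Markov. One should also note in passing that $\langle 1, \eta^N_t\rangle$ is a well-defined finite quantity for each $t$ (non-explosion of the underlying birth--death process), which is guaranteed by the boundedness of $\gamma$ as already remarked after Definition~\ref{defn:mgale_construction}.
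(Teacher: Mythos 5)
Your overall architecture (semimartingale decomposition, Gronwall for the mean, a maximal inequality for the martingale part, Markov for the probability bound) is the same as the paper's, and the use of Doob's $L^2$ inequality in place of the paper's combination of Burkholder--Davis--Gundy with the comparison between quadratic variation and angle bracket is a harmless simplification. However, there is one genuine gap, and it occurs at precisely the point the paper singles out as delicate.

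You assert that $\mu_\theta = r\gamma - F/\theta \le \|\gamma\|_\infty + \|F^+\|_\infty/\theta$ is uniformly bounded. This is backwards: Assumptions~\ref{def:model_setup} only require $F$ to be bounded \emph{above}, not below, so $-F/\theta$ is bounded \emph{below} but can be arbitrarily large and positive. Consequently $\mu_\theta$ is not uniformly bounded, and the integrand in $\langle M^N(1)\rangle_T$ from~\eqref{eqn:prelimit_martingale_variation} cannot be dominated by a constant times $\langle 1, \eta^N_s\rangle$ pointwise. The term you must control is
\[
    \frac{\theta}{N}\,\IE\left[\int_0^T \Big\langle -\frac{F(x,\eta^N_s)}{\theta},\ \eta^N_s(dx)\Big\rangle\, ds\right]
    =
    \frac{1}{N}\,\IE\left[\int_0^T \big\langle -F(x,\eta^N_s),\ \eta^N_s(dx)\big\rangle\, ds\right],
\]
and to bound it you need the trick the paper uses in~\eqref{integral of -F}: rearrange the semimartingale identity~\eqref{eqn:eta_f_mgale_decomp} with $f\equiv 1$ and take expectations, which expresses $\IE\big[\int_0^T\langle -F,\eta^N_s\rangle\,ds\big]$ as $\IE[\langle 1,\eta^N_0\rangle]-\IE[\langle 1,\eta^N_T\rangle]$ plus the expected time-integral of $\langle \gamma B^\theta_1,\eta^N_s\rangle$; the first difference is bounded because $\langle 1,\eta^N_T\rangle\ge 0$ and the last term is bounded by Condition~\ref{gamma_B_condition} together with Lemma~\ref{lem:eta_f_bound}. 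With this insertion your argument closes; without it, the bound on $\IE[\langle M^N(1)\rangle_T]$ does not follow from the stated hypotheses. (Your handling of the drift term is fine as written, since there you only use that $\gamma B^\theta_1 + F$ is bounded above, and the supremum of a running integral is controlled by the integral of the positive part of its integrand.)
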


\begin{proof}
    First note that by the proof of Lemma~\ref{lem:eta_f_bound},
    $\IE[\langle 1, \eta^N_t \rangle] \le \IE[\langle 1, \eta^N_0 \rangle] e^{Ct}$
    for some $C$ (independent of $N$).
    Now, let $M^{N*}_t(f) = \sup_{0 \le s \le t} M^N_t(f)$,
    and as before let $\langle M^N(f) \rangle_t$ be the angle bracket process of $M^N_t(f)$.
    The Burkholder-Davis-Gundy inequality says that there is a $K$ for which
    $\IE\left[ M^{N*}_t(1) \right] \le K \IE[\sqrt{[M^N(1)]_t}]$,
    where $[M^N(1)]_t$ is the quadratic variation of $M^N(1)$.
    Furthermore, as discussed by \citet{hernandez/jacka:2022},
    the expectation of the quadratic variation of a local martingale
    is bounded by a (universal) constant multiple of the expectation of its angle bracket process
    (\cite{barlow/jacka/yor:1986}, Item (4.b'), Table 4.1, p. 162).
    Now, since $\sqrt{x} \le 1 + x$,
    in the notation of Lemma~\ref{lem:eta_f_bound}, there is a $C'$ such that
    \begin{align*}
        \IE\left[ M^{N*}_t(1) \right]
        &\le
        C'\left( 1 + \IE\left[ \left\langle M^N(1) \right\rangle_t \right] \right)
        \\ &=
        C'\left( 1 + \frac{\theta}{N} \IE\left[
            \int_0^t
	    \Big\langle\left\{
                \gamma(x, \eta^N_s)
                \int_{\IR^d} r(y, \eta^N_s) q_\theta(x, dy)
                + \mu_\theta(x, \eta^N_s)
            \right\}, \eta_s^N(dx)\Big\rangle
            ds
            \right] \right)
        \\ &=
        C'\Big( 1 + \IE\Big[
            \int_0^t
	    \Big\langle\Big\{
                \frac{2\theta}{N} \gamma(x, \eta^N_s) r(x, \eta^N_s)
        \\ &\qquad\qquad\qquad\qquad \qquad {}
		+ \frac{\gamma(x,\eta_s^N)}{N} 
		B^\theta_1(x, \eta^N_s)-\frac{1}{N}F(x,\eta^N_s)
            \Big\}, \eta_s^N(dx) \Big\rangle
            ds
            \Big] \Big) .
    \end{align*}

	We have not assumed that $F$ is bounded below, but to see that the 
	term involving $-F$ does not cause us problems, we rearrange
	equation~(\ref{eqn:eta_f_mgale_decomp}) 
	with $f=1$ to see that
	\begin{align}
		\label{integral of -F}
        \begin{split}
\IE\left[\int_0^t \Big\langle -F(x, \eta^N_s), \eta^N_s(dx) \Big\rangle ds \right]
        &= \IE[\langle 1, \eta^N_0\rangle] -\IE[\langle 1,\eta^N_t\rangle]
            \\&\qquad {}
	+ \IE\left[\int_0^t\Big\langle \gamma(x,\eta_s^N) B^\theta_1(x, \eta_s^N),\eta_s^N(dx)\Big\rangle ds\right],
        \end{split}
	\end{align}
which is bounded above since $\gamma (x,\eta)$ and $B^\theta_1(x,\eta)$ are both bounded and 
$\langle 1,\eta_t^N \rangle\geq 0$. 
    Since $\theta/N \to \alpha < \infty$,
    combining constants, we obtain that for some $C''$,
    \begin{align*}
        \IE\left[ M^{N*}_t(1) \right]
        &\le
        C' + C'' \IE[ \langle 1, \eta_0^N \rangle ] e^{tC} .
    \end{align*}
    Taking suprema and expectations on both sides of equation~\eqref{eqn:eta_f_mgale_decomp},
    then again using the fact that $\gamma(x, \eta) B^\theta_1(x, \eta) + F(x, \eta) \le C$,
    \begin{align*}
        \IE\left[\sup_{0 \le s \le T} \langle 1, \eta^N_s \rangle \right]
        &\le
        \IE[\langle 1, \eta^N_0 \rangle]
        + \IE\left[
            \sup_{0 \le t \le T}
            \int_0^t 
	    \Big\langle\left\{
                \gamma(x, \eta^N_s)
                B^\theta_1(x, \eta^N_s)
                + F(x, \eta^N_s)
            \right\}, \eta^N_s(dx) \Big\rangle ds
        \right]
        \\ & \qquad\qquad {}
        + \IE[M^{N*}_t(1)] 
        \\
        &\le
        \IE[\langle 1, \eta^N_0 \rangle]
        + C \IE\left[
            \int_0^T \sup_{0 \le s \le t} \langle 1, \eta^N_s \rangle dt
        \right]
        + C' + C'' \IE[ \langle 1, \eta_0^N \rangle ] e^{tC} .
    \end{align*}
    Once again applying Gronwall's inequality,
    \begin{align*}
        \IE\left[\sup_{0 \le s \le T} \langle 1, \eta^N_s \rangle \right]
        &\le
        C''' 
        \left(1+ \IE[\langle 1, \eta^N_0 \rangle]\right) e^{2TC} .
    \end{align*}
    For any $T$,
    the quantity on the right is bounded above by a constant $C(T)$ independent of $N$.
    As a result, for any $K > 0$,
    \begin{equation*}
    \limsup_{N \to \infty}
        \IP\left[ \sup_{0 \le s \le T} \langle 1, \eta^{N}_{s} \rangle \geq K \right]
        \leq
        \frac{C(T)}{K}.
    \end{equation*}
\end{proof}

Our next task is to show tightness of 
$(\langle f,\eta_t^N\rangle)_{t\geq 0}$ for 
$f\in C_b^\infty(\overline{\IR}^d)$. 

\begin{lemma}[Tightness of $(\langle f, \eta^{N}_t \rangle )_{t>0}$]
    \label{lem:eta_projections_tightness}
	For each $f \in C^{\infty}_{b}(\overline{\IR}^d)$, 
the collection of processes
$(\langle f, \eta^{N}_t \rangle)_{t \geq 0}$
for $N = 1, 2, \ldots$ is tight
as a sequence of c\`adl\`ag, real-valued processes.
\end{lemma}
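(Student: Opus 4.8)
The plan is to apply the Aldous--Rebolledo criterion. Since we work on the one-point compactification $\overline{\IR}^d$, compact containment is automatic from Lemma~\ref{lem:eta_compact_containment}: the bound $\IE[\sup_{t\le T}\langle 1,\eta^N_t\rangle]\le C_T\IE[\langle 1,\eta_0\rangle]$ shows that for each $t$ the law of $\langle f,\eta^N_t\rangle$ is tight on $\IR$ (it is supported in $[-\|f\|_\infty K, \|f\|_\infty K]$ off an event of small probability, uniformly in $N$), which verifies the pointwise tightness hypothesis. It remains to control the fluctuations of the process over small time intervals, for which I would use the semimartingale decomposition~\eqref{eqn:eta_martingale}: $\langle f,\eta^N_t\rangle = \langle f,\eta^N_0\rangle + A^N_t(f) + M^N_t(f)$, where $A^N_t(f) = \int_0^t \langle \gamma(x,\eta^N_s)B^\theta_f(x,\eta^N_s) + f(x)F(x,\eta^N_s),\eta^N_s(dx)\rangle\,ds$ is the finite-variation part and $M^N_t(f)$ is the martingale part with angle-bracket process given by~\eqref{eqn:prelimit_martingale_variation}. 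By the Aldous--Rebolledo criterion it suffices to show that for any sequence of stopping times $\tau_N \le T$ and any $\delta_N \downarrow 0$, both $A^N_{\tau_N+\delta_N}(f) - A^N_{\tau_N}(f) \to 0$ and $\langle M^N(f)\rangle_{\tau_N+\delta_N} - \langle M^N(f)\rangle_{\tau_N} \to 0$ in probability.

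For the finite-variation part, Condition~\ref{gamma_B_condition} of Assumptions~\ref{def:model_setup} gives $|\gamma(x,\eta)B^\theta_f(x,\eta)| \le C_f(1+|f(x)|) \le C_f(1+\|f\|_\infty)$, and $F$ is bounded above while $-F$ is controlled exactly as in~\eqref{integral of -F} after integrating against $\eta^N_s$; combining these, $|A^N_{t+h}(f) - A^N_t(f)| \le C\int_t^{t+h}\langle 1,\eta^N_s\rangle\,ds$ for a constant $C$ depending only on $f$ and the bound on $F$ from above (the contribution of the part of $F$ that is unbounded below is handled by the rearrangement~\eqref{integral of -F}, which bounds its time-integral by an expression that is itself $O(h)$ plus boundary terms controlled by Lemma~\ref{lem:eta_f_bound}). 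Then $\IE[|A^N_{\tau_N+\delta_N}(f) - A^N_{\tau_N}(f)|] \le C\,\delta_N\,\IE[\sup_{t\le T+1}\langle 1,\eta^N_t\rangle] \le C\,\delta_N\,C_{T+1}\IE[\langle 1,\eta_0\rangle] \to 0$ using Lemma~\ref{lem:eta_compact_containment} and the assumed uniform bound on $\IE[\langle 1,\eta^N_0\rangle]$. For the angle-bracket part, from~\eqref{eqn:prelimit_martingale_variation} we have $\langle M^N(f)\rangle_{t+h} - \langle M^N(f)\rangle_t = \frac{\theta}{N}\int_t^{t+h}\langle \gamma(x,\eta^N_s)\int f^2(z)r(z,\eta^N_s)q_\theta(x,dz) + \mu_\theta(x,\eta^N_s)f^2(x),\eta^N_s(dx)\rangle\,ds$; since $r\le 1$, $\gamma$ is bounded, $f^2\le\|f\|_\infty^2$, and $\mu_\theta = r\gamma - F/\theta$ so that $\mu_\theta f^2$ is bounded above with its negative part again controlled via~\eqref{integral of -F}, and $\theta/N \to \alpha < \infty$, this is bounded by $C'\int_t^{t+h}\langle 1,\eta^N_s\rangle\,ds$ up to harmless boundary terms, and the same Gronwall/Lemma~\ref{lem:eta_compact_containment} argument gives convergence to $0$ in expectation. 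Thus both Aldous--Rebolledo conditions hold.

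The main obstacle is the bookkeeping around the fact that $F$ is only bounded above, not below: naively the increments of $A^N$ and of $\langle M^N(f)\rangle$ involve $\int F\,d\eta^N$ which need not be small over short intervals. The device to get around this is the identity~\eqref{integral of -F}, which expresses $\IE[\int_0^t\langle -F,\eta^N_s\rangle\,ds]$ in terms of $\IE[\langle 1,\eta^N_0\rangle] - \IE[\langle 1,\eta^N_t\rangle]$ plus a bounded term; applied on the interval $[\tau_N,\tau_N+\delta_N]$ (using the strong Markov property, or simply taking conditional expectations given $\mathcal{F}_{\tau_N}$ and using that $\langle 1,\eta^N_t\rangle$ has uniformly bounded expectation) this shows the problematic contribution is $O(\delta_N)$ in expectation. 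Once this point is dispatched, everything else reduces to the uniform moment bound of Lemma~\ref{lem:eta_compact_containment} and the boundedness of $\gamma$, $r$, and $B^\theta_f$, so the argument closes routinely.
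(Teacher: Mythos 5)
Your overall architecture (Aldous--Rebolledo applied to the semimartingale decomposition, pointwise tightness from the moment bounds, uniform control of $\gamma B^\theta_f$ via Condition~\ref{gamma_B_condition}) matches the paper's proof. However, there is a genuine gap in how you handle the fact that $F$ is only bounded above. You propose to control the increment of the finite-variation part over $[\tau_N,\tau_N+\delta_N]$ by invoking the rearrangement~\eqref{integral of -F} and asserting that the resulting bound is $O(\delta_N)$ ``plus boundary terms controlled by Lemma~\ref{lem:eta_f_bound}.'' But applied to a short interval and conditioned on $\mathcal{F}_{\tau_N}$, that identity bounds
$\IE\bigl[\int_{\tau_N}^{\tau_N+\delta_N}\langle -F,\eta^N_s\rangle\,ds \,\big|\, \mathcal{F}_{\tau_N}\bigr]$
by
$\langle 1,\eta^N_{\tau_N}\rangle-\IE[\langle 1,\eta^N_{\tau_N+\delta_N}\rangle\mid\mathcal{F}_{\tau_N}]$
plus a term that is genuinely $O(\delta_N)$. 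The difference of total masses is \emph{not} $O(\delta_N)$: Lemma~\ref{lem:eta_f_bound} bounds each term separately by a constant, but precisely because $F$ (hence the death rate $\theta\mu_\theta$) is unbounded below, the population can lose an order-one fraction of its mass in an arbitrarily short time, so this difference can remain of order one as $\delta_N\downarrow 0$. The telescoping identity therefore only delivers an $O(1)$ bound on the problematic contribution, which is not enough to verify the second Aldous--Rebolledo condition.

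The paper closes this gap using an ingredient your argument never invokes: Condition (3c) of Assumptions~\ref{def:model_setup}, namely that for each fixed $m$, $\sup_{x}\sup_{k\le m}|F(x,k)|<\infty$. Since $\smooth{F}\eta\le\langle 1,\eta\rangle\,\|\rho_F\|_\infty$, one first uses Lemma~\ref{lem:eta_compact_containment} to choose $K$ so that the event $\{\sup_{0\le s\le T}\langle 1,\eta^N_s\rangle\ge K\}$ has probability less than $\nu/2$ uniformly in large $N$; on the complementary event the local density is bounded by $K\|\rho_F\|_\infty$ everywhere, so $|F|$ is bounded by a finite constant $M_K$ \emph{pathwise}, and the increment of the $F$-term over an interval of length $\delta$ is at most $\delta\,\|f\|_\infty M_K K$ deterministically on that event. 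This yields the required smallness in $\delta$; your expectation-based device does not. The same fix is needed for the $\mu_\theta f^2$ term in the angle-bracket increment. The rest of your argument (the $\gamma B^\theta_f$ bound, the treatment of the quadratic variation using $\theta/N\to\alpha$) is sound.
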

\begin{proof}
The Aldous-Rebolledo criterion (Theorem~\ref{thm:aldous_rebolledo})
applied to the semimartingale representation of $\langle f, \eta^N_t\rangle$
of equation~\eqref{eqn:eta_f_mgale_decomp}, tells us
that it suffices to show that for each $T>0$,
(a) for each fixed $0\leq t\leq T$, the sequence $\{\langle f, \eta^N_t \rangle\}_{N \ge 1}$ is tight,
and (b) for any sequence of stopping times $\tau_N$ bounded by $T$,
and for each $\nu > 0$, there exist $\delta > 0$ and $N_0 > 0$ such that 
\begin{gather}
        \label{eqn:eta_projections_goal1}
    \sup_{N > N_0}
    \sup_{t \in [0, \delta]}
    \IP\left\{\left|
            \int_{\tau_N}^{\tau_N + t}
            \int_{\IR^d}
            \left\{
                \gamma(x, \eta^N_s) B^\theta_f(x, \eta^N_s)
                + f(x) F(x, \eta^N_s)
            \right\} 
            \eta^N_s(dx)
            ds
        \right|> \nu \right\}
        < \nu ,
    \\ \text{and} \qquad
        \label{eqn:eta_projections_goal2}
    \sup_{N > N_0}
    \sup_{t \in [0, \delta]}
    \IP\left\{\big|
        \langle M^{N}(f) \rangle_{\tau_N + t} 
            - \langle M^{N}(f) \rangle_{\tau_N} \big|
        > \nu
    \right\}
    < \nu.
\end{gather}
Tightness of $\langle f, \eta^N_t\rangle$ for fixed $t$
follows from Lemma~\ref{lem:eta_f_bound} and Markov's inequality,
so we focus on the remaining conditions.

The proof of Lemma~\ref{lem:eta_f_bound}
provides a uniform bound on $\gamma B^\theta_f$, but we only know
that $F$ is bounded above. However, by assumption, for each fixed value of
$m$, $\sup_{k\leq m}|F(x,k)|$ is uniformly bounded as a function of $x$.
Noting that $\smooth{F}\eta\leq \langle 1,\eta\rangle\|\rho_F\|_\infty$,
we can use Lemma~\ref{lem:eta_compact_containment} 
to choose $N_0$ and $K$ such that
if $N > N_0$, then
$$
    \IP\left\{\sup_{0\leq s\leq T}\langle 1, \eta_s^N\rangle\geq K\right\} < \nu/2,
$$
we now choose $\delta_1$ so that
$$
    \delta_1 \|f\|_\infty \sup\big\{ \sup_{x}|F(x,k)| : k\leq K\|\rho_F\|_\infty \big\}
    <
    \nu/4,
    \qquad
    \sup_{x,\eta}\gamma(x,\eta)B^\theta_f(\,\eta)\delta_1 < \nu/4,
$$
so that~\eqref{eqn:eta_projections_goal1} is satisfied with $\delta = \delta_1$.

Similarly,
\begin{align*}
    &
 \big|   \langle M^{N}(f) \rangle_{\tau_N + t} 
        - \langle M^{N}(f) \rangle_{\tau_N} \big|
    \\ & \qquad =
  \Big|  \int_{\tau_N}^{\tau_N + t}
        \frac{\theta}{N}
        \int_{\IR^d}
        \left\{
            \gamma(x, \eta^N_s)
            \int_{\IR^d} f^2(y) r(y, \eta^N_s) q_\theta(x, dy)
            +
            \mu_\theta(x, \eta^N_s) f^2(x)
        \right\}
        \eta^N_s(dx)
    ds \Big|
    \\ & \qquad =
   \Big| \int_{\tau_N}^{\tau_N + t}
        \frac{\theta}{N}
        \int_{\IR^d}
        \left\{
            \gamma(x, \eta^N_s)
            \left(
                2 f^2(x) r(x, \eta^N_s)
                +
                B^\theta_{f^2}(x, \eta^N_s)
            \right)
            -
            f^2(x) \frac{F(x, \eta^N_s)}{ \theta}
        \right\}
        \eta^N_s(dx)
    ds\Big| ,
\end{align*}
and so using the fact that $\theta/N \to \alpha < \infty$,
an argument entirely analogous to 
that for \eqref{eqn:eta_projections_goal1}
yields a $\delta_2$ for which \eqref{eqn:eta_projections_goal2} is
satsified. Taking $\delta=\min\{\delta_1,\delta_2\}$, the result follows.

\end{proof}


We collect the implications of the last two lemmas into a proposition.

\begin{proposition}[Tightness of $(\eta^{N}_t)_{t \geq 0}$]
\label{tightness in one point compactification}
    The collection of measure-valued processes 
    $\{(\eta^{N}_t)_{t \geq 0}: N \geq 1\}$
    is tight in $\mathcal{D}_{[0,\infty)}(\cmeasures)$.
\end{proposition}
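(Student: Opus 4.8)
I would deduce this from a standard tightness criterion for measure-valued c\`adl\`ag processes, which reduces tightness in $\mathcal{D}_{[0,\infty)}(\cmeasures)$ to two ingredients: (i) a compact containment condition, and (ii) tightness of the real-valued projections $(\langle f,\eta^N_t\rangle)_{t\ge 0}$ as $f$ ranges over a suitable class of test functions. This is Jakubowski's criterion, specialised to the measure-valued setting (see, e.g., \citet{dawson:1993} or \citet{etheridge/kurtz:2019}). Both ingredients are already in hand — Lemma~\ref{lem:eta_compact_containment} supplies (i) and Lemma~\ref{lem:eta_projections_tightness} supplies (ii) — so the only work is to check that the hypotheses of the criterion are genuinely met on the compactified state space $\cmeasures$.

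For (i), the payoff of having passed to the one-point compactification is that $\overline{\IR}^d$ is compact, so by Prokhorov's theorem every set of the form $\{\nu\in\cmeasures:\langle 1,\nu\rangle\le K\}$ is compact in $\cmeasures$ with the topology of weak convergence. Given $T>0$ and $\delta>0$, Lemma~\ref{lem:eta_compact_containment} produces a constant $K_\delta$ with $\limsup_{N\to\infty}\IP\{\sup_{s\le T}\langle 1,\eta^N_s\rangle>K_\delta\}<\delta$, which is precisely the compact containment condition for $(\eta^N)$ on $[0,T]$; since $T>0$ is arbitrary, compact containment holds on $[0,\infty)$.

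For (ii), Lemma~\ref{lem:eta_projections_tightness} gives tightness of $(\langle f,\eta^N_t\rangle)_{t\ge 0}$ in $\mathcal{D}_{[0,\infty)}(\IR)$ for every $f\in C_b^\infty(\overline{\IR}^d)$. The image family $\{\langle f,\cdot\rangle:f\in C_b^\infty(\overline{\IR}^d)\}$ contains the constants, is closed under addition (since $\langle f,\cdot\rangle+\langle g,\cdot\rangle=\langle f+g,\cdot\rangle$ and $C_b^\infty(\overline{\IR}^d)$ is closed under addition), and separates points of $\cmeasures$, because $C_b^\infty(\overline{\IR}^d)$ is dense in $C(\overline{\IR}^d)$ (e.g.\ by the Stone--Weierstrass theorem together with mollification) and a finite measure on the compact space $\overline{\IR}^d$ is determined by its integrals against continuous functions. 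Feeding (i) and (ii) into the criterion yields tightness of $\{(\eta^N_t)_{t\ge 0}:N\ge 1\}$ in $\mathcal{D}_{[0,\infty)}(\cmeasures)$. The argument is essentially bookkeeping once Lemmas~\ref{lem:eta_compact_containment} and~\ref{lem:eta_projections_tightness} are available; the one step that truly needs the compactification — and would fail on $\IR^d$ itself — is the passage from a uniform bound on total mass to compact containment in the space of measures, since on a non-compact space mass can leak to infinity. (That no mass is actually lost in the limit is handled separately in Lemma~\ref{no mass at infty}.)
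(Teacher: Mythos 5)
Your proof is correct, and its two ingredients---compact containment via Lemma~\ref{lem:eta_compact_containment} together with compactness of $\{\nu:\langle 1,\nu\rangle\le K\}$ in $\cmeasures$, and tightness of the real-valued projections from Lemma~\ref{lem:eta_projections_tightness}---are exactly the ones the paper uses. The one substantive difference is the abstract criterion you feed them into. The paper invokes Theorem~3.9.1 of \citet{ethier/kurtz:1986}, which requires tightness of $(F(\eta^N_t))_{t\ge 0}$ for $F$ ranging over a subset of $C_b(\cmeasures)$ that is \emph{dense} under uniform convergence on compacts; since the linear functionals $\langle f,\cdot\rangle$ alone are not dense, the paper must pass to the algebra they generate via Stone--Weierstrass and then extend tightness to finite sums \emph{and products} of such processes, which is the content of Lemma~\ref{lem:product_tightness}. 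Your route via Jakubowski's criterion only asks that the family of test functionals separate points and be closed under addition, both of which hold trivially for $\{\langle f,\cdot\rangle: f\in C_b^\infty(\overline{\IR}^d)\}$ (closure under addition is just $\langle f,\cdot\rangle+\langle g,\cdot\rangle=\langle f+g,\cdot\rangle$, and point separation follows from density of $C_b^\infty$ in $C(\overline{\IR}^d)$). This buys you a genuinely leaner argument: the product-tightness lemma becomes unnecessary for this proposition (though the paper reuses it later for the lookdown process, so it is not wasted effort there). Both arguments are sound; yours is the more economical for this particular statement.
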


\begin{proof}
    Theorem 3.9.1~in \cite{ethier/kurtz:1986}
    says that if the collection of $E$-valued processes
    satisfies a compact containment condition
    (for any $\epsilon > 0$ and $T>0$, there is a compact set
    such that the processes stay within that set up to time $T$ with probability at least $1-\epsilon$),
    then the collection is relatively compact (which is equivalent to tightness
since we are working on a Polish space)
    if and only if
    $\{(f(\eta^{N}_t))_{t \geq 0}: N \geq 1\}$ is relatively compact
    for all $f$ in a dense subset of $C_b(E)$
    under the topology of uniform convergence in compact sets.

Since $\{ \nu: \langle 1, \nu \rangle \leq K \} $ is compact in $\cmeasures$,
    Lemma~\ref{lem:eta_compact_containment}
    gives compact containment.
    Lemma \ref{lem:eta_projections_tightness}
    shows that the real-valued processes $\langle f, \eta^{N}_t \rangle$
    are relatively compact for all 
$f \in \mathcal{C}^{\infty}_{b}(\overline{\IR}^d)$.
    Since by the Stone-Weierstrass theorem,
    the algebra of finite sums and products of terms of this form
    is dense in the space of bounded continuous functions on $\cmeasures$,
    and tightness of $\langle f, \eta_t^N \rangle$ extends to sums and products of this form
    by Lemma~\ref{lem:product_tightness},
    we have relative compactness in $\mathcal{D}_{[0,\infty)}(\cmeasures)$.
\end{proof}

We wish to characterise the limit points of
$\{(\eta^{N}_t)_{t>0}\}_{N\geq 1}$ as solutions to a martingale problem 
with generator $\Pgen^{\infty}$ which we now identify. Most of the work
was done in Section~\ref{sec:heuristics}.
First, we record an equivalent formulation
of the martingale problems,
which were essentially laid out in Subsection~\ref{sec:population_heuristics}.

\begin{lemma}
    \label{def: MP definition of limit}
For $G \in \mathcal{C}^{\infty}(\IR)$ with $\|G'''\|_\infty<\infty$,
and 
$f \in \mathcal{C}_{b}^\infty(\overline{\IR}^d)$, define the 
function $G_f$ by 
$G_f(\eta):=G (\langle f, \eta \rangle)$.
Let $\Pgen^N$ be the generator given by
\begin{equation}
\begin{aligned}
    \Pgen^N G_f(\eta)
    &:=
    \theta N \bigg\langle
    \gamma(x, \eta)
        \int 
        \left(G(\langle f, \eta \rangle + f(z)/N) - G(\langle f, \eta \rangle)\right)
        r(z,\eta) q_\theta(x,dz)
    \\ & \qquad \qquad {}
    +
    \left(G(\langle f, \eta \rangle - f(x)/N) - G(\langle f, \eta \rangle)\right)
    \mu_\theta(x, \eta),
    \eta(dx) \bigg\rangle .
\end{aligned}
\end{equation}
The process $(\eta^N_t)_{t \geq 0}$ of Definition~\ref{defn:mgale_construction}
is the unique solution to the $(\Pgen^N, \eta_0)$-martingale problem, i.e.,
\revpoint{1}{30}
$$
    M_t :=
    G_f(\eta^N_t) - G_f(\eta^N_0)
    - \int_{0}^{t}\Pgen^N G_f(\eta^N_s)ds
$$
is a martingale for all such test functions (with respect to the natural $\sigma$-field). 

Furthermore, let $\Pgen^\infty$ be the generator given by
\begin{equation}
    \label{eq: Limit Generator Definition}
\begin{aligned}
\Pgen^{\infty} G_f(\eta)
    &:= G'(\langle f, \eta \rangle)
                   \big\langle
                        \gamma(x, \eta)
                            \mathcal{B}\left(
                            f(\cdot) r(\cdot, \eta)
                            \right)(x)
                    +
                    f(x) F(x, \eta),
                    \eta(dx)
                    \big\rangle \\
               &\qquad {}
               + \alpha G''(\langle f, \eta \rangle)
                  \big\langle
                    \gamma\left( x, \eta \right)
                    r\left(x,\eta \right)
                    f^2(x),
                    \eta (dx)
                    \big\rangle  .
\end{aligned}    
\end{equation}
A process $(\eta^{\infty}_t)_{t \geq 0}$ 
satisfies the martingale characterization of equations~\eqref{eqn:limiting_mgale_problem}
and~\eqref{eqn:limiting_mgale_variation}
if it is a solution 
to the $(\Pgen^{\infty}, \eta^\infty_0)$-martingale problem, i.e.,
if for all such test functions 
$$
    M_t :=
    G_f(\eta^{\infty}_t) - G_f(\eta^\infty_0)
    - \int_{0}^{t}\Pgen^{\infty}G_f(\eta^{\infty}_s)ds
$$
is a martingale (with respect to the natural $\sigma$-field). 
\end{lemma}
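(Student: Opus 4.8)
The plan is to recognise both halves of the statement as instances of the change-of-variables formula --- It\^o's formula for pure-jump processes in the prelimit, and It\^o's formula for continuous semimartingales in the limit --- together with a polarisation step in which one specialises to $G(u)=u$ and $G(u)=u^{2}$ to pass between the ``generator applied to $G_{f}$'' form of a martingale problem and the more explicit ``martingale plus angle-bracket'' form of Definition~\ref{defn:mgale_construction} and of equations~\eqref{eqn:limiting_mgale_problem}--\eqref{eqn:limiting_mgale_variation}.

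\emph{The prelimit.} First I would observe that $\Pgen^{N}$ is literally the jump generator of the scaled birth--death process: when the state is $\eta$, a birth from a parent in $\eta(dx)$ whose juvenile establishes at $z$ occurs at rate $\theta N\,\gamma(x,\eta)\,r(z,\eta)\,q_{\theta}(x,dz)\,\eta(dx)$ and increments $\langle f,\eta\rangle$ by $f(z)/N$, while a death in $\eta(dx)$ occurs at rate $\theta N\,\mu_{\theta}(x,\eta)\,\eta(dx)$ and increments $\langle f,\eta\rangle$ by $-f(x)/N$; summing $(\text{increment of }G_{f})\times(\text{rate})$ gives exactly the displayed expression for $\Pgen^{N}G_{f}(\eta)$. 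Since $\gamma$ is bounded (Condition~\ref{gamma_bounded_condition}) the process is non-explosive by comparison with a pure birth process, as noted after Definition~\ref{defn:mgale_construction}, so the $(\Pgen^{N},\eta_{0})$-martingale problem is well posed (existence and uniqueness) by the standard theory for pure-jump Markov processes. To see that ``solving the $(\Pgen^{N},\eta_{0})$-MP'' and ``the characterisation of Definition~\ref{defn:mgale_construction}'' describe the same process, I would take $G(u)=u$, for which $\Pgen^{N}G_{f}(\eta)=\langle\gamma(x,\eta)B^{\theta}_{f}(x,\eta)+f(x)F(x,\eta),\eta(dx)\rangle$ (using $F=\theta(r\gamma-\mu_{\theta})$ and the definition~\eqref{generator Bf} of $B^{\theta}_{f}$), recovering the drift in~\eqref{eqn:eta_martingale}; and take $G(u)=u^{2}$, for which $\Pgen^{N}G_{f}(\eta)=2\langle f,\eta\rangle\,\langle\gamma B^{\theta}_{f}+fF,\eta\rangle+\tfrac{\theta}{N}\langle\gamma(x,\eta)\int f^{2}(z)r(z,\eta)q_{\theta}(x,dz)+\mu_{\theta}(x,\eta)f^{2}(x),\eta(dx)\rangle$; combining this second identity with integration by parts applied to $\langle f,\eta^{N}_{t}\rangle^{2}$ identifies the angle-bracket process of $M^{N}(f)$ as~\eqref{eqn:prelimit_martingale_variation}. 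The converse (from Definition~\ref{defn:mgale_construction} to the $\Pgen^{N}$-MP) is the jump-It\^o/Dynkin formula: $G(\langle f,\eta^{N}_{t}\rangle)$ is a pure-jump process with jumps of size $\pm f(\cdot)/N$, whose compensator is $\int_{0}^{t}\Pgen^{N}G_{f}(\eta^{N}_{s})\,ds$.

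\emph{The limit.} Here one works with continuous-path solutions --- any limit point of $(\eta^{N})$ is continuous because the jumps of $\eta^{N}$ have size $1/N\to0$, and this continuity is part of the characterisation of limit points --- so that It\^o's formula for continuous semimartingales applies. For the forward implication, if $M_{t}(f)$ of~\eqref{eqn:limiting_mgale_problem} is a continuous martingale with angle bracket~\eqref{eqn:limiting_mgale_variation}, then writing $N_{t}=\langle f,\eta_{t}\rangle$ we have $dN_{t}=\langle\gamma\,\DG(fr)+fF,\eta_{t}\rangle\,dt+dM_{t}(f)$ and $d\langle N\rangle_{t}=2\alpha\langle\gamma r f^{2},\eta_{t}\rangle\,dt$, so It\^o gives that $G_{f}(\eta_{t})-G_{f}(\eta_{0})-\int_{0}^{t}\Pgen^{\infty}G_{f}(\eta_{s})\,ds=\int_{0}^{t}G'(N_{s})\,dM_{s}(f)$ is a local martingale --- which is exactly the $(\Pgen^{\infty},\eta^{\infty}_{0})$-MP. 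For the converse, taking $G(u)=u$ in the $\Pgen^{\infty}$-MP (where the $G''$-term drops out) returns~\eqref{eqn:limiting_mgale_problem}, and taking $G(u)=u^{2}$ and comparing with integration by parts for $\langle f,\eta_{t}\rangle^{2}$ (legitimate since $\eta$ is continuous) forces $\langle M(f)\rangle_{t}=\int_{0}^{t}2\alpha\langle\gamma r f^{2},\eta_{s}\rangle\,ds$, i.e.~\eqref{eqn:limiting_mgale_variation}.

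\emph{Main obstacle.} The delicate point is not the algebra but upgrading the local martingales above to genuine martingales, which requires moment control $\IE\big[\sup_{s\le t}\langle 1,\eta_{s}\rangle^{k}\big]<\infty$ for sufficiently many $k$ (note $\|G'''\|_{\infty}<\infty$ only gives $|G'(u)|=O(1+u^{2})$). In the prelimit this follows from the bounded-rate estimates behind Lemmas~\ref{lem:eta_f_bound} and~\ref{lem:eta_compact_containment}; in the limit one first has to establish the corresponding bounds by a truncation-and-Gronwall bootstrap within the martingale problem itself (apply the MP to a smooth $G$ agreeing with $u\mapsto u^{2k}$ on a large interval, derive a closed differential inequality for the relevant moment, and let the interval grow). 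A secondary subtlety, already flagged above, is that the equivalence with the $\Pgen^{\infty}$-MP is clean only for continuous-path solutions, so one should either restrict to that class or invoke the separately-established continuity of the limit.
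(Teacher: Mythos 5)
Your proposal is correct and follows essentially the same route the paper takes: the paper states this lemma without a separate proof, pointing back to the Taylor-expansion computation of Subsection~\ref{sec:population_heuristics}, which is precisely your polarisation argument (recognise $\Pgen^N$ as the jump generator, specialise to $G(u)=u$ and $G(u)=u^2$ to pass between the two formulations, and use Dynkin/It\^o in the respective directions). The two subtleties you flag — upgrading local martingales to martingales and the need for continuous paths in the limiting case — are genuine but are dispatched in the paper's application because the only solutions considered are limit points of $\eta^N$, which have vanishing jump sizes and inherit the moment bounds of Lemmas~\ref{lem:eta_f_bound} and~\ref{lem:eta_compact_containment}.
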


The converse -- that any solution to equations~\eqref{eqn:limiting_mgale_problem}
and~\eqref{eqn:limiting_mgale_variation}
is a solution to the $(\Pgen^{\infty}, \eta^\infty_0)$-martingale problem --
requires continuity, which we expect to be true, but have not proved.

\begin{lemma}[Characterisation of limit points]
    \label{lem:limit_mgale}
Suppose that $(\eta^{N}_0)_{N\geq 1}$ converges weakly to $\eta_0$ as 
$N\to\infty$.
Then any limit point of $\{(\eta^{N}_t)_{t \geq 0}\}_{N\geq 1}$ 
in $\mathcal{D}_{[0,\infty)}(\cmeasures)$ 
is a solution to the martingale problem for 
$(\Pgen^\infty, \eta_0)$.
\end{lemma}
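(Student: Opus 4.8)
The plan is to combine the tightness established in Proposition~\ref{tightness in one point compactification} with a standard generator-convergence argument to identify limit points. Fix a limit point $(\eta_t)_{t \ge 0}$ of the sequence $\{(\eta^N_t)_{t \ge 0}\}_{N \ge 1}$ in $\mathcal{D}_{[0,\infty)}(\cmeasures)$; by passing to a subsequence we may assume $\eta^N \Rightarrow \eta$, and by the Skorohod representation theorem we may realise these on a common probability space with almost sure convergence in $\mathcal{D}_{[0,\infty)}(\cmeasures)$. Fix a test function $G_f(\eta) = G(\langle f, \eta\rangle)$ with $f \in C_b^\infty(\overline{\IR}^d)$ and $G \in C^\infty(\IR)$ with $\|G'''\|_\infty < \infty$. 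By Lemma~\ref{def: MP definition of limit}, for each $N$ the process
\[
    M^N_t := G_f(\eta^N_t) - G_f(\eta^N_0) - \int_0^t \Pgen^N G_f(\eta^N_s)\, ds
\]
is a martingale with respect to the natural filtration. The goal is to show that $M_t := G_f(\eta_t) - G_f(\eta_0) - \int_0^t \Pgen^\infty G_f(\eta_s)\, ds$ is a martingale, since by Lemma~\ref{def: MP definition of limit} this is equivalent to the martingale characterisation~\eqref{eqn:limiting_mgale_problem}--\eqref{eqn:limiting_mgale_variation}.

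First I would establish the pointwise (in fact, locally uniform) convergence $\Pgen^N G_f(\eta^N_s) \to \Pgen^\infty G_f(\eta_s)$. This is essentially the content of Section~\ref{sec:heuristics}: a Taylor expansion of $G$ around $\langle f, \eta\rangle$ gives a first-order term whose rate is $\int \theta(f(z)r(z,\eta) - f(x)r(x,\eta))q_\theta(x,dz) = B^\theta_f(x,\eta)$ tested against $\gamma(x,\eta)\eta(dx)$ plus the $F$-term, which converges to $\gamma(x,\eta)\DG(f(\cdot)r(\cdot,\eta))(x) + f(x)F(x,\eta)$ by Remark~\ref{rem:DG_limit}; a second-order term of size $\theta/N \to \alpha$ giving the $G''$ term with coefficient $2\gamma r f^2$ (using $\int f^2(y)r(y,\eta)q_\theta(x,dy) \to f^2(x)r(x,\eta)$ and $r\gamma + \mu_\theta \to 2r\gamma$); and a remainder of order $\theta/N^2 \to 0$ controlled by $\|G'''\|_\infty$. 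Here I would use Condition~\ref{gamma_B_condition} to bound $|\gamma B^\theta_f|$ uniformly, the continuity of $x \mapsto \smooth{r}\eta(x)$ etc.\ under weak convergence of $\eta$, and the (locally uniform) convergence of the kernel-averaged quantities. The continuity of the maps $\eta \mapsto \langle \gamma(\cdot,\eta)\DG(f r(\cdot,\eta)) + fF(\cdot,\eta), \eta\rangle$ and $\eta \mapsto \langle \gamma r f^2, \eta\rangle$ on $\measures$ (with the weak topology) follows from smoothness of the kernels $\rho_\gamma, \rho_r, \rho_F$ together with the regularity Assumptions~\ref{def:model_setup} on $r$, $\gamma$, $F$; I would record this as a small lemma.

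To pass to the limit in the martingale relation itself, I would show that for any bounded continuous functional $\Phi$ of the path up to time $s$ and any $s < t$, $\IE[(M^N_t - M^N_s)\Phi((\eta^N_u)_{u \le s})] = 0$, and then take $N \to \infty$. For the convergence of $\IE[G_f(\eta^N_t)\Phi]$ and the boundary term this is immediate from a.s.\ convergence and boundedness of $G_f \circ (\text{evaluation})$ and $\Phi$, provided the evaluation maps $\eta \mapsto \langle f, \eta\rangle$ are a.s.\ continuous at the limit path — which holds at all but countably many $t$ by right-continuity, and suffices. For the integral term $\int_s^t \Pgen^N G_f(\eta^N_u)\, du$, I would combine the pointwise convergence of the integrand with a uniform integrability bound: using Lemma~\ref{lem:eta_compact_containment} (the bound on $\IE[\sup_{u \le T}\langle 1, \eta^N_u\rangle]$) together with the at-most-linear-in-$\langle 1,\eta\rangle$ growth of $\Pgen^N G_f$ coming from Condition~\ref{gamma_B_condition} and the bounds on $F$ (using $\smooth{F}\eta \le \|\rho_F\|_\infty \langle 1,\eta\rangle$ and the hypothesis $\sup_x\sup_{k \le m}|F(x,k)| < \infty$), to justify dominated convergence. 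The one subtlety is that $F$ is not bounded below, so I would bound $\Pgen^N G_f$ from below using the rearrangement trick of~\eqref{integral of -F} (expressing $\int -F\,d\eta$ via the semimartingale decomposition of $\langle 1,\eta\rangle$), exactly as in the proof of Lemma~\ref{lem:eta_compact_containment}.

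\textbf{Main obstacle.} The delicate point is the interchange of limit and integral in $\int_s^t \Pgen^N G_f(\eta^N_u)\,du$: one needs both that $\Pgen^N G_f(\eta^N_u) \to \Pgen^\infty G_f(\eta_u)$ (which requires the kernel-convergence estimates to be uniform over the relevant range of $\eta$, handled via Remark~\ref{rem:DG_limit} and Condition~\ref{gamma_B_condition}) \emph{and} the uniform integrability in $(N, u)$, where the lack of a lower bound on $F$ forces the indirect estimate via~\eqref{integral of -F}. Once this is in place, the martingale property transfers to the limit and, by Lemma~\ref{def: MP definition of limit}, the limit point solves the $(\Pgen^\infty, \eta_0)$-martingale problem. (That the limit lives in $\mathcal{D}_{[0,\infty)}(\measures)$ rather than merely $\mathcal{D}_{[0,\infty)}(\cmeasures)$ — i.e., no mass escapes to infinity — is the separate content of Lemma~\ref{no mass at infty}, invoked via the function $f_0$ of the theorem's hypotheses, and I would not repeat it here.)
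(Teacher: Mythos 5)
Your proposal is correct and rests on the same core ingredients as the paper's proof: the Taylor expansion of $\Pgen^N G_f$ identifying the first-, second-, and third-order terms, Condition~\ref{gamma_B_condition} to control $\gamma B^\theta_f$, and dominated convergence justified by the rearrangement~\eqref{integral of -F} together with $|F|\leq K-F$. The one structural difference is how the martingale property is transferred to the limit. The paper invokes Theorem~4.8.2 of \citet{ethier/kurtz:1986}, which reduces everything to showing
$\IE\big[\int_t^{t+\tau}|\Pgen^N G_f(\eta^N_s)-\Pgen^\infty G_f(\eta^N_s)|\,ds\,\prod_i h_i(\eta^N_{t_i})\big]\to 0$,
i.e., the two generators are compared on the \emph{same} prelimit argument $\eta^N_s$; this is a purely analytic estimate on the generators plus integrability, and no continuity of $\eta\mapsto\Pgen^\infty G_f(\eta)$ is ever needed. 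Your route (Skorohod representation, a.s.\ convergence, direct passage to the limit in $\IE[(M^N_t-M^N_s)\Phi]=0$) instead compares $\Pgen^N G_f(\eta^N_u)$ with $\Pgen^\infty G_f(\eta_u)$, so you genuinely do need the ``small lemma'' you flag: weak continuity on $\cmeasures$ of the functionals $\eta\mapsto\langle \gamma\,\DG(fr)+fF,\eta\rangle$ and $\eta\mapsto\langle \gamma r f^2,\eta\rangle$. That lemma is provable (on compact-containment events the densities $\smooth{r}\eta(x)$ etc.\ are bounded, and the kernels vanish at infinity so the convolution functionals extend continuously to $\overline{\IR}^d$), but it is an extra step the paper's formulation avoids entirely. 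Both arguments go through; the Ethier--Kurtz route is marginally more economical, while yours is more self-contained and makes the mechanism of the limit transfer explicit.
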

\begin{proof}
We use Theorem 4.8.2 in \cite{ethier/kurtz:1986}.
First observe that the set of functions
$\{G_f(\eta):= G(\langle f, \eta \rangle ),~
G \in \mathcal{C}^{\infty}(\IR), \|G'''\|_\infty<\infty,~
f \in \mathcal{C}_{b}^\infty(\overline{\IR}^d)\}$
is separating 
on $\mathcal{M}_F(\overline{\IR}^d)$.
Therefore, it suffices to show that for any $t>0$ and $\tau > 0$ that
\begin{equation}
    \label{eq: Convergence Condition}
\lim_{N \to \infty}
\mathbb{E}\left[
\left(
G_f(\eta^{N}_{t+\tau})-G_f(\eta^{N}_t)
-\int_{t}^{t+\tau}\Pgen^{\infty}G_f(\eta^{N}_s)ds
\right)
\prod_{i=1}^{k} h_i(\eta^{N}_{t_i})
\right]=0
\end{equation}
for all $k\geq 0$, $0\leq t_1<t_2<\ldots,t_k \leq t < t+\tau$,
and 
bounded continuous functions 
$h_1,\ldots,h_k$ on $\cmeasures$.
Since $(\eta^N_t)_{t\geq 0}$ is Markov, the tower property gives that,
for each $N$,
\begin{equation}
    \label{eq: Prelimit MP Application}
\mathbb{E}\left[
\left(
G_f(\eta^{N}_{t+\tau})-G_f(\eta^{N}_t)
-\int_{t}^{t+\tau}\Pgen^{N}G_f(\eta^{N}_s)ds
\right)
\prod_{i=1}^{k}h_i(\eta^{N}_{t_i})
\right]=0 .
\end{equation}
Therefore, it suffices to show that 
\begin{equation}
    \label{eq: Convergence of Spatial Generator}
\lim_{N \to \infty}
\mathbb{E}\left[
\int_{t}^{t+\tau}
\left|
\Pgen^{N}G_f(\eta^{N}_s)
-\Pgen^{\infty}G_f(\eta^{N}_s)
\right|
ds
\prod_{i=1}^{k}h_i(\eta^{N}_{t_i})
\right]=0,
\end{equation}
and, again using the tower property, since the functions $h_i$ are
bounded, this will follow if
\begin{equation}
\lim_{N \to \infty}
\mathbb{E}\left[\left.
\int_{t}^{t+\tau}
\left|
\Pgen^{N}G_f(\eta^{N}_s)
-\Pgen^{\infty}G_f(\eta^{N}_s)
\right|
ds\right| {\cal F}_t\right]=0
\end{equation}
(where $\{{\cal F}_t\}_{t\geq 0}$ is the natural $\sigma$-field).

We rewrite $\Pgen^NG_f(\eta^N_s)$ using a 
Taylor series expansion up to third order for
$G\big(\langle f,\eta\rangle\pm f(y)/N\big)$ around
$G(\langle f, \eta \rangle )$.
As in Section~\ref{sec:heuristics} (except that now we are more explicit about
the error term), we find
\begin{equation} 
    \label{eq: Pre-Limit Generator Expanded}
\begin{aligned}
\Pgen^{N} G_f(\eta)
    :=& 
    G'(\langle f, \eta \rangle)\int_{\IR^d}  
    \theta\Big\{ \gamma(x,\eta) \int_{\IR^d} f(y)r(y,\eta) q_{\theta}(x,dy) 
    - f(x)\mu_\theta(x,\eta) \Big\} \eta(dx)\\
&+\frac{1}{2}\frac{\theta}{N}G''(\langle f, \eta \rangle)\int_{\IR^d} 
    \Big\{\gamma(x, \eta)\int_{\IR^d} f^2(y) r(y,\eta)q_{\theta}(x,dy)+f^2(x)
    \mu_\theta(x, \eta) \Big\} \eta(dx) \\
&+\frac{1}{6}\frac{\theta}{N^2}
    G'''(w) \gamma(x, \eta)\int_{\IR^d} \Big\{ f^3(y) r(y,\eta) 
    q_{\theta}(x,dy)
    - G'''(v)f^3(x)\mu_\theta(x, \eta) \Big\} \eta(dx)
\end{aligned}    
\end{equation}
for some $w,v \in [\langle f,\eta \rangle - \|f\|_\infty/N, 
\langle f,\eta \rangle + \|f\|_\infty/N]$.

Combining with equation~\eqref{limit of mean measure equation}, and the fact
that $\mu_\theta(x,\eta) \to r(x,\eta)\gamma(x,\eta)$ as $\theta\to\infty$, we have
pointwise convergence:
\begin{equation}
\lim_{N\to \infty} |\mathcal{P}^{N}G(\langle f, \eta \rangle) 
- \mathcal{P}^{\infty}G(\langle f, \eta \rangle)| = 0 .
\end{equation}
To conclude convergence of the expectation, 
we would like to apply the Dominated Convergence 
Theorem in~(\ref{eq: Convergence of Spatial Generator}).
Recall that $f$ and $G$ and their derivatives are bounded,
and $\gamma(x,\eta)$ is bounded independent of $\theta$.
Since $\theta/N^2 \to 0$,
rearranging as in \eqref{eqn:rewritten mean measure}
and using the convergence of \eqref{eqn:near_critical},
we deduce that we can dominate
$\left| \Pgen^{N}G_f(\eta^{N}_s) -\Pgen^{\infty}G_f(\eta^{N}_s) \right|$
by a constant multiple of $\langle 1+|F(x)|,\eta_s(dx)\rangle$.
Since $F$ is bounded above, there is a constant $K$ such that $|F|\leq K-F$ so
that, exactly as in equation~(\ref{integral of -F}), 
we can check that
\[
\IE\Big[\left.\int_t^{t+\tau}\big\langle |F(x,\eta^N_s)|,\eta^N_s(dx)\big\rangle ds
\right| {\cal F}_t\Big]<\infty,
\]
which concludes our proof.
\end{proof}

The last step in the proof of Theorem~\ref{thm:nonlocal_convergence}
is to check that
any limit point $(\eta_t)_{t\geq 0}$ of $\{(\eta^N_t)_{t\geq 0}\}_{N\geq 1}$
actually takes its values in $\measures$, that is, ``no mass has escaped to infinity''.

\begin{lemma}
\label{no mass at infty}
Under the assumptions of Theorem~\ref{thm:nonlocal_convergence},
if $(\eta_t)_{t\geq 0}$ is a limit point of $\{(\eta^N_t)_{t\geq 0}\}_{N\geq 1}$,
then for any $\delta>0$ and $T>0$,
\[
\IP\bigg[
    \sup_{0 \le t \le T} \eta_t\big(\{\|x\|>R\}\big)>\delta
\bigg]\to 0\qquad\mbox{ as }R\to\infty.
\]
\end{lemma}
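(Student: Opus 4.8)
The plan is to exploit the Lyapunov function $f_0$ furnished by the hypotheses of Theorem~\ref{thm:nonlocal_convergence}: it is nonnegative, has uniformly bounded first and second derivatives, satisfies $f_0(x)\to\infty$ as $|x|\to\infty$, and has $\langle f_0,\eta_0^N\rangle$ bounded uniformly in $N$. The key reduction is that, if one shows $\IE[\langle f_0,\eta_t\rangle]\le C'$ for a deterministic constant $C'$ (allowed to depend on $t$ but nothing else), then, with $m_R:=\inf_{\|x\|\ge R}f_0(x)\to\infty$, Markov's inequality gives
\begin{equation*}
\IP\big[\eta_t(\{\|x\|>R\})>\delta\big]\le\IP\big[\langle f_0,\eta_t\rangle>\delta\,m_R\big]\le\frac{C'}{\delta\,m_R}\longrightarrow 0\qquad(R\to\infty),
\end{equation*}
which is the assertion. (As a byproduct the same bound shows $\eta_t$ places no mass at the point at infinity of $\overline{\IR}^d$, i.e.\ $\eta_t\in\measures$ almost surely.)

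To get at $\IE[\langle f_0,\eta_t\rangle]$ I would first introduce bounded truncations, since Lemma~\ref{lem:eta_f_bound} is stated for $C_b^2$ test functions. Choose smooth concave $\chi_M:\IR\to\IR$ with $\chi_M(s)=s$ for $s\le M$, $\chi_M$ constant for $s\ge M+1$, $0\le\chi_M'\le1$, and $\sup_M\|\chi_M''\|_\infty<\infty$, and set $f_0^M:=\chi_M(f_0)$ (first mollifying $f_0$ if necessary so that it is genuinely $C^2$, which affects neither its growth nor the bounds on its derivatives). Then $f_0^M\in C_b^2(\IR^d)$; the chain rule bounds the sup-norms of $\nabla f_0^M$ and of the second derivatives of $f_0^M$ in terms of those of $f_0$, hence \emph{uniformly in $M$}; $f_0^M\uparrow f_0$ pointwise on $\IR^d$; and each $f_0^M$, being bounded with $f_0^M(x)\to\sup f_0^M$ as $|x|\to\infty$, extends to an element of $C(\overline{\IR}^d)$, diverging to its supremum at the point at infinity.

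Applying Lemma~\ref{lem:eta_f_bound} to each $f_0^M$ gives $\IE[\langle f_0^M,\eta_t^N\rangle]\le C(f_0^M,T)\IE[\langle 1,\eta_0^N\rangle]$ for $t\le T$; the constant $C(f_0^M,T)$ it produces enters only through the constant $C_{f_0^M}$ of Condition~\ref{gamma_B_condition}, which by Assumptions~\ref{def:model_setup} depends on $f_0^M$ only through the sup-norms of its first two derivatives, hence by the previous paragraph is bounded by some $C(T)$ independent of $M$. Together with $\sup_N\langle 1,\eta_0^N\rangle<\infty$ (part of the tightness hypothesis on $\{\eta_0^N\}$), this yields $\sup_M\sup_N\sup_{0\le t\le T}\IE[\langle f_0^M,\eta_t^N\rangle]\le C'<\infty$. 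Now pass to the limit along the subsequence $N_k$ defining $(\eta_t)$: because $f_0^M\in C(\overline{\IR}^d)$ and $\overline{\IR}^d$ is compact, $\nu\mapsto\langle f_0^M,\nu\rangle$ is weakly continuous on $\cmeasures$, so $\langle f_0^M,\eta_t^{N_k}\rangle\to\langle f_0^M,\eta_t\rangle$ at every $t$ that is a continuity point of the limiting path; Fatou's lemma gives $\IE[\langle f_0^M,\eta_t\rangle]\le C'$ there, and this extends to every $t$ by right-continuity of the càdlàg real path $t\mapsto\langle f_0^M,\eta_t\rangle$ together with Fatou's lemma along continuity points $s\downarrow t$. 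Letting $M\to\infty$, monotone convergence yields $\IE[\langle f_0,\eta_t\rangle]\le C'$, while the divergence of $f_0^M$ at infinity forces $\eta_t(\{\infty\})=0$ a.s.; this is exactly the bound used above.

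The one step that is not bookkeeping is the uniformity in the truncation level $M$ of the constant $C'$: it rests entirely on the feature — deliberately built into Condition~\ref{gamma_B_condition} of Assumptions~\ref{def:model_setup}, as the discussion there flags — that the constant bounding $|\gamma(x,\eta)B^\theta_f(x,\eta)|$ depends on $f$ only through the norms of its first two derivatives and not on $\|f\|_\infty$. (Equivalently one could apply Lemma~\ref{lem:eta_f_bound} directly to $f_0$, whose proof uses only that condition and the derivative bounds; the truncation then serves merely to keep $\langle f_0^M,\eta^N\rangle$ manifestly integrable and to stay literally within the lemma's hypotheses.) Everything else — the truncation, the use of the one-point compactification, Fatou's lemma, the passage to a non-continuity time, and the final Markov estimate — is routine.
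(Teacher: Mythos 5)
Your proposal is correct and follows essentially the same route as the paper's own (sketched) argument: both truncate $f_0$ by an increasing sequence of bounded $C^2$ functions whose first and second derivatives are bounded uniformly in the truncation level, exploit that the constant in Condition~\ref{gamma_B_condition} depends on the test function only through those derivative norms to get a Gronwall bound uniform in $N$ and the truncation level, pass to the limit in $N$ and then in the truncation, and finish with Markov's inequality. Your write-up merely fills in the weak-convergence and Fatou/right-continuity bookkeeping that the paper leaves implicit.
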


\begin{proof}[Sketch]
Take $f_0(x)$ as in the statement of Theorem~\ref{thm:nonlocal_convergence},
i.e., $f_0$ is nonnegative, grows to infinity as $x \to \infty$,
has uniformly bounded first and second derivatives,
and has $\langle f_0, \eta^N_0\rangle$ uniformly bounded in $N$.
We take a sequence of nonnegative test functions $f_n$ that increase to the function $f_0$
and having uniformly bounded first and second derivatives,
so that there is a (single) $C$
from Condition~\ref{gamma_B_condition} of Assumptions~\ref{def:model_setup}
such that $\gamma(x,\eta) B_{f_n}^\theta(x,\eta) \le C(1 + f_n(x))$
for all $x$, $\eta$, and $f_n$.
Then, just as we arrived at equation~\eqref{bound on intfdeta},
\begin{align*}
    \IE\left[\left\langle f_n(x), \eta^N_t(dx) \right\rangle\right]
    &\le
    \IE\left[\left\langle f_n(x), \eta^N_0(dx) \right\rangle\right]
    +
    C \int_0^t \IE\left[\left\langle f_n(x), \eta^N_s(dx) \right\rangle \right] ds ,
\end{align*}
with the same constant for all $n$ and all $N$.
Gronwall's inequality then implies that 
$\IE[\langle f_n, \eta^N_t\rangle] \le C'$
for some $C'$ independent of $n$, $N$, and $t \in [0,T]$.
By first taking $N \to \infty$ and then $n \to \infty$,
we find that $\IE[\langle f_0, \eta_t(dx)\rangle] \le C'$ for $t\in [0,T]$.
However, this is for a single time --
we would like instead to uniformly bound
$\IE[\sup_{0 \le t \le T} \langle f_n, \eta^N_t(dx)\rangle]$.
This can be done in a similar but lengthier manner,
following the proof of Lemma~\ref{lem:eta_compact_containment}
and observing that bounds can be taken independent of $n$ and $N$.

Finally, since $f_0 \to \infty$ as $|x| \to \infty$,
an application of Markov's inequality
tells us that for any $\delta > 0$,
$$
    \IP\bigg\{ \sup_{0 \le t \le T} \eta_t(\{x : \|x\| > R\})>\delta \bigg\}
    \le \frac{\IE[\sup_{0 \le t \le T} \langle f_0, \eta_t\rangle]}{\delta \inf_{\{x:\|x\|\ge R\}} f_0(x)}
    \to 0 \qquad \text{as } R\to\infty .
$$
\end{proof}

\subsection{Convergence of some nonlocal equations to classical PDEs}
\label{subsec:nonlocal to local}

It is natural to conjecture that when the limit of the rescaled 
population process that we obtained in the previous section solves a nonlocal
PDE, if we further scale the kernels 
$\rho_r$, $\rho_\gamma$, and $\rho_F$ by setting 
$\rho^\epsilon(\cdot)=\rho(\cdot/\epsilon)/\epsilon^d$, as $\epsilon\to 0$, the 
corresponding solutions should converge to a limiting
population density that solves the corresponding ``classical'' PDE. We verify
this in two examples; in the first the nonlocal equation is a reaction-diffusion equation
with the ``nonlocality'' only appearing in the 
reaction term; in the second the nonlocal PDE is a special 
case of a nonlinear porous medium equation. These, in particular,
capture the examples that we explored in 
Section~\ref{ancestral lineages for travelling waves}.

\subsubsection{Reaction--diffusion equation limits}
\label{two-step convergence to FKPP}

In this subsection we prove Proposition~\ref{prop:nonlocal_to_local}. {\em The conditions of the
proposition are in force throughout this subsection.} 
The proof rests on a Feynman-Kac representation.
We write $(Z_t)_{t\geq 0}$ for a diffusion with generator $\DG^*$ and denote 
its transition density by $f_t(x,y)$. The first step is a regularity
result for this density.

\begin{lemma} \label{regularityForX1}
Fix $T>0$. There exists a constant $K= K(T)>0$ such that, for any 
$x, y\in \mathbb{R}^d$ and $t \in [0,T]$,
\begin{equation} \label{eq:boundDensityXt}
\int |f_t(x,z)-f_t(y,z)| dz \leq \frac{\|x-y\|}{\sqrt{t}} K.
\end{equation}
\end{lemma}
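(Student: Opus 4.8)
The plan is to use Gaussian-type (Aronson) bounds for the transition density $f_t(x,z)$ of the diffusion with generator $\DG^*$, together with a gradient estimate, and then integrate. Under Assumptions~\ref{def:model_setup} the coefficients $\covq$ and $\meanq$ are bounded and $\alpha$-H\"older continuous, and $\DG$ (hence $\DG^*$, which has the same principal part up to lower-order terms) is uniformly strictly elliptic; this is exactly the classical setting in which one has, for $t\in(0,T]$, bounds of the form
\[
  f_t(x,z) \le \frac{C_1}{t^{d/2}} \exp\!\left(-\frac{\|x-z\|^2}{C_2 t}\right),
  \qquad
  \|\grad_x f_t(x,z)\| \le \frac{C_3}{t^{(d+1)/2}} \exp\!\left(-\frac{\|x-z\|^2}{C_4 t}\right),
\]
with constants depending only on the ellipticity constant, the H\"older norms, the dimension, and $T$ (see e.g. Friedman's parametrix construction, or Aronson's estimates). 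First I would invoke these two estimates as a known input.

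The main step is then elementary. Fix $x,y$ and $t\in(0,T]$. Write, along the segment $x_\lambda = \lambda x + (1-\lambda) y$,
\[
  f_t(x,z) - f_t(y,z)
  = \int_0^1 \grad_x f_t(x_\lambda, z)\cdot(x-y)\, d\lambda,
\]
so that
\[
  \int |f_t(x,z)-f_t(y,z)|\,dz
  \le \|x-y\|\int_0^1 \int \|\grad_x f_t(x_\lambda,z)\|\,dz\,d\lambda.
\]
Plugging in the gradient bound and integrating the Gaussian in $z$ gives $\int \|\grad_x f_t(x_\lambda,z)\|\,dz \le C_5\, t^{(d+1)/2}\cdot t^{-(d+1)/2}$ — more carefully, $\int t^{-(d+1)/2}\exp(-\|x_\lambda-z\|^2/(C_4 t))\,dz = C_6\, t^{-1/2}$ — which is uniform in $\lambda$ and yields the bound $\|x-y\| K/\sqrt t$ with $K = K(T)$. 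This proves \eqref{eq:boundDensityXt} for $t\le T$.

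The one genuine subtlety, and the thing I would be most careful about, is justifying the gradient estimate for $\grad_x f_t(x,z)$ under only H\"older (not Lipschitz or $C^1$) coefficients: with merely $\alpha$-H\"older continuous $\covq$ one still gets $C^{1,\alpha}$-type control and the stated Gaussian gradient bound from the parametrix method, but one should cite this carefully (e.g. Friedman, \emph{Partial Differential Equations of Parabolic Type}, Ch.~1, or Stroock's notes on Aronson's estimates) rather than prove it from scratch. A secondary point is that $\DG^*$ as written in Definition~\ref{def:dispersal_generator} is in non-divergence form with an extra zeroth-order term; this is harmless — the zeroth-order term only multiplies the density and does not affect the Gaussian upper/gradient bounds — but I would note that the relevant estimates apply to operators of exactly this type. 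Finally, since the bound is only needed for $t\in[0,T]$ and blows up like $t^{-1/2}$ as $t\downarrow 0$ (which is fine, as it is integrable in $\sqrt t\,dt$ and that is all the later Feynman--Kac argument needs), there is nothing to check at $t=0$.
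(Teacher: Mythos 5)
Your proof is correct and follows essentially the same route as the paper's: both apply the mean value theorem along the segment from $x$ to $y$ to reduce the claim to an $L^1$ bound on $\grad_x f_t$, then invoke a Gaussian gradient estimate of the form $\|\grad_x f_t(w,z)\| \le K t^{-1/2} p_{\lambda t}(w,z)$ (the paper cites equation (1.3) of Sheu (1991); your Friedman/Aronson parametrix bound is the same estimate in unnormalized form) and integrate out $z$. Your remark about justifying the gradient bound under merely H\"older coefficients is a reasonable point of care, but it does not change the argument.
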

\begin{proof}
We first use the Intermediate Value Theorem to obtain the bound
\begin{align*}
\int |f_t(x,z)-f_t(y,z)| dz & \leq \int \|x-y\| \|\nabla f_t(w,z)\| dz
\end{align*}
where $\nabla$ acts on the first coordinate only and $w$ is in the line 
segment $[x,y]$ joining $x$ to $y$. 
Under our assumptions on $b$ and $C$, equation~(1.3) 
of~\cite{sheu:1991}, gives existence of constants 
$\lambda=\lambda(T)>0$ and $K$ such that,
\[ \|\nabla f_t(w,z)\|  \leq  \frac{K }{\sqrt{t}}p_{ \lambda  t}(w,z), \]
where $p_s(x,y)$ is the Brownian transition density.
Hence, 
\begin{align*}
\int |f_t(x,z)-f_t(y,z)| dz & \leq K 
\frac{\|x-y\|}{\sqrt{t}} \int p_{\lambda  t}(w,z)dz = 
K  \frac{\|x-y\|}{\sqrt{t}} .
\end{align*}
\end{proof}

\begin{lemma} \label{regularityForX2}
Fix $T>0$. Let $x,y \in \mathbb{R}^d$, $t \in [0,T]$, and denote by 
$(Z_t^y)_{t\geq 0}$ and $(Z_t^x)_{t\geq 0}$ independent copies of the 
diffusion $(Z_t)_{t\geq 0}$ starting 
from $y$ and $x$ respectively. There exists a constant $K=K(T)>0$ such that, 
\[ \IE[\|Z_t^y-Z_t^x\|] \leq K(\sqrt{t} + \|y-x\|). \]
\end{lemma}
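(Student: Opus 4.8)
The plan is to bound the displacement of each copy from its own starting point and then combine the two bounds by the triangle inequality. Since pointwise $\|Z_t^y - Z_t^x\| \le \|Z_t^y - y\| + \|y-x\| + \|x - Z_t^x\|$, taking expectations gives
\[
  \IE[\|Z_t^y - Z_t^x\|] \le \IE[\|Z_t^y - y\|] + \|y-x\| + \IE[\|Z_t^x - x\|],
\]
so it suffices to produce a constant $K = K(T)$ with $\IE[\|Z_t^x - x\|] \le K\sqrt{t}$ for all $x \in \IR^d$ and $t \in [0,T]$. (Note that independence of the two copies is not needed for this direction; it is only the upper bound that is asserted.)

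For that single-copy estimate I would use a Gaussian upper bound on the transition density $f_t$, valid under Assumptions~\ref{def:model_setup} (bounded, $\alpha$-H\"older coefficients and uniform ellipticity); this is the same circle of parabolic estimates that underlies the gradient bound of \cite{sheu:1991} invoked in the proof of Lemma~\ref{regularityForX1}. Concretely, there are constants $K_1 = K_1(T)$ and $\lambda = \lambda(T) > 0$ such that $f_t(x,z) \le K_1\, p_{\lambda t}(x,z)$ for all $t \in (0,T]$ and $x,z \in \IR^d$, where $p_s$ is the transition density of standard Brownian motion (if $\DG^*$ carries a zeroth-order term this holds up to a further factor that is bounded on $[0,T]$, which we absorb into $K_1$). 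A short computation then gives
\[
  \IE[\|Z_t^x - x\|] = \int_{\IR^d} \|z - x\|\, f_t(x,z)\, dz
  \le K_1 \int_{\IR^d} \|z-x\|\, p_{\lambda t}(x,z)\, dz
  = K_1\, c_d\, \sqrt{\lambda t},
\]
since the first absolute moment of a centred Gaussian with covariance $\lambda t\, I$ equals a dimensional constant $c_d$ times $\sqrt{\lambda t}$. Substituting this into the displayed triangle-inequality bound yields the claim with $K = (2 K_1 c_d \sqrt{\lambda}) \vee 1$.

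I expect the only genuine subtlety to be the justification of the Gaussian upper bound itself: because $\covq$ is assumed merely H\"older continuous, $\DG^*$ need not have bounded first-order coefficients when written in non-divergence form, so one cannot simply write down an It\^o SDE for $Z$ and estimate it by boundedness of coefficients together with the Burkholder--Davis--Gundy inequality. Instead one appeals to classical parabolic theory (Aronson-type heat kernel bounds, or \cite{sheu:1991} and the references therein), exactly as already done for Lemma~\ref{regularityForX1}; granting this input, the remaining steps are the elementary triangle inequality and the explicit Gaussian moment computation above.
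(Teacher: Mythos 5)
Your proof is correct and uses essentially the same approach as the paper: the key input in both is the Gaussian upper bound $f_t(x,z)\le K\,p_{\lambda t}(x,z)$ from \cite{sheu:1991}, combined with the triangle inequality. The only cosmetic difference is the order of the two steps — you apply the triangle inequality to the diffusion displacements $\|Z_t^x-x\|$ first and then invoke the Gaussian bound on each marginal, whereas the paper first dominates the joint density by a product of Brownian kernels and then applies the triangle inequality to the Brownian motions; both yield the same constant structure $K(\sqrt{t}+\|y-x\|)$.
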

\begin{proof}
First we write, 
\[ \IE[\|Z_t^y-Z_t^x\|]  = \int \int \|u-v\| f_t(y,u) f_t(x,v) du dv . \]
Under our regularity assumptions on $C$, $b$, using equation~(1.2)
of~\cite{sheu:1991}, there exist constants $K$, $\lambda =\lambda (T)>0$ 
for which,
\[ f_t(y,u) \leq K  p_{\lambda  t}(y,u). \]
It then follows that,
\begin{align}
  \IE[\|Z_t^y-Z_t^x\|]  \leq \int \int \|u-v\| K^2 p_{\lambda  t}(y,u) 
p_{\lambda  t}(x,v) dv du = K^2 \IE[\|B_{\lambda  t}^y-B_{\lambda  t}^x\|], 
\label{eq:ExpDif1}
\end{align}
where $(B_t^y)_{t\geq 0}$ and $(B_t^x)_{t\geq 0}$ are 
independent Brownian motions starting at $y$ and $x$ respectively. 
Using the triangle inequality, and writing $(B^0_t)_{t\geq 0}$ for a 
Brownian motion started from the origin,
\begin{equation}
\IE[\|B_{\lambda  t}^y-B_{\lambda  t}^x\|]
	\leq \|y-x\|+\IE[\|B^0_{2\lambda t}\|] 
\leq \|y-x\|+C\sqrt{t}. \label{eq:ExpDif2}
\end{equation}
Substituting (\ref{eq:ExpDif2}) in (\ref{eq:ExpDif1}) gives the result.
\end{proof}

We use the representations of the solutions to equations \eqref{localPDE} and \eqref{nonlocalPDEv1} respectively:
\begin{align}
\varphi_t(x) &= \IE_x\Big[\varphi_0(Z_t) 
    + \int_0^t \varphi_s(Z_{t-s})F(\varphi_s(Z_{t-s})) ds\Big], \label{FK:varphi} \\
\varphi^\epsilon_t(x) &= \IE_x\Big[\varphi_0(Z_t) 
    + \int_0^t \varphi^\epsilon_s(Z_{t-s})F(\rho^\epsilon_F*\varphi^\epsilon_s(Z_{t-s})) ds\Big],
\end{align}
from which
\begin{equation} 
\varphi_t(x) - \varphi^\epsilon_t(x) = 
    \IE_x\left[ \int_0^t \Big(\varphi_s(Z_{t-s})F\big(\varphi_s(Z_{t-s})\big)
    -\varphi^\epsilon_s(Z_{t-s})F\big(\rho_\epsilon*\varphi^\epsilon_s(Z_{t-s})\big)\Big) ds \right] ,
\label{FK:diferencesVarphis}
\end{equation}
where $\IE_x$ denotes expectation for $Z$ with $Z_0 = x$.
The key to our proof of Proposition~\ref{prop:nonlocal_to_local}
will be to replace $F(\varphi_s(Z_{t-s}))$ by $F(\rho^\epsilon_F*\varphi_s(Z_{t-s}))$ 
in this expression. 
We achieve this through three lemmas.

First we need a uniform bound on $\varphi$ and $\varphi^\epsilon$.  
\begin{lemma} 
\label{BoundednessVarphis}
For any $T>0$ there exists $M = M(T, \Vert \varphi_0 \Vert) >0$ such that, 
for all $0 \leq t \leq T$:
\[ \max\{ \| \varphi_t(\cdot) \|_\infty, \| \varphi^\epsilon_t(\cdot) \|_\infty\} 
< M.
 \]
\end{lemma}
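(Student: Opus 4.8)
The plan is to read the bound directly off the Feynman--Kac representation~\eqref{FK:varphi}, using only that $F$ is bounded above and that both $\varphi$ and $\varphi^\epsilon$ stay non-negative; everything then reduces to Gronwall's inequality. Write $C_F := \sup_m F(m) < \infty$, which is finite by hypothesis.

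\textbf{Step 1: non-negativity.} First I would check that $\varphi_t \ge 0$ and $\varphi^\epsilon_t \ge 0$ for all $t\in[0,T]$. Since $\varphi_0$ is positive, this amounts to saying that the linear parabolic equation $\partial_t u = \DG^* u + c_t(x)\, u$, with potential $c_t(x) = F(\varphi_t(x))$ (respectively $c_t(x) = F(\rho^\epsilon_F*\varphi^\epsilon_t(x))$) --- a potential bounded above by $C_F$ --- propagates non-negativity of its initial datum. This follows from the parabolic maximum principle, or equivalently from the exponential form of Feynman--Kac, $\varphi_t(x)=\IE_x[\varphi_0(Z_t)\exp(\int_0^t c_{t-s}(Z_s)\,ds)]$, in which non-negativity is manifest. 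Non-negativity is genuinely needed: if $\varphi$ could dip below zero, the term $\varphi_s F(\varphi_s)$ is only controlled by a \emph{quadratic} in $\varphi_s$ (using Lipschitzness of $F$), and Gronwall would fail; whereas on $\{\varphi_s\ge0\}$ one has the linear bound $\varphi_s F(\varphi_s)\le C_F\varphi_s$.

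\textbf{Step 2: the Gronwall estimate.} Given non-negativity, $\varphi_s(Z_s)F(\varphi_s(Z_s))\le C_F\,\varphi_s(Z_s)$ pointwise, so~\eqref{FK:varphi} gives
\[
\varphi_t(x)\ \le\ \|\varphi_0\|_\infty + C_F\int_0^t \IE_x[\varphi_s(Z_s)]\,ds\ \le\ \|\varphi_0\|_\infty + C_F\int_0^t \|\varphi_s\|_\infty\,ds .
\]
Taking the supremum over $x\in\IR^d$ and applying Gronwall yields $\|\varphi_t\|_\infty\le \|\varphi_0\|_\infty e^{C_F t}\le \|\varphi_0\|_\infty e^{C_F T}$ for $t\in[0,T]$. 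The argument for $\varphi^\epsilon$ is identical: $F(\rho^\epsilon_F*\varphi^\epsilon_s)\le C_F$ regardless of its argument, and $\varphi^\epsilon_s\ge 0$, so $\varphi^\epsilon_s(Z_s)F(\rho^\epsilon_F*\varphi^\epsilon_s(Z_s))\le C_F\varphi^\epsilon_s(Z_s)$ and the same bound results, crucially \emph{uniformly in} $\epsilon$. Hence $M:=\|\varphi_0\|_\infty e^{C_F T}+1$ works, with $M=M(T,\|\varphi_0\|_\infty)$ as claimed.

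\textbf{Step 3 (the only delicate point): making Gronwall legitimate.} To apply Gronwall I must know a priori that $t\mapsto\|\varphi_t\|_\infty$ is finite (and measurable) on $[0,T]$. I would obtain this either by invoking standard parabolic regularity for~\eqref{localPDE} and~\eqref{nonlocalPDEv1} (a weak $L^2$ solution of a linear equation with potential bounded above has locally bounded sup-norm, by the same $\DG^*$-heat-semigroup estimates used in Lemmas~\ref{regularityForX1}--\ref{regularityForX2}), or, self-containedly, by observing that the map $\Phi\mapsto\big(x\mapsto \IE_x[\varphi_0(Z_t)]+\int_0^t\IE_x[\Phi_s(Z_s)F(\Phi_s(Z_s))]\,ds\big)$ is a contraction on $C([0,t_0];L^\infty)$ for $t_0$ small (using Lipschitzness of $F$ on bounded sets and $\|\varphi_0\|_\infty<\infty$), so the solution is bounded on $[0,t_0]$; the estimate of Step 2 then propagates this over $[0,T]$ in finitely many steps, with a constant depending only on $T$ and $\|\varphi_0\|_\infty$. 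I do not expect a real obstacle here --- the mathematical content is simply that a reaction term with $F$ bounded above can force at most exponential sup-norm growth; the rest is bookkeeping to justify the Gronwall step and to see that the constant is independent of $\epsilon$.
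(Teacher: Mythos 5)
Your proof is correct and follows essentially the same route as the paper's: the paper's argument is precisely the Gronwall estimate read off the Feynman--Kac representation~\eqref{FK:varphi} using that $F$ is bounded above. Your Steps 1 and 3 (explicit non-negativity, so that $\varphi_s F(\varphi_s)\le C_F\varphi_s$, and the a priori finiteness of $t\mapsto\|\varphi_t\|_\infty$ needed to run Gronwall) are left implicit in the paper but are the right points to flag.
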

\begin{proof}
Using that $\varphi_0$ and $F$ are bounded above, from the
representation~(\ref{FK:varphi}), we have
\begin{align*}
\varphi_t(x) \leq \| \varphi_0 \|_\infty + 
    K \IE\Big[\int_0^t \varphi_s(Z_{t-s}) ds\Big].
\end{align*}
In particular, 
\[ \| \varphi_t(\cdot) \|_\infty \leq \| \varphi_0 \|_\infty
+  K \int_0^t \| \varphi_s(\cdot) \|_\infty ds, \]
so, by Gronwall's inequality,
\[ \| \varphi_t(\cdot) \|_\infty \leq \| \varphi_0 \|_\infty \exp\left( K T \right). \]
Similarly,
$\| \varphi^\epsilon_t(\cdot) \|_{\infty} \leq \| \varphi_0 \|_\infty \exp\left( K T \right) $.
\end{proof}

We also need a continuity estimate for $\varphi$.
\begin{lemma} 
\label{ContinuityVarphi}
Let $T>0$. There exists a constant 
$K=K(T, \| \varphi_0\|_\infty)>0$ and 
$\delta_0=\delta_0(T, \| \varphi_0\|_\infty) >0$ such that 
for all $0 < \delta<\delta_0$ and $0 \leq t \leq T$,
\[ 
 \|x-y\|<\delta^3 \Rightarrow |\varphi_t(x)-\varphi_t(y)|< K\delta.
\]
\end{lemma}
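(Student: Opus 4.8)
The plan is to use the Feynman--Kac representation~\eqref{FK:varphi} and estimate the difference $\varphi_t(x)-\varphi_t(y)$ by coupling the two diffusions started at $x$ and $y$, exactly in the spirit of the preceding two lemmas. Writing
\[
\varphi_t(x)-\varphi_t(y)
= \IE\big[\varphi_0(Z_t^x)-\varphi_0(Z_t^y)\big]
+ \IE\Big[\int_0^t \big(\varphi_s(Z_s^x)F(\varphi_s(Z_s^x)) - \varphi_s(Z_s^y)F(\varphi_s(Z_s^y))\big)\,ds\Big],
\]
one splits the estimate into the ``initial condition'' term and the ``source'' term. For the first term, since $\varphi_0$ is uniformly Lipschitz (say with constant $L_0$), Lemma~\ref{regularityForX2} gives $|\IE[\varphi_0(Z_t^x)-\varphi_0(Z_t^y)]| \le L_0 \IE[\|Z_t^x - Z_t^y\|] \le L_0 K(\sqrt t + \|x-y\|)$. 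This is already too big near $t=0$ if we only control $\|x-y\|$; however, it can instead be bounded directly using Lemma~\ref{regularityForX1}: $|\IE[\varphi_0(Z_t^x)]-\IE[\varphi_0(Z_t^y)]| = |\int \varphi_0(z)(f_t(x,z)-f_t(y,z))dz| \le \|\varphi_0\|_\infty \|x-y\|K/\sqrt t$, and also trivially by $\le 2\|\varphi_0\|_\infty$; taking whichever is smaller and choosing the threshold $\|x-y\|<\delta^3$ will make this term $O(\delta)$ as long as we handle small $t$ separately.

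The natural way to organise this is a Gronwall argument on the modulus of continuity. Define $\omega_t(\delta) := \sup_{\|x-y\|\le \delta^3} |\varphi_t(x)-\varphi_t(y)|$. Using the Lipschitz bound on $F$ restricted to the bounded range $[0,M]$ of $\varphi$ (with $M=M(T,\|\varphi_0\|_\infty)$ from Lemma~\ref{BoundednessVarphis}), the map $m\mapsto mF(m)$ is Lipschitz on $[0,M]$, say with constant $L_F$. Then, coupling $Z^x$ and $Z^y$ so that they run independently until they meet and coincide thereafter (or simply estimating directly via the triangle inequality through $\varphi_s(Z_s^x)F(\cdots) - \varphi_s(Z_s^y)F(\cdots)$ and splitting $|\varphi_s(Z^x_s) - \varphi_s(Z^y_s)|$ by whether $\|Z^x_s - Z^y_s\| \le \delta^3$ or not), one gets
\[
|\varphi_t(x)-\varphi_t(y)| \le \frac{\|\varphi_0\|_\infty K\|x-y\|}{\sqrt t}\wedge 2\|\varphi_0\|_\infty
+ L_F \int_0^t \big( \omega_s(\delta) + 2M\,\IP(\|Z_s^x - Z_s^y\|>\delta^3)\big)\,ds.
\]
For the probability term, use Markov's inequality together with Lemma~\ref{regularityForX2}: $\IP(\|Z_s^x-Z_s^y\|>\delta^3) \le \delta^{-3}K(\sqrt s + \|x-y\|) \le \delta^{-3}K(\sqrt s + \delta^3)$. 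Integrating over $s\in[0,t]$, the $\sqrt s$ contributes $\frac23 t^{3/2}\delta^{-3}$, which is $O(\delta)$ provided we shrink $t$ or $\delta_0$ appropriately; more carefully, one fixes a small time increment and iterates, or absorbs the $t^{3/2}/\delta^3$ into the requirement $\delta<\delta_0(T,\|\varphi_0\|_\infty)$ by noting that for the claimed bound we only need it on a fixed window. Then Gronwall on $\omega_t(\delta)$ closes the estimate: $\omega_t(\delta) \le C\delta\, e^{L_F T}$ for all $t\le T$ and $\delta<\delta_0$, with $C$, $\delta_0$ depending only on $T$ and $\|\varphi_0\|_\infty$.

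The main obstacle is handling the singularity at $t=0$: the heat-kernel gradient bound from~\cite{sheu:1991} costs a factor $1/\sqrt t$, so a naive bound on $|\varphi_t(x)-\varphi_t(y)|$ blows up as $t\downarrow0$. The cube $\delta^3$ in the hypothesis $\|x-y\|<\delta^3$ is precisely engineered to beat this: it lets us trade the $1/\sqrt t$ against $\|x-y\|^{1/3}$-type gains (via $a/\sqrt t \wedge b$ type bounds and via the Markov step $\delta^{-3}\sqrt s$), so that after integrating in time everything is genuinely $O(\delta)$ uniformly down to $t=0$. Getting the bookkeeping of these two competing powers right --- choosing $\delta_0$ small enough that the $t^{3/2}\delta^{-3}$ and $\|x-y\|\,t^{-1/2}\le \delta^3 t^{-1/2}$ terms are each $\le \delta$ on $[0,T]$ --- is the one genuinely delicate point; the rest is routine Gronwall and the already-established Lemmas~\ref{regularityForX1},~\ref{regularityForX2}, and~\ref{BoundednessVarphis}.
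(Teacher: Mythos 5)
Your overall strategy---propagating a modulus of continuity forward from $t=0$ through the full Feynman--Kac integral via Gronwall---has a genuine gap at the step where you control the source term. You bound the contribution of the event $\{\|Z^x_s-Z^y_s\|>\delta^3\}$ by $2M\,\IP\{\|Z^x_s-Z^y_s\|>\delta^3\}$ and then invoke Markov's inequality with Lemma~\ref{regularityForX2}, giving $\delta^{-3}K(\sqrt s+\delta^3)$. Integrating over $[0,t]$ produces a term of order $t^{3/2}\delta^{-3}$, which for fixed $t\in[\delta^2,T]$ \emph{diverges} as $\delta\to0$; it cannot be made $O(\delta)$ by shrinking $\delta_0$, since the bound worsens as $\delta$ decreases. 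The underlying problem is structural, not bookkeeping: $Z^x$ and $Z^y$ are independent diffusions, so for $s\gg\delta^6$ they are typically order $\sqrt s$ apart and $\IP\{\|Z^x_s-Z^y_s\|>\delta^3\}$ is close to $1$; the integral $\int_0^t\IP\{\cdot\}\,ds$ is genuinely of order $t$, not of order $\delta$. A coalescing coupling would rescue this only in $d=1$ (and even there needs a reflection-coupling argument for variable-coefficient diffusions that you have not supplied); in $d\ge2$ independent diffusions never meet. The fallback of ``fixing a small time increment and iterating'' is not developed and is circular as stated, since restarting the estimate on each window requires continuity of $\varphi$ at the start of that window, which is exactly what is being proved.

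The paper avoids this entirely with a different decomposition (following Lemma~2.2 of~\citet{penington:2017}). For $t<\delta^2$ your first estimate is essentially what the paper does: Lipschitz continuity of $\varphi_0$ plus Lemma~\ref{regularityForX2} gives $O(\delta)$ directly. For $t\ge\delta^2$ the paper restarts the Feynman--Kac representation a short time $\delta^2$ before $t$:
\[
\varphi_t(x)=\IE_x\Big[\varphi_{t-\delta^2}(Z_{\delta^2})\exp\Big(\int_0^{\delta^2}F(\varphi_{t-s}(Z_s))\,ds\Big)\Big],
\]
so the exponential weight is sandwiched between $e^{\pm\delta^2\| F\|_{M}}$ and contributes only an $O(\delta^2)$ error, while the difference
$\IE_x[\varphi_{t-\delta^2}(Z_{\delta^2})]-\IE_y[\varphi_{t-\delta^2}(Z_{\delta^2})]$
of expectations of the \emph{bounded} function $\varphi_{t-\delta^2}$ is controlled by $M\int|f_{\delta^2}(x,z)-f_{\delta^2}(y,z)|\,dz\le M\widehat{K}\|x-y\|/\delta\le M\widehat{K}\delta^2$, using Lemma~\ref{regularityForX1} at time $\delta^2$ together with $\|x-y\|<\delta^3$. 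No Gronwall iteration, no long-time coupling of the two diffusions, and no control of $\varphi$ at intermediate times is needed. If you want to salvage your write-up, replace the long-time propagation by this short-window restart; as written, your estimate cannot be closed.
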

\begin{proof}
First we need some notation. 
Fix $T>0$ and write $M$ for the corresponding constant from 
Lemma~\ref{BoundednessVarphis}.
Let $\| F \|_{M} = \sup_{m \in [0,M]} |F(m)|$.
We reserve $\widehat{K}$ for the constant on the right hand side of 
equation~(\ref{eq:boundDensityXt})
and $\widetilde{K}$ for the constant in Lemma~\ref{regularityForX2},
and write $K_{\varphi_0}$ for the Lipschitz constant of 
$\varphi_0$. Set 
\[ 
\delta_0=\min\Big( \frac{1}{\| F \|_M^{2}},\frac{1}{M e\big(2\| F \|_{M}+ \widehat{K}\big)}, 
    \frac{1}{\widetilde{K} K_{\varphi_0} + 2 \| F \|_M M},  1\Big).
\]
In what follows we take $0< \delta<\delta_0$. 

We first prove that the result holds if $t < \delta^2$. As before
let $Z_t^x$ and $Z_t^y$ be 
independent copies of the diffusion $Z_t$ starting at $x$ and $y$ respectively. 
From our representation (\ref{FK:varphi}) and 
Lemma~\ref{BoundednessVarphis}, we can write:
\begin{align*}
|\varphi_t(x)- \varphi_t(y)| 
&\leq \big|\IE_x[\varphi_0(Z_t)]-\IE_y[\varphi_0(Z_t)]\big| 
    + 2 \| F \|_M M t \\ 
& \leq \IE[|\varphi_0(Z_t^x)-\varphi_0(Z_t^y)|] 
    + 2 \| F \|_M M t  \\ 
&\leq K_{\varphi_0}  \IE[\|Z_t^x-Z_t^y\|] 
    + 2 \| F \|_M M t \\ 
& \leq \widetilde{K} K_{\varphi_0}  (\sqrt{t} + \|y-x\|) 
    + 2 \| F \|_M M t \\ 
& \leq \widetilde{K} K_{\varphi_0} (\delta + \delta^3) 
    + 2 \| F \|_M M \delta^2 
\leq (\widetilde{K} K_{\varphi_0} +1)\delta,
\end{align*}
where we have used Lemma~\ref{regularityForX2} in the fourth inequality and the definition of $\delta_0$ in the last inequality.

Suppose now that $\delta^2<t$.
We will follow the pattern in Lemma~2.2 of~\cite{penington:2017}. 
First, note that by the Feynman-Kac formula we have an alternative 
representation for $\varphi_t(x)$: for any $t'<t$,
\[ 
\varphi_t(x) = \IE_x\Big[ \varphi_{t-t'}(Z_{t'}) 
\exp\big( \int_0^{t'} F(\varphi_{t-s}(Z_s)) ds \big) \Big]. 
\]
Therefore, setting $t'=\delta^2$ and using Lemma~\ref{BoundednessVarphis}, for all $z$,
\[ 
e^{-\delta^2\| F \|_{M}}\IE_z\big[ \varphi_{t-\delta^2}(Z_{\delta^2})\big] 
\leq  
\varphi_t(z) 
\leq e^{\delta^2\| F \|_{M}} \IE_z\big[  \varphi_{t-\delta^2}(Z_{\delta^2})\big]. 
\]
We can then deduce that
\begin{align}
\nonumber
\varphi_t(x)-\varphi_t(y) 
&\leq e^{\delta^2\| F \|_{M}}\IE_x\big[ \varphi_{t-\delta^2}(Z_{\delta^2})\big] 
- e^{-\delta^2\| F \|_{M}}\IE_y\big[ \varphi_{t-\delta^2}(Z_{\delta^2})\big] 
\\ 
\nonumber
& = e^{\delta^2\| F \|_{M}}
\Big(\IE_x\big[ \varphi_{t-\delta^2}(Z_{\delta^2})\big]
- \IE_y\big[ \varphi_{t-\delta^2}(Z_{\delta^2})\big]\Big) 
\\ 
\nonumber
&\qquad \qquad 
+ \big(e^{\delta^2\| F \|_{M}} - e^{-\delta^2\| F \|_{M}}  \big)
\IE_y\big[ \varphi_{t-\delta^2}(Z_{\delta^2})\big] 
\\
\label{difference}
&\leq e^{\delta^2\| F \|_{M}} 
\Big(\IE_x\big[ \varphi_{t-\delta^2}(Z_{\delta^2})\big]- 
\IE_y\big[ \varphi_{t-\delta^2}(Z_{\delta^2})\big]) 
+ M\big(e^{\delta^2\| F \|_{M}}-e^{-\delta^2\| F \|_{M}}\big) .
\end{align}

To bound the differences of the expected values in the last equation 
note that, by using again Lemma~\ref{BoundednessVarphis},
\begin{align*}
\IE_x\big[ &\varphi_{t-\delta^2}(Z_{\delta^2})\big]
- \IE_y[ \varphi_{t-\delta^2}(Z_{\delta^2})\big] & \\ 
& = \int \varphi_{t-\delta^2}(z) (f_{\delta^2}(x,z)-f_{\delta^2}(y,z) ) dz  \\ 
&\leq M \int \big|f_{\delta^2}(x,z)-f_{\delta^2}(y,z) \big| dz \\ 
&\leq M \widehat{K} \frac{\|x-y\|}{\delta}  
\leq M \widehat{K} \delta^2, 
\end{align*}
where we have used Lemma~\ref{regularityForX1} and that $\|x-y\| < \delta^3$.
Substituting in~(\ref{difference}),
\begin{align*}
\varphi_t(x)-\varphi_t(y)
&\leq e^{\delta^2 \| F \|_{M}} 
\left(M \widehat{K}\delta^2 +M - M e^{-2\delta^2\| F \|_{M}}  \right) \\ 
& \leq e^{\delta^2 \| F \|_{M}}\left(M \widehat{K}\delta^2 
+ 2 M \delta^2\| F \|_{M}  \right) \\ 
&\leq e \left( M \widehat{K}+2 M \| F \|_M  \right) \delta^2 \leq  \delta,
\end{align*}
where the last two inequalities follow from the definition of $\delta$. 
Interchanging $x$ and $y$ yields the same bound for $\varphi_t(y)-\varphi_t(x)$, and the result 
follows.
\end{proof}

We proceed to control the difference between $F(\varphi)$ and 
$F(\rho^\epsilon_F*\varphi)$. 
Note first that since $\rho_F\in L^1$, 
\[
I(\epsilon):=\int_{\{\|y\|>\epsilon^{3/4}\}}\rho^\epsilon_F(y)dy
=\int_{\{\|y\|>\epsilon^{-1/4}\}}\rho_F(y)dy
\to 0\qquad\mbox{ as }
\epsilon\to 0.
\]

\begin{lemma} 
\label{ContinuityConvolution}
Let $T>0$. There exists a constant 
$C=C(T,\| \varphi_0 \|_\infty)>0$ such that, for all $0 \leq t \leq T$, 
for all $\epsilon$ small enough,
\begin{equation} 
\label{ContinuityConvolution1} 
\| \varphi_t(\cdot) - \rho^\epsilon_F*\varphi_t(\cdot) \|_\infty 
\leq C (I(\epsilon)+\epsilon^{1/4}). 
\end{equation}
Furthermore, there is a constant $\widetilde{C}(T, \| \varphi_0 \|_\infty) = \widetilde{C}$
such that, for all $0 \leq t \leq T$,
\begin{equation} 
\label{ContinuityConvolution2} 
\| F(\varphi_t(\cdot)) - F(\rho^\epsilon_F*\varphi_t(\cdot)) \|_\infty 
\leq \widetilde{C}\big(I(\epsilon)+\epsilon^{1/4}\big). 
\end{equation}
\end{lemma}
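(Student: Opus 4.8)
The plan is to prove both bounds by writing $\varphi_t-\rho^\epsilon_F*\varphi_t$ as an average of spatial increments of $\varphi_t$, and splitting that average according to the size of the displacement: Lemma~\ref{BoundednessVarphis} handles large displacements, and Lemma~\ref{ContinuityVarphi} handles small ones.

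Concretely, for each $x\in\IR^d$ and $0\le t\le T$, since $\int\rho^\epsilon_F=1$,
\[
    \varphi_t(x)-\rho^\epsilon_F*\varphi_t(x)
    =\int_{\IR^d}\rho^\epsilon_F(y)\big(\varphi_t(x)-\varphi_t(x-y)\big)\,dy .
\]
I would split this integral over $\{\|y\|>\epsilon^{3/4}\}$ and $\{\|y\|\le\epsilon^{3/4}\}$. On the former region Lemma~\ref{BoundednessVarphis} gives $|\varphi_t(x)-\varphi_t(x-y)|\le 2M$, so that piece is bounded by $2M\int_{\{\|y\|>\epsilon^{3/4}\}}\rho^\epsilon_F(y)\,dy=2M\,I(\epsilon)$. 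On the latter region I apply Lemma~\ref{ContinuityVarphi} with $\delta=\epsilon^{1/4}$: provided $\epsilon$ is small enough that $\epsilon^{1/4}<\delta_0$, and since $\|y\|\le\epsilon^{3/4}=\delta^3$, we get $|\varphi_t(x)-\varphi_t(x-y)|<K\epsilon^{1/4}$, so this piece is bounded by $K\epsilon^{1/4}\int\rho^\epsilon_F(y)\,dy=K\epsilon^{1/4}$. Taking the supremum over $x$ and adding the two contributions yields \eqref{ContinuityConvolution1} with $C=2M+K$, a constant depending only on $T$ and $\|\varphi_0\|_\infty$ (through $M$, $K$, and $\delta_0$).

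For \eqref{ContinuityConvolution2}, note first that $\varphi_t\ge 0$ — for instance from the Feynman--Kac identity $\varphi_t(x)=\IE_x\big[\varphi_0(Z_t)\exp(\int_0^t F(\varphi_{t-s}(Z_s))\,ds)\big]$ together with $\varphi_0\ge 0$ — and hence $\rho^\epsilon_F*\varphi_t\ge 0$ since $\rho_F\ge 0$; both functions are bounded by $M$ by Lemma~\ref{BoundednessVarphis}. Since $F$ is Lipschitz, say with constant $L$, we obtain $|F(\varphi_t(x))-F(\rho^\epsilon_F*\varphi_t(x))|\le L\,|\varphi_t(x)-\rho^\epsilon_F*\varphi_t(x)|$ for every $x$, and \eqref{ContinuityConvolution2} follows from \eqref{ContinuityConvolution1} with $\widetilde C=LC$.

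The only point requiring care is the matching of scales: the cutoff $\epsilon^{3/4}$ appearing in $I(\epsilon)$ must equal $\delta^3$ for the choice $\delta=\epsilon^{1/4}$, so that the continuity estimate of Lemma~\ref{ContinuityVarphi} is exactly applicable on the near region while $I(\epsilon)$ controls the far region. Beyond this bookkeeping there is no analytic obstacle, since the substantive work is already contained in Lemmas~\ref{BoundednessVarphis} and~\ref{ContinuityVarphi}.
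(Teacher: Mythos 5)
Your proof is correct and follows essentially the same route as the paper's: the same split of the convolution at displacement $\epsilon^{3/4}$, with Lemma~\ref{BoundednessVarphis} controlling the far region by $2MI(\epsilon)$ and Lemma~\ref{ContinuityVarphi} (applied with $\delta=\epsilon^{1/4}$) controlling the near region by $K\epsilon^{1/4}$, followed by the Lipschitz property of $F$ for the second bound.
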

\begin{proof}
Let $\epsilon<\delta_0^4$, with $\delta_0$ from Lemma~\ref{ContinuityVarphi}. Then,
\begin{align*}
|\varphi_t(x) - \rho^\epsilon_F*\varphi_t(x)| & \leq 
\int_{\|x-y\| > \epsilon^{3/4} } \rho^\epsilon_F(x-y)|\varphi_t(y)-\varphi_t(x)| dy  \\ 
& + \int_{\|x-y\| \leq \epsilon^{3/4} } \rho^\epsilon_F(x-y)|\varphi_t(y)-\varphi_t(x)| dy \\ 
& \leq 2 M \int_{\|x-y\| > \epsilon^{3/4}  } \rho^\epsilon_F(x-y) dy 
+ \int_{\|x-y\| \leq \epsilon^{3/4}} \rho^\epsilon_F(x-y) K \epsilon^{1/4} dy \\
&\leq 2M I(\epsilon) + K \epsilon^{1/4},
\end{align*}
where we used 
the estimates of Lemma~\ref{BoundednessVarphis}
and Lemma~\ref{ContinuityVarphi}.
This proves (\ref{ContinuityConvolution1}). For (\ref{ContinuityConvolution2}),
let $L_M$ be the (uniform) Lipschitz constant of $F$ on $[0,M]$, with
$M$ still taken from Lemma~\ref{BoundednessVarphis}. Then,
\begin{align*}
 \| F(\varphi_t(\cdot)) - F(\rho^\epsilon_F*\varphi_t(\cdot)) \|_\infty 
&\leq L_M \| \varphi_t(\cdot)) - (\rho^\epsilon_F*\varphi_t(\cdot)) \|_\infty \\ 
& \leq L_M ( 2 M I(\epsilon) + K\epsilon^{1/4}),
\end{align*}
which proves (\ref{ContinuityConvolution2}). 
\end{proof}

\begin{proof}[Proof of Proposition \ref{prop:nonlocal_to_local}:]
Let $\epsilon$ be small enough that Lemma~\ref{ContinuityConvolution} holds. 
We use the notation $\widehat{\delta}(\epsilon)$ for the quantity on the right hand side 
of~(\ref{ContinuityConvolution2}).
Then from the representation~(\ref{FK:diferencesVarphis}) 
and Lemma~\ref{ContinuityConvolution} we can write,
\begin{align*}
| &\varphi_t(x) - \varphi^\epsilon_t(x)|  \\ 
& \leq \IE_x\left[ \int_0^t\Big| \varphi_s(Z_{t-s})
    F(\rho^\epsilon_F*\varphi_s(Z_{t-s}))
    -\varphi^\epsilon_s(Z_{t-s})
    F\big(\rho^\epsilon_F*\varphi^\epsilon_s(Z_{t-s})\big)\Big| ds \right] 
    +M t \widehat{\delta}(\epsilon)   \\ 
& \leq \IE_x\left[ \int_0^t \big|F(\rho^\epsilon_F*\varphi^\epsilon_s(Z_{t-s}))\big|\cdot
    \big|\varphi^\epsilon_s(Z_{t-s})-\varphi_s(Z_{t-s})\big| ds  \right] \\ 
    & \qquad +  \IE_x\left[\int_0^t |\varphi_s(Z_{t-s})| 
    \big|F(\rho^\epsilon_F*\varphi^\epsilon_s(Z_{t-s}))
    -F(\rho^\epsilon_F*\varphi_s(Z_{t-s}))\big| ds \right] 
    + M t \widehat{\delta}(\epsilon)  \\ 
& \leq \| F \|_M \int_0^t \| \varphi^\epsilon_s(\cdot) 
    - \varphi_s(\cdot) \|_\infty ds
      + M L_M \int_0^t \| \rho^\epsilon_F*\varphi^\epsilon_s(\cdot)
    -\rho^\epsilon_F*\varphi_s(\cdot) \|_\infty ds 
    +  M  t \widehat{\delta}(\epsilon)  \\ 
& \leq (\| F \|_M + M L_M) 
    \int_0^t \| \varphi^\epsilon_s(\cdot) - \varphi_s(\cdot) \|_\infty ds 
    + M  t \widehat{\delta}(\epsilon),
\end{align*}
where the second inequality is the triangle inequality, 
and the third is Lemma~\ref{BoundednessVarphis}. 
An application of Gronwall's inequality then yields,
\begin{align*}
\| \varphi^\epsilon_t(\cdot)-\varphi_t(\cdot)\|_\infty 
&\leq M  t \widehat{\delta}(\epsilon)  
\exp(t (\| F \|_M + M L_M)) \\ 
& \leq  M  T \widehat{\delta}(\epsilon)  \exp(T(\| F \|_M + M L_M)),
\end{align*}
	giving the result, since $\widehat{\delta}\to 0$ as $\epsilon\to 0$.
\end{proof}

\subsubsection{Porous Medium Equation}
\label{sec:pme}

In this subsection we prove Proposition~\ref{nonlocalPME to PME}.
To ease notation, we present
the proof in $d=1$ (although we retain the notation $\nabla$).
However, to recall the dependence on $\epsilon$ we write $\rho^\epsilon$ for $\rho_\gamma$.
It should be clear that it extends almost without change
to higher dimensions. 

Recall that 
we are concerned with non-negative solutions to the equation~(\ref{mollified equation}):
\begin{equation*}
	\partial_t\psi_t^\epsilon(x) =
	\Delta\left(\psi_t^\epsilon \, \rho^\epsilon *\psi_t^\epsilon\right)(x)
	+\psi_t^\epsilon(x)\left(1-\rho^\epsilon *\psi_t^\epsilon(x) 
\right) .
\end{equation*}
and 
we assume that $\rho=\zeta*\check{\zeta}$ with $\zeta$ a rapidly decreasing function
and $\check{\zeta}(x) = \zeta(-x)$.
The example we have in mind is $\zeta$ (and therefore $\rho$) being the density of
a mean zero Gaussian random variable.  
We shall prove that under the 
assumptions of Proposition~\ref{nonlocalPME to PME},
as $\epsilon\to 0$, we have
convergence to the solution to the porous medium equation with
logistic growth,
equation~\eqref{PME1}:
\begin{equation*}
	\partial_t \psi_t(x)=
	\Delta\left(\psi_t^2\right)(x)
	+\psi_t(x)\left(1-\psi_t(x)\right).
\end{equation*}

We work on the time interval $[0,T]$.
We will require a lower bound on $\int \psi_t^\epsilon(x)\log \psi_t^\epsilon(x) dx$ 
which we record as a lemma.

\begin{lemma}
\label{lower bound on ulogu}
	Suppose that there exists $\lambda\in (0,1)$ and $C<\infty$, both independent of
$\epsilon$, such that $\int \exp(\lambda |x|) \psi_0^\epsilon(x) dx<C$.
Then there exists
a constant $K<\infty$, independent of $\epsilon$, such that  
 $\int \psi_t^\epsilon(x)\log \psi_t^\epsilon (x)dx>-K$ for all $t\in [0,T]$. 
\end{lemma}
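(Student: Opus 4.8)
The plan is to control $\int \psi_t^\epsilon \log \psi_t^\epsilon\,dx$ from below by splitting the integral into the region where $\psi_t^\epsilon \geq 1$ (where $\psi\log\psi \geq 0$) and the region where $\psi_t^\epsilon < 1$ (where $\psi\log\psi$ is negative but bounded below by, say, $-\sqrt{\psi_t^\epsilon}$ up to a constant, or more simply by $-1/e$ pointwise). The pointwise bound $\psi\log\psi \geq -1/e$ is not enough on its own since the bad region may have infinite Lebesgue measure; instead I would use that on $\{\psi_t^\epsilon < 1\}$ one has $-\psi_t^\epsilon \log\psi_t^\epsilon \leq C_\lambda \psi_t^\epsilon \exp(\tfrac{\lambda}{2}|x|) + C_\lambda' \exp(-\tfrac{\lambda}{2}|x|)\cdot\mathbf 1$... more cleanly: for $0 \le a \le 1$ and any $\beta>0$, $-a\log a \le a^{1-\beta}$ times a constant, but the slickest route is the elementary inequality $-a \log a \le \tfrac{2}{\lambda}\, a\, |x| \cdot \mathbf{1}_{a \ge e^{-\lambda|x|}} \; + \; (\text{something integrable in }x)$. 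Concretely: if $a \le e^{-\lambda|x|/2}$ then $-a\log a \le a \cdot \tfrac{\lambda|x|}{2} \le$ (since $a$ small) $\; \le e^{-\lambda|x|/2}\tfrac{\lambda|x|}{2}$, which is integrable in $x$ with a bound independent of everything; and if $e^{-\lambda|x|/2} < a < 1$ then $-a\log a < \tfrac{\lambda|x|}{2} a \le \tfrac{\lambda|x|}{2} e^{\lambda|x|/2} a \le \tfrac{\lambda}{2}\, a\, e^{\lambda |x|}$. Hence
\[
	-\int_{\{\psi_t^\epsilon<1\}} \psi_t^\epsilon \log \psi_t^\epsilon\, dx
	\;\le\; C_1 + \frac{\lambda}{2}\int \psi_t^\epsilon(x)\, e^{\lambda|x|}\,dx,
\]
so it suffices to obtain a uniform-in-$\epsilon$, uniform-in-$t\in[0,T]$ bound on the exponential moment $\int \psi_t^\epsilon(x) e^{\lambda|x|}\,dx$, starting from the assumed bound at $t=0$.

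The exponential-moment bound is what I expect to be the main work. Test the equation~\eqref{mollified equation} against $e^{\lambda|x|}$ (or, to avoid the non-smoothness at $0$, against a smooth convex approximation $\chi(x)$ with $\chi,\chi',\chi''$ all $\le C e^{\lambda|x|}$ and $\chi \ge e^{\lambda|x|}/C$, then remove the approximation). Writing $m_t = \int \chi\,\psi_t^\epsilon\,dx$, the reaction term contributes $\int \chi \psi_t^\epsilon(1 - \rho^\epsilon*\psi_t^\epsilon)\,dx \le \int \chi\psi_t^\epsilon\,dx = m_t$ since $\psi^\epsilon \ge 0$ and $\rho^\epsilon * \psi^\epsilon \ge 0$. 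The diffusion term, after two integrations by parts, is $\int \Delta\chi \,\cdot\, \psi_t^\epsilon (\rho^\epsilon*\psi_t^\epsilon)\,dx$; here the quadratic nonlinearity is the nuisance. Using $\chi'' \le \lambda^2 C\, \chi$ (true for $\chi \sim e^{\lambda|x|}$ away from the origin) this is $\le \lambda^2 C \int \chi\, \psi_t^\epsilon\, (\rho^\epsilon*\psi_t^\epsilon)\,dx$, and one needs to absorb the factor $\rho^\epsilon * \psi_t^\epsilon$. This is where I would invoke the structure $\rho = \zeta*\check\zeta$ and an $L^\infty$ or $L^1$-type a priori bound: either a uniform bound $\|\rho^\epsilon*\psi_t^\epsilon\|_\infty \le C(T)$ (which should follow from the $L^\infty$/$L^1$ estimates the paper obtains for $\psi^\epsilon$ via the Lions--Mas-Gallic approach, combined with $\zeta \in \mathcal S$), giving $\le \lambda^2 C\, C(T)\, m_t$ directly; or, failing a uniform $L^\infty$ bound, a Cauchy--Schwarz splitting using the $L^2_t(H^1)$ bound and conservation of the $L^1$ mass $\int \psi_t^\epsilon = O(1)$. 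In the worst case one proves the inequality $\tfrac{d}{dt} m_t \le C(1+\|\psi^\epsilon_t\|_\infty) m_t$ and closes it with the separately-established sup bound. Either way Gronwall on $[0,T]$ yields $m_t \le m_0 e^{C(T)T} \le C$ uniformly in $\epsilon$.

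With the exponential moment bound in hand, the lemma follows by combining it with the pointwise/split estimate of the first paragraph: $\int \psi_t^\epsilon \log\psi_t^\epsilon\,dx \ge -C_1 - \tfrac{\lambda}{2}\sup_{t\le T} m_t \ge -K$ with $K$ independent of $\epsilon$. The one technical wrinkle to handle carefully is the smoothing of $|x|$ near the origin and the justification of the integrations by parts — one works with $\chi_\delta(x) = \sqrt{\delta^2 + |x|^2}$ composed appropriately, or simply $\chi(x) = \cosh(\lambda x)$ in $d=1$ (smooth, convex, $\chi'' = \lambda^2\chi$ exactly, comparable to $e^{\lambda|x|}$), which makes the computation clean; the decay of $\psi^\epsilon$ needed to justify dropping boundary terms comes from the finiteness of $\int e^{\lambda|x|}\psi_t^\epsilon\,dx$ itself, so one should set this up as an a priori estimate (assume the moment finite, derive the bound, then bootstrap), or truncate in space and pass to the limit. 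I expect the genuinely hard part to be the treatment of the quadratic term $\int \Delta\chi\,\psi^\epsilon(\rho^\epsilon*\psi^\epsilon)$ without circular reliance on bounds not yet proved at this point of the argument — resolving this likely means proving this lemma only after (or simultaneously with) the $L^\infty_t L^1_x$ and $L^2_t H^1_x$ estimates, and citing them here.
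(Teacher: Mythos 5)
Your overall architecture matches the paper's: (i) reduce the problem to a uniform-in-$\epsilon$ bound on the exponential moment $\int e^{\lambda |x|}\psi_t^\epsilon\,dx$, and (ii) convert exponential spatial decay of the mass into a lower bound on $\int \psi_t^\epsilon\log\psi_t^\epsilon\,dx$ via a pointwise (or, in the paper, a Jensen-on-unit-intervals) argument. Step (ii) is fine in spirit, though your intermediate claim ``if $a \le e^{-\lambda|x|/2}$ then $-a\log a \le a\,\lambda|x|/2$'' is false as written (for small $a$ one has $-\log a \ge \lambda|x|/2$, so the inequality points the other way); the correct justification of your final bound is monotonicity of $a\mapsto -a\log a$ on $(0,e^{-1})$, which gives $-a\log a \le \tfrac{\lambda|x|}{2}e^{-\lambda|x|/2}$ for $a\le e^{-\lambda|x|/2}$ and $|x|$ large. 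That is repairable.

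The genuine gap is in step (i), and you have correctly diagnosed where it is but not how to close it. After two integrations by parts the diffusion term contributes $+\lambda^2\int e^{\lambda x}\,\psi_t^\epsilon\,(\rho^\epsilon*\psi_t^\epsilon)\,dx$, and you propose to absorb this by an $L^\infty$ bound on $\rho^\epsilon*\psi_t^\epsilon$ or by the $L^2_t(H^1)$ estimates — but those estimates are derived in the paper \emph{from} this lemma (via the entropy bound), so that route is circular, exactly as you feared. The resolution is that you discarded the negative part of the reaction term too early: keeping $\int e^{\lambda x}\psi_t^\epsilon\bigl(1-\rho^\epsilon*\psi_t^\epsilon\bigr)\,dx$ intact, the competition term contributes $-\int e^{\lambda x}\,\psi_t^\epsilon\,(\rho^\epsilon*\psi_t^\epsilon)\,dx$, which combines with the diffusion contribution to give a total quadratic term with coefficient $\lambda^2-1<0$. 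This is precisely why the hypothesis requires $\lambda\in(0,1)$. The quadratic terms cancel to something nonpositive, leaving $\tfrac{d}{dt}\int e^{\lambda x}\psi_t^\epsilon\,dx\le \int e^{\lambda x}\psi_t^\epsilon\,dx$, and Gronwall closes the estimate with no a priori regularity needed. (The paper also sidesteps your smoothing-of-$|x|$ concern by testing against $e^{\lambda x}$ and $e^{-\lambda x}$ separately and arguing by symmetry.) Without this cancellation your proof does not go through as an independent argument.
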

\begin{proof}
First observe that, since $x\log x$ is bounded below, 
$\int_{-1}^1\psi_t^\epsilon(x)\log\psi_t^\epsilon(x) dx$ is bounded below,
and recall that $\psi_t^\epsilon(x) \ge 0$.

Now consider 
\begin{multline}
\frac{d}{dt}\int \exp(\lambda x) \psi_t^\epsilon (x)dx
	= \int \exp(\lambda x)\Delta \big(\psi_t^\epsilon \, \rho^{\epsilon}*\psi_t^\epsilon\big)(x) dx
	\\
	+ \int \exp(\lambda x)\psi_t^\epsilon(x)\big(1-\rho^{\epsilon}*\psi_t^\epsilon(x)\big) dx
    \\
	= \int (\lambda^2-1)\exp(\lambda x) \psi_t^\epsilon(x) \smooth{\epsilon} \psi_t^\epsilon(x) dx 
	+ \int \exp(\lambda x)\psi_t^\epsilon(x) dx
    \\
    \leq \int \exp(\lambda x)\psi_t^\epsilon(x) dx,
\end{multline}
and so, by Gronwall's inequality, 
$\int \exp(\lambda x)\psi_t^\epsilon (x)dx$ is uniformly bounded on $[0,T]$.
In particular, combining with the Mean Value Theorem, we find 
\begin{equation*}
\int_x^{x+1}\psi_t^\epsilon(y)dy\leq C\exp(-\lambda x),
\end{equation*}
where the constant $C$ is independent of $x\geq1$.
A fortiori, 
\begin{equation}
\label{exp decay of u} 
\int_x^{x+1}\psi_t^\epsilon(y)\ind_{\psi_t^\epsilon(y)\leq 1}dy\leq C\exp(-\lambda x).
\end{equation}
Now the function $\psi\mapsto \ind_{0\leq \psi\leq 1}\psi|\log \psi|$ is concave, and so 
using Jensen's inequality and~(\ref{exp decay of u}),
\begin{equation*}
\int_x^{x+1}\psi_t^\epsilon(y)
|\log \psi_t^\epsilon (y)|\ind_{\psi_t^\epsilon(y)\leq 1}dy
\leq C'x \exp(-\lambda x).
\end{equation*}
Evidently a symmetric argument applies for $x\leq -1$.
Summing over $x$,
    and using that $\psi \log \psi \ge - \psi |\log \psi| \ind_{\psi \le 1}$,
we find
\begin{equation*}
\int \psi_t^\epsilon (x) \log \psi_t^\epsilon (x) dx
\geq -C''\sum_{x=1}^\infty x\exp(-\lambda x)>-K>-\infty,  
\end{equation*}
as required.
\end{proof}

\begin{proof}[Proof of Proposition~\ref{nonlocalPME to PME}]
First observe that
\begin{multline*}
	\int \psi_t^\epsilon(x)\, \rho^{\epsilon}*\psi_t^\epsilon(x) dx 
=
\int\int\int \psi_t^\epsilon(x)\psi_t^\epsilon(x-y)\zeta^{\epsilon}(y-z)
\check{\zeta}^{\epsilon}(z) dz dy dx 
\\
=
\int\int\int \psi_t^\epsilon(\tilde{x}-\tilde{z})
\psi_t^\epsilon(\tilde{x}-\tilde{y})
\zeta^{\epsilon}(\tilde{y})
\zeta^{\epsilon}(\tilde{z}) d\tilde{z} d\tilde{y} d \tilde{x}
	=\int\left(\zeta^{\epsilon}*\psi_t^\epsilon(x)\right)^2 dx,
\end{multline*}
where we have set $\tilde{x}=x-z$, $\tilde{y}=y-z$, $\tilde{z}=-z$.

Now note that
\begin{eqnarray*}
\frac{d}{dt}\int \psi_t^\epsilon (x) dx
	&=&
\int \Delta \big(\psi_t^\epsilon \, \rho^{\epsilon}*\psi_t^\epsilon\big)(x) dx 
	+\int \psi_t^\epsilon(x)\big(1-\rho^{\epsilon}*\psi_t^\epsilon(x)\big) dx
\\	
&=&
 \int \psi_t^\epsilon (x) d x
	-\int \big(\zeta^{\epsilon}*\psi_t^\epsilon(x)\big)^2  dx.
\end{eqnarray*}
Thus, Gronwall's inequality implies that $\int \psi_t^\epsilon(x) dx$ is uniformly bounded above in
$\epsilon$ and $t\in [0,T]$. Note that this also then gives a uniform
bound on the rate of change of $\int \psi_t^\epsilon (x) d x$, and since
we are working on $[0,T]$ this will be enough to give continuity in time of the
$L^1$ norm of the limit
when we pass to a convergent subsequence.

Now consider
\begin{eqnarray}
	\frac{d}{dt}\int \psi_t^\epsilon\log \psi_t^\epsilon dx &=&
	\int (1+\log \psi_t^\epsilon)\left[\Delta \big(\psi_t^\epsilon\,
	\rho^{\epsilon}*\psi_t^\epsilon\big)
	+\psi_t^\epsilon\big(1-\rho^{\epsilon}*\psi_t^\epsilon\big)\right]
dx
\nonumber
\\
&=&
	\int (1+\log \psi_t^\epsilon)\left[\nabla\Big(\psi_t^\epsilon\,
	\nabla(\rho^{\epsilon}*\psi_t^\epsilon)+
	\nabla \psi_t^\epsilon\, \rho^{\epsilon}*\psi_t^\epsilon\Big)
	+\psi_t^\epsilon\big(1-\rho^{\epsilon}*\psi_t^\epsilon\big)\right]  dx
\nonumber
	\\
&=&
\int \left[-\frac{\nabla \psi_t^\epsilon}{\psi_t^\epsilon}
\Big(\psi_t^\epsilon\, \nabla(\rho^{\epsilon}*\psi_t^\epsilon)
+\nabla \psi_t^\epsilon\, \rho^{\epsilon}*\psi_t^\epsilon\Big)+
(1+\log \psi_t^\epsilon)\psi_t^\epsilon(1-\rho^{\epsilon}*\psi_t^\epsilon)\right] d x
\nonumber
	\\
&=&
-\int \left(\nabla (\zeta^{\epsilon}*\psi_t^\epsilon)\right)^2  dx
-\int (\nabla \psi_t^\epsilon)^2\frac{\rho^{\epsilon}*\psi_t^\epsilon}{\psi_t^\epsilon}  dx
\nonumber
	\\
&&+\int
\left[\psi_t^\epsilon+\psi_t^\epsilon\log \psi_t^\epsilon
\big(1-\rho^{\epsilon}*\psi_t^\epsilon\big)
-\psi_t^\epsilon\, \rho^{\epsilon}*\psi_t^\epsilon\right] dx
\nonumber
	\\
&=&
-\int \left(\nabla (\zeta^{\epsilon}*\psi_t^\epsilon)\right)^2  dx
-\int (\nabla \psi_t^\epsilon)^2\frac{\rho^{\epsilon}*\psi_t^\epsilon}{\psi_t^\epsilon} dx
-\int (\zeta^{\epsilon}*\psi_t^\epsilon)^2  dx
\nonumber
	\\
&&+\int
\left[\psi_t^\epsilon+\psi_t^\epsilon\log \psi_t^\epsilon
\big(1-\rho^{\epsilon}*\psi_t^\epsilon\big)
\right] dx .
	\label{change in ulogu}
\end{eqnarray}
The first three terms are negative; and we already
saw that the $L^1$ norm
of $\psi_t^\epsilon$ is uniformly bounded.
Moreover, since $\psi_t^\epsilon\log \psi_t^\epsilon$ is uniformly bounded below
and $\int \rho^\epsilon(x) dx = 1$,
$$
    -\int \psi_t^\epsilon \log \psi_t^\epsilon \, \rho^{\epsilon}*\psi_t^\epsilon dx
    \leq
    C\int \rho^{\epsilon}*\psi_t^\epsilon dx
    =
    C \int \psi_t^\epsilon dx.
$$
From this and~(\ref{change in ulogu}),
we see immediately that $\int \psi_t^\epsilon \log \psi_t^\epsilon dx$
is uniformly bounded
above in $\epsilon$ and $t\in [0,T]$. 
Combining with Lemma~\ref{lower bound on ulogu},
we deduce that
we have a uniform bound on $\int \psi_s^\epsilon(x)|\log \psi_s^\epsilon(x)| d x$.
From \eqref{change in ulogu}, this in turn means that both
$\int_0^t \int(\zeta^{\epsilon}*\psi_s^\epsilon(x))^2  dx ds$ and
$\int_0^t \int \big(\nabla (\zeta^{\epsilon}*\psi_s^\epsilon(x))\big)^2  dx ds$
are uniformly bounded in
$\epsilon$ and $t\in [0,T]$.

We shall next show that $\zeta^\epsilon * \psi_t^\epsilon$ solves \eqref{PME1}
up to a remainder of order $\epsilon$.
First observe that
\begin{equation}
\label{total equation}
\int\Delta\left((\rho^{\epsilon}*\psi_t^\epsilon)\psi_t^\epsilon\right)\phi  dx
    =
    - \int \nabla(\rho^{\epsilon}*\psi_t^\epsilon)\, \psi_t^\epsilon\,\nabla\phi dx
    -\int \rho^{\epsilon}*\psi_t^\epsilon \,\nabla \psi_t^\epsilon \, \nabla\phi dx.
\end{equation}
We would like to show that this is close to $\int (\zeta^\epsilon * \psi^\epsilon_t)^2 \Delta \phi dx$.
For the first term
\begin{eqnarray}
\nonumber
\int \nabla(\rho^{\epsilon}*\psi_t^\epsilon)\, \psi_t^\epsilon\, \nabla\phi  dx
&=&
\int\int\int\nabla \psi_t^\epsilon(x-y)\zeta^{\epsilon}(y-z)
\check{\zeta}^{\epsilon}(z)
\psi_t^\epsilon(x)\nabla\phi(x) dz  dy  dx
\\
\nonumber
&=&
\int\int\int\nabla \psi_t^\epsilon(\tilde{x}-\tilde{y})
\zeta^{\epsilon}(\tilde{y})
\zeta^{\epsilon}(\tilde{z})\psi_t^\epsilon(\tilde{x}-\tilde{z})
\nabla\phi(\tilde{x}-\tilde{z}) d\tilde{z} d\tilde{y} d\tilde{x}
\\
\nonumber
&=&\int(\nabla \zeta^{\epsilon}*\psi_t^\epsilon)\,
\left(\zeta^{\epsilon}*(\psi_t^\epsilon\nabla\phi)\right)
d x\\
\nonumber
&=&
\frac{1}{2}\int\nabla ((\zeta^{\epsilon}*\psi_t^\epsilon)^2)
\,\nabla\phi  dx
\\
\label{error 1}
&&+
\int\nabla(\zeta^{\epsilon}*\psi_t^\epsilon)
\left[\zeta^{\epsilon}*(\psi_t^\epsilon\,\nabla\phi)-
\nabla\phi\,(\zeta^{\epsilon}*\psi_t^\epsilon)\right] dx,
\end{eqnarray}
where, as before, we have substituted $\tilde{x}=x-z$, $\tilde{y}=y-z$,
$\tilde{z}=-z$.
We are going to bound the square of the $L^2$-norm of the term in square brackets 
in~(\ref{error 1}) by the product of its $L^1$-norm (which is bounded
by a constant times the $L^1$-norm of $\psi_t^\epsilon$) and its $L^\infty$-norm.
To control the $L^\infty$-norm,
we use the Intermediate Value Theorem to see that
\begin{multline*}
\left|\int\left[\psi_t^\epsilon(x-y)\nabla\phi(x-y)\zeta^{\epsilon}(y)-
\nabla\phi(x)\psi_t^\epsilon(x-y)\zeta^{\epsilon}(y)
\right] dy\right|
\\
	\leq C\|\Delta\phi\|_\infty\int \psi_t^\epsilon (x-y)
	\epsilon\frac{|y|}{\epsilon}\zeta^{\epsilon}(y) d y
	\leq C\|\Delta\phi\|_\infty\epsilon \|z\zeta(z)\|_\infty\int \psi_t^\epsilon (x-y) dy.
\end{multline*}
Since $\zeta\in\mathcal{S}(\IR)$,
and (as we checked above)
$\psi_t^\epsilon$ is uniformly bounded in $L^1$ over $[0,T]$,
this expression is $\mathcal{O}(\epsilon)$ -- \revpoint{1}{31}
i.e., is bounded by a constant multiple of $\epsilon$
with a constant that does not depend on $t \in [0,T]$ or $x$.

We can now apply the Cauchy-Schwarz inequality to~(\ref{error 1}) to bound
it by the square root of
\begin{equation}
	C\epsilon\int \big(\nabla(\zeta^\epsilon*\psi_t^\epsilon(x))\big)^2 dx.
\end{equation}

Similarly, for the second term in~(\ref{total equation}),
\begin{eqnarray}
\nonumber
\int (\rho^{\epsilon}*\psi_t^\epsilon)\, \nabla \psi_t^\epsilon\, \nabla\phi dx
&=&
\int\int\int \psi_t^\epsilon(x-y)\zeta^{\epsilon}(y-z)
\check{\zeta}^{\epsilon}(z)\nabla \psi_t^\epsilon(x)\nabla\phi(x) dz dy  dx
\\
\nonumber
&=&
\int\int\int \psi_t^\epsilon(\tilde{x}-\tilde{y})\zeta^{\epsilon}(\tilde{y})
\zeta^{\epsilon}(\tilde{z})\nabla \psi_t^\epsilon(\tilde{x}-\tilde{z})
\nabla\phi(\tilde{x}-\tilde{z}) d \tilde{z} d\tilde{y} d\tilde{x}
\\
\nonumber
	&=&\int(\zeta^{\epsilon}*\psi_t^\epsilon)\,
\left(\zeta^{\epsilon}*(\nabla \psi_t^\epsilon \, \nabla\phi)\right)  dx\\
\nonumber
&=&
\frac{1}{2}\int\nabla ((\zeta^{\epsilon}*\psi_t^\epsilon)^2)
\, \nabla\phi  dx
\\
\label{error 2}
&&+
\int (\zeta^{\epsilon}*\psi_t^\epsilon)
\left[\zeta^{\epsilon}*(\nabla \psi_t^\epsilon\, \nabla\phi)-
\nabla\phi\,(\nabla \zeta^{\epsilon}*\psi_t^\epsilon)\right] dx.
\end{eqnarray}
The term in square brackets in~(\ref{error 2}) is 
\begin{multline*}
\left|\int\left[\nabla \psi_t^\epsilon(x-y)\nabla\phi(x-y)\zeta^{\epsilon}(y)
-\nabla\phi(x)\nabla \psi_t^\epsilon(x-y)\zeta^{\epsilon}(y)
\right] dy\right|
\\
=
	\left|\int\left[\psi_t^\epsilon(x-y)
	\nabla\big((\nabla\phi(x-y)-\nabla\phi(x))\zeta^{\epsilon}(y)\big)
\right] dy\right|,
\end{multline*}
and expanding $\nabla\phi$ in a Taylor series about $x$ and once again using that
$\zeta\in \mathcal{S}(\IR)$, we see that the $L^\infty$ norm of this quantity is
$\mathcal{O}(\epsilon)$.

We can now apply the Cauchy-Schwarz inequality to~(\ref{error 2}) to bound
it by the square root of
\begin{equation}
	C\epsilon\int \big(\zeta^\epsilon*\psi_t^\epsilon(x)\big)^2 dx.
\end{equation}

We now have the ingredients that we need. Recalling that 
$\int_0^t \int(\zeta^{\epsilon}*\psi_s^\epsilon(x))^2  dx ds$ and
$\int_0^t \int \big(\nabla (\zeta^{\epsilon}*\psi_s^\epsilon(x))\big)^2  dx ds$
are uniformly bounded in
$\epsilon$ and $t\in [0,T]$,
the calculations above yield both a uniform (in $\epsilon$) bound on
$\zeta^{\epsilon}*\psi_t^\epsilon$ in
$L^1\cap L^2\big([0,T]\times \IR\big)$, and (with another application of 
Cauchy-Schwarz, this time applied to the time integral, to control the error terms) that
\begin{multline} \label{eqn:weak_approx_pme}
\int \psi_t^\epsilon(x)\phi(x) dx-\int \psi_0^\epsilon (x)\phi(x) dx
	=\int_0^t\int (\zeta^{\epsilon}*\psi_s^\epsilon(x))^2\Delta\phi(x) dx
\\
	+\int_0^t \int \zeta^{\epsilon}*\psi_s^\epsilon(x)
	\left(1- \zeta^{\epsilon}*\psi_s^\epsilon(x)\right)
\phi(x) dx
	+\mathcal{O}(\sqrt{\epsilon})
\end{multline}
(for sufficiently regular $\phi$).
Since
$\int \psi_t^\epsilon(x)\phi(x) d x- \int \zeta^{\epsilon}*\psi_t^\epsilon (x)\phi(x) d x$
is order $\epsilon$,
if we replace $\psi^\epsilon$ by $\zeta^{\epsilon}*\psi^\epsilon$ on the left hand side,
then~\eqref{eqn:weak_approx_pme} says that $\zeta^{\epsilon}*\psi^\epsilon$
solves~\eqref{PME} weakly up to order $\epsilon$.
Therefore, $\zeta^{\epsilon}*\psi^\epsilon$ converges weakly to $\psi$ in $L^1$,
where $\psi$ is the (unique)
solution to equation~(\ref{PME})
and, so, therefore, does $\psi^\epsilon$. In fact, strong convergence, that is
$\int |\psi^\epsilon -\psi|\phi dx \to 0$, follows from the
uniform integrability of $\psi^\epsilon$ that we
can deduce from the uniform control of
$\int \psi^\epsilon|\log \psi^\epsilon| d x$ that we proved above.
\end{proof}

\section{Simultaneous scaling with interaction distance}
\label{subsec:one step convergence proof}

In this section we prove Theorem~\ref{thm:local_convergence},
which proves convergence in the case that the width of the interaction kernel $\rho_F$
simultaneously scales along with the parameters $\theta$ and $N$,
in the special case in which $r\equiv 1\equiv\gamma$, $q_{\theta}(x,dy)$
is isotropic with zero mean, the kernel
$\rho_F$ is Gaussian, and the scaling 
limit is a reaction-diffusion equation.

{\em To simplify notation, in this section we shall write 
$$\rho_\epsilon*\eta(x)=\rho_F^\epsilon*\eta(x)
=\langle p_{\epsilon^2}(x,y),\eta(dy)\rangle,$$
where $p_t(x,y)$ denotes the heat semigroup. 
The assumptions of Theorem~\ref{thm:local_convergence} will be in force throughout,
in particular,
\begin{equation}
    \label{conditions for one step}
	\epsilon^2\theta\to\infty, \qquad\mbox{and}\quad \frac{\theta}{N\epsilon^d}\to 0.
\end{equation}
That $N, \theta\to \infty$ and $\epsilon\to 0$ simultaneously will be 
implicit, so for example if we write $\lim_{\epsilon\to 0}$, it should be 
understood that $\theta, N\to\infty$ in such a way 
that~(\ref{conditions for one step}) is satisfied. 
Moreover, where there is no risk of confusion, except where it is helpful
for emphasis, we 
suppress dependence of $\eta$ on $N$.}

The first part of the proof mirrors that of Theorem~\ref{thm:nonlocal_convergence}:
in Subsection~\ref{bounds on rhoepsilon}
we establish bounds on the moments of $\rho_\epsilon*\eta_t(x)$ that are sufficient to
imply tightness and then apply standard results on 
convergence of Markov processes from~\cite{ethier/kurtz:1986}. 
The challenge
comes in identifying the limit points. This is much more intricate than the case in 
which we do not scale the interaction kernel, as weak convergence will no longer
be sufficient to guarantee the form of the nonlinear terms in the limiting equation.
Identification of the limit will rest on regularity inherited from continuity 
estimates for a random walk with Gaussian jumps which we prove in 
Subsection~\ref{continuity for random walk}, before
identifying the limit points in Subsection~\ref{limit in onestep case}.

Roughly speaking, the assumption that $\theta/N\epsilon^d$ is small \revpoint{1}{3}
is used in ensuring a well-defined and deterministic limit,
while the assumption on $\epsilon^2 \theta$ is used in proving continuity.
For more motivation behind these assumptions,
see the last part of Section~\ref{sec:one_step_heuristics}.

\subsection{Moment bounds for $\rho_\epsilon*\eta$}
\label{bounds on rhoepsilon}

Let us write $\mathcal{L}^\theta f(x) := \theta \int (f(y)-f(x))q_\theta(x,y) dy$ 
where $q_\theta$ is a Gaussian kernel of mean $0$ and variance $1/\theta$. 
We note that $\mathcal{L}^\theta$ is the generator of a continuous (time and 
space) random walk, which makes jumps of mean $0$ and variance $1/\theta$ at rate $\theta$.
In what follows we write $\psi_t^{\epsilon, x}(y)$ for the solution of
\begin{equation}
    \partial_t \psi_t^{\epsilon,x}
    =
    \mathcal{L}^\theta \psi_t^{\epsilon, x},
    \label{AlmostHeatEquation}
\end{equation}
with initial condition 
$\psi_0^{\epsilon,x}(y) = \rho_\epsilon(y-x) =p_{\epsilon^2}(x,y)$.

To see why $\psi_t^{\epsilon,x}$ is useful,
first note that for any time-dependent function $\phi_t(x)$
with time derivative $\dot \phi_t(x) = \partial_t \phi_t(x)$,
\begin{multline}
\label{time dep mg prob}
    \langle \phi_t(x), \eta_t(dx) \rangle
    = 
    \langle \phi_0(x), \eta_0(dx) \rangle
    +
    M_t(\phi)
    +
    \int_0^t \big\langle \mathcal{L}^\theta \phi_s(x) 
        + \dot \phi_s(x), \eta_s(dx) \big\rangle ds
    \\  {}
    +
    \int_{0}^t \big\langle \phi_s(x) F(x, \eta_s) , \eta_s(dx) \big\rangle ds ,
\end{multline}
where $M_t(\phi)$ is a martingale (with respect to the natural filtration)
with angle bracket process given by~\eqref{eqn:prelimit_martingale_variation}
with $f$ replaced by $\phi_s(\cdot)$.
So, taking $\phi_s(\cdot)=\psi_{t-s}^{\epsilon,x}(\cdot)$ for $0 \le s \le t$,
\begin{eqnarray}
    \nonumber
    \rho_\epsilon*\eta_t(x)&=&\langle\psi^{\epsilon,x}_0(y),\eta_t(dy)\rangle
    \\
    &=&\langle\psi^{\epsilon,x}_t(y),\eta_0(dy)\rangle
    +\int_0^t\big\langle\psi^{\epsilon,x}_{t-s}(y)
    F\big(\rho_\epsilon*\eta_s(y)\big), \eta_s(dy)\big\rangle ds
    +M_t(x),
    \label{expn rhoepsilon} 
\end{eqnarray}
where $M_t(x)$ has mean zero and a second moment we can easily write down.

\begin{lemma} \label{PsiBoundHS}
    Fix $t>0$,
    let $(\Pi(s))_{s \ge 0}$ be a rate one Poisson process,
    and let $T(t) = \Pi(\theta t) / \theta$.
    Then
    \[
        \psi^{\epsilon,x}_t(y)
        =
        \IE\left[ p_{\epsilon^2+T(t)}(x,y)\right],
    \]
    and, moreover,
    since under our assumptions $\theta \epsilon^2$ is bounded below,
    there is a $C$ independent of $\epsilon$ or $t$ such that
    \[
        \| \psi^{\epsilon,x}_t \|_\infty
        \leq
        \frac{C}{(\epsilon^2 + t)^{d/2}} .
    \]
\end{lemma}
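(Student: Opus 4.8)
The plan is to recognize equation~\eqref{AlmostHeatEquation} as the Kolmogorov forward (and, by symmetry of the Gaussian jump kernel, also backward) equation for the continuous-space random walk generated by $\mathcal{L}^\theta$, and then to give this random walk an explicit subordination representation. First I would recall that $\mathcal{L}^\theta$ generates the process $Z^\theta_t = \sum_{k=1}^{\Pi(\theta t)} G_k$, where $\Pi$ is a rate-one Poisson process and the $G_k$ are i.i.d.\ $\mathcal{N}(0, \theta^{-1} I)$ increments, independent of $\Pi$; equivalently, conditionally on $\Pi(\theta t) = n$, the walk has made $n$ jumps, and the sum of $n$ such Gaussians is $\mathcal{N}(0, (n/\theta) I) = \mathcal{N}(0, T(t) I)$ with $T(t) = \Pi(\theta t)/\theta$ as in the statement. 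Since a $\mathcal{N}(0, sI)$ displacement has density $p_s(x,y)$ (the heat semigroup at time $s$), conditioning on $T(t)$ and using that the initial condition $\psi^{\epsilon,x}_0 = p_{\epsilon^2}(x,\cdot)$ is itself a Gaussian density, the Chapman--Kolmogorov/convolution identity $p_a * p_b = p_{a+b}$ gives
\[
    \psi^{\epsilon,x}_t(y) = \IE\big[\, p_{\epsilon^2 + T(t)}(x,y) \,\big],
\]
which is the first claim. To make this rigorous I would simply verify that the right-hand side solves~\eqref{AlmostHeatEquation} with the correct initial data: differentiating under the expectation in $t$, the $t$-derivative of $p_{\epsilon^2+T(t)}$ picks up $\tfrac12\Delta p_{\epsilon^2+T(t)}$ from the heat-kernel part and a jump term from $T(t)$, and one checks these combine to $\mathcal{L}^\theta$ applied to $\IE[p_{\epsilon^2+T(t)}]$; uniqueness of the (bounded) solution to the linear evolution equation then finishes it. Alternatively one can just cite the standard probabilistic representation of solutions of $\partial_t u = \mathcal{L}^\theta u$.

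For the supremum bound, I would use $\|p_s(x,\cdot)\|_\infty = (2\pi s)^{-d/2}$ (up to the usual constant depending on the normalization of $\DG = \Delta/2$), so that
\[
    \|\psi^{\epsilon,x}_t\|_\infty \le \IE\big[\|p_{\epsilon^2+T(t)}(x,\cdot)\|_\infty\big] = \IE\Big[\big(2\pi(\epsilon^2+T(t))\big)^{-d/2}\Big] \le \big(2\pi\epsilon^2\big)^{-d/2}\wedge \text{(something in $t$)},
\]
and then the point is to interpolate to get the claimed $(\epsilon^2+t)^{-d/2}$ rate. The bound $(\epsilon^2+T(t))^{-d/2} \le \epsilon^{-d}$ is immediate. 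To recover the decay in $t$ I would bound $T(t)$ below with high probability: since $\IE[T(t)] = t$ and $\mathrm{Var}(T(t)) = t/\theta$, on the event $\{T(t) \ge t/2\}$ we get $(\epsilon^2+T(t))^{-d/2} \le (\epsilon^2+t/2)^{-d/2} \le C(\epsilon^2+t)^{-d/2}$, while on the complementary event we use the crude bound $\epsilon^{-d}$ together with $\IP(T(t) < t/2) \le \mathrm{Var}(T(t))/(t/2)^2 = 4/(\theta t)$ by Chebyshev. Combining,
\[
    \|\psi^{\epsilon,x}_t\|_\infty \le C(\epsilon^2+t)^{-d/2} + \frac{C}{\theta t}\,\epsilon^{-d}.
\]
The second term is controlled because $\theta\epsilon^2$ is bounded below (our standing assumption~\eqref{conditions for one step}): writing $\theta t \ge (\theta\epsilon^2)(t/\epsilon^2) \ge c\, t/\epsilon^2$, we get $\tfrac{1}{\theta t}\epsilon^{-d} \le \tfrac{c^{-1}\epsilon^2}{t}\,\epsilon^{-d} = c^{-1}\epsilon^{2-d}/t$; I would then split into the regimes $t \le \epsilon^2$ (where $\epsilon^{-d}\le C(\epsilon^2+t)^{-d/2}$ directly and the Chebyshev step can be replaced by the trivial bound $\epsilon^{-d} \le 2^{d/2}(\epsilon^2+t)^{-d/2}$) and $t \ge \epsilon^2$ (where $\epsilon^{2-d}/t \le \epsilon^{-d}\cdot \epsilon^2/t \le (\epsilon^2+t)^{-d/2}\cdot(\text{bounded})$ after again using $\theta\epsilon^2 \gtrsim 1$). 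The only slightly delicate point — and the main obstacle — is getting the constant in the final bound to be genuinely uniform in $\epsilon$ (equivalently in $\theta, N$): this hinges entirely on the hypothesis $\liminf \theta\epsilon^2 > 0$, and the cleanest route is to do the two-regime split above so that the small-time regime never needs the Poisson concentration estimate at all, and the large-time regime has $T(t)$ concentrated tightly enough around $t$ that the naive bound suffices. Everything else is routine heat-kernel bookkeeping.
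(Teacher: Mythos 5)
Your representation of $\psi^{\epsilon,x}_t$ as $\IE[p_{\epsilon^2+T(t)}(x,y)]$ is exactly the paper's argument (the paper simply declares it immediate from the definition of the walk), and your overall strategy for the sup-norm bound --- partition on the event that $T(t)$ is not much smaller than $t$, use $(\epsilon^2+t)^{-d/2}$ on the good event and the crude $\epsilon^{-d}$ on the bad event --- is also the paper's. One small quibble on the first part: there is no ``$\tfrac12\Delta$ term'' when you differentiate $\IE[p_{\epsilon^2+T(t)}]$ in $t$, since $T(t)$ is piecewise constant; the whole derivative comes from the Poisson clock, and assembles into $\theta\big(\IE[p_{\epsilon^2+T(t)+1/\theta}]-\IE[p_{\epsilon^2+T(t)}]\big)=\mathcal{L}^\theta\IE[p_{\epsilon^2+T(t)}]$. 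That is cosmetic.

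The genuine gap is in the tail estimate. Chebyshev with the variance gives $\IP\{T(t)<t/2\}\le 4/(\theta t)$, so your bad-event contribution is of order $\epsilon^{-d}/(\theta t)$, and you need this to be $O\big((\epsilon^2+t)^{-d/2}\big)$, i.e.\ you need $\IP\{T(t)<t/2\}\lesssim (1+t/\epsilon^2)^{-d/2}\gtrsim (\theta t)^{-d/2}$ (using $\theta\epsilon^2\ge c$). A decay of order $(\theta t)^{-1}$ therefore suffices only for $d\le 2$: for $d\ge 3$, take $t$ fixed and $\epsilon\to 0$ with $\theta\epsilon^2$ of order one, and your bound $c^{-1}\epsilon^{2-d}/t$ blows up, so the step ``$\epsilon^{2-d}/t\le (\epsilon^2+t)^{-d/2}\cdot(\text{bounded})$'' fails. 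The lemma is needed for general $d$ (the one-step theorem is stated on $\IR^d$). The fix is to replace Chebyshev by either a $k$-th moment bound with $k\ge d/2$, or, as the paper does, an exponential Markov/Chernoff bound: $\IP\{T(t)-t<-(\epsilon^2+t)/2\}\le \exp\{-\chi\theta t-\theta\epsilon^2/2\}$ with $\chi=\tfrac12-e^{-1}>0$, after which the factor $e^{-\chi\theta t}$ absorbs the polynomial growth $(1+t/\epsilon^2)^{d/2}$ uniformly in $\epsilon$ (using only that $\theta\epsilon^2$ is bounded below) in every dimension. Your two-regime split correctly disposes of $t\le\epsilon^2$ without any concentration input; it is only the regime $t\gg\epsilon^2$ in dimension $d\ge 3$ where the argument as written does not close.
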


\begin{proof}
    The first claim is immediate from the definition of the random walk 
with generator $\mathcal{L}^\theta$.

For the second claim, first define $\tau(t) = T(t)-t$.
Since if $\tau(t) \ge -(\epsilon^2 + t)/2$,
then $1/(\epsilon^2 + T(t)) \le 2/(\epsilon^2 + t)$,
while $\epsilon^2 + T(t) \ge \epsilon^2$ always,
partitioning over
$\{ \tau(t) \ge -(\epsilon^2 + t)/2 \}$ and its complement,
\begin{align}
   \| \psi^{\epsilon,x} \|_\infty
    &=
        \IE\left[ 
            \frac{1}{\big(2 \pi (\epsilon^2+T(t))\big)^{d/2}}
        \right] \nonumber
    \\ & \le
        \frac{C}{(\epsilon^2 + t)^{d/2}}
        + 
        \frac{C}{\epsilon^d}
        \IP\left\{
            \tau(t) < - (\epsilon^2 + t) / 2
        \right\}. \label{eqn:psi_infty_bound}
\end{align}
Now, observe that since $\IE[e^{-\Pi(\theta t)}] = \exp(-\theta t (1 - e^{-1}))$,
by Markov's inequality,
\begin{align}
    \IP\left\{
        \tau(t) < - \frac{\epsilon^2 + t}{2}
    \right\}
    &=
    \IP\left\{
        e^{-\Pi(\theta t)} > e^{-\theta (t - \epsilon^2) / 2}
    \right\}
\nonumber
\\
&\leq \frac{\IE[\exp\big(-\Pi(\theta t)\big)]}{\exp\big(-\theta(t-\epsilon^2)/2\big)}
\nonumber
    \\&=
    \frac{
        \exp(- \theta t(1 - e^{-1}))
    }{
        \exp(- \theta (t - \epsilon^2) / 2)
    }
\nonumber
    \\&=
    \exp\left\{ - \chi \theta t - \frac{\theta \epsilon^2}{2} \right\} ,
\label{bound for negative tau}
\end{align}
where $\chi = 1/2 - e^{-1} > 0$.
The second term in \eqref{eqn:psi_infty_bound} is therefore bounded by
\[
	C \left(1 + \frac{t}{\epsilon^2}\right)^{d/2} e^{-\chi \theta t }
	\frac{1}{(\epsilon^2 + t)^{d/2}}e^{-\epsilon^2 \theta/2} .
\]
Now observe that the derivative (with respect to $t$) of 
	$e^{-\chi \theta t} 
	(1 + t/\epsilon^2)^{d/2}$ 
is
\[
    \left( \frac{d}{2 \epsilon^2} - \left(1 + \frac{t}{\epsilon^2}\right) \chi \theta \right)
    \left( 1 + \frac{t}{\epsilon^2} \right)^{d/2 - 1} e^{-\chi \theta t },
\]
which is negative if $\theta(\epsilon^2 + t) > d/2 \chi$.
At the maximum, 
$(1 + t/\epsilon^2) = d /(2\chi \theta \epsilon^2)$,
and so this quantity is bounded uniformly over not only $t$ but also $\epsilon$
(since we've assumed that $\theta \epsilon^2$ is bounded below).
Therefore, 
we have the bound
\begin{equation}
	\label{small tau bound}
	\frac{1}{\epsilon^d}
        \IP\left\{
            \tau(t) < - (\epsilon^2 + t) / 2
        \right\}
\leq \frac{C}{(\epsilon^2 + t)^{d/2}} e^{-\epsilon^2\theta/2}.
\end{equation}
Substituting this into \eqref{eqn:psi_infty_bound} yields the result.
\end{proof}

\begin{lemma}
\label{bounds on moments}
Let $\{{\cal F}_t\}_{t\geq 0}$ denote the natural filtration. 
Under the assumptions of Theorem~\ref{thm:local_convergence},
for each $T\in [0,\infty)$, and $k\in\IN$, there exist constants $C=C(k,T)$ and
$\widetilde{C}=\widetilde{C}(k,T)$, independent of $\epsilon$, such that for all 
$x\in\IR^d$ and all $u,t\in [0,T]$ with $u<t$,
\begin{equation}
\label{moment bound rhoepsilon}
\IE\Big[\left.\big(\rho_\epsilon*\eta_t(x)\big)^k\right| {\cal F}_u\Big]
\leq C\langle\psi_{t-u}^{\epsilon,x}(z), \eta_u(dz)\rangle^k
+C\frac{\theta}{N\epsilon^d}
\langle\psi_{t-u}^{\epsilon,x}(z), \eta_u(dz)\rangle;
\end{equation} 
and
\begin{multline}
\label{flandoli trick for rhoepsilon}
\IE\Big[\left. \int_u^t\langle\psi_{t-s}^{\epsilon,x}(z),\eta_s(dz)\rangle^{k-1}
\big\langle\psi_{t-s}^{\epsilon,x}(z)|F(\rho_\epsilon*\eta_s(z))|,
\eta_s(dz)\big\rangle ds\right| {\cal F}_u\Big]
\\
\leq\widetilde{C}\langle\psi_{t-u}^{\epsilon,x}(z), \eta_u(dz)\rangle^k
+\widetilde{C}\frac{\theta}{N\epsilon^d}
\langle\psi_{t-u}^{\epsilon,x}(z), \eta_u(dz)\rangle;
\end{multline}
where the function $\psi^{\epsilon,x}_t(\cdot)$ was
defined in~(\ref{AlmostHeatEquation}).
In particular, 
under the assumptions of Theorem~\ref{thm:local_convergence}, the
expected values of the
quantities on the right hand side of~(\ref{moment bound rhoepsilon})
and~(\ref{flandoli trick for rhoepsilon}) are both integrable 
with respect to Lebesgue measure.  
\end{lemma}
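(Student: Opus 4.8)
The plan is to read everything off the semimartingale decomposition~\eqref{expn rhoepsilon}, started at time $u$ rather than $0$. Fix $t$ and write $Y_s := \langle \psi^{\epsilon,x}_{t-s}(z),\eta_s(dz)\rangle$ for $s\in[u,t]$, so $Y_u = \langle\psi^{\epsilon,x}_{t-u},\eta_u\rangle$ and $Y_t = \rho_\epsilon*\eta_t(x)$. Since $\psi$ solves~\eqref{AlmostHeatEquation}, the choice $\phi_s = \psi^{\epsilon,x}_{t-s}$ in the time-dependent martingale identity~\eqref{time dep mg prob} kills the $\mathcal{L}^\theta\phi_s + \dot\phi_s$ term, leaving
\[
    Y_t = Y_u + \int_u^t \big\langle \psi^{\epsilon,x}_{t-s}(z)\, F(\rho_\epsilon*\eta_s(z)),\,\eta_s(dz)\big\rangle\,ds + \big(M_t(x) - M_u(x)\big),
\]
where $(M_s(x))$ is a martingale whose conditional angle-bracket increment over $[u,t]$ is, by~\eqref{eqn:prelimit_martingale_variation} with $r\equiv\gamma\equiv1$, at most $\tfrac{C\theta}{N}\int_u^t\langle(\psi^{\epsilon,x}_{t-s})^2,\eta_s\rangle\,ds$. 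Throughout I will use Lemma~\ref{PsiBoundHS}, which (using $\theta\epsilon^2\to\infty$) gives $\|\psi^{\epsilon,x}_{t-s}\|_\infty\le C\epsilon^{-d}$, hence $\langle(\psi^{\epsilon,x}_{t-s})^2,\eta_s\rangle\le C\epsilon^{-d}Y_s$, and I will repeatedly use that $F(m)\ind_{m\ge0}$ is bounded above by a constant $\|F^+\|_\infty$, whence $\langle \psi^{\epsilon,x}_{t-s}F(\rho_\epsilon*\eta_s),\eta_s\rangle\le\|F^+\|_\infty Y_s$.

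For~\eqref{moment bound rhoepsilon} I would induct on $k$, the case $k=1$ being immediate from the display above and Gronwall's inequality (the martingale having zero conditional mean). For the inductive step, expand $Y_t^k$ by Itô's formula for the jump process — i.e.\ apply the time-inhomogeneous generator to $z\mapsto z^k$ as in~\eqref{eq: Pre-Limit Generator Expanded}: one gets $Y_u^k$, a drift term $k\int_u^t Y_s^{k-1}\langle\psi^{\epsilon,x}_{t-s}F(\rho_\epsilon*\eta_s),\eta_s\rangle\,ds$, a quadratic-variation term of order $\tfrac{\theta}{N}\int_u^t Y_s^{k-2}\langle(\psi^{\epsilon,x}_{t-s})^2,\eta_s\rangle\,ds$, and jump corrections carrying extra factors of $\theta/N^2$ times powers of $\|\psi^{\epsilon,x}_{t-s}\|_\infty$. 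The jump corrections are negligible relative to the quadratic-variation term because $N\epsilon^d\to\infty$ (a consequence of $\theta\to\infty$ together with $\tfrac{\theta}{N\epsilon^d}\to0$ in~\eqref{conditions for one step}). Taking $\IE[\cdot\mid\mathcal{F}_u]$, the drift is at most $C\int_u^t\IE[Y_s^k\mid\mathcal{F}_u]\,ds$, while the quadratic-variation term is at most $\tfrac{C\theta}{N\epsilon^d}\int_u^t\IE[Y_s^{k-1}\mid\mathcal{F}_u]\,ds\le\tfrac{C\theta}{N\epsilon^d}\big(Y_u^{k-1}+\tfrac{\theta}{N\epsilon^d}Y_u\big)$ by the inductive hypothesis; since $\tfrac{\theta}{N\epsilon^d}\le1$ eventually, a short case analysis on whether $Y_u\lessgtr 1$ shows this is $\le C\big(Y_u^k+\tfrac{\theta}{N\epsilon^d}Y_u\big)$. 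Gronwall's inequality applied to $s\mapsto\IE[Y_s^k\mid\mathcal{F}_u]$ then gives~\eqref{moment bound rhoepsilon}.

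For~\eqref{flandoli trick for rhoepsilon} the new difficulty is that $F$, being a polynomial, is unbounded below, so $|F|$ cannot be controlled by $F^+$ alone; this is where the ``Flandoli trick'' enters, exactly as the term involving $-F$ was handled in~\eqref{integral of -F}. Write $|F|\le 2\|F^+\|_\infty - F$; the piece with $2\|F^+\|_\infty$ contributes $2\|F^+\|_\infty\int_u^t Y_s^k\,ds$, bounded by~\eqref{moment bound rhoepsilon}. For the piece with $-F$, rearrange the Itô expansion of $\tfrac1k Y_t^k$ to isolate $\int_u^t Y_s^{k-1}\langle\psi^{\epsilon,x}_{t-s}(-F(\rho_\epsilon*\eta_s)),\eta_s\rangle\,ds = \tfrac1k Y_u^k - \tfrac1k Y_t^k + \text{martingale} + \text{(quadratic-variation and jump corrections)}$, and take $\IE[\cdot\mid\mathcal{F}_u]$: the martingale vanishes, $-\tfrac1k\IE[Y_t^k\mid\mathcal{F}_u]\le 0$, and the correction terms are controlled as in the previous paragraph, yielding~\eqref{flandoli trick for rhoepsilon}. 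Finally, for the Lebesgue-integrability claim, take $u=0$ in the two bounds and integrate over $x$: by Lemma~\ref{PsiBoundHS} and Jensen's inequality $\langle\psi^{\epsilon,x}_{t},\eta_0\rangle^k\le\IE[(p_{\epsilon^2+T(t)}*\eta_0(x))^k]$, with $p_{\epsilon^2+T}*\eta_0 = p_{T}*(\rho_\epsilon*\eta_0)$, so (by Jensen again and Fubini) $\int\langle\psi^{\epsilon,x}_{t},\eta_0\rangle^k\,dx\le\int (\rho_\epsilon*\eta_0(x))^k\,dx$, which together with $\int\rho_\epsilon*\eta_0(x)\,dx=\langle1,\eta_0\rangle$ is finite uniformly in small $\epsilon$ by the hypotheses of Theorem~\ref{thm:local_convergence}.

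The main obstacle is the bookkeeping: carrying the induction on $k$ through while keeping the right-hand side collapsed to precisely $Y_u^k$ and $\tfrac{\theta}{N\epsilon^d}Y_u$ (rather than a spray of intermediate powers or a stray additive constant), and checking honestly that every jump correction is genuinely of smaller order under~\eqref{conditions for one step} — together with the rearrangement that tames the unbounded negative part of $F$.
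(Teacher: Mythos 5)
Your proposal is correct and follows essentially the same route as the paper's proof: induction on $k$, expanding $\langle\psi^{\epsilon,x}_{t-s},\eta_s\rangle^k$ via the generator so that the $j=1$ terms cancel against the $\dot\psi$ term, bounding the $j\ge 2$ terms by $\theta/(N\epsilon^d)$ times lower powers using $\|\psi^{\epsilon,x}_s\|_\infty\le C\epsilon^{-d}$, applying Gronwall, and obtaining the second bound by rearranging to isolate the $-F$ integral. The one detail you gloss over is that the $j\ge2$ death terms carry the factor $\mu_\theta=1-F(\rho_\epsilon*\eta)/\theta$, so $|F|/\theta$ appears inside the quadratic-variation correction and must be absorbed via the inductive hypothesis for~\eqref{flandoli trick for rhoepsilon} at lower exponents (not just via~\eqref{moment bound rhoepsilon}) --- which is exactly how the paper handles it, and is available to you since your induction carries both bounds.
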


\begin{proof}
To simplify our expressions, we shall consider the case $u=0$, but
the proof goes through unchanged for other values of $u$.

We proceed by induction. 
Taking expectations in~(\ref{expn rhoepsilon}), using that $F$ is bounded above, and 
applying Gronwall's inequality to 
$\langle\psi^{\epsilon,x}_{t-s}, \eta_s\rangle$ 
we obtain 
$\IE[\langle \psi_0^{\epsilon,x},\eta_t\rangle] \le C \IE[\langle \psi_t^{\epsilon,x},\eta_0\rangle]$,
which implies~(\ref{moment bound rhoepsilon}) in the case $k=1$.
Moreover, rearranging~(\ref{expn rhoepsilon}) we find 
\begin{equation}
-\int_0^t\big\langle\psi^{\epsilon,x}_{t-s}(y)
F(\rho_\epsilon*\eta_s(y), \eta_s(dy)\big\rangle ds
=\langle\psi^{\epsilon,x}_t(y),\eta_0(dy)\rangle
-\langle \psi^{\epsilon,x}_0(y),\eta_t(dy)\rangle +M_t(x),
\end{equation} 
and taking expectations again, 
since $\langle\psi^{\epsilon,x}_0,\eta_t\rangle>0$, and $M_0(x)=0$,
this yields 
$$\IE\Big[-\int_0^t\langle\psi^{\epsilon,x}_{t-s}(y)F(\rho_\epsilon*\eta_s(y)), 
\eta_s(dy)\rangle ds\Big|{\cal F}_0\Big]
\leq \langle\psi^{\epsilon, x}_t(y),\eta_0(dy)\rangle.$$
Since $F$ is bounded above, there exists a constant $K$ such that $|F|\leq K-F$ and so 
combined with the bound on $\IE[\langle\psi^{\epsilon, x}_0(y),\eta_t(dy)\rangle]$ just
obtained, this in turn yields 
$$\IE\Big[\int_0^t\big\langle\psi^{\epsilon,x}_{t-s}(y)
|F(\rho_\epsilon*\eta_s(y))|, 
\eta_s(dy)\big\rangle ds\Big|{\cal F}_0\Big]\leq 
\widetilde{C}\langle\psi^{\epsilon, x}_t(y),\eta_0(dy)\rangle,$$
which is~(\ref{flandoli trick for rhoepsilon})
in the case $k=1$.

Now suppose that we have established~(\ref{moment bound rhoepsilon}) 
and~(\ref{flandoli trick for rhoepsilon}) for all exponents $j<k$.
First we apply the generator $\Pgen^N$ of our scaled population process
to functions of the form $\langle f,\eta\rangle^k$.
Recalling that each jump of the process involves the birth or death of a single 
individual, and so increments $\langle f,\eta\rangle$ by $\pm f/N$ at the location of
that individual and that $r \equiv \gamma \equiv 1$, we find 
\begin{multline}
\label{pgen applied to kth moment}
\Pgen^N\Big(\langle f,\eta\rangle^k\Big)
=
\Big\langle
\int\theta N
    \sum_{j=1}^k \binom{k}{j} \frac{f(y)^j}{N^j} \langle f, \eta\rangle^{k-j}
    q_\theta(x,dy),\eta(dx)\Big\rangle
\\
+
    \Big\langle\theta N\Big(1-\frac{F(\rho_\epsilon*\eta(x))}{\theta}\Big)
    \sum_{j=1}^k \binom{k}{j} (-1)^j \frac{f(x)^j}{N^j} \langle f, \eta\rangle^{k-j}
    ,\eta(dx)\Big\rangle.
\end{multline}
Mimicking what we did above, we set $f(\cdot)=\psi^{\epsilon,x}_t(\cdot)$ and 
write
\begin{multline}
\label{kth moment for time varying function}
\IE\Big[\langle \psi^{\epsilon,x}_0,\eta_t\rangle^k\Big|{\cal F}_0\Big]
=\langle\psi^{\epsilon, x}_t(y),\eta_0(dy)\rangle^k
+\IE\Big[\int_0^t\Pgen^N\big(\langle\psi^{\epsilon,x}_{t-s}(y),\eta_s(dy)\rangle^k\big) ds
\\
-\int_0^t\langle k\dot{\psi}^{\epsilon,x}_{t-s}(y), \eta_s(dy)\rangle
\big\langle \psi^{\epsilon, x}_{t-s}(y),\eta_s(dy)\big\rangle^{k-1}ds
\Big|{\cal F}_0\Big].
\end{multline}
Since $\dot \psi^{\epsilon,x}_s = \mathcal{L}_\theta \psi^{\epsilon,x}_s$,
the $j=1$ term from $\Pgen^N(\langle \psi^{\epsilon,x}_{t-s}(y),\eta_s(dy) \rangle^k)$
combines with the last term in~\eqref{kth moment for time varying function}
to yield 
$$ \int_0^t k \langle \psi^{\epsilon,x}_{t-s},\eta\rangle^{k-1} \langle F(\rho_\epsilon*\eta_s(y)) \psi^{\epsilon,x}_{t-s}(y), \eta_s(dy) \rangle ds . $$
As for the remaining terms, using (from Lemma~\ref{PsiBoundHS}) 
that $\sup_s\|\psi^{\epsilon,x}_s(\cdot)\|_\infty=C/\epsilon^d$,
$N\epsilon^d>1$,
and our inductive
hypothesis, we find
\begin{multline*}
\IE\Big[
\Big\langle
\int_0^t\theta N\sum_{j=2}^{k}\binom{k}{j}\int
\frac{\psi_{t-s}^{\epsilon,x}(z)^j}{N^j}
\langle \psi^{\epsilon,x}_{t-s},\eta_s\rangle^{k-j}
q_\theta(y,dz),
\eta_s(dy)\Big\rangle ds 
\\
+
\Big\langle \int_0^t\theta N\sum_{j=2}^{k}\binom{k}{j}\frac{\psi_{t-s}^{\epsilon,x}(y)^j}{N^j}\langle \psi^{\epsilon,x}_{t-s},\eta_s\rangle^{k-j}
(-1)^j\Big(1-\frac{F(\rho_\epsilon*\eta_s(y))}{\theta} 
\Big),
\eta_s(dy)\Big\rangle ds\Big|{\cal F}_0\Big]
\\
\leq 
C\IE\Big[\Big\langle\int_0^t\sum_{j=2}^k\frac{\theta}{N\epsilon^d}
\Big(\frac{1}{(N\epsilon^d)^{j-2}}\Big)
	\Big\langle \psi^{\epsilon,x}_{t-s}(y)
\Big(2+\frac{|F(\rho_\epsilon*\eta_s(y))|}{\theta}\Big),\eta_s(dy)\Big\rangle
\langle\psi^{\epsilon,x}_{t-s},\eta_s\rangle^{k-j}ds\big|{\cal F}_0\Big]
\\
\leq
C'\frac{\theta}{N\epsilon^d}\sum_{j=1}^{k-1}
\langle\psi^{\epsilon, x}_t(y),\eta_0(dy)\rangle^j
\leq
C''\frac{\theta}{N\epsilon^d}
\Big(\langle\psi^{\epsilon, x}_t(y),\eta_0(dy)\rangle^k
+\langle\psi^{\epsilon, x}_t(y),\eta_0(dy)\rangle\Big).
\end{multline*}
Combining this with~(\ref{pgen applied to kth moment})
and~(\ref{kth moment for time varying function}), using once again the fact that $F$
is bounded above, we find 
\begin{multline*}
\label{kth step of induction}
\IE\Big[\langle \psi^{\epsilon,x}_0,\eta_t\rangle^k\Big|{\cal F}_0\Big]
\leq
\langle\psi^{\epsilon, x}_t(y),\eta_0(dy)\rangle^k
+\tilde{C}\IE\Big[\int_0^t
\langle\psi^{\epsilon, x}_{t-s}(y),\eta_s(dy)\rangle^k ds
\Big|{\cal F}_0\Big]
\\
+C''\frac{\theta}{N\epsilon^d}
\Big(\langle\psi^{\epsilon, x}_t(y),\eta_0(dy)\rangle^k+
\langle\psi^{\epsilon, x}_t(y),\eta_0(dy)\rangle\Big)
,
\end{multline*}
and~(\ref{moment bound rhoepsilon}) 
follows from Gronwall's inequality. Rearranging exactly as in the case $k=1$, we 
recover~(\ref{flandoli trick for rhoepsilon}) 
and the inductive step is complete.
\end{proof}

We shall also need the following consequence of the bounds that we obtained in 
Lemma~\ref{bounds on moments}:
\begin{corollary}
\label{alternative form moment bounds}
Under the assumptions of Theorem~\ref{thm:local_convergence},
for each $k\geq 1$, $T>0$, there is a $C(k,T)$ such that
\begin{equation}
\label{double integrals with respect to eta}
\IE\Big[\big\langle (\rho_\epsilon*\eta_t)^{k},\eta_t\big\rangle \Big]
<C(k,T)<\infty, \qquad\mbox{ for all }t\in [0,T].
\end{equation}
\end{corollary}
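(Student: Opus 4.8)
The plan is to derive the bound~\eqref{double integrals with respect to eta} directly from the moment bounds of Lemma~\ref{bounds on moments} by integrating over a suitable driving location. The key observation is that $\langle (\rho_\epsilon * \eta_t)^k, \eta_t \rangle = \int (\rho_\epsilon * \eta_t(x))^k \eta_t(dx)$, and since $\rho_\epsilon(x-y) = p_{\epsilon^2}(x,y)$, we can write $\eta_t(dx)$ in the outer integral in terms of $\rho_\epsilon$ as well by noting that $\rho_\epsilon * \eta_t(x)$ is itself a smoothed version of $\eta_t$; more precisely, by the semigroup property $\langle (\rho_\epsilon * \eta_t)^k, \eta_t\rangle \le \langle (\rho_\epsilon*\eta_t)^k, \rho_{\epsilon} * \eta_t\rangle$ cannot quite be argued directly, so instead I would bound $\langle (\rho_\epsilon * \eta_t)^k , \eta_t \rangle \leq C \int (\rho_\epsilon * \eta_t(x))^{k+1} dx + (\text{lower order})$ using the fact that for a purely atomic measure with atoms of mass $1/N$, $\int f(x)\eta_t(dx) \le \int f(x) \rho_{2\epsilon^2}*\eta_t(x)\,dx + \bigO(1/(N\epsilon^d))$-type estimates, or, more cleanly, simply apply Lemma~\ref{bounds on moments} with $u=0$, take expectations (removing the conditioning), and integrate in $x$ against Lebesgue measure.

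Concretely, first I would take expectations in~\eqref{moment bound rhoepsilon} with $u = 0$ and exponent $k+1$ in place of $k$, to get
\begin{equation*}
\IE\big[(\rho_\epsilon*\eta_t(x))^{k+1}\big]
\le C\, \IE\big[\langle \psi^{\epsilon,x}_t(z),\eta_0(dz)\rangle^{k+1}\big]
+ C\frac{\theta}{N\epsilon^d}\,\IE\big[\langle \psi^{\epsilon,x}_t(z),\eta_0(dz)\rangle\big].
\end{equation*}
By Lemma~\ref{PsiBoundHS}, $\psi^{\epsilon,x}_t(z) = \IE[p_{\epsilon^2+T(t)}(x,z)]$, and since $p$ is a transition density, $\int \psi^{\epsilon,x}_t(z)\,dx = 1$ for every $z$; moreover $\langle\psi^{\epsilon,x}_t(z),\eta_0(dz)\rangle = \IE[p_{\epsilon^2+T(t)}*\eta_0(x)]$, which is exactly of the form $\rho_{\delta}*\eta_0(x)$ smoothed and time-averaged. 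The hypotheses of Theorem~\ref{thm:local_convergence} give exactly that $\limsup_{\epsilon\to 0}\int \IE[\rho_F^\epsilon*\eta_0(x)^j]\,dx < \infty$ and $\limsup_{\epsilon\to 0}\IE[\rho_F^\epsilon*\eta_0(x)^j]<\infty$ for all $j$, and since $\epsilon^2 + T(t) \ge \epsilon^2$, a comparison (using that the heat semigroup is a contraction on $L^p$ and monotonicity of Gaussian smoothing, or an explicit application of Jensen to the Poisson average $T(t)$) shows $\int \IE[\langle\psi^{\epsilon,x}_t,\eta_0\rangle^{k+1}]\,dx$ is bounded uniformly in the relevant limit. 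Thus $\int \IE[(\rho_\epsilon*\eta_t(x))^{k+1}]\,dx \le C(k+1,T)$, and similarly $\sup_x \IE[(\rho_\epsilon*\eta_t(x))^{k+1}] \le C'$, because $\theta/(N\epsilon^d)\to 0$.

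It then remains to pass from $\int \IE[(\rho_\epsilon*\eta_t(x))^{k+1}]\,dx$ (integration against Lebesgue measure) to $\IE[\langle (\rho_\epsilon*\eta_t)^k, \eta_t\rangle]$ (integration against $\eta_t$). For this I would use that $\eta_t$ has atoms of mass $1/N$: writing $\eta_t = \tfrac1N\sum_i \delta_{x_i}$, each atom contributes $\tfrac1N (\rho_\epsilon*\eta_t(x_i))^k$, and $(\rho_\epsilon*\eta_t(x_i))^k \le (\rho_\epsilon*\eta_t(x_i))^k$; bounding $\tfrac1N \le \rho_\epsilon(0)^{-1}\rho_\epsilon*\eta_t(x_i)\cdot(\text{nothing})$ is not quite right, so instead I would observe $\langle (\rho_\epsilon*\eta_t)^k,\eta_t\rangle \le \|\rho_\epsilon\|_\infty^{-1}$ times nothing useful; the clean route is: since $\rho_\epsilon * \eta_t(x) \ge \tfrac1N \rho_\epsilon(0)\cdot\ind\{x = x_i \text{ for some }i\}$ is false for a continuous kernel, better to write $\langle g, \eta_t\rangle \le \int g(x)\,\rho_\epsilon*\eta_t(x)\,dx \cdot C_\epsilon$ only after bounding below $\rho_\epsilon*\eta_t$ near atoms. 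The cleanest approach, which I would adopt, is to apply the generator $\Pgen^N$ directly to the functional $\langle (\rho_\epsilon*\eta)^k,\eta\rangle = \langle\langle\rho_\epsilon(\cdot-y),\eta\rangle^k,\eta(dy)\rangle$, expand the resulting drift exactly as in the proof of Lemma~\ref{bounds on moments}, use $\|\rho_\epsilon\|_\infty \le C\epsilon^{-d}$, $N\epsilon^d > 1$, the already-established bound $\int\IE[(\rho_\epsilon*\eta_t(x))^{j}]dx \le C(j,T)$ for all $j$ as the input, and close with Gronwall on $t\mapsto \IE[\langle(\rho_\epsilon*\eta_t)^k,\eta_t\rangle]$. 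The main obstacle will be bookkeeping the cross terms that arise when $\Pgen^N$ acts on $\langle(\rho_\epsilon*\eta)^k,\eta\rangle$ — a birth or death event at $z$ changes both the inner convolution (at every $y$, by $\pm\rho_\epsilon(z-y)/N$) and the outer integral (by $\pm(\rho_\epsilon*\eta(z))^k/N$) — but each such term carries a compensating factor of $\theta/N$ (from the rate) times powers of $\|\rho_\epsilon\|_\infty/N \lesssim 1/(N\epsilon^d)$, and since $\theta/(N\epsilon^d)\to 0$ these are all controlled by the $L^1_x$ moment bounds above. The reaction term contributes $\langle (\rho_\epsilon*\eta)^k|F(\rho_\epsilon*\eta)|,\eta\rangle$, which by the polynomial growth of $F$ and Hölder's inequality is bounded by a finite combination of $\IE[\langle(\rho_\epsilon*\eta)^{k'},\eta\rangle]$ for $k' \le k + \deg F$, each finite by the same argument, closing the Gronwall estimate.
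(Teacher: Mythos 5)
Your first step is sound and matches the paper's input: taking expectations in~\eqref{moment bound rhoepsilon} with $u=0$, using Lemma~\ref{PsiBoundHS} and the hypotheses on $\eta_0$, one does get $\int \IE\big[(\rho_\epsilon*\eta_t(x))^{j}\big]\,dx \le C(j,T)$ for every $j$. You have also correctly located the crux of the corollary, namely converting integration against $\eta_t$ into integration against Lebesgue measure --- but your proposal does not actually resolve it. Your fallback, applying $\Pgen^N$ to $\langle(\rho_\epsilon*\eta)^k,\eta\rangle$ and closing with Gronwall, has a genuine gap in the first-order drift term. Expanding the birth/death increments of this functional to order $1/N$ and multiplying by the rate $\theta N$, the near-cancellation between births and deaths leaves a term of the form $\langle \mathcal{L}^\theta h,\eta\rangle$ with $h(w) = (\rho_\epsilon*\eta(w))^k + k\langle(\rho_\epsilon*\eta(x))^{k-1}\rho_\epsilon(x-w),\eta(dx)\rangle$. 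Since $\mathcal{L}^\theta h \approx \tfrac12\Delta h$, this produces terms like $k(\rho_\epsilon*\eta)^{k-1}\,\Delta\rho_\epsilon*\eta$ and $k(k-1)(\rho_\epsilon*\eta)^{k-2}\|\nabla\rho_\epsilon*\eta\|^2$, which are of size $\epsilon^{-2}$ relative to the functional itself and carry no helpful sign (the gradient-squared term is nonnegative, i.e.\ pushes the wrong way). In Lemma~\ref{bounds on moments} this problem is avoided precisely because the time-reversed test function $\psi^{\epsilon,x}_{t-s}$ solves $\partial_t\psi = \mathcal{L}^\theta\psi$, so the diffusive drift cancels exactly against $\dot\psi$; no such cancellation is available for $\langle(\rho_\epsilon*\eta)^k,\eta\rangle$, and your Gronwall constant would blow up like $e^{C t/\epsilon^2}$.

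The device the paper uses instead is Chapman--Kolmogorov splitting of the Gaussian kernel. Writing $p_{\epsilon^2}(x,z_j)=\int p_{\epsilon^2/2}(x,y_j)p_{\epsilon^2/2}(y_j,z_j)\,dy_j$ turns the $(k+1)$-fold integral $\int\cdots\int\prod_j p_{\epsilon^2}(x,z_j)\,\eta_t(dz_1)\cdots\eta_t(dz_k)\,\eta_t(dx)$ into a Lebesgue integral over $y_1,\dots,y_k$ of a product of smoothed densities $p_{\epsilon^2/2}*\eta_t(y_i)$; the remaining dependence on the outer variable $x$ is handled by the Gaussian product identity $\prod_{j=1}^k p_{\tau}(x,y_j)=\prod_{j=2}^{k}p_{j\tau/(j-1)}(y_j,Y_{j-1})\,p_{\tau/k}(x,Y_k)$, which leaves one more smoothed density $p_{\epsilon^2/2k}*\eta_t(Y_k)$ after integrating $\eta_t(dx)$. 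The product of $k+1$ such factors is then bounded by the sum of their $(k+1)$-th powers, the leftover heat kernels are integrated out after a change of variables, and the comparison $p_{A\epsilon^2}\le A^{-d/2}p_{\epsilon^2}$ from~\eqref{heat equation at smaller time} reduces everything to $\int\IE\big[(\rho_\epsilon*\eta_t(x))^{k+1}\big]dx$ --- exactly the quantity your first step controls. So the missing idea is not a new estimate but this algebraic rewriting of the $\eta_t$-integrals as Lebesgue integrals; without it, neither of your two routes closes.
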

\begin{proof}[Sketch]
First observe that if $A\in (0,1)$, then 
\begin{equation}
\label{heat equation at smaller time}
p_{A\epsilon^2}(x,y)
=\frac{1}{A^{d/2}}p_{\epsilon^2}(x,y)\exp\Big(-\frac{\|x-y\|^2}{2\epsilon^2}
\big(\frac{1}{A}-1\big)\Big)\leq \frac{1}{A^{d/2}}p_{\epsilon^2}(x,y).
\end{equation}
Now consider
    \begin{align*}
\IE\big[\langle \rho_\epsilon*\eta_t(x),\eta_t(dx)\rangle\big]&=
\IE\Big[\int\int p_{\epsilon^2}(x,z)\eta_t(dz)\eta_t(dx)\Big]
\\        
&=
\IE\Big[\int\int\int p_{\epsilon^2/2}(x,y) 
p_{\epsilon^2/2}(y,z)dy\eta_t(dz)\eta_t(dx)\Big]
        \\ &=
       \IE\Big[ \int \left( p_{\epsilon^2/2} * \eta_t(y) \right)^2 dy \Big]
\\&\leq C\int \IE\big[\big(\rho_\epsilon*\eta_t(x)\big)^2\big] dx,
    \end{align*}
where we used~(\ref{heat equation at smaller time}) in the last line.
Using Lemma~\ref{bounds on moments}
and our assumptions on $\eta_0$, this quantity is finite.

To illustrate the inductive step, now consider
\begin{multline}
\label{rewrite for inductive step}
\IE\big[\langle \rho_\epsilon*\eta_t(x)^2,\eta_t(dx)\rangle\big] 
=
\IE\Big[\int\int\int p_{\epsilon^2}(x,z_1) p_{\epsilon^2}(x,z_2)
\eta_t(dz_1)\eta_t(dz_2)\eta_t(dx)\Big]
\\
=
\IE\Big[\int\cdots\int p_{\epsilon^2/2}(x,y_1) 
p_{\epsilon^2/2}(x,y_2)p_{\epsilon^2/2}(y_1,z_1)
p_{\epsilon^2/2}(y_2,z_2)
\eta_t(dz_1)\eta_t(dz_2)dy_1dy_2\eta_t(dx)\Big].
\end{multline}
We use the identity
\[
p_{\epsilon^2/2}(x,y_1) p_{\epsilon^2/2}(x,y_2)
=p_{\epsilon^2}(y_1,y_2)p_{\epsilon^2/4}\Big(x, \frac{y_1+y_2}{2}\Big)
\]
to rewrite~(\ref{rewrite for inductive step}) as
\begin{align*}
&\IE\Big[\int \int p_{\epsilon^2/2}*\eta_t(y_1)\, p_{\epsilon^2/2}*\eta_t(y_2)\,
p_{\epsilon^2/4}*\eta_t\big(\frac{y_1+y_2}{2}\big)\, 
p_{\epsilon^2}(y_1,y_2)dy_1dy_2\Big]
\\
&\leq 
\IE\Big[\int \int\Big\{
\big(p_{\epsilon^2/2}*\eta_t(y_1)\big)^3+ \big(p_{\epsilon^2/2}*\eta_t(y_2)\big)^3+
\big(p_{\epsilon^2/4}*\eta_t\big(\frac{y_1+y_2}{2}\big)\big)^3
\Big\}p_{\epsilon^2}(y_1,y_2)dy_1dy_2\Big],
\end{align*}
where we have used that for any non-negative real numbers
$\beta_1$, $\beta_2$, $\beta_3$, 
$\beta_1\beta_2\beta_3\leq \beta_1^3+\beta_2^3+\beta_3^3$.
For the first two terms in the sum we integrate with respect to $y_2$ and $y_1$
respectively to reduce to an expression of the form considered in 
Lemma~\ref{bounds on moments}. For the final term, the change of variables
$z_1=y_1+y_2$, $z_2=y_1-y_2$ in the integral similarly 
allows us to integrate out the heat kernel, and we conclude that the result
holds for $k=2$.

We can proceed in the same way for larger values of $k$, using repeatedly that
\[
p_{t_1}(x,y_1)p_{t_2}(x,y_2)=p_{\frac{t_1t_2}{t_1+t_2}}
\Big(x,\frac{t_2y_1+t_1y_2}{t_1+t_2}\Big)p_{t_1+t_2}(y_1,y_2)
\]
to write 
\[
\prod_{j=1}^k p_{\tau}(y,y_j)
=\prod_{j=2}^{k}p_{\frac{j\tau}{j-1}}\big(y_j,Y_{j-1}\big)
p_{\frac{\tau}{k}}(y, Y_k)
\]
where
\[
    Y_1=y_1, \qquad Y_j=\frac{j-1}{j}Y_{j-1}+\frac{1}{j}y_j,  \mbox{for }j\geq 2.
\]
Writing $p_{\epsilon^2}(x,z_j) = \int p_{\epsilon^2/2}(x,y_j) p_{\epsilon^2/2}(y_j,z_j) dy_j$
and using the above with $\tau = \epsilon^2/2$,
this yields
\begin{multline*}
\big\langle\big(\rho_\epsilon*\eta_t(x)\big)^k,\eta_t(dx)\big\rangle
    =\int\cdots\int \prod_{j=2}^k p_{\epsilon^2 j/2(j-1)}(y_j,Y_{j-1})
     \prod_{i=1}^k p_{\epsilon^2/2}*\eta_t(y_i)p_{\epsilon^2/2k}*\eta_t(Y_k)
dy_1\ldots dy_k
\\
\leq
\int\cdots\int 
    \prod_{j=2}^k p_{\epsilon^2 j/2(j-1)}(y_j,Y_{j-1})
    \Big\{\sum_{i=1}^k\big(p_{\epsilon^2/2}*\eta_t(y_i)\big)^{k+1}
    + \big(p_{\epsilon^2/2k}*\eta_t(Y_k)\big)^{k+1}\Big\}dy_1\ldots dy_k,
\end{multline*}
and once again we can change variables in the integrals and 
use~(\ref{heat equation at smaller time}) to bound this by a constant 
multiple of $\int\IE\big[\big(\rho_\epsilon*\eta_t(x)\big)^{k+1}\big]dx$, and
the inductive step is complete.
\end{proof}

\begin{corollary}[Tightness of $\{(\rho_\epsilon*\eta_t^N(x) dx)_{t\geq 0}\}$]
\label{lem:density_tightness} 
Under the assumptions of Theorem~\ref{thm:local_convergence},
the sequence of measure valued processes $\{ \rho_\epsilon*\eta_t^N(x) dx \}_{t \geq 0}$
	(taking values in ${\mathcal D}_{[0,T]}(\measures)$) is tight.
\end{corollary}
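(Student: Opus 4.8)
The plan is to mirror the proof of Theorem~\ref{thm:nonlocal_convergence} as it was organized in Proposition~\ref{tightness in one point compactification}: pass to the one-point compactification $\overline{\IR}^d$, establish compact containment of the rescaled processes in $\cmeasures$, prove tightness of the real-valued projections $\langle f,\rho_\epsilon*\eta^N_t\rangle$ through the Aldous-Rebolledo criterion, and then assemble these ingredients via Theorem~3.9.1 of \citet{ethier/kurtz:1986}. The one genuinely new input, relative to the nonlocal setting, is the family of moment estimates already established in Lemma~\ref{bounds on moments} and Corollary~\ref{alternative form moment bounds}; these play the role that the cruder bounds of Lemmas~\ref{lem:eta_f_bound}--\ref{lem:eta_compact_containment} played in the proof of Theorem~\ref{thm:nonlocal_convergence}.

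For compact containment, I would first note that $\int\rho_\epsilon(x)\,dx=1$ forces the total mass $\langle 1,\rho_\epsilon*\eta^N_t\rangle$ to equal $\langle 1,\eta^N_t\rangle$. With $r\equiv\gamma\equiv1$, the semimartingale decomposition~\eqref{time dep mg prob} applied to $\phi\equiv1$ gives $\langle 1,\eta^N_t\rangle=\langle 1,\eta^N_0\rangle+\int_0^t\langle F(\rho_\epsilon*\eta^N_s(x)),\eta^N_s(dx)\rangle\,ds+M^N_t(1)$. Since $F$ is bounded above, $\langle F(\rho_\epsilon*\eta_s),\eta_s\rangle\le C\langle 1,\eta_s\rangle$, and the martingale bracket~\eqref{eqn:prelimit_martingale_variation} carries a prefactor $\theta/N\to0$ whose integrand is controlled — the contribution of $F$ dealt with via $|F|\le K-F$ exactly as in~\eqref{integral of -F} — by Corollary~\ref{alternative form moment bounds}. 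A Burkholder-Davis-Gundy plus Gronwall argument identical to that in the proof of Lemma~\ref{lem:eta_compact_containment} then yields $\IE[\sup_{t\le T}\langle 1,\eta^N_t\rangle]\le C_T$ uniformly in the coupled parameters, so that for each $\delta>0$ the processes remain, with probability at least $1-\delta$, inside $\{\nu:\langle 1,\nu\rangle\le K_\delta\}$, which is compact in $\cmeasures$.

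For tightness of the projections I would fix $f\in C^\infty_b(\overline{\IR}^d)$, use that symmetry of the Gaussian gives $\langle f,\rho_\epsilon*\eta^N_t\rangle=\langle\rho_\epsilon*f,\eta^N_t\rangle$, and write the decomposition~\eqref{time dep mg prob} with $\phi\equiv\rho_\epsilon*f$. Then I would apply the Aldous-Rebolledo criterion (Theorem~\ref{thm:aldous_rebolledo}): tightness at fixed times follows from $|\langle\rho_\epsilon*f,\eta^N_t\rangle|\le\|f\|_\infty\langle 1,\eta^N_t\rangle$ and the mass bound; a second-order Taylor expansion gives $\|\mathcal{L}^\theta g\|_\infty\le\tfrac{d}{2}\|D^2g\|_\infty$ uniformly in $\theta$ and $\|D^2(\rho_\epsilon*f)\|_\infty\le\|D^2f\|_\infty$, so the drift part of the integrand is bounded uniformly in $\epsilon,\theta$ by a multiple of $\|f\|_{C^2}$, while $|(\rho_\epsilon*f)F(\rho_\epsilon*\eta^N_s)|\le\|f\|_\infty\sum_ja_j(\rho_\epsilon*\eta^N_s)^j$ integrates against $\eta^N_s$ to something with expectation bounded uniformly on $[0,T]$ by Corollary~\ref{alternative form moment bounds}; and the bracket increment over $[\tau,\tau+\delta]$ carries the factor $\theta/N=\epsilon^d\cdot\tfrac{\theta}{N\epsilon^d}\to0$ with integrand again dominated by $\|f\|_\infty^2$ times a polynomial in $\rho_\epsilon*\eta^N_s$. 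Markov's inequality then closes both Aldous-Rebolledo conditions, and — exactly as in Proposition~\ref{tightness in one point compactification} — Theorem~3.9.1 of \citet{ethier/kurtz:1986}, the Stone-Weierstrass theorem, and the product-tightness Lemma~\ref{lem:product_tightness} upgrade this to tightness in $\mathcal{D}_{[0,T]}(\cmeasures)$; ruling out mass escaping to infinity, so that limit points live in $\mathcal{D}_{[0,T]}(\measures)$, is done in the spirit of Lemma~\ref{no mass at infty} using the uniform spatial integrability of $x\mapsto\IE[\rho_\epsilon*\eta^N_t(x)]$ furnished by Lemma~\ref{bounds on moments}.

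The step I expect to require the most care is keeping every estimate uniform in $\epsilon$ and in the jointly diverging parameters $\theta,N$ simultaneously: because $F$ is polynomial, both the reaction term in the drift and the martingale bracket involve arbitrarily high powers of $\rho_\epsilon*\eta^N_s$, and it is precisely the moment bounds of Lemma~\ref{bounds on moments} — which themselves exploit $\theta\epsilon^2\to\infty$ and $\theta/(N\epsilon^d)\to0$ — that render these terms harmless. Once those are invoked, the remaining work is routine bookkeeping along the lines of the proof of Theorem~\ref{thm:nonlocal_convergence}.
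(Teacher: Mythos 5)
Your overall architecture matches the paper's: compact containment via the identity $\langle 1,\rho_\epsilon*\eta^N_t(x)dx\rangle=\langle 1,\eta^N_t\rangle$ and the unchanged proof of Lemma~\ref{lem:eta_compact_containment}; tightness of the real-valued projections by moving the mollifier onto the test function; the drift term killed by a uniform bound on $\mathcal{L}^\theta(\rho_\epsilon*f)$; the martingale term killed by the prefactor $\theta/N\to 0$ together with Burkholder--Davis--Gundy. The place where your argument has a genuine gap is the reaction term. You claim that the bound $\sup_s\IE\big[\langle\rho_\epsilon*|f|\,|F(\rho_\epsilon*\eta^N_s)|,\eta^N_s\rangle\big]\le C$ from Corollary~\ref{alternative form moment bounds}, combined with Markov's inequality, closes the Aldous--Rebolledo condition. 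It does not: the Aldous--Rebolledo increment is $\int_{\tau_N}^{\tau_N+t}G_s\,ds$ over a \emph{random} interval, and a first-moment bound at deterministic times does not give $\IE[\int_{\tau_N}^{\tau_N+t}|G_s|ds]\le Ct$. In the nonlocal setting (Lemma~\ref{lem:eta_projections_tightness}) this was repaired by bounding the integrand \emph{pointwise} on the high-probability event $\{\sup_s\langle 1,\eta^N_s\rangle\le K\}$, using $\smooth{F}\eta\le\langle 1,\eta\rangle\|\rho_F\|_\infty$; that device is unavailable here precisely because $\|\rho_F^\epsilon\|_\infty\sim\epsilon^{-d}\to\infty$, so boundedness of the total mass no longer controls $\rho_\epsilon*\eta^N_s(x)$ pointwise. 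Whichever criterion you use, some second-moment estimate is forced on you: either $\sup_s\IE[|G_s|^2]<\infty$ (to handle the random interval by Cauchy--Schwarz against $\ind_{[\tau_N,\tau_N+t]}$), or the second moment of the time-integral itself.

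The paper takes the second route, and this computation is the real content of the corollary. It abandons Aldous--Rebolledo in favour of Corollary~3.7.4 of \citet{ethier/kurtz:1986} with the modulus $w'$ over a deterministic partition into intervals of length $\delta$, and then must beat a union bound over $\sim T/\delta$ intervals; a first-moment bound of order $\delta$ per interval would only give a total of order $T$. It therefore computes
$\IE\big[\big(\int_s^t\langle\rho_\epsilon*|f|\,|F(\rho_\epsilon*\eta^N_u)|,\eta^N_u\rangle du\big)^2\big]\le C(t-s)^2$
in~\eqref{square of F-integral}, by symmetrizing the square as a double time-integral, conditioning the inner integral on ${\cal F}_u$ via the tower property and Lemma~\ref{bounds on moments}, and reducing the resulting products of two spatial integrals at the same time by $a^jb^k\le a^{j+k}+b^{j+k}$ together with $\int|f(x)|dx<\infty$; Markov and the union bound then give $CT\delta/\nu^2$. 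None of this appears in your proposal, and without it (or an equivalent second-moment/stopping-time argument) the tightness of the projections is not established. The remaining minor points — your Taylor bound $\|\mathcal{L}^\theta(\rho_\epsilon*f)\|_\infty\le\tfrac{d}{2}\|D^2f\|_\infty$ versus the paper's heat-semigroup/IVT bound by $\|\Delta f\|_\infty$, and your treatment of the martingale bracket — are fine and essentially equivalent to the paper's.
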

\begin{proof}

First observe that the proof, from Lemma~\ref{lem:eta_compact_containment}, 
that 
$\IE[\sup_{0\leq t\leq T} \langle 1,\eta_t^N\rangle]$ is 
bounded goes through unchanged, and since 
$\langle 1, \rho_\epsilon*\eta_t^N(x)dx\rangle=\langle 1, \eta_t^N\rangle$,  
compact containment follows.
 
As in the nonlocal case, it suffices to prove that for $T>0$,
and any $f\in C_b^\infty (\IR^d)$ with bounded second derivatives
and $\int|f(x)|dx < \infty$, the sequence of real-valued processes 
$\big\{\big(\int f(x)\rho_\epsilon*\eta_t^N(x)dx\big)_{t\geq 0}\big\}_{N\geq 1}$ is tight.  
Let us temporarily write $X_f^N(t)$ for $\int f(x)\rho_\epsilon*\eta_t^N(x)dx$
and set
\[
w'\big(X_f^N,\delta,T\big)= \inf_{\{t_i\}}\max_i\sup_{s,t\in [t_{i-1},t_i)}
\big| X_f^N(t)-X_f^N(s)\big|,
\]
where $\{t_i\}$ ranges over all partitions of the form 
$0=t_0<t_1<\cdots <t_{n-1}<T\leq t_n$ with 
$\min_{1\leq i\leq n}(t_i-t_{i-1})>\delta$ and $n\geq 1$.
Using Corollary~3.7.4 of~\cite{ethier/kurtz:1986}, 
to prove tightness
of the sequence of real-valued processes $X_f^N$
it suffices to check compact containment of 
    the sequence $\{\int f(x) \rho_\epsilon*\eta_t^N(x)dx\}_{N\geq 1}$ at any rational time $t$ and
that for every $\nu>0$ and $T>0$, there exists $\delta>0$ such that
\[
\limsup_{N\to\infty}\IP\big[w'\big(X_f^N,\delta,T\big)>\nu\big]<\nu.
\]
Evidently this will follow 
if we can show that this condition is satisfied when we replace the 
minimum over all partitions with mesh at least $\delta$ in the 
definition of $w'$, by the partition into
intervals of length exactly $\delta$.

We have
\begin{multline}
\label{tightness estimate}
\left|
\langle \rho_{\epsilon} * f, \eta_t^N \rangle 
- \langle \rho_{\epsilon} * f, \eta_s^N \rangle            
\right|
\leq
\left|
\int_s^t\Big\langle\theta\int\big(\rho_\epsilon*f(y)-\rho_\epsilon*f(x)\big)
q_{\theta}(x,dy),\eta_u^N(dx)\Big\rangle du
\right|
\\
+\int_s^t\Big\langle |F\big(\rho_\epsilon*\eta_u^N(x)\big)|\rho_\epsilon*|f|(x), 
\eta_u^N(dx)\Big\rangle du +2\sup_{0\leq u\leq T}|\widehat{M}^N(f)_u|,
\end{multline}
where $\widehat{M}^N(f)$ is the martingale of~(\ref{eqn:eta_f_mgale_decomp})
with the test function $f$ replaced by $\rho_\epsilon*f$. 
We control each of the three terms on the right hand side separately.

By the Intermediate Value Theorem, using $T_t$ to denote the heat semigroup,
there exists $s\in (0,1/\theta)$ such that
\begin{multline*}
\left|
\theta\int\big(\rho_\epsilon*f(y)-\rho_\epsilon*f(x)\big)
q_{\theta}(x,dy) \right|
=
\left|\theta\Big(T_{\epsilon^2+1/\theta}f(x)-T_{\epsilon^2}f(x)\Big)\right|
\\
=
\left|\partial_sT_{\epsilon^2+s}f(x)\right|=\left|T_{\epsilon^2+s}\Delta f(x)\right|
\leq\|\Delta f\|_\infty.
\end{multline*}
The first term in~\eqref{tightness estimate} is therefore bounded by
$$
    \| \Delta f \|_\infty |t-s| \sup_{0 \leq u \leq T } \langle 1, \eta_u^N \rangle .
$$

We follow the approach of Lemma~\ref{lem:eta_compact_containment}.
Consulting~\eqref{eqn:prelimit_martingale_variation},
the angle bracket process of $\widehat{M}^N(f)$ satisfies
$ \IE[\langle\widehat{M}^N_f\rangle_T] \le C (\theta/N) \int_0^T \IE[\langle 1, \eta_s \rangle] ds \le C' \theta/N$
for some constants $C$ and $C'$.
Now, using the Burkholder-Davis-Gundy inequality
and the same fact as before from \citet{barlow/jacka/yor:1986}, \revpoint{1}{32}
$\IE[\sup_{0 \le u \le T} |\widehat{M}^N(f)_u|^2] \le C'' \IE[\langle\widehat{M}^N(f)\rangle_T]$,
and so using Markov's inequality,
\begin{equation}
\label{martingale term to zero}
    \limsup_{N\to\infty}\IP\Big[
        2\sup_{0\leq u\leq T}|\widehat{M}^N(f)_u|
        >\frac{\nu}{3}
    \Big]
    \leq
    \limsup_{N\to\infty} \frac{36}{\nu^2} C'' \IE\big[\langle\widehat{M}^N(f)\rangle_T\big]
    \leq
    \limsup_{N\to\infty}\frac{36}{\nu^2}\frac{C' C''\theta}{N}
    = 0 .
\end{equation}

Now consider
\begin{multline}
	\label{square of F-integral}
\IE\Big[
	\Big(\int_s^t\big\langle\rho_\epsilon*|f|(x) \big|F\big(\rho_\epsilon*\eta_u^N(x)\big)\big|,
\eta_u^N(dx)\big\rangle du\Big)^2\Big]
\\
=
2\IE\Big[
	\int_s^t\big\langle\rho_\epsilon*|f|(x) \big|F\big(\rho_\epsilon*\eta_u^N(x)\big)\big|,
\eta_u^N(dx)\big\rangle 
	\int_u^t\big\langle \rho_\epsilon*|f|(x)\big|F\big(\rho_\epsilon*\eta_r^N(x)\big)\big|,
\eta_r^N(dx)\big\rangle dr
du\Big].
\end{multline}
Since $F$ is polynomial, we use the approach
of Corollary~\ref{alternative form moment bounds}, the tower 
property, and 
Lemma~\ref{bounds on moments}, to bound this
in terms of sums of terms of the form
\[
\IE\Big[\int_s^t(t-u)\int\rho_\epsilon*|f|(x)\rho_\epsilon*\eta_u^N(x)^jdx
\int\rho_\epsilon*|f|(y)\rho_\epsilon*\eta_u^N(y)^kdydu\Big].
\]
Now observe that, again using Lemma~\ref{bounds on moments}, 
since for nonnegative $a$ and $b$, $a^j b^k \le a^{j+k} + b^{j+k}$,
\begin{multline*}
	\IE\left[	\int\int\rho_\epsilon*|f|(x)\rho_\epsilon*\eta_u^N(x)^j
	\rho_\epsilon*|f|(y)\rho_\epsilon*\eta_u^N(y)^k dx dy\right]
	\\
	\leq
	\IE\left[\int\int\|f\|_\infty\rho_\epsilon*\eta_u^N(x)^{j+k}
	\rho_\epsilon*|f|(y)
	dx dy
+
	\int\int
	\rho_\epsilon*|f|(x)
	\|f\|_\infty
	\rho_\epsilon*\eta_u^N(y)^{j+k}
	dx dy \right]
	\\ \leq C\int |f|(x)dx.
\end{multline*}
Thus the quantity~(\ref{square of F-integral}) 
is bounded by $C(t-s)^2$ for a new constant $C$ which we can 
take to be independent of $s$, $t$ and $\epsilon$. Markov's inequality 
then gives
\[
\IP\Big[\|f\|_\infty\int_s^t
\big\langle \big|F\big(\rho_\epsilon*\eta_u^N(x)\big)\big|,
\eta_u^N(dx)\big\rangle du \geq \frac{\nu}{3}\Big]
\leq C\frac{(t-s)^2}{\nu^2}.
\]
A union bound gives that
\begin{equation}
\label{union bound}
\IP\Big[\max_i\|f\|_\infty\int_{t_{i-1}}^{t_i}
\big\langle \big|F\big(\rho_\epsilon*\eta_u^N(x)\big)\big|,
\eta_u^N(dx)\big\rangle du \geq \frac{\nu}{3}\Big]
\leq C\frac{T\delta}{\nu^2}.
\end{equation}
Now using Markov's inequality, we can choose $K$ so that
\[
\IP\Big[\|\Delta f\|_\infty\, \sup_{0\leq t\leq T} \langle 1,\eta_t^N\rangle 
>K\Big]<\frac{\nu}{3}, 
\]
and so choosing $\delta$ so that $K\delta<\nu/3$ in this 
expression and $C\delta<\nu^3/3T$ in~(\ref{union bound}),
combining with~(\ref{martingale term to zero}), the result follows.
\end{proof}

\subsection{Continuity estimates for $\rho_\epsilon*\eta$}
\label{continuity for random walk}

To identify the limit point of any convergent subsequence
of $\{\rho_\epsilon*\eta^N(x)\}$, 
we will require some control on the spatial continuity of the 
functions $\rho_\epsilon*\eta^N(x)$. This will be inherited from the regularity
of the transition density of the 
Gaussian random walk with generator ${\cal L}^\theta$, which in turn follows
from its representation
as that of a Brownian motion evaluated at the random time $T(t)$ defined
in Lemma~\ref{PsiBoundHS}.
Our approach will be to 
approximate $\psi_t^{\epsilon,x}(\cdot)$ by $p_{\epsilon^2+t}(x, \cdot)$,
and to control the error that this introduces we 
need to control $T(t)-t$. 

\begin{lemma}
    \label{lem:poisson_ld}
    In the notation of Lemma~\ref{PsiBoundHS}, for any $A>1$,
    \begin{align*}
        \IP\left\{
            T(t) - t > A(\epsilon^2 + t)
        \right\}
    \le
	    \exp\left(-\frac{\theta A}{4}\big(\epsilon^2+t\big)
	    \right) .
    \end{align*}
\end{lemma}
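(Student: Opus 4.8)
The plan is to recognise this as a textbook Chernoff (exponential Markov) bound for a Poisson random variable, and then to reduce the resulting rate function to an elementary scalar inequality.

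First I would unwind the definitions. Since $T(t) = \Pi(\theta t)/\theta$ with $\Pi$ a rate-one Poisson process, the random variable $\Pi(\theta t)$ is Poisson with mean $\theta t$, and the event $\{T(t) - t > A(\epsilon^2+t)\}$ is precisely $\{\Pi(\theta t) > \theta t + \theta A(\epsilon^2+t)\}$. Writing $\Lambda = \theta t$ and $x = \theta A(\epsilon^2+t)$, it suffices to bound $\IP\{\Pi(\theta t) \ge \Lambda + x\}$. For any $s>0$, Markov's inequality applied to $e^{s\Pi(\theta t)}$ together with the Poisson moment generating function $\IE[e^{s\Pi(\theta t)}] = \exp(\Lambda(e^s-1))$ gives
\[
\IP\{\Pi(\theta t) \ge \Lambda + x\} \le \exp\big(-s(\Lambda+x) + \Lambda(e^s-1)\big),
\]
and choosing the minimiser $s = \log(1 + x/\Lambda)$ yields the classical bound $\exp\big(-\Lambda h(x/\Lambda)\big)$ with $h(u) := (1+u)\log(1+u) - u$.

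It then remains to estimate the exponent. Put $u = x/\Lambda = A(\epsilon^2+t)/t$, so that $u \ge A > 1$. I would verify the elementary inequality $h(u) \ge u/4$ for $u \ge 1$: the function $g(u) := (1+u)\log(1+u)/u$ has derivative $g'(u) = (u - \log(1+u))/u^2 > 0$ for $u>0$, hence $g$ is increasing and $g(u) \ge g(1) = 2\log 2 > 5/4$, so $(1+u)\log(1+u) \ge \tfrac54 u$ and therefore $h(u) \ge \tfrac14 u$. Consequently
\[
\Lambda h(u) \ge \frac{\Lambda u}{4} = \frac{x}{4} = \frac{\theta A}{4}(\epsilon^2 + t),
\]
which is exactly the claimed bound (with $\IP\{T(t)-t > A(\epsilon^2+t)\} \le \IP\{\Pi(\theta t) \ge \Lambda+x\}$).

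There is no real obstacle here: the argument is a routine large-deviations estimate, and the only mild care needed is cosmetic, namely arranging the constants so that the clean factor $\theta A(\epsilon^2+t)/4$ emerges, which is handled by the monotonicity of $g$ above. Note the bound is only used via the crude consequence that this probability decays at least like $e^{-\epsilon^2\theta/2}$ (uniformly in $t$), as already exploited in the proof of Lemma~\ref{PsiBoundHS}.
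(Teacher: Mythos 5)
Your proof is correct and takes essentially the same route as the paper: both are Chernoff (exponential Markov) bounds for the Poisson variable $\Pi(\theta t)$, the only difference being that the paper fixes the tilting parameter at $\alpha=1/2$ and checks the resulting exponent directly, whereas you use the optimal tilt and then bound the Poisson rate function $h(u)=(1+u)\log(1+u)-u$ below by $u/4$. Both yield the stated constant $\theta A(\epsilon^2+t)/4$, and your elementary verification of $h(u)\ge u/4$ for $u\ge 1$ via the monotonicity of $(1+u)\log(1+u)/u$ is sound.
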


\begin{proof}
    This is just a Chernoff bound. With $\Pi$ a rate one Poisson process as
	in Lemma~\ref{PsiBoundHS}, for any $A>1$,
	\begin{align*}
        \IP\left\{
            T(t) - t > A(\epsilon^2 + t)
        \right\}
&=
        \IP\Big\{
            \Pi(\theta t)
            >
		\theta\Big( t + A(\epsilon^2 + t)\Big)
        \Big\} \\
&\leq\frac{\IE\left[\exp\left(\alpha\Pi(\theta t)\right)\right]}
		{\exp\left(\alpha\theta\big(t+A(\epsilon^2+t)\big)\right)}
		\\
		&=\exp\left(\theta t\big(e^\alpha-1\big)-\alpha\theta\big(t+A(\epsilon^2+t)\big)\right)
		\\
		&\leq
		\exp\left(\theta t\big(e^\alpha-\alpha-1-\frac{A\alpha}{2}\big)
		-\frac{A\alpha}{2}\theta(\epsilon^2+t)\right).
    \end{align*}
	Now set $\alpha=1/2$. Since $A>1$, $e^\alpha-\alpha-1-A\alpha/2<0$ and the 
	result follows.
\end{proof}

As advertised, we wish to control the difference between
$\psi_t^{\epsilon,x}(y)$
and $p_{\epsilon^2 + t}(x, y)$.

\begin{lemma}
    \label{Lemma:BoundPsiHS2}
    In the notation of Lemma~\ref{PsiBoundHS},
    there exists a $C < \infty$ such that
	\begin{equation}
\label{heat kernel estimate}
        \left|
            \psi_t^{\epsilon, x}(y)
            -
            p_{\epsilon^2 + t}(x, y)
        \right|
\le
        \frac{C}{(\epsilon^2 \theta)^{1/2}}
        p_{6(\epsilon^2+t)}(x, y)
        +
	    \frac{C}{(\epsilon^2 + t)^{d/2}}
	    \exp(- \epsilon^2 \theta / 2)
	    .
	\end{equation}
\end{lemma}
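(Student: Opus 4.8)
The plan is to use the probabilistic representation $\psi_t^{\epsilon,x}(y) = \IE[p_{\epsilon^2 + T(t)}(x,y)]$ from Lemma~\ref{PsiBoundHS} and compare the random time $T(t)$ to its mean $t$, so that $\epsilon^2 + T(t)$ is compared to $\epsilon^2 + t$. Write $\tau(t) = T(t) - t$, so that $\IE[\tau(t)] = 0$ and
\[
    \psi_t^{\epsilon,x}(y) - p_{\epsilon^2+t}(x,y)
    = \IE\left[ p_{\epsilon^2+t+\tau(t)}(x,y) - p_{\epsilon^2+t}(x,y) \right].
\]
The natural idea is a first-order Taylor expansion in the time parameter: since $\partial_s p_s(x,y) = \tfrac12 \Delta_x p_s(x,y)$ and $\IE[\tau(t)] = 0$, the zeroth-order term cancels, and one is left controlling a remainder that is quadratic in $\tau(t)$ involving $\partial_s^2 p_s = \tfrac14 \Delta^2 p_s$. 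Using the standard Gaussian bound $|\partial_s^2 p_s(x,y)| \le C s^{-2} p_{2s}(x,y)$ (valid by scaling, absorbing polynomial factors of $\|x-y\|^2/s$ into a slightly larger Gaussian), together with $\mathrm{Var}(\tau(t)) = t/\theta$, one expects a bound of order $\frac{t/\theta}{(\epsilon^2+t)^2}\, p_{c(\epsilon^2+t)}(x,y) \le \frac{1}{\theta(\epsilon^2+t)}\, p_{c(\epsilon^2+t)}(x,y)$. Since $\theta(\epsilon^2+t) \ge \theta \epsilon^2$, this is $\le \frac{1}{\theta\epsilon^2}\, p_{c(\epsilon^2+t)}(x,y)$, which is even better than the $(\epsilon^2\theta)^{-1/2}$ in the claim; so a second-moment Taylor bound is more than enough on the ``good'' event.

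The subtlety — and the main obstacle — is that the Taylor expansion is only valid where $\tau(t)$ is not so negative that $\epsilon^2 + t + \tau(t)$ gets close to $0$; $p_s(x,y)$ blows up as $s \downarrow 0$, so one cannot Taylor-expand uniformly. The remedy is to split the expectation over the event $G = \{\tau(t) \ge -(\epsilon^2+t)/2\}$ and its complement. On $G$ one has $\epsilon^2 + t + \tau(t) \in [(\epsilon^2+t)/2, \ldots]$, so all the $p_s$ and their derivatives appearing in the Taylor remainder are controlled by Gaussians at time comparable to $\epsilon^2+t$ (hence the constant $6$ in $p_{6(\epsilon^2+t)}$ — one needs a little room for the Taylor intermediate point and for absorbing the $\|x-y\|^2/s$ polynomial factors). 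Carrying out the Taylor expansion with Lagrange remainder on $G$, bounding the remainder by $C\,\tau(t)^2 (\epsilon^2+t)^{-2} p_{6(\epsilon^2+t)}(x,y)$ and taking expectations, gives a contribution bounded by $\frac{C}{\theta(\epsilon^2+t)}\,p_{6(\epsilon^2+t)}(x,y) \le \frac{C}{\theta\epsilon^2}\,p_{6(\epsilon^2+t)}(x,y)$, which is $\le \frac{C}{(\theta\epsilon^2)^{1/2}}\,p_{6(\epsilon^2+t)}(x,y)$ since $\theta\epsilon^2$ is bounded below. (Alternatively, one can be cruder and expand only to first order, using $|\partial_s p_s| \le C s^{-1} p_{2s}$ and $\IE|\tau(t)| \le \sqrt{t/\theta}$, which directly yields the stated power $(\theta\epsilon^2)^{-1/2}$.)

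On the complement $G^c = \{\tau(t) < -(\epsilon^2+t)/2\}$, one cannot use any pointwise bound on $p_{\epsilon^2+T(t)}$ that is uniform, but one can use the crude bound $p_s(x,y) \le C s^{-d/2} \le C \epsilon^{-d}$ (since $\epsilon^2 + T(t) \ge \epsilon^2$ always), so the contribution of $G^c$ to $\psi_t^{\epsilon,x}(y)$ is at most $C\epsilon^{-d}\,\IP(G^c)$, and the contribution of $G^c$ to $p_{\epsilon^2+t}(x,y)$ is at most $C(\epsilon^2+t)^{-d/2}\IP(G^c) \le C\epsilon^{-d}\IP(G^c)$. The probability $\IP(G^c) = \IP\{\tau(t) < -(\epsilon^2+t)/2\}$ was already estimated in the proof of Lemma~\ref{PsiBoundHS} (see~\eqref{bound for negative tau} and~\eqref{small tau bound}): it is bounded by $\exp(-\chi\theta t - \theta\epsilon^2/2)$, and the estimate there shows that $\epsilon^{-d}\IP(G^c) \le C(\epsilon^2+t)^{-d/2}\exp(-\epsilon^2\theta/2)$, which is exactly the second term in~\eqref{heat kernel estimate}. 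Combining the two pieces gives~\eqref{heat kernel estimate}; the only real work is the bookkeeping in the Taylor remainder, i.e.\ checking that all intermediate-time Gaussians can be dominated by $p_{6(\epsilon^2+t)}(x,y)$ after absorbing polynomial prefactors, and invoking the already-proven tail bound from Lemma~\ref{PsiBoundHS} for the $G^c$ term. $\Box$
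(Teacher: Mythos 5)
Your decomposition into a ``bad lower tail'' event $G^c=\{\tau(t)<-(\epsilon^2+t)/2\}$ and its complement matches the paper's treatment of the lower tail, and your handling of $G^c$ (crude sup bound $p_s(x,y)\le C\epsilon^{-d}$ times the tail probability already estimated in the proof of Lemma~\ref{PsiBoundHS}) is exactly what the paper does. However, there is a genuine gap on the event $G$: you assert that on $G$ ``all the $p_s$ and their derivatives appearing in the Taylor remainder are controlled by Gaussians at time comparable to $\epsilon^2+t$,'' but $G$ only bounds $\tau(t)$ from \emph{below}. On the part of $G$ where $\tau(t)$ is large and positive, the intermediate point $s$ in the mean value/Taylor expansion lies anywhere in $[\epsilon^2+t,\,\epsilon^2+t+\tau(t)]$, and for $2s\gg 6(\epsilon^2+t)$ the bound $p_{2s}(x,y)\le C\,p_{6(\epsilon^2+t)}(x,y)$ fails: the ratio $p_{2s}(x,y)/p_{6(\epsilon^2+t)}(x,y)$ contains the factor $\exp\bigl(\|x-y\|^2(\tfrac{1}{12(\epsilon^2+t)}-\tfrac{1}{4s})\bigr)$, which blows up in $\|x-y\|$ once $4s>12(\epsilon^2+t)$. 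So the claimed domination of the remainder by $p_{6(\epsilon^2+t)}(x,y)$ does not hold uniformly over $G$.

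The paper closes this by splitting into \emph{three} events rather than two, adding $A_2=\{\tau(t)>2(\epsilon^2+t)\}$. On $A_2$ it abandons the Taylor expansion entirely, uses the crude bound $|p_{\epsilon^2+T(t)}-p_{\epsilon^2+t}|\le C(\epsilon^2+t)^{-d/2}$, and invokes the Chernoff estimate of Lemma~\ref{lem:poisson_ld} with $A=2$, which gives $\IP\{\tau(t)>2(\epsilon^2+t)\}\le\exp(-\theta(\epsilon^2+t)/2)\le\exp(-\theta\epsilon^2/2)$; this contribution is then absorbed into the second term of~\eqref{heat kernel estimate}. On the remaining middle event the intermediate point genuinely lies in $[(\epsilon^2+t)/2,\,3(\epsilon^2+t)]$ and your argument goes through (the paper uses the first-order version with $\IE|\tau(t)|\le\sqrt{t/\theta}$). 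A secondary caveat: your preferred second-order expansion relies on $\IE[\tau(t)]=0$ to kill the first-order term, but once you restrict to the event $G$ you only have $\IE[\tau(t)\ind_G]=-\IE[\tau(t)\ind_{G^c}]\ne 0$, so the cancellation is not exact and the residual first-order term would need its own estimate; your fallback to the first-order mean value argument avoids this, and is what the paper does.
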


\begin{proof}
    Still using the notation of Lemma~\ref{PsiBoundHS},
 we partition into three events according to the value of $\tau(t)$.
    Let $A_1 = \{ \tau(t) < - (\epsilon^2 + t)/2 \}$,
    $A_2 = \{ \tau(t) > 2(\epsilon^2 + t) \}$,
    and $A_3$ the remaining event, $\{ - (\epsilon^2 + t)/2 \le \tau(t) \le 2(\epsilon^2 + t) \}$.
    Then,
    \begin{align*}
        \left|
            \psi_t^{\epsilon, x}(y)
            -
            p_{\epsilon^2 + t}(x, y)
        \right|
        &=
        \left|
        \IE\left[
            p_{\epsilon^2 + t + \tau(t)}(x, y)
            -
            p_{\epsilon^2 + t}(x, y)
        \right]
        \right|
        \\&\le
        \IE\left[
            (1_{A_1} + 1_{A_2} + 1_{A_3})
            \left|
            p_{\epsilon^2 + t + \tau(t)}(x, y)
            -
            p_{\epsilon^2 + t}(x, y)
        \right|
        \right] .
    \end{align*}
    For the first term, note that if $a < b$ then
    \begin{align*}
        |p_a(x, y) - p_b(x, y)| 
        &=
        \frac{1}{(2\pi)^{d/2}}
        \left|
            \frac{1}{a^{d/2}}
            e^{-\|x - y\|^2 / 2a}
            -
            \frac{1}{b^{d/2}}
            e^{-\|x - y\|^2 / 2b}
        \right|
        \\ &=
        \frac{1}{(2\pi a^2)^{d/2}}
            e^{-\|x - y\|^2 / 2b}
        \left|
            e^{-\|x - y\|^2 \left(\frac{1}{2a} - \frac{1}{2b}\right)}
            -
            \left(\frac{a}{b}\right)^{d/2}
        \right|
        \\ &\le
        C \left(\frac{b}{a}\right)^{d/2}
        p_b(x, y) ,
    \end{align*}
    where the inequality follows because both terms under the absolute value 
are less than 1.
Since, on the event $A_1$, $\tau(t)<0$, we can  
apply this with $a = \epsilon^2 + t + \tau(t)$ and $b = \epsilon^2 + t$,
and, using the bound~(\ref{bound for negative tau}),
    \begin{align*}
        \IE\left[
            1_{A_1} |p_a(x, y) - p_b(x, y)| 
        \right]
        & \le
            C \left(\frac{\epsilon^2 + t}{\epsilon^2}\right)^{d/2}
            p_{\epsilon^2 + t}(x, y) 
            \IP\left\{ \tau(t) < - \frac{\epsilon^2 + t}{2} \right\}
        \\ & \le
            C \frac{1}{\epsilon^d}
            \IP\left\{ \tau(t) < - \frac{\epsilon^2 + t}{2} \right\}
        \\ & \le
	    \frac{C}{(\epsilon^2+t)^{d/2}} \exp\left(-\frac{\theta \epsilon^2}{2} \right) .
    \end{align*}

    For the third term, we will first collect some facts.
    Observe that on the event $A_3$,
    $\epsilon^2 + t + \tau(t)$ is between
    $(\epsilon^2 + t)/ 2$ and $3(\epsilon^2 + t)$,
    and for any $s$ in this interval,
    \begin{align}
        p_{2s}(y)
        &\le \nonumber
        \left( \frac{
            6 (\epsilon^2 + t)
        }{
            \epsilon^2 + t
        } \right)^{d/2}
        p_{6(\epsilon^2 + t)}(x, y)
        \\ &= \label{eqn:p_bounded_on_interval}
        6^{d/2}
        p_{6(\epsilon^2 + t)}(x, y) .
    \end{align}
    Moreover, since $u e^{-u} \le e^{-1}$ for all $u \ge 0$,
    \begin{align}
        \frac{\|x - y\|^2}{s} p_s(x, y)
        &= \nonumber
        \frac{4}{(2 \pi s)^{d/2}}
        e^{- \frac{ \|x-y\|^2 }{ 4s }}
        \frac{\|x-y\|^2}{4s}
        e^{- \frac{ \|x-y\|^2 }{ 4s }}
        \\ &\le \label{eqn:p_deriv_term2}
        C p_{2s}(x, y) .
    \end{align}
    Now, by the Intermediate Value Theorem,
    \begin{align}
\label{deduction from IVT}
        \left|
            p_{\epsilon^2 + t + \tau(t)}(x, y)
            -
            p_{\epsilon^2 + t}(x, y)
        \right|
        &=
        \left| \tau(t) \right|
        \left| \frac{\partial p_s(x, y)}{\partial s} \right|
    \end{align}
    for some $s$ between $\epsilon^2 + t + \tau(t)$ and $\epsilon^2 + t$.
   Since 
    \begin{align*}
        \partial_ s p_s(x, y)
        &=
        \partial_s
        \left(
            \frac{1}{(2 \pi s)^{d/2}}
            \exp\left( - \frac{\|x - y\|^2}{2 s} \right)
        \right)
        \\ &=
        - \frac{d}{2s} p_s(x, y) + \frac{\|x - y\|^2}{2 s^2} p_s(x, y),
    \end{align*}
    applying the inequality~\eqref{eqn:p_deriv_term2},
    using the fact that $p_s(x, y) \le 2^{d/2} p_{2s}(x,y)$,
    and then \eqref{eqn:p_bounded_on_interval},
    we have that for any $s \in ((\epsilon^2 + t)/ 2, 3(\epsilon^2 + t))$,
    \begin{align*}
        \left| \frac{\partial}{\partial s} p_s(x, y) \right|
        \le
        \frac{C}{s} p_{2s}(x, y) 
        \le
        \frac{C}{\epsilon^2 + t} p_{6(\epsilon^2 + t)}(x, y) .
    \end{align*}
    Therefore, recalling that $\IE[\tau(t)^2] = t / \theta$, substituting 
into~(\ref{deduction from IVT}),
    \begin{align*}
        \IE\left[
            1_{A_3}
            \left|
                p_{\epsilon^2 + t + \tau(t)}(x, y)
                -
                p_{\epsilon^2 + t}(x, y)
            \right|
        \right]
        &\le
            \frac{C}{\epsilon^2 + t}
                p_{6(\epsilon^2 + t)}(x, y)
            \IE\left[|\tau(t)|\right]
        \\ &\le
            \frac{C}{\epsilon^2 + t}
                p_{6(\epsilon^2 + t)}(x, y)
            \IE\left[\tau(t)^2\right]^{1/2}
        \\ &=
            \left( \frac{Ct}{\theta(\epsilon^2 + t)^2} \right)^{1/2}
                p_{6(\epsilon^2 + t)}(x, y)
        \\ &\le
            \frac{C}{\sqrt{\theta \epsilon^2}}
                p_{6(\epsilon^2 + t)}(x, y) ,
    \end{align*}
    where the last inequality follows from $2 \epsilon^2 t \le (\epsilon^2 + t)^2$.

    Finally, on the event $A_2 = \{\tau(t) > 2(\epsilon^2 + t)\}$,
    we simply use
    \begin{align*}
        \left|
            p_{\epsilon^2 + t + \tau(t)}(x, y)
            -
            p_{\epsilon^2 + t}(x, y)
        \right|
        \le
        \frac{C}{(\epsilon^2 + t)^{d/2}} ,
    \end{align*}
    so that
    \begin{align*}
        \IE\left[
        1_{A_2}
        \left|
            p_{\epsilon^2 + t + \tau(t)}(x, y)
            -
            p_{\epsilon^2 + t}(x, y)
        \right|
        \right]
        \le
        \frac{C}{(\epsilon^2 + t)^{d/2}}
        \IP\left\{
            \tau(t) > 2 (\epsilon^2 + t)
        \right\} ,
    \end{align*}
    and apply Lemma~\ref{lem:poisson_ld}
    with $A=2$.
\end{proof}

The last result will be useful when combined with the next bound for the heat kernel.

\begin{lemma}
    \label{Lemma:ContinuityHS}
Let $s>0$, and $x, y, z\in \IR^d$. The following estimate holds:
\[
    |p_s(x,z) - p_s(y,z)|
    \leq
    \frac{C\|x-y\|}{\sqrt{s}} \left(p_{2s}(x,z) + p_{2s}(y,z)\right) ,
\]
where the constant $C$ does not depend on $x,y,z$ or $s$.
\end{lemma}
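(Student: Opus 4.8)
The plan is to reduce the estimate to a one-dimensional computation on the Gaussian density profile. Fix $z$ and $s>0$ and write $g(x) = p_s(x,z) = (2\pi s)^{-d/2}\exp(-\|x-z\|^2/2s)$. By the fundamental theorem of calculus along the segment from $x$ to $y$,
\[
    |p_s(x,z) - p_s(y,z)|
    \le
    \|x-y\| \sup_{w \in [x,y]} \|\nabla_w p_s(w,z)\| .
\]
Since $\nabla_w p_s(w,z) = -\frac{w-z}{s} p_s(w,z)$, we have $\|\nabla_w p_s(w,z)\| = \frac{\|w-z\|}{s} p_s(w,z)$, so the task is to bound $\frac{\|w-z\|}{s} p_s(w,z)$ for $w$ on the segment by $\frac{C}{\sqrt s}\big(p_{2s}(x,z) + p_{2s}(y,z)\big)$.

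First I would handle the scalar factor: using $u e^{-u^2/2} \le C$ for all $u \ge 0$ with $u = \|w-z\|/\sqrt{s}$, one gets
\[
    \frac{\|w-z\|}{\sqrt s} \exp\!\left(-\frac{\|w-z\|^2}{2s}\right) \le C,
\]
and hence, after extracting one Gaussian factor and doubling the variance in the exponent,
\[
    \frac{\|w-z\|}{s} p_s(w,z)
    =
    \frac{1}{\sqrt s}\cdot \frac{\|w-z\|}{\sqrt s}\exp\!\left(-\frac{\|w-z\|^2}{4s}\right)\cdot \frac{1}{(2\pi s)^{d/2}}\exp\!\left(-\frac{\|w-z\|^2}{4s}\right)
    \le
    \frac{C}{\sqrt s}\, p_{2s}(w,z).
\]
So it remains to show $p_{2s}(w,z) \le C\big(p_{2s}(x,z) + p_{2s}(y,z)\big)$ for every $w$ on the segment $[x,y]$; the prefactors $(2\pi\cdot 2s)^{-d/2}$ cancel, so this is the purely geometric claim that $\exp(-\|w-z\|^2/4s) \le C\big(\exp(-\|x-z\|^2/4s) + \exp(-\|y-z\|^2/4s)\big)$.

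The one genuine point — and the step I expect to require the most care — is this last geometric inequality, because a priori $w$ could be closer to $z$ than both endpoints (if $z$ lies ``behind'' the segment), making $\|w-z\|$ smaller and the inequality false without a constant. The fix is to note that for $w = (1-\lambda)x + \lambda y$ with $\lambda \in [0,1]$, convexity of $t \mapsto \|t - z\|^2$ gives $\|w-z\|^2 \le (1-\lambda)\|x-z\|^2 + \lambda\|y-z\|^2 \ge \min(\|x-z\|^2, \|y-z\|^2)$ — wait, convexity gives the upper bound $\|w-z\|^2 \le \max(\|x-z\|^2,\|y-z\|^2)$, which is the wrong direction. Instead I would argue directly: at least one of $\|x-w\| , \|y-w\|$ is at most $\tfrac12\|x-y\|$, say $\|x-w\| \le \tfrac12\|x-y\|$; but this still does not immediately control $\|w-z\|$ from below by $\|x-z\|$. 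The clean route is to observe that $\|w-z\| \ge \|x-z\| - \|x-w\|$ is not enough either, so instead one proves the inequality $\exp(-\|w-z\|^2/4s) \le \exp(-\|x-z\|^2/4s) + \exp(-\|y-z\|^2/4s)$ by splitting on whether $\|w-z\| \ge \tfrac12(\|x-z\| + \|y-z\|)$ — if so one is done trivially (with constant $1$), and if not then $\|w - z\| < \tfrac12(\|x-z\|+\|y-z\|)$, in which case, since $w$ lies between $x$ and $y$, the triangle inequality forces $\|x-y\| \le \|x-z\| + \|z-y\|$ to be comparable to $\|w-z\|$... Actually the cleanest and safest argument, which I would present, is to use $\|x-z\|^2 \le 2\|x-w\|^2 + 2\|w-z\|^2 \le \tfrac12\|x-y\|^2 + 2\|w-z\|^2$ (and symmetrically for $y$), so that $p_{2s}(w,z) \le C e^{\|x-y\|^2/16s}\min\big(p_{4s}(x,z), p_{4s}(y,z)\big)$; this introduces the harmless factor $e^{\|x-y\|^2/16s}$, which can be absorbed because the factor $\|x-y\|/\sqrt s$ already present in the bound means we only need the estimate to be non-vacuous when $\|x-y\|/\sqrt s$ is bounded (when $\|x-y\| \ge \sqrt s$ the claimed inequality holds trivially since the left side is at most $C s^{-d/2}$ and the right side dominates a constant multiple of $s^{-d/2}$ times $\|x-y\|/\sqrt s \ge 1$). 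I would phrase the final write-up by treating the cases $\|x-y\| \le \sqrt s$ and $\|x-y\| > \sqrt s$ separately at the outset, which makes all constants explicit and avoids any delicacy about the location of $z$.
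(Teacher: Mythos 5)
Your overall strategy (mean value theorem applied directly to $p_s(\cdot,z)$, then the elementary bound $ue^{-cu^2}\le C$) is reasonable, and your trivial disposal of the case $\|x-y\|\ge\sqrt{s}$ via $p_s\le 2^{d/2}p_{2s}$ is correct. However, in the main case $\|x-y\|\le\sqrt{s}$ your argument does not prove the lemma as stated: after the step $\frac{\|w-z\|}{s}p_s(w,z)\le \frac{C}{\sqrt{s}}p_{2s}(w,z)$ you must compare $p_{2s}(w,z)$ at the intermediate point $w$ with the endpoint values, and your chosen route via $\|x-z\|^2\le 2\|x-w\|^2+2\|w-z\|^2$ gives $e^{-\|w-z\|^2/4s}\le e^{\|x-y\|^2/2s}\,e^{-\|x-z\|^2/8s}$, i.e.\ a bound by $p_{4s}(x,z)$, not $p_{2s}(x,z)$. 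The prefactor $e^{\|x-y\|^2/2s}$ is indeed harmless in this case, but the variance doubling is not: $p_{4s}(x,z)/p_{2s}(x,z)=2^{-d/2}e^{\|x-z\|^2/8s}$ is unbounded, so the inequality with $p_{4s}$ is strictly weaker and does not imply the stated one. The gap is repairable within your framework --- split the Gaussian at $w$ asymmetrically, e.g.\ use $\|w-z\|e^{-\|w-z\|^2/8s}\le C\sqrt{s}$ so that a factor $e^{-3\|w-z\|^2/8s}$ survives; then $\frac{\|x-z\|^2}{4s}-\frac{3\|w-z\|^2}{8s}\le -\frac{u^2}{8}+\frac{u}{2}+\frac14$ with $u=\|w-z\|/\sqrt{s}$, which is bounded above, and one lands on $p_{2s}(x,z)$ as required --- but as written the proof establishes only the $p_{4s}$ version.

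For comparison, the paper avoids this difficulty entirely by factoring the difference of Gaussians as a difference of squares:
\[
e^{-\frac{\|y-z\|^2}{2s}}-e^{-\frac{\|x-z\|^2}{2s}}
=\Big(e^{-\frac{\|y-z\|^2}{4s}}-e^{-\frac{\|x-z\|^2}{4s}}\Big)\Big(e^{-\frac{\|y-z\|^2}{4s}}+e^{-\frac{\|x-z\|^2}{4s}}\Big).
\]
The sum factor is exactly (up to normalisation) $p_{2s}(x,z)+p_{2s}(y,z)$ evaluated at the \emph{endpoints}, so no comparison with an intermediate point is ever needed; the mean value theorem is applied only to the difference factor, whose entire Gaussian at the intermediate point is consumed by the $ue^{-u^2}$ bound, leaving just $C\|x-y\|/\sqrt{s}$. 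This is the idea your proposal is missing, and it removes the need for any case analysis.
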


\begin{proof}

    Expanding the difference of two squares,
    \begin{align*}
        e^{- \frac{\|y-z\|^2}{2 s}}
        -
        e^{- \frac{\|x-z\|^2}{2 s}}
        &=
        \left(
            e^{-\frac{\|y-z\|^2}{4 s}}
            -
            e^{-\frac{\|x-z\|^2}{4 s}}
        \right)
        \left(
            e^{-\frac{\|y-z\|^2}{4 s}}
            +
            e^{-\frac{\|x-z\|^2}{4 s}}
        \right) .
    \end{align*}
Now, thinking of the first term in brackets as a function of a single variable $x$ on
the line segment $[y,z]$ connecting $y$ to $z$, we can apply the
Intermediate Value Theorem and take the modulus to bound this expression by 
    \[
        \|y-x\|
        \left( \frac{2\|w-z\|}{4s} \exp\left(-\frac{\|w-z\|^2}{4s}\right)\right)
        (4 \pi s)^{d/2}
        \left( p_{2s}(y,z)+p_{2s}(x,z) \right)
    \]
    for some $w\in [y,z]$.
    Using the fact that $x e^{-x^2}$ is uniformly bounded,
    we can bound the first bracket in the last equation by $C/\sqrt{s}$, and the result follows.
\end{proof}

We now have the ingredients that we need to write down a continuity 
estimate for $\rho_\epsilon*\eta$.
We fix $\delta>0$ and 
suppose that $s>\delta$.
Let us write
\[
\widehat{\epsilon}(\delta,\epsilon,\theta):=
\frac{1}{(\epsilon^2+\delta)^{d/2}}
e^{-\epsilon^2\theta/2}, 
\]
and note that under the assumption that $\epsilon^2\theta\to \infty$, for 
each fixed $\delta>0$,  
$\lim_{\epsilon\to 0, \theta\to\infty}
\widehat{\epsilon}(\delta,\epsilon,\theta)=0$. 
Using the semimartingale decomposition~(\ref{expn rhoepsilon}),
and Lemma~\ref{Lemma:BoundPsiHS2},
we have
\begin{align}
& |\rho_\epsilon*\eta_s(y)-\rho_\epsilon*\eta_s(w)| \nonumber 
=
    |\langle p_{\epsilon^2}(y,z)-p_{\epsilon^2}(w,z),\eta_s(dz) \rangle | 
\nonumber
\\ &\qquad
\leq 
\big\langle
    |p_{\epsilon^2+s}(y,z)-p_{\epsilon^2+s}(w,z)|, 
\eta_0(dz) \big\rangle 
\nonumber
\\ &\qquad \qquad {}
+
\int_0^{s-\delta}
    \big\langle |p_{s-r+\epsilon^2}(y,z)-p_{s-r+\epsilon^2}(w,z)||
    |F(\rho_\epsilon*\eta_r(z)|, 
\eta_r(dz) \big\rangle  dr
\nonumber
\\ &\qquad \qquad {}
+
\Big\langle
    \frac{C}{(\theta\epsilon^2)^{1/2}}
    \Big(p_{6(\epsilon^2+s)}(y,z)+p_{6(\epsilon^2+s)}(w,z)\Big)
	+ C\widehat{\epsilon}(\delta,\epsilon,\theta)
,\eta_0(dz) \Big\rangle
\nonumber
\\ &\qquad \qquad {}
+
\int_0^{s-\delta}
\big\langle \Big\{
\frac{C}{(\epsilon^2\theta)^{1/2}}
|p_{6(s-r+\epsilon^2)}(y,z)+
p_{6(s-r+\epsilon^2)}(w,z)| +\widehat{\epsilon}(\delta,\epsilon,\theta)\Big\}
|F(\rho_\epsilon*\eta_r(z)|, 
\eta_r(dz) \big\rangle  dr
\nonumber
    \\ &\qquad \qquad {}
+ \int_{s-\delta}^s \big\langle 
|\psi_{s-r}^{\epsilon,y}(z) + \psi_{s-r}^{\epsilon, w}(z) |
|F(\rho_\epsilon*\eta_r(z)|, 
\eta_r(dz) \big\rangle  dr
\nonumber
\\ & \qquad \qquad {}
+ |M_s(y)|+|M_s(w)| 
\nonumber
\\ & \qquad 
\leq
\Big\langle \frac{\|y-w\|}{\sqrt{s+\epsilon^2}}
    \big(p_{2(s+\epsilon^2)}(y,z)+p_{2(s+\epsilon^2)}(w,z)\big),\eta_0(dz)\Big\rangle
\nonumber
\\ & \qquad \qquad  {}
+
\int_0^{s-\delta}\Big\langle\frac{\|y-w\|}{\sqrt{s-r+\epsilon^2}}
    \big(p_{2(s-t+\epsilon^2)}(y,z)+p_{2(s-r+\epsilon^2)}(w,z)\big)
    |F(\rho_\epsilon*\eta_r(z)|, 
\eta_r(dz)\Big\rangle dr
\nonumber
\\ & \qquad \qquad  {}
+
\Big\langle \frac{C}{(\theta\epsilon^2)^{1/2}}\Big( 
	p_{6(\epsilon^2+s)}(y,z)+p_{6(\epsilon^2+s)}(w,z)\Big)+
	C \widehat{\epsilon}(\delta,\epsilon,\theta) , \eta_0(dz)
\Big\rangle
\nonumber
\\ & \qquad \qquad  {}
+
\int_0^{s-\delta}
\big\langle \Big\{
\frac{C}{(\epsilon^2\theta)^{1/2}}
(p_{6(s-r+\epsilon^2)}(y,z)+ p_{6(s-r+\epsilon^2)}(w,z))
+\widehat{\epsilon}(\delta,\epsilon,\theta)\Big\}
|F(\rho_\epsilon*\eta_r(z)|, 
\eta_r(dz) \big\rangle  dr
\nonumber
    \\ &\qquad \qquad {}
+ \int_{s-\delta}^s \big\langle 
|\psi_{s-r}^{\epsilon,y}(z) + \psi_{s-r}^{\epsilon, w}(z) |
|F(\rho_\epsilon*\eta_r(z)|, 
\eta_r(dz) \big\rangle  dr
\nonumber
\\ & \qquad \qquad  {}
+ |M_s(y)|+|M_s(w)|. 
\label{difference in rhos}
\end{align} 
Although this expression is lengthy, we have successfully isolated the
terms involving $\|y-w\|$, which will control the regularity as we pass to the
limit. Asymptotically, we 
don't expect the martingale terms to contribute, since their 
quadratic variation scales with $\theta/(N\epsilon^d)$; 
under the assumption that $\epsilon^2\theta\to\infty$,
for any fixed $\delta>0$, 
the terms arising from 
approximating the transition 
density $\psi_{s-r}^{\epsilon, \cdot}(\cdot)$ of the Gaussian 
walk by $p_{s-r+\epsilon^2}(\cdot,\cdot)$ at times 
with $s-r>\delta$ will tend to zero; and  
the moment bounds of Lemma~\ref{bounds on moments} 
will allow us to control the integral over $[s-\delta, s]$.
There is some technical work to be done to rigorously identify the
limit points of $\rho_\epsilon*\eta^N$, but 
it really amounts to 
applying the tower property and our moment bounds from 
Lemma~\ref{bounds on moments} and 
Corollary~\ref{alternative form moment bounds}.

\subsection{Identification of the limit}
\label{limit in onestep case}

We now turn to the identification of the limit points of the sequence 
$\big\{\big(\rho_\epsilon*\eta_t^N(x)dx\big)_{t\geq 0}\big\}_{N\geq 1}$.
We would like to show that any limit point solves~\eqref{target equation in local convergence}
in the limit, i.e.,
\begin{equation}\label{eqn:target PDE weak form}
    \langle f(x), \varphi(t,x)dx \rangle
    =
    \int_0^t
        \Big\langle
        \frac{1}{2}\Delta f(x) + f(x) F(\varphi(s,x))
        ,
        \varphi(s,x) dx
        \Big\rangle
    ds .
\end{equation}
Since 
$\langle f, \rho_\epsilon*\eta_t^N(x)dx\rangle=
\langle \rho_\epsilon*f(x), \eta_t^N(dx)\rangle$,
and the limit is deterministic,
this will follow if we can show that each of the terms in the semimartingale
decomposition~(\ref{eqn:eta_martingale}),
with the test function $f$ replaced by $\rho_\epsilon*f(\cdot)$,
converges to the corresponding term in~\eqref{eqn:target PDE weak form}.

The linear term is straightforward.
Write $T_{\cdot}$ for the heat semigroup, so that
$\rho_\epsilon*f(x) = T_{\epsilon^2}f(x)$.
By a Taylor expansion,
\begin{align*}
\int_0^t \big\langle \mathcal{L}^\theta T_{\epsilon^2} f, 
	\eta_s^N(dx)\big\rangle ds 
&= \int_0^t \big\langle \frac{1}{2} \Delta T_{\epsilon^2} f(x), 
	\eta_s^N(dx) \big\rangle 
	+ \mathcal{O}\Big(\frac{1}{\theta}\Big) 
\\ &= \int_0^t \big\langle \frac{1}{2} \Delta f(x), \rho_\epsilon* \eta_s^N(x) dx 
	\big\rangle ds + \mathcal{O}\Big(\frac{1}{\theta}\Big).
\end{align*}
Thus, from weak convergence we can deduce that under our scaling,
for any (weakly) convergent subsequence $\{\rho_\epsilon*\eta^N(x)dx\}_{N\geq 1}$,
\[ 
\int_0^t \big\langle \mathcal{L}^\theta T_{\epsilon^2} f, 
\eta_s^N(dx)\big\rangle ds 
\rightarrow \int_0^t \big\langle \frac{1}{2} \Delta f(x), 
\varphi (s,x) dx \big\rangle ds. 
\]

The nonlinear term in the semimartingale decomposition is more intricate. 
It takes the form
\[ 
\IE\Big[
\int_0^t\big\langle T_{\epsilon^2} f(y) F\big(\rho_\epsilon *\eta_s(y)\big), 
\eta_s(dy) \big\rangle ds\Big]
\]
and we should like to show that this converges to 
	\[
		\int_0^t\int f(y)F(\varphi(s,y))\varphi(s,y)dyds.
	\]
	We proceed in stages. First we should like to transfer the 
	heat semigroup from $T_{\epsilon^2}f$ onto $\eta_s$. 
	Since $f$ is smooth, this will follow easily if we can show that
\begin{align*}
\IE\Big[
\int_0^t\big\langle T_{\epsilon^2} f(y) F\big(\rho_\epsilon *\eta_s(y)\big), 
\eta_s(dy) \big\rangle ds\Big]
	\sim 
\IE\Big[
\int_0^t\big\langle T_{\epsilon^2} f(y) F\big(\rho_\epsilon * \eta_s(y)\big), 
\rho_\epsilon * \eta_s(y) dy \big\rangle ds\Big].
\end{align*}
This is the content of Proposition~\ref{CuadraticApprox}. 

\begin{proposition} 
\label{CuadraticApprox}
Under the conditions of Theorem~\ref{thm:local_convergence},
\begin{equation}
\label{replacing nonlinear term}
\lim_{\epsilon\to 0}\IE\Big[\Big|\int_0^t\big\langle T_{\epsilon^2} f(y) 
F(\rho_\epsilon *\eta_s(y)), \eta_s(y) \big\rangle 
- \big\langle T_{\epsilon^2} f(y) 
F(\rho_\epsilon * \eta_s(y)), \rho_\epsilon * \eta_s(y) dy 
\big\rangle ds \Big|\Big]
=0.
\end{equation}
\end{proposition}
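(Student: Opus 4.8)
The key point is that the difference between integrating a test function against $\eta_s$ versus against $\rho_\epsilon * \eta_s(y)\,dy$ is controlled because $\rho_\epsilon * \eta_s$ is a smoothing of $\eta_s$ at scale $\epsilon$, and the test function $T_{\epsilon^2}f(y)F(\rho_\epsilon*\eta_s(y))$ varies slowly relative to that scale. The plan is to write
\[
    \big\langle T_{\epsilon^2}f(y) F(\rho_\epsilon*\eta_s(y)), \eta_s(dy)\big\rangle
    - \big\langle T_{\epsilon^2}f(y) F(\rho_\epsilon*\eta_s(y)), \rho_\epsilon*\eta_s(y)\,dy\big\rangle
    = \big\langle g_s(y), \eta_s(dy)\big\rangle - \big\langle g_s(y), \rho_\epsilon*\eta_s(y)\,dy\big\rangle,
\]
where $g_s(y) = T_{\epsilon^2}f(y)F(\rho_\epsilon*\eta_s(y))$, and then note that
$\langle g_s, \rho_\epsilon*\eta_s(y)\,dy\rangle = \langle \rho_\epsilon*g_s, \eta_s\rangle$ since $\rho_\epsilon$ is a symmetric Gaussian. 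So the integrand becomes $\langle g_s(y) - \rho_\epsilon*g_s(y), \eta_s(dy)\rangle$, and it suffices to bound $\IE\big[\int_0^t |\langle g_s(y) - \rho_\epsilon*g_s(y), \eta_s(dy)\rangle|\, ds\big]$.

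The next step is to estimate $|g_s(y) - \rho_\epsilon*g_s(y)|$ pointwise. The factor $T_{\epsilon^2}f$ is smooth with derivatives bounded uniformly in $\epsilon$ (since $f \in C_b^\infty$ and the heat semigroup is a contraction on these norms), so the part of the increment coming from $T_{\epsilon^2}f$ contributes $O(\epsilon^2)\|f\|_{C^2}$ times polynomial factors in $\rho_\epsilon*\eta_s$. The delicate factor is $F(\rho_\epsilon*\eta_s(y))$: here one uses the continuity estimate~\eqref{difference in rhos} for $\rho_\epsilon*\eta_s$, together with the polynomial bound $|F(n)-F(m)| \le |n-m|\sum_j b_j(n^j+m^j)$, to control $|F(\rho_\epsilon*\eta_s(y)) - F(\rho_\epsilon*\eta_s(w))|$ for $\|y-w\|$ small. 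Convolving against $\rho_\epsilon$ then produces a bound of the form $|g_s(y) - \rho_\epsilon*g_s(y)|$ dominated by an error term (of size $\widehat\epsilon(\delta,\epsilon,\theta) + (\theta\epsilon^2)^{-1/2} + \epsilon$, plus the short-time integral over $[s-\delta,s]$) multiplied by a polynomial in heat-kernel averages of $\eta_0$ and $\eta_s$. Taking expectations and integrating against $\eta_s$, the moment bounds of Lemma~\ref{bounds on moments} and the double-integral bounds of Corollary~\ref{alternative form moment bounds} show that all the resulting terms $\IE[\langle (\rho_\epsilon*\eta_s)^k, \eta_s\rangle]$ are bounded uniformly in $\epsilon$. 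Hence the whole expression is bounded by $C(t)\big(\widehat\epsilon(\delta,\epsilon,\theta) + (\theta\epsilon^2)^{-1/2} + \epsilon + h(\delta)\big)$, where $h(\delta) \to 0$ as $\delta \to 0$ uniformly in $\epsilon$ because the $[s-\delta,s]$-integral is controlled via the tower property and Lemma~\ref{bounds on moments}; letting first $\epsilon \to 0$ (using $\theta\epsilon^2 \to \infty$) and then $\delta \to 0$ gives the claim.

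\textbf{Main obstacle.} The hard part will be handling the short-time portion of the martingale/Feynman-Kac representation~\eqref{expn rhoepsilon}, i.e. the interval $r \in [s-\delta, s]$, where $\psi^{\epsilon,y}_{s-r}$ cannot be well-approximated by $p_{\epsilon^2+s-r}$ and its sup-norm blows up like $\epsilon^{-d}$. There one must instead use the $L^1$-type control: $\|\psi^{\epsilon,y}_{s-r}\|_\infty \le C(\epsilon^2+s-r)^{-d/2}$ from Lemma~\ref{PsiBoundHS}, combined with the fact that $\int_{s-\delta}^s (\epsilon^2+s-r)^{-d/2}dr$ is small (after pairing against the measure $\eta_r$ whose total mass is controlled, and using the moment bound~\eqref{flandoli trick for rhoepsilon} to absorb the $|F|$ factor). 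Making this genuinely uniform in $\epsilon$ — so that $h(\delta) \to 0$ independently of how $\epsilon, \theta, N$ tend to their limits along the admissible regime~\eqref{conditions for one step} — is the crux, and is where the assumption $\theta\epsilon^2 \to \infty$ is essential, since it keeps $\widehat\epsilon(\delta,\epsilon,\theta)$ and $(\theta\epsilon^2)^{-1/2}$ negligible while $\delta$ is still fixed.
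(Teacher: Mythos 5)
Your proposal follows essentially the same route as the paper's proof: rewrite the difference as $\langle g_s - \rho_\epsilon * g_s, \eta_s\rangle$ via symmetry of $\rho_\epsilon$, split the increment of $g_s$ into the smooth $T_{\epsilon^2}f$ part (giving an $O(\epsilon)$ contribution) and the $F(\rho_\epsilon*\eta_s)$ part, control the latter via the polynomial bound on increments of $F$ together with the continuity estimate~\eqref{difference in rhos}, absorb everything with the moment bounds of Lemma~\ref{bounds on moments} and Corollary~\ref{alternative form moment bounds}, and send $\epsilon\to 0$ for fixed $\delta$ before letting $\delta\to 0$. Your identification of the short-time window $[s-\delta,s]$ and of the role of $\theta\epsilon^2\to\infty$ matches the paper's treatment, so no further comment is needed.
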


\begin{proof}
In fact we are going to fix $\delta>0$, with $t>\delta$, and show that
the expression on the left hand side of~\eqref{replacing nonlinear term} is
less than a constant times $\delta$, with a constant independent of $\delta$,
$N$, and $\epsilon$. Since $\delta$ is arbitrary, the result will follow.

We first note that,
\begin{align*}
    &
\langle T_{\epsilon^2} f(y) F(\rho_\epsilon * \eta_s(y)), 
\rho_\epsilon * \eta_s(y) dy \rangle 
-\langle T_{\epsilon^2} f(y) F(\rho_\epsilon *\eta_s(dy)), \eta_s(dy) \rangle 
\\ 
& = 
\langle \int T_{\epsilon^2} f(y) 
F(\rho_\epsilon * \eta_s(y))\rho_\epsilon(y-w) dy, \eta_s(dw)\rangle - \langle 
T_{\epsilon^2} f(w) F(\rho_\epsilon*\eta_s(w)), \eta_s(dw) \rangle 
\\ 
&= 
\langle \int \big\{ T_{\epsilon^2} f(y) F(\rho_\epsilon*\eta_s(y))-T_{\epsilon^2}f(w) 
F(\rho_\epsilon*\eta_s(w))\big\} 
\rho_\epsilon(w-y)dy, \eta_s(dw) \rangle.
\end{align*}
Let us denote the integral against $dy$ in the last expression by $I$, that is 
\[ I :=  \int \{ T_{\epsilon^2} f(y) 
F(\rho_\epsilon*\eta_s(y))-T_{\epsilon^2}f(w) 
F(\rho_\epsilon*\eta_s(w))\} \rho_\epsilon(w-y)dy, \]
and note that $|I|$ is bounded by
\begin{align}
& \int 
\Big\{ |F(\rho_\epsilon*\eta_s(y))-F(\rho_\epsilon*\eta_s(w))|T_{\epsilon^2}f(y)+
F(\rho_\epsilon*\eta_s(w))|T_{\epsilon^2}f(y)-T_{\epsilon^2}f(w))|\Big\} 
\rho_{\epsilon}(w-y)dy
\nonumber \\ &\leq \int \| f\|_{\infty} 
\big|F(\rho_\epsilon*\eta_s(y))-F(\rho_\epsilon*\eta_s(w))\big| \rho_\epsilon(w-y)dy + 
C \epsilon\|f' \|_\infty |F(\rho_\epsilon*\eta_s(w))|, \label{FirstBoundCT}
\end{align}
where we have used that 
\[
\int |T_{\epsilon^2}f(y)-T_{\epsilon^2}f(w)| p_{\epsilon^2}(w,y)dy
\leq \|f'\|_\infty\int|y-w|p_{\epsilon^2}(w,y)dy.
\]
Now recall that $F$ is a polynomial of degree $n$, and so 
there exist real numbers $b_k$ such
that $F(a)-F(b)= (a-b)\sum_{k=1}^{n-1} b_k a^k b^{n-1-k}$ and so 
\[
|F(\rho_\epsilon*\eta_s(y))-F(\rho_\epsilon*\eta_s(w))|
\leq 
|\rho_\epsilon*\eta_s(y)-\rho_\epsilon*\eta_s(w)|
\sum_{k=1}^{n-1}|b_k|\left(\rho_\epsilon*\eta_s(y)^{n-1}+\rho_\epsilon*\eta_s(w)^{n-1}\right).
\]

Combining the above, we have reduced the problem to showing that for any $k \ge 0$,
\begin{equation}
\label{target expression}
	\lim_{\epsilon\to 0}
	\IE\Big[	\int_0^t\big\langle \int |\rho_\epsilon*\eta_s(y)-\rho_\epsilon*\eta_s(w)|
\left(\rho_\epsilon*\eta_s(y))^k+\rho_\epsilon*\eta_s(w)^k\right)
	p_{\epsilon^2}(w,y)dy, \eta_s(dw)\big\rangle ds\Big]
=0 .
\end{equation}
We are going to use the estimate~(\ref{difference in rhos}).
First note that by Lemma~\ref{bounds on moments} (with $u=0$),
the contribution to~(\ref{target expression}) from the integral over
the time interval $[0,\delta]$ is bounded by a constant multiple of $\delta$, \revpoint{1}{33}
with a constant that depends only on $\eta_0$.
We focus instead
on the interval $(\delta, t]$.

The first term in~(\ref{difference in rhos}) gives
\begin{multline*}
\int_\delta^t\IE\Big[\Big\langle\int
	\big\langle\frac{\|y-w\|}{\sqrt{s+\epsilon^2}}
\big(p_{2(s+\epsilon^2)}(y,z)+p_{2(s+\epsilon^2)}(w,z)\big),\eta_0(dz)\big\rangle
\\
\big(\rho_\epsilon*\eta_s(y)^k+\rho_\epsilon*\eta_s(w)^k\big)
p_{\epsilon^2}(w,y)dy, \eta_s(dw)\Big\rangle\Big] ds.
\end{multline*}
We ``borrow'' from the exponential term to see that 
$\|y-w\|p_{\epsilon^2}(w,y)\leq C\epsilon p_{2\epsilon^2}(w,y)$
and so bound
this by
\begin{multline}
C\int_\delta^t\frac{\epsilon}{\sqrt{s+\epsilon^2}}
\IE\Big[\Big\langle\int\big\langle
\big(p_{2(s+\epsilon^2)}(y,z)+p_{2(s+\epsilon^2)}(w,z)\big),\eta_0(dz)
\big\rangle
\\
\big(\rho_\epsilon*\eta_s(y)^k+\rho_\epsilon*\eta_s(w)^k\big)
p_{2\epsilon^2}(w,y)dy, \eta_s(dw)\Big\rangle ds\Big] .
\label{contribution from initial condition terms}
\end{multline}
The four terms in the product are taken separately, according to the
combinations of $w$ and $y$ appearing. First,
\[
\int_\delta^t\frac{\epsilon}{\sqrt{s+\epsilon^2}}
\IE\Big[\Big\langle\int\big\langle
p_{2(s+\epsilon^2)}(y,z)   
,\eta_0(dz)
\big\rangle
\rho_\epsilon*\eta_s(y)^k 
p_{2\epsilon^2}(w,y)dy, \eta_s(dw)\Big\rangle  \Big] ds
\]
can be rewritten as
\begin{multline*}
\int_\delta^t\frac{\epsilon}{\sqrt{s+\epsilon^2}}
\IE\Big[\int\int\big\langle
p_{2(s+\epsilon^2)}(y,z)  
,\eta_0(dz)
\big\rangle
\rho_\epsilon*\eta_s(y)^k 
p_{\epsilon^2}(x,y) \rho_\epsilon*\eta_s(x) dy dx ds \Big]
\\
\leq
\int_\delta^t\frac{\epsilon}{\sqrt{s+\epsilon^2}}
\IE\Big[\int\int\big\langle
p_{2(s+\epsilon^2)}(y,z)  
,\eta_0(dz)
\big\rangle
\big(\rho_\epsilon*\eta_s(y)^{k+1} +\rho_\epsilon*\eta_s(x)^{k+1}\big)
p_{\epsilon^2}(x,y) dy dx \Big] ds,
\end{multline*}
and using Lemma~\ref{bounds on moments} and the tower property, and integrating
with respect to $s$, under our assumptions on $\eta_0$,
this is 
bounded by
\begin{multline*}
C\epsilon \int_\delta^t\frac{1}{\sqrt{s+\epsilon^2}}
\IE\Big[\int\int\big\langle
p_{2(s+\epsilon^2)}(y,z)  
,\eta_0(dz)
\big\rangle
\\
\big(\rho_\epsilon*\eta_0(y)+\rho_\epsilon*\eta_0(y)^{k+1} 
+\rho_\epsilon*\eta_0(x)
+\rho_\epsilon*\eta_0(x)^{k+1}\big)
p_{\epsilon^2}(x,y) dy dx \Big] ds 
\\
\le C'\epsilon\int_\delta^t\frac{1}{(s+\epsilon^2)^{d/2}}ds .
\end{multline*}
For fixed $\delta$, this bound tends to zero as $\epsilon\to 0$.
The term involving 
$ 
\big\langle
p_{2(s+\epsilon^2)}(w,z)   
,\eta_0(dz)
\big\rangle
\rho_\epsilon*\eta_s(w)^k
$ 
is handled similarly.

On the other hand
\begin{multline*}
\big\langle \int \langle p_{2(s+\epsilon^2)}(y,z), \eta_0(dz)\rangle
\rho_\epsilon*\eta_s(w)^kp_{2\epsilon^2}(w,y)dy, \eta_s(dw)\big\rangle
\\
\leq \frac{C}{(s+\epsilon^2)^{d/2}}\langle 1,\eta_0\rangle\big\langle
\rho_\epsilon*\eta_s(w)^k,\eta_s(dw)\big\rangle,
\end{multline*}
and since $\langle 1,\eta_0\rangle$ is uniformly bounded we apply 
Corollary~\ref{alternative form moment bounds} to obtain a bound on
the contribution to~(\ref{contribution from initial condition terms})
from this term of the same form as the others.

Now consider the contribution to the left hand side of~(\ref{target expression})
from the second term in~(\ref{difference in rhos}). Since $F$ is a polynomial,
it is bounded by a sum of terms of the form
\begin{multline*}
\int_\delta^t\int_0^{s-\delta}\Big\langle\int\frac{\|y-w\|}{\sqrt{s-r+\epsilon^2}}
\big\langle\big(p_{2(s-r+\epsilon^2)}(y,z)+ p_{2(s-r+\epsilon^2)}(w,z)\big)
\rho_\epsilon*\eta_r(z)^j,\eta_r(dz)\big\rangle
\\
\rho_\epsilon*\eta_s(y)^kp_{\epsilon^2}(y,w)dy,\eta_s(dw)
\Big\rangle dr ds
\\
\leq C\epsilon
\int_\delta^t\int_0^{s-\delta}
	\frac{1}{\sqrt{s-r+\epsilon^2}}
	\Big\langle\int
\big\langle
\big(p_{2(s-r+\epsilon^2)}(y,z)+ p_{2(s-r+\epsilon^2)}(w,z)\big)
\rho_\epsilon*\eta_r(z)^j,\eta_r(dz)\big\rangle 
\\
\rho_\epsilon*\eta_s(y)^kp_{2\epsilon^2}(y,w)dy,\eta_s(dw)
\Big\rangle dr ds,
\end{multline*}
where as usual we have ``borrowed'' from the exponential term in
$p_\epsilon^2(y,w)$ to replace $\|y-w\|$ by a constant times $\epsilon$.

Once again, our approach is to rearrange terms so that we can apply
Lemma~\ref{bounds on moments}
or 
Corollary~\ref{alternative form moment bounds} to obtain a bound on
the contribution to~(\ref{target expression})
from these terms of the form $C\epsilon$ (where $C$ may depend on $\delta$
but not $\epsilon$).

For example, using the Chapman-Kolmogorov equation to rewrite
\[
\int\Big\langle 
\big\langle p_{2(s-r+\epsilon^2)}(y,z)\rho_\epsilon*\eta_r(z)^j, \eta_r(dz)
\big\rangle
\rho_\epsilon*\eta_s(y)^k p_{2\epsilon^2}(y,w), \eta_s(dw)\Big\rangle dy
\]
as 
\[
\int\int
\big\langle p_{2(s-r+\epsilon^2)}(y,z)\rho_\epsilon*\eta_r(z)^j, \eta_r(dz)
\big\rangle
\rho_\epsilon*\eta_s(y)^k p_{\epsilon^2}(y,x)\rho_\epsilon*\eta_s(x) dx dy,
\]
and using Lemma~\ref{bounds on moments} and the tower property, we are led to 
control terms of the form
\[
\IE\Big[\int\big\langle p_{2(s-r+\epsilon^2)}(y,z)\rho_\epsilon*\eta_r(z)^j, 
\eta_r(dz)\big\rangle \rho_\epsilon*\eta_r(y)^{k+1} dy\Big].
\]
This, in turn, is at most
\begin{multline*}
\IE\Big[\big\langle \rho_\epsilon*\eta_r(z)^{j+k+1}, \eta_r(dz)\rangle\Big]
+\IE\Big[\int\int p_{2(s-r)}(y,x)\rho_\epsilon*\eta_r(x)
	\rho_\epsilon*\eta_r(y)^{j+k+1} dy dx\Big]
\\
\leq
\IE\Big[\big\langle \rho_\epsilon*\eta_r(z)^{j+k+1}, \eta_r(dz)\rangle\Big]
+2\int\IE\Big[\rho_\epsilon*\eta_r(x)^{j+k+2}\Big]dx,
\end{multline*} 
which is bounded by Lemma~\ref{bounds on moments}.

We now turn to the contribution arising from the martingale terms 
in~(\ref{difference in rhos}):
\[
\IE\Big[
\int_\delta^t\Big\langle\int 
\big(|M_s(y)|+|M_s(w)|\big)\big(\rho_\epsilon*\eta_s(y)^k+
\rho_\epsilon*\eta_s(w)^k\big)p_{\epsilon^2}(w,y)dy, \eta_s(dw)\Big\rangle ds
\Big] .
\]
Since $\psi_{t-s}^{\epsilon,x}(y)=\IE[p_{T(t-s)+\epsilon^2}(x,y)]$, 
rearranging~(\ref{expn rhoepsilon})
we see that we can pull a convolution with $p_{\epsilon^2/2}$ out of
our expressions for $M_s(y)$ and $M_s(w)$ and so all the manipulations 
that we used to control terms above will still be valid.
To deal with the two terms in the product involving $|M_s(y)|$, we write 
the first as $\int |M_s(y)|\rho_\epsilon*\eta_s(y)^{k+1}dy$ and then use
H\"older's inequality, Lemma~\ref{bounds on moments}, and the fact
that $\IE\big[|M_s(y)|^2\big]$ is ${\mathcal O}\big(\theta/(N\epsilon^d)\big)$
to see that the contribution from this term tends to zero in the limit.
For the second, we use the idea of the proof of 
Corollary~\ref{alternative form moment bounds} to reduce to a form to which
we can apply H\"older's inequality.

Control of the terms arising from approximating $\psi^{\epsilon,x}$
by the heat kernel follows in an entirely analogous way.

Combining the above, we see that given $\delta>0$, 
\[
\lim_{\epsilon\to 0}\IE\Big[\Big|\int_0^t\big\langle T_{\epsilon^2} f(y) 
F(\rho_\epsilon *\eta_s(y)), \eta_s(y) \big\rangle ds 
- \int_0^t\big\langle T_{\epsilon^2} f(y) 
F(\rho_\epsilon * \eta_s(y)), \rho_\epsilon * \eta_s(y) dy 
\big\rangle ds\Big|\Big]
<C\delta,
\]
where the constant $C$ is independent of $\delta$.
Since $\delta$ was arbitrary, the proof is complete.
\end{proof}


Since $f$ is smooth, $T_\epsilon^2f-f$ is ${\mathcal O}(\epsilon)$, 
with an application of the triangle inequality, 
\[
\lim_{\epsilon\to 0}\IE\Big[\Big|\int_0^t\big\langle T_{\epsilon^2} f(y) 
F(\rho_\epsilon *\eta_s(y)), \eta_s(y) \big\rangle ds 
- \int_0^t\big\langle f(y) 
F(\rho_\epsilon * \eta_s(y)), \rho_\epsilon * \eta_s(y) dy 
\big\rangle ds\Big|\Big]
<C\delta,
\]
now follows immediately.
Thus to complete the characterisation of the limit, 
it remains to show that if we
take a convergent subsequence 
$\big\{\big(\rho_\epsilon*\eta_t^N(dx)\big)_{t\geq 0}\big\}$ 
converging to a limit point $\big(\varphi(t,x)dx\big)_{t\geq 0}$, then
\[ 
\int_0^t
\int f(x) \rho_\epsilon * \eta_s^N(x) F(\rho_\epsilon * \eta_s^N(x)) dx ds 
\rightarrow \int_0^t\int f(x) \varphi(s,x) F(\varphi(s,x)) dx ds. 
\]
Since $F$ is a polynomial, we consider powers of $\rho_\epsilon*\eta$.
To illustrate the approach, we first prove that
\begin{equation} 
\label{eq:QuadConvergence} 
\int_0^t\int f(x) 
\rho_\epsilon * \eta_s^N(x)^2 dx ds
\rightarrow \int_0^t \int f(x) 
\varphi(s,x)^2 dx ds. 
\end{equation}
The convergence of higher powers will follow in an entirely analogous manner, but
with more complex expressions.

The approach is standard. We fix $\tau>0$ and, in keeping with
our notation $\rho_\epsilon$, in this subsection, use $\rho_\tau$ to 
denote the symmetric Gaussian kernel with variance parameter $\tau^2$.
Our strategy is to show that, up to an error that tends
to zero as $\tau \to 0$, 
\begin{equation}
\label{approx2}
\int_0^t\int f(z) \rho_\epsilon*\eta_s(z)^2 dz ds
\sim 
\int_0^t\int \int f(z) (\rho_\epsilon * \eta_s)(z) 
\rho_\tau (z-y) (\rho_\epsilon * \eta_s)(y) dz dy ds.
\end{equation}
Analogously, also up to an error that vanishes as $\tau \to 0$,
\begin{equation}
\label{approx3}
\int_0^t\int f(z) \varphi(s,z)^2 dz ds
\sim 
\int_0^t\int \int f(z) \varphi(s,z) 
\rho_\tau (z-y) \varphi(s,y) dz dy ds.
\end{equation}
On the other hand,
weak convergence of $\rho_\epsilon * \eta$ 
(plus continuity of the mapping $(z,y) \rightarrow f(z) \rho_\tau(z-y)$) 
gives that 
\begin{equation}
\int_0^t\int \int f(z) (\rho_\epsilon * \eta_s)(z) 
\rho_\tau (z-y) (\rho_\epsilon * \eta_s)(y) dz dy ds
\to \int_0^t \int \int f(z) \varphi(s,z) 
\rho_\tau (z-y) \varphi(s,y) dz dy ds.
\end{equation}
Since $\tau$ is arbitrary, the 
convergence~(\ref{eq:QuadConvergence})
will follow.

\begin{proposition}
	\label{quadratic convergence}
Under the conditions of Theorem~\ref{thm:local_convergence}, we have that
along any convergent subsequence,
\begin{multline}
\label{approx4}
\limsup_{\epsilon \to 0}
\IE\Big[\Big|
\int_0^t\int f(y) \rho_\epsilon*\eta_s(y)^2 dy ds
\\
-
\int_0^t\int \int f(z) (\rho_\epsilon * \eta_s)(z) 
\rho_\tau (z-y) (\rho_\epsilon * \eta_s)(y) dz dy ds\Big|\Big]\leq C\tau,
\end{multline}
where $C$ is independent of $\tau$.
\end{proposition}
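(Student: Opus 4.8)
The plan is to reduce this to the estimate of type~(\ref{target expression}) already carried out in the proof of Proposition~\ref{CuadraticApprox}, with the vanishing mollifier $p_{\epsilon^2}(w,y)$ there replaced by the fixed Gaussian kernel $\rho_\tau$. Write $g_s := \rho_\epsilon*\eta_s$ for brevity. Since $\int \rho_\tau(z-y)\,dz = 1$, we may insert this factor into the first integral of~(\ref{approx4}), so that the quantity inside the expectation equals
\[
\left|\int_0^t\!\!\int\!\!\int \big[f(y)g_s(y) - f(z)g_s(z)\big]\,\rho_\tau(z-y)\,g_s(y)\,dz\,dy\,ds\right| .
\]
Splitting the bracket as $f(y)g_s(y) - f(z)g_s(z) = \big(f(y)-f(z)\big)g_s(y) + f(z)\big(g_s(y)-g_s(z)\big)$, we treat the two resulting pieces separately.

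For the first piece, $|f(y)-f(z)| \le \|\nabla f\|_\infty\|y-z\|$ and $\int \|y-z\|\,\rho_\tau(z-y)\,dy \le C\tau$, so its contribution is at most $C\tau \int_0^t \int g_s(y)^2\,dy\,ds$; taking expectations and using that $\IE\big[\int (\rho_\epsilon*\eta_s(y))^2\,dy\big]$ is bounded uniformly in $\epsilon$ and $s\in[0,T]$ (the right-hand side of~(\ref{moment bound rhoepsilon}) with $k=2$ being integrable in $x$ under the assumptions of Theorem~\ref{thm:local_convergence}), this piece is bounded by $C\tau$, with $C$ independent of $\tau$ and $\epsilon$. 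For the second piece, bounding $|f(z)|\le\|f\|_\infty$, we must control
\[
\IE\Big[\int_0^t\!\!\int\!\!\int |g_s(y)-g_s(z)|\, g_s(y)\,\rho_\tau(z-y)\,dz\,dy\,ds\Big],
\]
which is precisely of the form~(\ref{target expression}), but with $p_{\epsilon^2}(w,y)$ replaced by $\rho_\tau(z-y)$ and $\eta_s(dw)$ replaced by $g_s(y)\,dy$. We apply the continuity estimate~(\ref{difference in rhos}) (with the roles of $y,w$ played by $y,z$): fix $\delta>0$ with $t>\delta$, note that the contribution of $s\in[0,\delta]$ is $\mathcal{O}(\delta)$ by the moment bounds, and for $s\in(\delta,t]$ expand using~(\ref{difference in rhos}). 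In the two leading terms of~(\ref{difference in rhos}), which carry a factor $\|y-z\|$, we ``borrow'' from the Gaussian $\rho_\tau$ via $\|y-z\|\,\rho_\tau(z-y)\le C\tau\,p_{2\tau^2}(z-y)$ to extract the factor $\tau$; the remaining terms of~(\ref{difference in rhos}) — those carrying $(\theta\epsilon^2)^{-1/2}$ or $\widehat{\epsilon}(\delta,\epsilon,\theta)$, the $\int_{s-\delta}^{s}$ integral, and the martingale terms $|M_s(y)|,|M_s(w)|$ — are handled exactly as in Proposition~\ref{CuadraticApprox}: for fixed $\delta$ each either vanishes as $\epsilon\to0$ (the martingale terms because $\IE[|M_s(\cdot)|^2] = \mathcal{O}(\theta/N\epsilon^d)$, after pulling a $p_{\epsilon^2/2}$-convolution out of $M_s$ using~(\ref{expn rhoepsilon})) or is $\mathcal{O}(\delta)$. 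In every case one then rearranges the integrals via Chapman--Kolmogorov, applies the tower property, and invokes Lemma~\ref{bounds on moments} and Corollary~\ref{alternative form moment bounds} to reduce each term to a uniformly bounded quantity of the form $\int \IE[(\rho_\epsilon*\eta_r(x))^{j}]\,dx$ times an integrable kernel factor. The net outcome is $\limsup_{\epsilon\to0}\IE[|\cdots|] \le C\tau + C'\delta$, and letting $\delta\to0$ yields~(\ref{approx4}).

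The main obstacle is the bookkeeping of this last step: one must organise the $\delta$-splitting so that the coefficient of $\tau$ surviving the $\epsilon\to0$ limit — coming from terms of the shape $\tau\int_\delta^t (s+\epsilon^2)^{-d/2}\,ds$ — stays bounded as $\delta\to0$. This is automatic in $d=1$, since $\int_\delta^t s^{-1/2}\,ds\le 2\sqrt t$ independently of $\delta$, so that $\limsup_{\epsilon\to 0}\IE[|\cdots|]\le C\sqrt{t}\,\tau + C'\delta$ and $\delta\to 0$ gives the clean bound; in higher dimensions one must instead couple $\delta$ to $\tau$ (and track the moment estimates of Lemma~\ref{bounds on moments} more carefully) to avoid a spurious $\log(1/\tau)$ factor. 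Beyond this point the argument is a direct, if lengthy, transcription of the estimates in the proof of Proposition~\ref{CuadraticApprox}, and the convergence of the higher powers of $\rho_\epsilon*\eta_s$ needed for the polynomial $F$ follows identically, with bulkier expressions.
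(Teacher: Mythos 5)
Your proposal is correct and follows essentially the same route as the paper: reduce the difference to $\|f\|_\infty\,\IE\big[\int_0^t\!\int\!\int|\rho_\epsilon*\eta_s(y)-\rho_\epsilon*\eta_s(z)|\,\rho_\tau(z-y)\,\rho_\epsilon*\eta_s(y)\,dz\,dy\,ds\big]$ as in \eqref{b1}, then rerun the proof of Proposition~\ref{CuadraticApprox} with $\|y-z\|\,\rho_\tau(z-y)\le C\tau\,p_{2\tau^2}(z-y)$ in place of the corresponding estimate for $p_{\epsilon^2}$. Your two refinements — splitting off the $\big(f(y)-f(z)\big)$ term explicitly (the paper silently replaces $f(z)$ by $f(y)$ in the second integral) and checking that the coefficient of $\tau$ surviving the $\epsilon\to0$ limit, of the shape $\tau\int_\delta^t(s+\epsilon^2)^{-d/2}ds$, remains bounded as $\delta\to0$ (automatic in $d=1$, requiring a coupling of $\delta$ to $\tau$ in higher dimensions) — are points the paper's one-line proof elides.
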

\begin{proof}
First note,
\begin{align}
&\int_0^t 
\mathbb{E}\left[\big| \langle f(y), (\rho_\epsilon*\eta_s(y))^2) dy \rangle 
- \int \int f(y) (\rho_\epsilon * \eta_s)(z) \rho_\tau (z-y) 
(\rho_\epsilon * \eta_s)(y) dz dy \big|\right] ds 
\nonumber 
\\ 
&\leq  
\Vert f \Vert_\infty \int  \int_0^t  
\mathbb{E}\left[ \int \left\{ \left|(\rho_\epsilon * \eta_s)(y)
-(\rho_\epsilon * \eta_s)(z) \right|   \rho_\tau(z-y) dz \right\} 
(\rho_\epsilon *\eta_s)(y) \right] ds  dy. 
\label{b1}
\end{align}
Now proceed exactly as in the proof of 
Proposition~\ref{CuadraticApprox}.
The only distinction is that $|p_{\epsilon^2}(y,z)-p_{\epsilon^2}(w,z)|$
is replaced by $|p_{\tau}(y,z)-p_{\tau}(w,z)|$ and 
the estimate $\|y-w\|p_{\tau^2}(y,w)\leq C\tau p_{2\tau^2}(y,w)$
replaces the corresponding statement with $\epsilon^2$ replacing 
$\tau^2$ in our previous argument.
\end{proof}

The extension of Proposition~\ref{quadratic convergence}
to higher moments is straightforward, if notationally messy. For 
fixed (but arbitrary) $\tau$, one shows that
\begin{multline*}
	\limsup_{\epsilon\to 0}
	\IE\Big[\Big|
	\int_0^t\int f(y)\rho_\epsilon*\eta_s^N(y)^k dy ds
\\
	- \int_0^t\int\cdots\int f(y_1)
	\rho_\epsilon*\eta_s^N(y_1) 
\prod_{i=2}^{k}\rho_\tau (y_i-y_{i-1})\rho_\epsilon*\eta_s^N(y_i)
dy_{k}\ldots dy_1 ds\Big| \Big]
	\leq C\tau,
\end{multline*}
as well as a corresponding statement with $\rho_\epsilon*\eta_s^N(x)$ replaced
by $\varphi(s,x)$ and then use weak convergence to see that, up to an error 
of order $\tau$, any limit point of
the sequence $\{\rho_\epsilon*\eta^N(x)dx\}$ solves (the weak form 
of) equation~(\ref{target equation in local convergence}).
Since $\tau$ was arbitrary, the proof of Theorem~\ref{thm:local_convergence}
is complete.

\section{Proofs of results for the lookdown process and ancestral lineages}
    \label{sec:lookdown_proofs}

Now we turn to results about the lookdown process,
first establishing the basic connection between the population process $\eta^N$
and the lookdown process $\lp^N$,
Proposition~\ref{thm:mmt_application},
and then in the next section, convergence of the lookdown process itself.

\begin{proof}[Proof of Proposition~\ref{thm:mmt_application}:]
    This proposition is the content of the Markov Mapping Theorem,
    reproduced from \citet{etheridge/kurtz:2019} as Theorem~\ref{thm:mmt},
    applied to our situation.
    The function $\gamma$ of that theorem is what we have called $\kappa$ above,
    and the kernel $\alpha$ of that theorem
    is the transition function that assigns levels uniformly on $[0, N]$ (in the first case)
    or as a Poisson process with Lesbegue intensity (in the limiting case).
    We need a continuous $\psi^N(\lp) \ge 1$ such that $|A^N f(\lp)| \le c_f \psi^N(\lp)$
    for all $f$ in the domain of $A^N$ (and similarly a function $\psi$ for $A$).
    We also need that applying the lookdown generator to a function and averaging over levels
    is equivalent to applying the population process generator to the function
    whose dependence on levels has been averaged out,
    a condition which we precisely state, and verify,
    in Lemmas~\ref{lem:averaged_generators} and~\ref{lem:averaged_generators_limit}
    of the Appendix.

    For finite $N$,
    taking $f(\lp)$ of the form \eqref{eqn:test_functions},
    we can use $\psi^N(\lp) = \langle C (1 + u |F(x, \eta)|), \lp(dx, du) \rangle$
    for an appropriate constant $C$.
    For the scaling limit, recall that the test functions $f$ are of the form
    $f(\lp) = \prod_{(x, u) \in \lp} g(x, u)$
    with $g(x, u) = 1$ for $u \ge u_0$,
    and consulting~\eqref{eqn:limiting_lookdown_generator},
    we see that most terms in $Af(\lp)$ can be bounded 
    as above by constant multiples of $\langle 1, \eta \rangle$.
    However, the term involving $F$ is, as usual, more troublesome.
    Since $0 \le f(\lp) / g(x, u) \le 1$ for any $(x, u) \in \lp$,
    \begin{align*} 
        \big| f(\lp) \sum_{(x, u) \in \lp} F(x, \eta) u \frac{\partial_u g(x, u)}{g(x, u)} \big|
        & \le
        \| \partial_u g \|_\infty \sum_{(x, u) \in \lp} | F(x, \eta) u \ind_{u \le u_0} | \\
        & \le
        \| \partial_u g \|_\infty e^{u_0} \sum_{(x, u) \in \lp} | F(x, \eta) u e^{-u} | .
    \end{align*}
    The first line would be just what we want,
    except that $\psi(\lp)$ cannot depend on $f$, and hence neither on $u_0$.
    So, the second line provides us with the required bound:
    we absorb $\| \partial_u g \|_\infty e^{u_0}$ into $c_f$
    and take $\psi(\lp) = 1 + \langle 1 + F(x, \eta) u e^{-u}, \lp(dx, du) \rangle$.
\end{proof}

\subsection{Tightness of the Lookdown Process}
    \label{sec:lookdown_convergence}

Now we turn to the main theorem on convergence of the lookdown process, 
Theorem~\ref{thm:lookdown_convergence}, whose proof
follows a similar pattern to that of convergence for the population processes
in Section~\ref{sec:population_density_proof}.

We first give a description of the lookdown process $\lp^N$ in terms of the lines of descent
introduced in Section~\ref{sec: individual lines of descent}.
Each line of descent
gives birth to lines at higher levels at rate $2 (N - u) c_\theta(x, \eta)$,
and each such new line chooses a level uniformly from $[u, N]$,
a spatial location $y$ from the kernel
\begin{equation} \label{eqn:qm_defn}
    q^m(x, dy, \eta)
    =
    r(y, \eta) q(x, dy) / \int_{\IR^d} r(z, \eta) q(x, dz),
\end{equation}
and the two lines swap spatial locations with probability 1/2;
the level of each line of descent evolves according to equation~\eqref{eqn:u_evolution}.

It is evident from the description of the process
(or, by differentiating in Definition~\ref{defn:lookdown_mgale})
that
\begin{align} \label{eqn:f_xi_mgale}
\begin{split}
    \langle f, \lp_t^N \rangle
    &=
    \langle f, \lp_0^N \rangle
    + M^f_t
\\ & \qquad {}
    +
    \int_0^t
    \bigg\langle
        c_\theta(x, \eta^N_s)
        \int_u^N \int_{\IR^d}
        \left(
            f(y, u_1) + f(x, u_1) + f(y, u) - f(x, u)
        \right)
        q^m(x, dy, \eta^N_s) du_1 
\\ & \qquad \qquad \qquad \qquad {}
        +
        \left(
            c_\theta(x, \eta^N_s) u^2
            - b_\theta(x, \eta^N_s) u
        \right)
        \frac{d}{du} f(x, u)
    ,
    \lp_s^N(dx, du) \bigg \rangle
    ds ,
\end{split}
\end{align}
where $M^f$ is a martingale
with angle bracket process
\begin{align} \label{eqn:f_xi_qv}
    \begin{split}
    \left\langle M^f \right\rangle_t
    &=
    \int_0^t
    \bigg \langle c_\theta(x, \eta_s^N) \int_u^N \int_{\IR^d}
        \big[ f(y, u_1)^2
\\&\qquad \qquad {}
        + \left( f(x, u_1) + f(y, u) - f(x, u) \right)^2 \big]
    du_1 q^m(x, dy, \eta^N_s)
    ,
    \lp_s^N(dx, du) \bigg \rangle
    ds .
\end{split}\end{align}

\begin{remark}
In addition to tightness of the measure-valued processes $\lp^N$, \revpoint{1}{34}
the bounds used in the proofs below also imply
tightness of the number of lines of descent and the number of births below a fixed level,
and of the motion of individual lines of descent.
In other words, the limiting ``line of descent'' construction
of Section~\ref{sec: individual lines of descent} holds.
\end{remark}

\begin{proof}[Proof of Theorem~\ref{thm:lookdown_convergence}]
    As in Section~\ref{sec:population_density_proof},
    the theorem will follow from tightness and characterization of the limit points.
    This time, the processes $\lp^N$ take values in $\lpmeasures$,
    the space of locally finite measures on space $\times$ levels.
    (They will in fact be point measures, including the limit,
    but that is a consequence of this theorem.)
    Again, tightness follows from a compact containment condition,
    tightness of one-dimensional distributions,
    and an application of \citet{ethier/kurtz:1986} Theorem~3.9.1.

    Lines of descent can escape to infinite level in finite time, and so
    we endow $\lpmeasures$ with the vague topology ``in the level coordinate'',
    induced by test functions on $\overline{\IR^d} \times [0,\infty)$
    of the form $g(x) h(u)$, where $g \in C_b(\overline{\IR^d})$ is bounded and continuous
    and $h \in C_c([0, \infty))$ is compactly supported
    (following, e.g., \citet{etheridge/kurtz:2019}, Condition 2.1).
    In several places below we require a dense subset
    of $C_b(\lpmeasures)$, the bounded, continuous functions on $\lpmeasures$.
    The functions $\lp \mapsto \exp(-\langle f, \lp \rangle)$
    for nonnegative, compactly supported $f : \overline{\IR^d} \times [0,\infty)$
    do not form not a dense subset of $C_b(\lpmeasures)$,
    but they do separate points and vanish nowhere, 
    since for any $\lp_1$ and $\lp_2$ there is an $f$
    with $\langle f, \lp_1\rangle \neq \langle f, \lp_2\rangle$,
    and a $g$ such that $\langle g, \lp_1\rangle \neq 0$.
	Therefore,
    by the Stone-Weierstrass theorem, the algebra they generate is dense
    in $C_b(\lpmeasures)$ with respect to uniform convergence on compact subsets.
    Topologized in this way, the space $\lpmeasures$ is completely metrizable,
    and we may choose a countable set of bounded, nonnegative $f_k$,
    each supported on $\IR^d \times [0,u_k]$ for some $u_k < \infty$,
    such that a subset $K \subset \lpmeasures$ is relatively compact if and only if
    $\sup_{\lp \in K} \langle f_k, \lp \rangle < \infty$ for each $k$.
    (To see this, use Theorem A.2.3 of~\citet{kallenberg1997foundations}.)
    Below, Lemma~\ref{lem:compact containment lookdown} proves exactly this,
    and therefore compact containment.
    Here we have compactified $\IR^d$ for convenience
    (since it turned out to be straightforward to show that mass does not escape to infinity in space);
    however, we need to use the vague topology ``in the level direction''
    because \emph{levels} may escape to infinity in finite time in the limit.

    In order to apply \citet{ethier/kurtz:1986} Theorem~3.9.1
    we require that $\{(F(\lp^N_t))_{t \ge 0}\}_N$
    is tight as a sequence of real-valued c\`adl\`ag processes, for all $F$ in a subset
    of $C_b(\lpmeasures)$ 
    that is dense with respect to uniform convergence on compact subsets.
    Lemma~\ref{lem:test_fn_tightness} shows that
    $\{\langle f, \lp^N_t \rangle\}_N$ is a tight sequence
    for any $f: \overline{\IR^d} \times [0,\infty) \to \IR$ with compact support
    in the level direction,
    and hence $\{e^{-\langle f, \lp^N_t \rangle}\}_N$ is tight as well.
    Since as above the algebra generated by the functions $\lp \mapsto \exp(-\langle f, \lp \rangle)$
    is dense in $C_b(\lpmeasures)$,
    it suffices to show that tightness for the processes $(\exp(-\langle f, \lp^N_t\rangle))_{t \ge 0}$
    extends to finite sums and products (and constant multiples) of these processes,
    which is shown in Lemma~\ref{lem:product_tightness}.
    The fact that martingale properties are preserved under passage to the limit
    is straightforward, and can be proved in a way analogous to Lemma~\ref{lem:limit_mgale};
    we omit the proof.
    Finally, we must show that the limiting lookdown process $\lp$
    projects to the limiting process $\eta$, i.e., a solution of the martingale
    problem in Theorem~\ref{thm:nonlocal_convergence}.
    Let $N_k \to \infty$ be a sequence along which $\lp^{N_k}$ converges.
    By Theorem~\ref{thm:nonlocal_convergence},
    there is a subsequence $N_{k(j)}$ along which the projected population processes
    $\eta^{N_{k(j)}}$ converge, and the limit solves the martingale problem.
    Thus any limit point of $\lp^N$ projects to a population process $\eta$
    solving the martingale problem of Theorem~\ref{thm:nonlocal_convergence}.
\end{proof}

What we need for compact containment will come from the following Lemma.
The generality is unimportant --
for concreteness one may take $h(u) = e^{-u}$.

\begin{lemma}
        \label{lem:h_containment}
    Let $h$ be a positive, continuous, nonincreasing, differentiable function on $[0, \infty)$
    such that
    $\int_0^\infty \int_u^\infty h(v) dv du$,
    $\int_0^\infty u^2 |h'(u)| du$,
    and $\int_0^\infty h(u)^2 du$
    are all finite.
    Suppose that Assumptions~\ref{def:model_setup} hold,
    and that $\theta/N \to \alpha$ and $\lp_0^N \to \lp_0$ weakly as $N \to \infty$,
    where each $\lp_0^N$ is conditionally uniform given $\eta_0^N$
    in the sense of \eqref{eqn:conditionally_uniform}
    and $\lp_0$ is conditionally Poisson given $\eta_0$
    in the sense of \eqref{eqn:conditionally_poisson}.
    Then for any $T$ there exists a constant $K(T)$ such that for all $M > 0$,
    $$ 
    \limsup_{N \to \infty} \IP\left\{ \sup_{0 \le t \le T} \langle h, \lp^N_t \rangle > M \right\}
    <
    \frac{K(T)}{M} .
    $$
\end{lemma}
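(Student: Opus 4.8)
The plan is to control $\langle h, \lp^N_t \rangle$ via its semimartingale decomposition~\eqref{eqn:f_xi_mgale}, applied with the time-independent test function $f(x,u) = h(u)$, and then combine a Gronwall-type bound on the mean with a Burkholder--Davis--Gundy estimate on the martingale part, exactly as in the proof of Lemma~\ref{lem:eta_compact_containment}. Writing $f(x,u)=h(u)$, the generator term in~\eqref{eqn:f_xi_mgale} becomes
\[
\bigg\langle
    c_\theta(x,\eta^N_s) \int_u^N \big( 2 h(u_1) + h(u) - h(u) \big) du_1
    + \big( c_\theta(x,\eta^N_s) u^2 - b_\theta(x,\eta^N_s) u \big) h'(u),
    \lp^N_s(dx,du)
\bigg\rangle ,
\]
where I used that $h$ does not depend on the spatial variable so the $f(y,u)-f(x,u)$ contribution vanishes and $q^m$ integrates to $1$. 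Since $c_\theta$ is uniformly bounded (Assumptions~\ref{def:model_setup}), $\int_u^N h(u_1)\,du_1 \le \int_u^\infty h(u_1)\,du_1$, and $c_\theta u^2 h'(u)$ is bounded in absolute value by $C u^2 |h'(u)|$ which is integrable against Lebesgue measure in $u$, these terms contribute at most a constant times $\langle 1 + 2\int_u^\infty h(v)dv + u^2|h'(u)|, \lp^N_s(dx,du)\rangle$. Because $\lp^N_s$ is conditionally uniform on $[0,N]$ given $\eta^N_s$ with $N$ atoms per unit mass of $\eta^N_s$, averaging over levels turns $\langle g(u), \lp^N_s\rangle$ into $N\langle \int_0^N g(u)\,du/N, \eta^N_s\rangle = \langle \int_0^N g(u)\,du, \eta^N_s \rangle$ in expectation, so each of these pieces is bounded by a constant times $\IE[\langle 1, \eta^N_s\rangle]$, which is controlled uniformly on $[0,T]$ by Lemma~\ref{lem:eta_f_bound} and Lemma~\ref{lem:eta_compact_containment}.

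The one term needing care is the $-b_\theta(x,\eta) u\, h'(u)$ piece: since $h$ is nonincreasing, $h'(u)\le 0$, and $b_\theta = \theta\gamma\int(r(y,\eta)-r(x,\eta))q_\theta(x,dy) + F(x,\eta)$ is bounded \emph{above} but not below (because of $F$). Thus $-b_\theta u h'(u) = b_\theta \cdot u |h'(u)|$ can be large and negative only through the $-F$ contribution, i.e. $b_\theta u |h'(u)| \le (C + F(x,\eta)^+ ) u|h'(u)|$ uses only the upper bound on $b_\theta$ and so is controlled; but for the BDG step we also need to control the \emph{negative} part. Here I will use the same trick as in equations~\eqref{integral of -F} and \eqref{bound on intfdeta}: rearrange the semimartingale identity for $\langle h, \lp^N\rangle$ (or more simply, for $\langle 1, \eta^N\rangle$) to express $\IE\int_0^t\langle (-F)^+, \cdot\rangle$ in terms of boundary terms plus bounded quantities, so that the contribution of the unbounded-below part is dominated. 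Concretely, write $|F|\le K - F$ for a suitable constant $K$ (valid since $F$ is bounded above), bound $b_\theta u|h'(u)| \le (C + K - F(x,\eta)) u |h'(u)|$, and absorb the $-F\cdot u|h'(u)|$ term using $\IE[\int_0^t \langle -F(x,\eta^N_s), \eta^N_s(dx)\rangle ds] < \infty$ uniformly, which follows exactly as in the derivation of~\eqref{integral of -F} combined with boundedness of $\gamma B^\theta_1$. This gives $\IE[\langle h, \lp^N_t\rangle] \le C'$ uniformly in $N$ and $t \in [0,T]$ via Gronwall, and likewise a uniform bound $\IE[\langle h, \lp^N(\cdot)\rangle]_T$-type control for the angle bracket.

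For the supremum over $t$, I would take suprema in~\eqref{eqn:f_xi_mgale} and apply Burkholder--Davis--Gundy together with the observation (as in Lemma~\ref{lem:eta_compact_containment}, citing~\citet{barlow/jacka/yor:1986}) that the expected quadratic variation of the jump martingale $M^f$ is bounded by a constant times the expectation of its angle bracket process~\eqref{eqn:f_xi_qv}. With $f=h$, the angle bracket integrand is $c_\theta(x,\eta^N_s)\int_u^N ( h(u_1)^2 + h(u)^2 )\,du_1$, which after averaging over levels is at most a constant times $\langle 1 + \int_0^\infty h(v)^2 dv, \eta^N_s\rangle$, hence has expectation bounded uniformly on $[0,T]$ by the moment bounds already in hand. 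Then $\IE[\sup_{t\le T} M^h_t] \le K\IE[\sqrt{[M^h]_T}] \le K'(1 + \IE[\langle M^h\rangle_T]) \le K''$, and combining with the uniform bound on the finite-variation part and a final Gronwall application yields $\IE[\sup_{0\le t\le T}\langle h, \lp^N_t\rangle] \le K(T)$ uniformly in $N$; Markov's inequality then gives the claimed bound $K(T)/M$, and taking $\limsup_{N\to\infty}$ preserves it. The main obstacle is the bookkeeping around the lower-unbounded drift term $-b_\theta u h'(u)$: one must be careful that the conditional-uniformity averaging is legitimate at the level of expectations (it is, by Proposition~\ref{thm:mmt_application}, since $\lp^N_t$ is conditionally uniform given $\mathcal F^{\eta^N}_t$ for each $t$) and that the $F$-induced term is absorbed by the same rearrangement argument used for the population process rather than by a naive bound; everything else is a routine repetition of the estimates in Section~\ref{sec:population_density_proof}.
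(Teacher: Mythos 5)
Your proposal is correct and follows essentially the same route as the paper's proof: the semimartingale decomposition~\eqref{eqn:f_xi_mgale} with $f(x,u)=h(u)$, level-averaging via conditional uniformity, Gronwall for the mean, Burkholder--Davis--Gundy together with the Barlow--Jacka--Yor comparison for $\sup_t M^h_t$, and Markov's inequality at the end. The one place you over-engineer is the drift term $-b_\theta(x,\eta)\,u\,h'(u)$: since $h'\le 0$ and $b_\theta \le C_b$, this term equals $b_\theta\, u\,|h'(u)| \le C_b\, u\,|h'(u)|$, which is already a nonnegative, $F$-free, level-integrable upper bound, and the angle bracket~\eqref{eqn:f_xi_qv} of $M^h$ involves only $c_\theta$ and never $b_\theta$ or $F$, so the $|F|\le K-F$ rearrangement you import from~\eqref{integral of -F} is not needed anywhere in this argument --- this is precisely the simplification the paper exploits.
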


We postpone the proof of this Lemma until we have shown how it yields compact containment.
First, we show that this implies compact containment of the processes $(\langle f, \lp^N_t \rangle)_{0 \le t \le T}$
for arbitrary compactly supported $f$.

\begin{lemma}
    \label{lem:compact containment hn}
    Suppose $f \in C(\bar \IR^d \times [0, \infty))$ and there is a $u_f$ such that
    if $u \ge u_f$ then $\sup_x f(x, u) = 0$.
    Under the assumptions of Lemma~\ref{lem:h_containment},
    for any $T$ there exists a constant $K(f,T)$ such that for all $M > 0$,
    $$ 
    \limsup_{N \to \infty} \IP\left\{ \sup_{0 \le t \le T} \langle f, \lp^N_t \rangle > M \right\}
    <
    \frac{K(f,T)}{M} .
    $$
\end{lemma}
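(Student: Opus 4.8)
The plan is to deduce Lemma~\ref{lem:compact containment hn} from Lemma~\ref{lem:h_containment} by a simple domination argument. First I would observe that since $f$ is continuous on $\overline{\IR}^d \times [0,\infty)$ and vanishes identically for $u \geq u_f$, it is bounded, say $\|f\|_\infty = \sup_{x,u} |f(x,u)| < \infty$. The key point is to choose a function $h$ of the type required by Lemma~\ref{lem:h_containment} which dominates $|f|$ up to a constant. Because $h$ may be taken to be, e.g., $h(u) = e^{-u}$ (which is positive, continuous, nonincreasing, differentiable, and satisfies all three integrability conditions $\int_0^\infty\int_u^\infty h(v)\,dv\,du < \infty$, $\int_0^\infty u^2|h'(u)|\,du < \infty$, $\int_0^\infty h(u)^2\,du < \infty$), and since $h$ is bounded below by $h(u_f) = e^{-u_f} > 0$ on the compact interval $[0,u_f]$ where $f$ is supported, we have the pointwise bound
\[
    |f(x,u)| \leq \frac{\|f\|_\infty}{h(u_f)}\, h(u)
    \qquad \text{for all } x \in \overline{\IR}^d,\ u \in [0,\infty),
\]
because for $u \leq u_f$ the right-hand side is at least $\|f\|_\infty$, and for $u > u_f$ the left-hand side is zero. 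Set $c_f := \|f\|_\infty / h(u_f)$.

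The remaining steps are routine. Since $\lp^N_t$ is a (nonnegative) measure, the bound above gives
\[
    |\langle f, \lp^N_t \rangle| \leq \langle |f|, \lp^N_t \rangle \leq c_f \langle h, \lp^N_t \rangle
\]
for every $t$, and hence $\sup_{0 \le t \le T} |\langle f, \lp^N_t \rangle| \leq c_f \sup_{0 \le t \le T} \langle h, \lp^N_t \rangle$. Therefore, for any $M > 0$,
\[
    \IP\left\{ \sup_{0 \le t \le T} \langle f, \lp^N_t \rangle > M \right\}
    \leq
    \IP\left\{ c_f \sup_{0 \le t \le T} \langle h, \lp^N_t \rangle > M \right\}
    =
    \IP\left\{ \sup_{0 \le t \le T} \langle h, \lp^N_t \rangle > M / c_f \right\}.
\]
Taking $\limsup_{N \to \infty}$ and applying Lemma~\ref{lem:h_containment} with the threshold $M/c_f$ yields
\[
    \limsup_{N \to \infty} \IP\left\{ \sup_{0 \le t \le T} \langle f, \lp^N_t \rangle > M \right\}
    \leq
    \frac{K(T)}{M / c_f}
    =
    \frac{c_f K(T)}{M} ,
\]
so the claim holds with $K(f,T) := c_f K(T) = \|f\|_\infty K(T) / h(u_f)$.

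There is essentially no obstacle here: the content of the lemma is entirely contained in Lemma~\ref{lem:h_containment}, and the only thing to check is the elementary domination $|f| \leq c_f h$, which is immediate once one notes that $f$ is bounded and compactly supported in the level direction while $h$ is bounded away from zero on any compact set. The one modest point worth stating explicitly in the write-up is why such an $h$ exists at all, namely that $e^{-u}$ works; this is why I would name it explicitly rather than leaving it abstract. (If one wanted $h$ with the same normalization for all $f$ simultaneously, one could not do so, but the statement allows $K$ to depend on $f$, so this is not an issue.) This lemma, together with the countable family of such $f_k$ exhausting the compact sets of $\lpmeasures$ as described in the proof of Theorem~\ref{thm:lookdown_convergence}, then gives the compact containment condition needed there.
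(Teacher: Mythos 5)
Your proof is correct and follows essentially the same route as the paper's: dominate $|f|$ by $c_f h$ for an $h$ satisfying the hypotheses of Lemma~\ref{lem:h_containment} (the paper likewise uses this one-line domination, with $c_f$ playing the same role), then apply that lemma at threshold $M/c_f$. Your write-up is in fact slightly more explicit than the paper's about why the constant $c_f = \|f\|_\infty/h(u_f)$ works, which is a reasonable addition.
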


\begin{proof}[Proof of Lemma~\ref{lem:compact containment hn}:]
    Let $h$ be as in Lemma~\ref{lem:h_containment},
    so there is a $c_f < \infty$ such that $f(x, u) \le c_f h(u)$ for all $x$ and $u$.
    Therefore, $\langle f, \xi \rangle \le c_f \langle h, \xi \rangle$,
    and so by Lemma~\ref{lem:h_containment},
    \begin{align*}
        \limsup_{N \to \infty} \IP\left\{ \sup_{0 \le t \le T} \langle f, \lp^N_t \rangle > M \right\}
        \le
        \limsup_{N \to \infty} \IP\left\{ \sup_{0 \le t \le T} \langle h, \lp^N_t \rangle > M/c_f \right\}
        <
        \frac{K(T) c_f}{M} .
    \end{align*}
\end{proof}

\begin{lemma}[Compact containment for $\lp$] \label{lem:compact containment lookdown}
    Let $f_1, f_2, \ldots$ be a sequence of functions each satisfying the conditions
    of Lemma~\ref{lem:compact containment hn}.
    Under the assumptions of Lemma~\ref{lem:h_containment},
    for any $T$ and $\delta > 0$ there exists a sequence
    $(C_1, C_2, \ldots)$ of finite constants such that
    \begin{align} \label{eqn:compact eqn lookdown}
    \limsup_{N \to \infty}
    \IP\left\{
        \sup_{0 \le t \le T} \langle f_k, \lp^N_t \rangle > C_k
        \text{ for some } k \ge 1
    \right\}
    < \delta .
    \end{align}
\end{lemma}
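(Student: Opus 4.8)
The plan is to deduce this from Lemma~\ref{lem:compact containment hn} by a standard union-bound-with-summable-tail argument. The point is that Lemma~\ref{lem:compact containment hn} gives, for each fixed $k$, a bound of the form $\limsup_N \IP\{\sup_{0\le t\le T}\langle f_k,\lp^N_t\rangle > M\} < K(f_k,T)/M$, which goes to zero as $M\to\infty$ but with a rate depending on $k$. To make the union over all $k\ge 1$ have total probability below $\delta$, I would choose $C_k$ large enough (depending on $k$, on $\delta$, and on the constant $K(f_k,T)$ from Lemma~\ref{lem:compact containment hn}) that
\[
\limsup_{N\to\infty}\IP\left\{\sup_{0\le t\le T}\langle f_k,\lp^N_t\rangle > C_k\right\} < \frac{\delta}{2^{k+1}},
\]
which is possible precisely because the bound $K(f_k,T)/C_k$ tends to $0$ as $C_k\to\infty$; concretely one may take $C_k = 2^{k+1}K(f_k,T)/\delta$.

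Next I would assemble the union bound. For any finite $J$,
\[
\IP\left\{\sup_{0\le t\le T}\langle f_k,\lp^N_t\rangle > C_k \text{ for some } 1\le k\le J\right\}
\le \sum_{k=1}^{J}\IP\left\{\sup_{0\le t\le T}\langle f_k,\lp^N_t\rangle > C_k\right\},
\]
so taking $\limsup_{N\to\infty}$ on both sides and using subadditivity of $\limsup$ over a finite sum,
\[
\limsup_{N\to\infty}\IP\left\{\sup_{0\le t\le T}\langle f_k,\lp^N_t\rangle > C_k \text{ for some } 1\le k\le J\right\}
\le \sum_{k=1}^{J}\frac{\delta}{2^{k+1}} < \frac{\delta}{2}.
\]
The event $\{\sup_{0\le t\le T}\langle f_k,\lp^N_t\rangle > C_k \text{ for some } k\le J\}$ increases to $\{\sup_{0\le t\le T}\langle f_k,\lp^N_t\rangle > C_k \text{ for some } k\ge 1\}$ as $J\to\infty$. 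Since the inequality above holds for every $J$ with a bound independent of $J$, and monotone limits commute with the probability of an increasing union, I would let $J\to\infty$ to conclude that the probability of the full union (for each fixed $N$) is at most $\delta/2$ in the $\limsup$, giving~\eqref{eqn:compact eqn lookdown}. (One small care: $\limsup_N$ of an increasing-in-$J$ family; since for each $N$ the probability of the $J$-union increases to the probability of the full union, and each is bounded by the sum which is $<\delta/2$ uniformly in $J$, the $\limsup_N$ of the full-union probability is $\le\delta/2<\delta$.)

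The only genuine content is already supplied by Lemma~\ref{lem:compact containment hn}; the remaining work is the bookkeeping of choosing $C_k$ with a geometrically summable budget, so there is no real obstacle here. The one place to be slightly careful is the interchange of $\limsup_{N\to\infty}$ with the limit $J\to\infty$ over the increasing sequence of events; this is handled as above by noting the $J$-truncated bound is uniform in $J$, so it passes to the union. I would write the proof in essentially the three short steps above: (i) invoke Lemma~\ref{lem:compact containment hn} and fix $C_k$; (ii) union bound over $k\le J$ and take $\limsup_N$; (iii) let $J\to\infty$.

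\begin{proof}[of Lemma~\ref{lem:compact containment lookdown}]
    By Lemma~\ref{lem:compact containment hn}, for each $k \ge 1$ there is a constant $K(f_k, T) < \infty$ such that for all $M > 0$,
    \[
        \limsup_{N \to \infty} \IP\left\{ \sup_{0 \le t \le T} \langle f_k, \lp^N_t \rangle > M \right\}
        < \frac{K(f_k, T)}{M} .
    \]
    Given $\delta > 0$, set $C_k = 2^{k+1} K(f_k, T) / \delta$, so that
    \[
        \limsup_{N \to \infty} \IP\left\{ \sup_{0 \le t \le T} \langle f_k, \lp^N_t \rangle > C_k \right\}
        < \frac{\delta}{2^{k+1}} .
    \]
    For any finite $J$, a union bound gives
    \[
        \IP\left\{ \sup_{0 \le t \le T} \langle f_k, \lp^N_t \rangle > C_k \text{ for some } 1 \le k \le J \right\}
        \le \sum_{k=1}^{J} \IP\left\{ \sup_{0 \le t \le T} \langle f_k, \lp^N_t \rangle > C_k \right\} ,
    \]
    and so, using subadditivity of $\limsup$ over a finite sum,
    \[
        \limsup_{N \to \infty} \IP\left\{ \sup_{0 \le t \le T} \langle f_k, \lp^N_t \rangle > C_k \text{ for some } 1 \le k \le J \right\}
        \le \sum_{k=1}^{J} \frac{\delta}{2^{k+1}} < \frac{\delta}{2} .
    \]
    For each fixed $N$, the event $E_J^N := \{ \sup_{0 \le t \le T} \langle f_k, \lp^N_t \rangle > C_k \text{ for some } 1 \le k \le J \}$ increases, as $J \to \infty$, to $E_\infty^N := \{ \sup_{0 \le t \le T} \langle f_k, \lp^N_t \rangle > C_k \text{ for some } k \ge 1 \}$, so $\IP(E_J^N) \uparrow \IP(E_\infty^N)$. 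Since $\limsup_{N \to \infty} \IP(E_J^N) < \delta/2$ for every $J$ and $\IP(E_J^N) \le \IP(E_\infty^N)$ for all $J, N$, we may pass to the limit: for any $\eta' > 0$ and all $J$ there are infinitely many $N$ with $\IP(E_J^N) < \delta/2 + \eta'$; since $\IP(E_\infty^N) = \lim_{J\to\infty}\IP(E_J^N)$ is the supremum over $J$ and this supremum is controlled uniformly along such $N$, we obtain $\limsup_{N \to \infty} \IP(E_\infty^N) \le \delta/2 < \delta$, which is~\eqref{eqn:compact eqn lookdown}.
\end{proof}
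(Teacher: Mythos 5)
Your overall strategy --- invoke Lemma~\ref{lem:compact containment hn} for each $f_k$, choose $C_k$ so that the tail bounds are geometrically summable, and take a union bound --- is exactly the paper's proof, and your choice $C_k = 2^{k+1}K(f_k,T)/\delta$ actually makes the constants come out more cleanly than the paper's $2^{k-1}$. However, your final step, passing from the finite unions $E_J^N$ to the full union $E_\infty^N$, is not valid as written. You know $\limsup_N \IP(E_J^N) < \delta/2$ for each fixed $J$, and you want $\limsup_N \IP(E_\infty^N) \le \delta/2$, where $\IP(E_\infty^N) = \sup_J \IP(E_J^N)$. But in general $\limsup_N \sup_J a_{N,J} \ge \sup_J \limsup_N a_{N,J}$ --- the inequality goes the wrong way. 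The threshold $N_J$ beyond which $\IP(E_J^N) < \delta/2$ may grow with $J$, so for no single large $N$ need all the $J$-truncated probabilities be simultaneously controlled. (Take $a_{N,J} = \ind_{J \ge N}$: each $\limsup_N a_{N,J} = 0$, yet $\sup_J a_{N,J} = 1$ for every $N$.) Your phrase ``this supremum is controlled uniformly along such $N$'' is precisely the assertion that needs proof and is not supplied.

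The fix is to upgrade the limsup bound to one holding uniformly in $N$, and then apply the countable union bound directly for each fixed $N$. This uniformity is available: the proof of Lemma~\ref{lem:h_containment} in fact shows $\IP\{\sup_{0\le t\le T}\langle h,\lp_t^N\rangle > K\} \le \big(\IE[\langle h,\lp_0^N\rangle] + (C_2+C_0/C_1)e^{C_1T}\big)/K$ for every $N$, and $\IE[\langle h,\lp_0^N\rangle]$ is bounded in $N$ since $\lp_0^N \to \lp_0$. Hence there are constants $K'(f_k,T)$ with $\IP\{\sup_{0 \le t \le T}\langle f_k,\lp_t^N\rangle > M\} \le K'(f_k,T)/M$ for all $N$, and taking $C_k = 2^{k+1}K'(f_k,T)/\delta$ gives $\IP(E_\infty^N) \le \sum_{k\ge 1} K'(f_k,T)/C_k = \delta/2$ for each $N$, from which \eqref{eqn:compact eqn lookdown} follows. (The paper's one-line proof sums over all $k \ge 1$ directly and implicitly relies on the same uniformity, so your write-up and the paper's share this elision; but your explicit attempt to justify the interchange is where the argument, as stated, breaks.)
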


In other words, the processes $\lp^N$ stay in 
the set
$$
    \left\{
        \lp \in \lpmeasures \;:\;
        \langle f_k, \lp \rangle \le C_k
        \text{ for all } k \ge 1
    \right\} ,
$$
for all $0 \le t \le T$
with uniformly high probability,
a set which (as discussed in the proof of Theorem~\ref{thm:lookdown_convergence})
is relatively compact for an appropriate choice of $\{f_k\}_{k \ge 1}$.

\begin{proof}[Proof of Lemma~\ref{lem:compact containment lookdown}:]
    By a union bound,
    $$ 
    \IP\left\{
        \sup_{0 \le t \le T} \langle f_k, \lp^N_t \rangle > C_k
        \text{ for some } k \ge 1
    \right\}
    \le
    \sum_{k \ge 1}
    \IP\left\{
        \sup_{0 \le t \le T} \langle f_k, \lp^N_t \rangle > C_k
    \right\} ,
    $$
    so \eqref{eqn:compact eqn lookdown} follows
    by taking $C_k = 2^{k-1} K(f_k, T) / \delta$ and using Lemma~\ref{lem:compact containment hn}.
\end{proof}

Finally, we prove the key lemma.

\begin{proof}[Proof of Lemma~\ref{lem:h_containment}:]
    Applied to $f(x, u) = h(u)$, the martingale representation~\eqref{eqn:f_xi_mgale} is
    \begin{align*}
    \begin{split}
        \langle h, \lp_t^N \rangle
        &=
        \langle h, \lp_0^N \rangle
        + M^{h}_t 
        \\ & \qquad {}
        +
        \int_0^t
        \big\langle
            2 c_\theta(x, \eta^N_s) \int_u^N h(v) dv
        ,
        \lp_s^N(dx, du) \big \rangle
        ds
        \\ & \qquad \qquad {}
        +
        \int_0^t
        \big\langle
            \left(
                c_\theta(x, \eta^N_s) u^2
                - b_\theta(x, \eta^N_s) u
            \right)
            h'(u)
        ,
        \lp_s^N(dx, du) \big \rangle
        ds ,
    \end{split}
    \end{align*}
    where $M^{h}_t$ is a martingale
    with angle bracket process
    \begin{align*}
        \left\langle M^{h} \right\rangle_t
        &=
        \int_0^t
        \langle 2 c_\theta(x, \eta_s^N) \int_u^N h(v)^2 dv
        ,
        \lp_s^N(dx, du) \big \rangle
        ds .
    \end{align*}
    Now, note that $0 \le c_\theta(x, \eta_x^N) \le C_a < \infty$
    and $b_\theta(x, \eta_s^N) \le C_b < \infty$,
    and we have assumed that $h'(u) \le 0$ (since $h$ is nonincreasing),
    so we may bound
    \begin{align} \label{eqn:hn_bound}
    \begin{split} 
        \langle h, \lp_t^N \rangle
        &\le
        \langle h, \lp_0^N \rangle
        + M^{h}_t
        \\ & \qquad {}
        +
        \int_0^t
        \big\langle
            2 C_a \int_u^\infty h(v) dv
            + \left( C_a u^2 + C_b u \right) |h'(u)|,
        \lp_s^N(dx, du) \big \rangle
        ds .
    \end{split}
    \end{align}
    Now, since $\lp_t^N$ is conditionally uniform given $\eta_t^N$
    in the sense of~\eqref{eqn:conditionally_uniform},
    we know that for compactly supported $f$,
    $\IE[\langle f, \lp^N_t \rangle] = \IE[\langle \widetilde{f}_N, \eta^N_t \rangle]$,
    where $\widetilde{f}_N(x) = \int_0^N f(x,u) du$.
    By our assumptions on $h$,
    we know that
    $$
        \int_0^\infty \left( 2 C_a \int_u^\infty h(v) dv
        + \left( C_a u^2 + C_b u \right) |h'(u)| \right) du
        < C
    $$
    for some $C < \infty$, and so (by dominated convergence)
    \begin{align*}
    \begin{split}
        \IE\left[ \langle h, \lp_t^N \rangle \right]
        &\le
        \IE\left[ \langle h, \lp_0^N \rangle \right]
        +
        C
        \int_0^t
        \IE\bigg[
            \big\langle
            1
            ,
            \eta_s^N \big \rangle
        \bigg]
        ds ,
    \end{split}
    \end{align*}
    which we know by Lemma~\ref{lem:eta_f_bound}
    is bounded by $C_0 e^{C_1 t}$ for some other constants $C_0$ and $C_1$.

    Now consider the maximum. By \eqref{eqn:hn_bound},
    using that the integrand is nonnegative,
    \begin{align*}
        \sup_{0 \le t \le T} \langle h, \lp_t^N \rangle
        &\le
        \langle h, \lp_0^N \rangle
        + \sup_{0 \le t \le T} M^{h}_t 
        \\ & \qquad {}
        + \int_0^T 
        \big \langle
            2 C_a \int_u^\infty h(v) dv
            + \left( C_a u^2 + C_b u \right) |h'(u)|
        ,
        \lp_s^N(dx, du) \big \rangle
        ds .
    \end{align*}

    As in the proof of Lemma~\ref{lem:eta_compact_containment},
    the Burkholder-Davis-Gundy inequality,
    \citet{barlow/jacka/yor:1986},
    and the fact that $\sqrt{x} \le 1 + x$ for $x \ge 0$
    tells us that there is a $C'$ such that
    \begin{align*}
    \IE\left[ \sup_{0 \le t \le T} M^{h}_t \right]
    & \le
        C'\left( 1 + \IE\left[ \langle M^{h} \rangle_T \right] \right)
    \\ & \le
        C' \left( 1 + \int_0^T \IE\left[
        \langle 2 c_\theta(x, \eta_s^N) \int_u^\infty h(v)^2 dv
        ,
        \lp_s^N(dx, du) \big \rangle
        \right] ds \right) 
    \\ & \le
        C' \left( 1 + 2 C_a \int_0^\infty h(v)^2 dv
        \int_0^T \IE\left[
            \langle 1 , \lp_s^N(dx, du) \big \rangle
        \right] ds \right)  
    \\ & \le
        C_2 e^{C_1 T} ,
    \end{align*}
    for a constant $C_2$ which is finite
    by our assumption that $\int_0^\infty h(v)^2 dv < \infty$.

    Therefore, 
    \begin{align*}
        \IE\left[ \sup_{0 \le t \le T} \langle h, \lp_t^N \rangle \right]
        &\le
        \IE\left[ \langle h, \lp_0^N \rangle \right]
        + (C_2 + C_0 / C_1) e^{C_1 T},
    \end{align*}
    and so
    \begin{align*}
        \IP\left\{ \sup_{0 \le t \le T} \langle h, \lp_t^N \rangle > K \right\}
        &\le
        \frac{
            \IE\left[ \langle h, \lp_0^N \rangle \right]
            + (C_2 + C_0 / C_1) e^{C_1 T}
        }{ K } .
    \end{align*}
\end{proof}

\begin{lemma}
    \label{lem:test_fn_tightness}
    Let $f$ be a bounded, continuous real-valued function on $\IR^d \times [0, \infty)$
    with uniformly bounded first and second derivatives
    for which there exists a $u_0$ such that if $u > u_0$ then $f(x, u) = 0$.
    Then, the sequence of real-valued processes $(\langle f, \lp_t^N \rangle)_{t \ge 0}$ for $N \ge 1$
    is tight in ${\cal D}_{[0,\infty)}(\IR)$.
\end{lemma}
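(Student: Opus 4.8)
The plan is to apply the Aldous–Rebolledo criterion (Theorem~\ref{thm:aldous_rebolledo}) to the real-valued semimartingale $(\langle f, \lp_t^N \rangle)_{t \ge 0}$, whose decomposition is given by~\eqref{eqn:f_xi_mgale} and~\eqref{eqn:f_xi_qv}. As in the proof of Lemma~\ref{lem:eta_projections_tightness}, it suffices to check: (a) for each fixed $t$, the family $\{\langle f, \lp_t^N\rangle\}_N$ is tight in $\IR$; (b) for any sequence of stopping times $\tau_N$ bounded by a fixed $T$, the increments of the finite-variation part and of the quadratic variation over $[\tau_N, \tau_N + t]$ can be made uniformly small in probability by taking $t$ small. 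Throughout I will use that $f$ is supported on $\IR^d \times [0, u_0]$, that $c_\theta(x,\eta)$ is uniformly bounded (by Assumptions~\ref{def:model_setup}, since $c_\theta \to \alpha \gamma r$ and $\gamma$ is bounded), that $b_\theta(x,\eta) = \gamma(x,\eta)\theta\int(r(y,\eta)-r(x,\eta))q_\theta(x,dy) + F(x,\eta)$ is bounded above (but not below, because of $F$), and that $f$ has bounded first and second derivatives so that the inner integrals $\int_u^N(f(y,u_1) + f(x,u_1) + f(y,u) - f(x,u))\,q^m(x,dy,\eta)\,du_1$ are bounded uniformly in $x$, $u$, $\eta$, and $N$ (the range of $u_1$ is effectively $[u, u_0]$, and $q^m$ is a probability kernel).

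First I would establish (a). Since $f(x,u) \le c_f h(u)$ for a suitable nonincreasing $h$ satisfying the hypotheses of Lemma~\ref{lem:h_containment}, we have $\langle f, \lp_t^N\rangle \le c_f\langle h, \lp_t^N\rangle$, and Lemma~\ref{lem:h_containment} (or rather the moment bound $\IE[\langle h,\lp_t^N\rangle] \le C_0 e^{C_1 t}$ obtained in its proof) together with Markov's inequality gives tightness of $\{\langle f,\lp_t^N\rangle\}_N$ for each fixed $t \le T$. For (b), the drift increment over $[\tau, \tau + t]$ is
\[
    \int_\tau^{\tau+t}\bigg\langle c_\theta(x,\eta_s^N)\!\int_u^N\!\!\int_{\IR^d}\!\big(f(y,u_1)+f(x,u_1)+f(y,u)-f(x,u)\big)q^m(x,dy,\eta_s^N)du_1 + \big(c_\theta u^2 - b_\theta u\big)\partial_u f(x,u), \lp_s^N(dx,du)\bigg\rangle ds.
\]
The troublesome term is the one containing $-b_\theta u\,\partial_u f$, because $b_\theta$ is not bounded below; but $|b_\theta(x,\eta)u\,\partial_u f(x,u)| \le (C_b + |F(x,\eta)|)u_0\|\partial_u f\|_\infty \ind_{u \le u_0}$, and since $F$ is bounded above and each fixed level of $|F(x,\eta)|$ is controlled by the total mass (as $\smooth{F}\eta \le \langle 1,\eta\rangle\|\rho_F\|_\infty$), we can first use Lemma~\ref{lem:eta_compact_containment} to find $K$ and $N_0$ with $\IP\{\sup_{s\le T}\langle 1,\eta_s^N\rangle \ge K\} < \nu/2$ for $N > N_0$, and then on that event bound the whole drift integrand by a constant $C(K, f)$ times $\langle h, \lp_s^N\rangle$ (or simply by $C(K,f)\langle 1 + u\ind_{u\le u_0}, \lp_s^N\rangle$, which has bounded expectation). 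Choosing $t = \delta$ small enough that $\delta\,C(K,f)\cdot(\text{mass bound}) < \nu/2$ yields the required estimate for the drift. An entirely analogous argument, using the explicit form~\eqref{eqn:f_xi_qv} of $\langle M^f\rangle$, the boundedness of $c_\theta$, and the fact that $f^2$ and $(f(x,u_1)+f(y,u)-f(x,u))^2$ are bounded and supported in $u \le u_0$, handles the quadratic-variation increment $[M^f]_{\tau+t} - [M^f]_\tau$ (one passes between $[M^f]$ and $\langle M^f\rangle$ via the bound of~\citet{barlow/jacka/yor:1986} on the expected quadratic variation of a local martingale in terms of its angle bracket, exactly as in Lemma~\ref{lem:eta_compact_containment}, or simply notes the jumps of $\langle f,\lp^N\rangle$ are $O(1)$ so $[M^f] - \langle M^f\rangle$ is itself a martingale with controllable bracket).

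The main obstacle, as usual in this paper, is the term involving $F$, which is only bounded above: it enters both the $b_\theta$ term in the drift and (through $\mu_\theta$, hidden in $b_\theta$) cannot be dominated by a deterministic constant. The resolution is the two-step device already used repeatedly — first localize on the high-probability event $\{\sup_{s \le T}\langle 1,\eta_s^N\rangle \le K\}$ (Lemma~\ref{lem:eta_compact_containment}), on which $\smooth{F}\eta_s^N$ is bounded by $K\|\rho_F\|_\infty$, so that $\sup_x\sup_{k \le K\|\rho_F\|_\infty}|F(x,k)| < \infty$ by Assumptions~\ref{def:model_setup}, and then apply the deterministic estimate. Everything else is routine bookkeeping with the explicit generator terms. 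I would present the drift estimate in detail and remark that the quadratic-variation estimate is analogous, then invoke Aldous–Rebolledo to conclude tightness in ${\cal D}_{[0,\infty)}(\IR)$.
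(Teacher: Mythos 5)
Your proposal is correct and follows essentially the same route as the paper's proof: the Aldous--Rebolledo criterion applied to the semimartingale decomposition \eqref{eqn:f_xi_mgale}--\eqref{eqn:f_xi_qv}, with one-dimensional tightness coming from the compact-containment bound (Lemma~\ref{lem:compact containment hn}) and the drift and bracket increments controlled as in Lemma~\ref{lem:eta_projections_tightness}, localizing on the event of bounded total mass to tame the $F$-dependence of $b_\theta$. You simply spell out the details that the paper compresses into ``the bounds follow as in the proof of Lemma~\ref{lem:eta_projections_tightness}.''
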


\begin{proof}[Proof of Lemma \ref{lem:test_fn_tightness}:]
    Again, we use the Aldous-Rebolledo criterion.
    Tightness of $\langle f, \lp_t \rangle$ for a fixed $t$
    follows from Lemma~\ref{lem:compact containment hn},
    so we need only prove conditions analogous to
    \eqref{eqn:eta_projections_goal1}
    and \eqref{eqn:eta_projections_goal2}
    applied to the martingale representation
    of equations~\eqref{eqn:f_xi_mgale} and~\eqref{eqn:f_xi_qv}.
    Rewriting~\eqref{eqn:f_xi_mgale} with $c_\theta = c_\theta(x, \eta_s)$,
    \begin{align*}
        \langle f, \lp_t \rangle
        &=
        \langle f, \lp_0 \rangle
        + M^f_t
        + \int_0^t \Big\langle
            c_\theta \int_u^N \int (f(y, u_1) + f(x, u_1)) q^m(x,dy,\eta) du_1
        \\ {} &\qquad
            + c_\theta (N - u) \int_0^t (f(y, u) - f(x, u)) q^m(x,dy,\eta)
            + (c_\theta u^2 - b_\theta u) \frac{d}{du} f,
            \lp_s
        \Big\rangle ds .
    \end{align*}
    The bounds analogous to \eqref{eqn:eta_projections_goal1} and \eqref{eqn:eta_projections_goal2}
    follow as in the proof of Lemma~\ref{lem:eta_projections_tightness}:
    for instance, observe that using that $c_\theta \le C_a$ for some $C_a$,
    the predictable part of this semimartingale decomposition is bounded by
    \begin{align*}
        \Big\langle
            2 C_a \|f\|_\infty u_f
            + (1 - u/N) \gamma B^\theta_f
            + (C_a u^2 - b_\theta u) \frac{d}{du} f,
            \lp_s
        \Big\rangle ,
    \end{align*}
    the last term of which is bounded by
    \begin{align*}
        \langle
            C_a u_f^2 + \sup_x |b_\theta(x, \eta_s)| u_f \|\frac{d}{du} f\|_\infty
        \rangle ,
    \end{align*}
    which can be bounded as we did for \eqref{eqn:eta_projections_goal1}.
\end{proof}

\subsection{Motion of ancestral lineages}
\label{sec:lineages_proof}

In this section we prove Theorem~\ref{thm:lineages}.
The argument follows directly from the discussion in Section \ref{sec:limiting_lines_of_descent}.

\begin{proof}[Proof of Theorem~\ref{thm:lineages}:]
For brevity, in the proof we write $\gamma(x)$ or $\gamma$ for $\gamma(x,\eta)$.

Here we have taken the high-density, deterministic limit
(so, $\theta, N \to \infty$ and $\theta/N \to 0$).
We first proceed informally,
as if the limiting process has a density $\varphi_t(x)$ at location $x$ and time $t$ (which it may not),
and follow this with an integration against test functions to make the argument rigorous.
Let $Y$ denote the spatial motion followed by a single line of descent.
Above equation~\eqref{eqn:limiting_generator},
we showed that $Y$ is a diffusion with generator at time $s$
$$
    \mathcal{L}^Y_s g(x) = \gamma(x,\eta_s) ( \DG(r(\cdot,\eta_s) g(\cdot))(x) - g(x) \DG r(x,\eta_s) ) .
$$
The diffusion is time-inhomogeneous if the density is not constant in time.
Let $\varphi_t(x)$ be the limiting density,
which is a weak solution to \eqref{general deterministic limit},
$\partial_t \varphi_t = r \DG^*[ \varphi_t \gamma ] + \varphi_t F$.
Formally, the intensity of individuals at $y$ at time $t$
that are descended from individuals that were at $x$ at time $s$
(with $s < t$) is
\begin{equation} \label{eqn:formal_intensity}
    \varphi_s(x) \IE_{s,x} \left[
        \exp\left(
            \int_s^t (F + \gamma \DG r)(Y_u) du
        \right)
        \ind_{Y_t = y}
    \right]
    dy ,
\end{equation}
where the subscript $s, x$ in the expectation indicates that $Y_s = x$.
To see why this should be true, 
suppose that an ancestor at time $s$ has level $v$. Conditional on its 
spatial motion $\{Y_u\}_{s\leq u\leq t}$, its level at time $t$ will
be $v \exp(-\int_s^t(F+\gamma\DG r)(Y_u)du)$. This will be less than a given level 
$\lambda$ if $v < \lambda \exp(\int_s^t(F+\gamma\DG r)(Y_u)du)$. 
The intensity of levels at $y$ that are descended from individuals at
$x$ can therefore be obtained as the limit as $\lambda\to\infty$ of 
$1/\lambda$ times the number of levels at $x$ at time $s$ with
$u<\lambda \exp(\int_s^t(F+\gamma\DG r)(Y_u)du)$ and for which
the corresponding individual is at $y$ at time $t$, which is 
precisely the quantity in~\eqref{eqn:formal_intensity}. 

By our construction in Section~\ref{sec:limiting_lines_of_descent},
when we integrate~\eqref{eqn:formal_intensity}
with respect to $x$ we recover $\varphi_t(y)dy$. 
Consider an individual sampled at location $y$ at time $t$,
and write $p(t,s,y,x)$ for the probability density
that their ancestor at time $s$ was at $x$.
As a consequence of~\eqref{eqn:formal_intensity},
still formally,
\begin{equation}
\label{eqn:ptsyx}
    p(t,s,y,x)
    =
    \frac{\varphi_s(x)}{\varphi_t(y)}
    \IE_{s,x}\left[
        \exp\left( \int_s^t (F + \gamma\DG r)(Y_u) du \right)
        \ind_{Y_t=y}
    \right]
\qquad \text{ for } s < t.
\end{equation}

To make~(\ref{eqn:ptsyx}) meaningful, we multiply by suitable test functions
$f$ and $g$ and integrate.
\begin{align*}
&\int \int f(y) \varphi_t(y) p(t,s,y,x) g(x) dy dx \\
&\qquad =
\int g(x) \varphi_s(x)
\IE_{x,s}\left[
    \exp\left(
        \int_s^t(F+\gamma\DG r)(Y_u)du
    \right)f(Y_t)
\right] dx .
\end{align*}

Writing $\hat{T}_{t,s}$ for the time-inhomogeneous semigroup
corresponding to the motion of ancestral lineages backwards in time
(that is, $\hat{T}_{t,s} f(y) = \int p(t,s,x,y) f(x) dy$),
we can write this as 
\begin{align} \label{eqn:integrated_semigroups}
    \int f(y)\varphi_t(y)\hat{T}_{t,s}g(y)dy
    =
    \int g(x) \varphi_s(x)
        \IE_{s,x} \left[
            \exp\left(
                \int_s^t(F+\gamma\DG r)(Y_u)du
            \right)f(Y_t)
        \right]dx.
\end{align}
Next, we will differentiate this equation with respect to $t$.
There are two terms in the product on the left-hand side that depend on $t$,
so if we
use that $\partial_t \varphi_t = r \DG^*[ \varphi_t \gamma ] + \varphi_t F$
(in a weak sense),
and write $\Lgen_u$ for the generator of $\hat{T}_{t,s}$ at time $t=u$
so that $\partial_t \hat{T}_{t,s} g(y) \Big|_{t=s} = \Lgen_s g(y)$,
then
\begin{multline*}
    \qquad \qquad
    \frac{d}{dt} 
    \int f(y)\varphi_t(y)\hat{T}_{t,s}g(y)dy
    \Big|_{t=s}
    \\ {}
    =
    \int f(y) \left\{
        \varphi_s(y) \Lgen_s g(y)
        + \left[
            r(y) \DG^*(\gamma \varphi_s)(y) + \varphi_s(y) F(y)
        \right] g(y)
    \right\} dy .
\end{multline*}
As for the right-hand side, since $Y_s=x$ under $\IE_{x,s}$,
\begin{multline*}
    \frac{d}{dt}
        \IE_{x,s}\left[
            \exp\left(
                \int_s^t(F+\gamma\DG r)(Y_u)du
            \right)f(Y_t)
        \right]
    \Bigg|_{t=s}
    =
        \left[F(x)+\gamma(x) \DG r(x)\right] f(x) + \Lgen^Y_s f(x) .
\end{multline*}
Therefore, the derivative of~\eqref{eqn:integrated_semigroups}
(with respect to $t$, evaluated at $t=s$) is
\begin{align*}
&
    \int f(y) \left\{
        \varphi_s(y) \Lgen_s g(y)
        + \left(
            r(y) \DG^*(\gamma \varphi_s)(y) + \varphi_s(y) F(y)
        \right) g(y)
    \right\} dy \\
&\qquad =
    \int g(x) \varphi_s(x) \left(
        \Lgen^Y_s f(x) + \left[F(x)+\gamma(x) \DG r(x)\right] f(x)
    \right) dx \\
&\qquad =
    \int f(x) \left(
        (\Lgen^Y_s)^* (\varphi_s g)(x) + \left[F(x) + \gamma(x) \DG r(x)\right] \varphi_s(x) g(x)
    \right) dx ,
\end{align*}
where $(\Lgen^Y_s)^*$ is the adjoint of $\Lgen^Y_s$ .
Since $f$ was arbitrary,
\begin{eqnarray*}
\Lgen_s g
    &=&
    \frac{1}{\varphi_s} \left[
        (\Lgen^Y_s)^* (\varphi_s g)
        + \gamma \varphi_s g \DG(r)
        - r g \DG^*(\gamma \varphi_s)
    \right] .
\end{eqnarray*}
(Note that the $\varphi_s F g$ terms have cancelled.)
Since the adjoint of ${\mathcal L}^Y_s$ is
\begin{align*}
    ({\mathcal L}^Y_s)^* f
    &=
    r \DG^* (\gamma f) - \gamma f \DG r ,
\end{align*}
we can rewrite the generator of a lineage as 
\begin{eqnarray*}
\Lgen_s g
    &=&
    \frac{r}{\varphi_s} \left[
        \DG^* (\gamma \varphi_s g)
        - g \DG^*(\gamma \varphi_s)
    \right] .
\end{eqnarray*}
This is equation \eqref{eqn:lineage_generator}.

To simplify to equation \eqref{eqn:lineage_generator2},
first define
$
    \DD f(x) = \sum_{ij} \covq_{ij} \partial_{ij} f(x),
$
and so the adjoint of $\DD$ is
$$
    \DD^* f(x)
    =
    \sum_{ij} \partial_{ij} (\covq_{ij} f(x)) .
$$
Note that $\DD^*$ satisfies the following identity:
\begin{align*}
    \DD^*(fg)
    &=
    \sum_{ij} \left\{
        g \partial_{ij} (\covq_{ij} f)
        + 2 f \partial_{i} (\covq_{ij}) \partial_j(g)
        + 2 \covq_{ij} \partial_{i} (f) \partial_j(g)
        + \covq_{ij} f \partial_{ij} g
    \right\} \\
    &=
    g \DD^* f
    + 2 f \vec{c} \cdot \grad g
    + 2 (\covq \grad f) \cdot \grad g
    + f \DD g ,
\end{align*}
where $\vec{c}_j = \sum_i \partial_i \covq_{ij}$.
So, with $f = \gamma \varphi_s$,
\begin{eqnarray*}
\Lgen_s g
    &=&
    \frac{r}{\varphi_s} \left[
        \frac{1}{2} \DD^*(\gamma \varphi_s g) - \grad \cdot (\gamma \varphi_s g \meanq)
        - \frac{1}{2} g \DD^*(\gamma \varphi_s) + g \grad \cdot (\gamma \varphi_s \meanq)
    \right] \\
    &=&
    \frac{r}{\varphi_s} \left[
        \frac{1}{2} \gamma \varphi_s \DD g
        + \gamma \varphi_s \vec{c} \cdot \grad g
        + (\covq \grad (\gamma \varphi_s)) \cdot \grad g
        - \gamma \varphi_s \meanq \cdot \grad g
    \right] \\
    &=&
    r \gamma \left[
        \frac{1}{2} \DD g
        + \vec{c} \cdot \grad g
        + (\covq \grad \log(\gamma \varphi_s)) \cdot \grad g
        - \meanq \cdot \grad g
    \right] ,
\end{eqnarray*}
which is equation \eqref{eqn:lineage_generator2}.
\end{proof}

\begin{proof}[Proof of Corollary~\ref{cor:stationary_dist}:]
For the moment, we will write $r(x)$ for $r(x,\eta)$
and $\gamma(x)$ for $\gamma(x,\eta)$.
First note that
since in this case the semigroup does not depend on time, 
we can write $\Lgen = \Lgen_s$, and
$$
    \Lgen f = \frac{\sigma^2}{2} r \gamma \left(
        \Delta f
        + \grad ( 2 \log(\gamma \varphi) - 2h/\sigma^2 ) \cdot \grad f
    \right) .
$$
Now, observe that
$$
    \int_{\IR^d} e^{H(x)} f(x) (\Delta + \grad H(x) \cdot \grad) g(x) dx
    =
    - \int_{\IR^d} e^{H(x)} \left\{
        \grad f(x) \cdot \grad g(x)
    \right\} dx ,
$$
so that by choosing $H(x) = 2 \log(\gamma(x) \varphi(x)) - 2h(x)/\sigma^2$
and
$$ \pi(x)
    = \frac{e^{H(x)} }{\sigma^2 r(x) \gamma(x)/2}
    = \frac{\gamma(x) \varphi(x)^2 e^{-2h(x)/\sigma^2}}{\sigma^2 r(x)/2} ,
$$
we have that
$$
    \int_{\IR^d} \pi(x) f(x) \Lgen g(x) dx
    =
    - \int_{\IR^d} e^{H(x)} \grad f(x) \cdot \grad g(x) dx .
$$
Since this Dirichlet form is symmetric in $f$ and $g$,
the process $Y$ is reversible with respect to $\pi$
(and the factor of $\sigma^2/2$ is constant).
\end{proof}

\section*{Acknowledgements}
Many thanks to Matthias Birkner, Matthias Winkel, and an anonymous reviewer
for detailed comments and corrections.
Thanks go to Gilia Patterson for identifying the ``clumping'' phenomenon,
and to Marcin Bownick and David Levin for useful discussions.
AME thanks everyone in MAPS at Universit\'e Paris Cit\'e for their hospitality
during the period in which much of this research took place.
AME and PLR also thank the Kavli Institute for Theoretical Physics
for their hospitality and birdwatching opportunities;
this research was therefore supported in part by the NSF under grant \#PHY-1748958
and by the Gordon and Betty Moore Foundation grant \#2919.02, both to KITP.
PLR was supported by the NIH NHGRI (grant \#HG011395),
IL by the ANID/Doctorado en el extranjero doctoral scholarship, grant \#2018-72190055,
and TTHL by the EPSRC Centre for Doctoral Training in Mathematics of Random Systems: Analysis, Modelling and Simulation (EP/S023925/1)
the Deutsche Forschungsgemeinschaft under Germany's Excellence Strategy, EXC-2047/1-390685813,
the Rhodes Trust and St.~John's College, Oxford.

\newpage
\appendix

\section{Markov Mapping Theorem}
\label{apx:mmt}

The following appears as Theorem A.2 in \citet{etheridge/kurtz:2019},
specialized slightly here to the case that the processes 
are c\`adl\`ag and have no fixed points of discontinuity.
For an $S_0$-valued, measurable process $Y$, $\hat{\mathcal{F}}^Y_t$
denotes the completion of the $\sigma$-algebra generated by
$Y(0)$ and $\{\int_0^r h(Y(s)) ds, r \le t, h \in B(S_0)\}$.
Also, let $D_S[0,\infty)$ denote the space of c\`adl\`ag, $S$-valued functions
with the Skorohod topology, and $M_S[0,\infty)$ the space of Borel measurable functions
from $[0,\infty)$ to $S$,
topologized by convergence in Lesbegue measure.
For other definitions see \citet{etheridge/kurtz:2019}.

\begin{theorem}[Markov Mapping Theorem] \label{thm:mmt}
    Let $(S,d)$ and $(S_0,d_0)$ be complete, separable metric spaces.
    Let $A \subset C_b(S) \times C(S)$ and $\psi \in C(S)$, $\psi \ge 1$.
    Suppose that for each $f \in \mathcal{D}(A)$ there exists $c_f$
    such that
    $$ |Af(x)| \le c_f \psi(x), \qquad x \in A, $$
    and define $A_0 f(x) = Af(x) / \psi(x)$.

    Suppose that $A_0$ is a countably determined pre-generator,
    and suppose that $\mathcal{D}(A) = \mathcal{D}(A_0)$
    is closed under multiplication and is separating.
    Let $\gamma : S \to S_0$ be Borel measurable,
    and let $\alpha$ be a transition function from $S_0$ into $S$
    ($y \in S_0 \to \alpha(y,\cdot)\in\mathcal{P}(S)$ is Borel measurable)
    satisfying $\int h\circ\gamma(x)\alpha(y,dx) = h(y)$ for $y \in S_0$ and $h \in B(S_0)$,
    that is, $\alpha(y,\gamma^{-1}(y)) = 1$.
    Assume that $\tilde \psi(y) \equiv \int_S \psi(z) \alpha(y,dz) < \infty$ for each $y \in S_0$
    and define
    \begin{align} \label{eqn:C_defn}
        C = \{
        \int_S f(z) \alpha(\cdot,dz),
        \int_S Af(z) \alpha(\cdot,dz)
        \; : \; f \in \mathcal{D}(A)
    \} . \end{align}
    Let $\mu_0 \in \mathcal{P}(S_0)$ and define $\nu_0 = \int \alpha(y,\cdot) \mu_0(dy)$.
    \begin{itemize}
        \item[(a)]
            If $\tilde Y$ satisfies $\int_0^t \IE[\tilde \psi(\tilde Y(s))]ds < \infty$
            for all $t \ge 0$ and $\tilde Y$ is a solution of the martingale problem for $(C,\mu_0)$,
            then there exists a solution $X$ of the martingale problem for $(A,\nu_0)$
            such that $\tilde Y$ has the same distribution on $M_{S_0}[0,\infty)$
            as $Y = \gamma \circ X$.
            If $Y$ and $\tilde Y$ are c\`adl\`ag, then $Y$ and $\tilde Y$ have the same distribution
            on $D_{S_0}[0,\infty)$.
        \item[(b)]
            For $t \ge 0$,
            $$ \IP\{ X(t) \in \Gamma \;|\; \hat{\mathcal{F}}^Y_t \}
            = \alpha(Y(t),\Gamma), \qquad \text{for } \Gamma \in \mathcal{B}(S).
            $$
        \item[(c)]
            If, in addition, uniqueness holds for the martingale problem for $(A,\nu_0)$,
            then uniqueness holds for the $M_{S_0}[0,\infty)$-martingale problem for $(C,\mu_0)$.
            If $\tilde Y$ has sample paths in $D_{S_0}[0,\infty)$ then uniqueness holds for the
            $D_{S_0}[0,\infty)$-martingale problem for $(C,\mu_0)$.
        \item[(d)]
            If uniqueness holds for the martingale problem for $(A,\nu_0)$
            then $Y$ is a Markov process.
    \end{itemize}
\end{theorem}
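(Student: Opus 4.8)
The statement is the Markov Mapping Theorem of Kurtz, and I would prove it by the standard route (cf.\ the references in \citet{etheridge/kurtz:2019}), in four stages: reduce to a bounded generator, construct the lifted process $X$, verify the conditional-law identity (b) and the distributional identity (a), and deduce (c) and (d) from uniqueness. First I would dispose of $\psi$ by a random time change. Because $|Af|\le c_f\psi$ with $\psi\ge 1$ and $A_0f=Af/\psi$, a random time change built from $\psi$ (speeding time up by the factor $\psi$) converts a solution of the $(A_0,\nu)$--martingale problem into one of the $(A,\nu)$--martingale problem and back; the hypothesis $\int_0^t\IE[\tilde\psi(\tilde Y(s))]\,ds<\infty$ is exactly what makes the corresponding time change on the projected process $\tilde Y$ finite, so it suffices to work with $A_0$. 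By assumption $A_0$ is a countably determined pre-generator with $\mathcal D(A_0)$ separating and closed under products --- the regularity needed so that the $(A_0,\mu)$--martingale problem has a c\`adl\`ag solution for every $\mu$ and so that the solution set is stable under the conditioning operations used below.

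The heart of the argument is recovering $X$ from a given solution $\tilde Y$ of the $(C,\mu_0)$--martingale problem. Setting $\mathcal Y(t):=\alpha(\tilde Y(t),\cdot)\in\mathcal P(S)$, the hypothesis is precisely that $\langle f,\mathcal Y(t)\rangle-\langle f,\mathcal Y(0)\rangle-\int_0^t\langle Af,\mathcal Y(s)\rangle\,ds$ is an $\mathcal F^{\mathcal Y}_t$--martingale for each $f\in\mathcal D(A)$, i.e.\ $\mathcal Y$ solves the \emph{filtered} martingale problem. On an enlarged space carrying $\tilde Y$ together with an independent randomization, I would build $X$ as a limit of grid approximations: on the grid $\{k\delta\}$, resample $X(k\delta)$ from the law prescribed by $\mathcal Y(k\delta)$ given the past randomization, and run $A_0$--dynamics between grid points. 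The pre-generator bounds yield tightness, and one identifies any limit point as a process that (i) solves the $(A_0,\nu_0)$--martingale problem with respect to the enlarged filtration, (ii) is \emph{compatible} with $\mathcal F^{\mathcal Y}$, in that $(X(s))_{s\le t}$ is conditionally independent of $\mathcal F^{\mathcal Y}_\infty$ given $\mathcal F^{\mathcal Y}_t$, and (iii) has $\IP\{X(t)\in\cdot\mid\mathcal F^{\mathcal Y}_t\}=\mathcal Y(t)$. I expect this step --- showing the grid approximations converge and that the limit genuinely solves the martingale problem rather than some relaxed version, while maintaining (ii) and (iii) --- to be the main obstacle; the rest is bookkeeping.

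Granting (i)--(iii), the four conclusions follow quickly. Undoing the time change turns $X$ into a solution of the $(A,\nu_0)$--martingale problem, and (iii) together with $\alpha(y,\gamma^{-1}(y))=1$ forces $\gamma(X(t))=\tilde Y(t)$ a.s.\ for each $t$; combined with (iii), this is (b), after identifying $\mathcal F^{\mathcal Y}_t$ with $\hat{\mathcal F}^Y_t$ (they agree for c\`adl\`ag $Y$). Conditioning the martingale property of $X$ on $\hat{\mathcal F}^Y_t$ and using compatibility and the $\gamma$--measurability built into $C$ shows $Y:=\gamma(X)$ solves the $(C,\mu_0)$--martingale problem, hence has the same law as $\tilde Y$ on $M_{S_0}[0,\infty)$, and on $D_{S_0}[0,\infty)$ when both are c\`adl\`ag; this is (a). For (c), if uniqueness holds for $(A,\nu_0)$ then two solutions $\tilde Y_1,\tilde Y_2$ of $(C,\mu_0)$ lift to solutions $X_1\stackrel{d}{=}X_2$ of $(A,\nu_0)$, so $\tilde Y_i=\gamma(X_i)$ share a law. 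For (d), uniqueness for $(A,\nu_0)$ propagates to a Markov property for $X$ in the manner of \citet{ethier/kurtz:1986}, and then by the Markov property of $X$ together with (b), $\IP\{Y(t+s)\in\Gamma\mid\hat{\mathcal F}^Y_t\}=\int\IP_x\{X(s)\in\gamma^{-1}(\Gamma)\}\,\alpha(Y(t),dx)$, a function of $Y(t)$ alone, so $Y$ is Markov.
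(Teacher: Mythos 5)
The paper does not prove this statement: it is imported verbatim (Theorem A.2 of \citet{etheridge/kurtz:2019}, itself a specialization of Kurtz's Markov mapping / filtered martingale problem results), so there is no in-paper proof to compare yours against. Judged on its own terms, your outline follows the standard route from that literature: the random time change by $\psi$ to pass between $A$ and the bounded $A_0$, the observation that $\mathcal Y(t)=\alpha(\tilde Y(t),\cdot)$ solves the filtered martingale problem for $A_0$ precisely because of how $C$ is defined, the construction of $X$ on an enlarged space by resampling from $\mathcal Y$ on a grid and interpolating with $A_0$-dynamics, and the derivation of (a)--(d) from compatibility plus $\alpha(y,\gamma^{-1}(y))=1$. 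The logical skeleton is right, and the way you extract (b) and (d) from properties (ii)--(iii) is exactly how it goes.

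That said, what you have written is a roadmap rather than a proof. You yourself flag the central difficulty --- showing that the grid approximations are tight, that a limit point solves the genuine (not relaxed) martingale problem for $A_0$ with respect to the enlarged filtration, and that compatibility and the identity $\IP\{X(t)\in\cdot\mid\mathcal F^{\mathcal Y}_t\}=\mathcal Y(t)$ survive the limit --- and then do not carry it out. This is where the hypotheses that $A_0$ is a countably determined pre-generator and that $\mathcal D(A_0)$ is separating and closed under products actually get used (via the Echeverr\'ia--Weiss--Kurtz stationary-distribution machinery and the uniqueness theory for the filtered martingale problem in Kurtz--Ocone and Kurtz--Nappo), and none of that is reproduced. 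Two smaller points: the separability/closure-under-products hypotheses do more than guarantee existence of c\`adl\`ag solutions --- they are needed to conclude that a measure-valued process satisfying the filtered martingale problem is a.s.\ supported on genuine sample paths; and in (a) the conclusion is equality of laws on $M_{S_0}[0,\infty)$ in general, upgrading to $D_{S_0}[0,\infty)$ only under the c\`adl\`ag hypothesis, a distinction your sketch glosses over. If you intend this as a citation-level justification it is adequate; as a standalone proof it has a substantive gap at the step you identify.
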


In our application, we have taken $S$ to be the space of locally finite counting measures
on $\IR^d \times [0,N)$ or on $\IR^d \times [0,\infty)$,
and $S_0$ the space of finite measures on $\bar \IR^d$.
Then, $A$ corresponds to the generator for the lookdown process (i.e., either $A^N$ or $A$),
and $C$ corresponds to the generator for the spatial population process
(i.e., either $\Pgen^N$ or $\Pgen$).
The ``$\gamma$'' of the theorem is our spatial projection operator
that we have called $\kappa^N$ or $\kappa$,
and the ``$\alpha$'' of the theorem will be named $\Gamma^N$ or $\Gamma$ below.
Finally, ``$X$'' of the theorem is our lookdown process, $\lp$,
and ``$Y$'' is our spatial process, $\eta$.

\subsection{Lookdown Generators}

In this section we verify one of the conditions of the Markov Mapping Theorem,
namely, that ``integrating out levels'' in the generator of the lookdown process
we obtain the generator of the projected process.
In the notation of the theorem,
we are verifying that $C$ defined in~\eqref{eqn:C_defn} is in fact $\Pgen^N$
(if defined with $A^N$) or $\Pgen^\infty$ (if defined with $A$).
We will work with test functions of the form
\begin{equation} \label{eqn:f_defn}
    f(\lp) = \prod_{(x, u) \in \lp} g(x, u) = \exp\left(\langle \log g, \lp \rangle \right) ,
\end{equation}
where $0 \le g \le 1$ and $g(x,u) = 1$ for all $u \ge u_g$ for some $u_g < \infty$.
Furthermore, recall that $\kappa^N(\lp)(\cdot) = \lp(\cdot \times [0, N)) / N$
is the ``spatial projection operator'',
and define the transition function $\Gamma^N : \measures \to \mathcal{M}(\IR^d \times [0,N))$
so that for $\eta \in \measures$, if $\hat g_N(x) = \int_0^N g(x, u) du / N$, then
\begin{align*}
    F^N_g(\eta)
    &:=
    \int f(\lp) \Gamma^N(\eta, d\lp) \\
    &=
    \exp\left(
        N \left\langle
            \log \frac{1}{N} \int_0^N g(x, u) du, \eta(dx)
        \right\rangle
    \right) \\
    &=
    \exp\left(
        N \left\langle \log \hat g_N(x), \eta(dx) \right\rangle
    \right) ,
\end{align*}
i.e., $\Gamma^N$ assigns independent labels on $[0, N]$ to each of the points in $\eta$.
It follows from Lemma~\ref{def: MP definition of limit} that for test functions of this form
the generator of $\eta^N_t$ is
\begin{align} \label{eqn:pgen_defn_again}
    \begin{split}
    \Pgen^N F^N_g(\eta)
    =
    F^N_g(\eta)
        N \theta & \bigg\langle
        \gamma(x, \eta) \int r(z, \eta) \left( \hat g_N(z) - 1 \right) q_\theta(x, dz)
    \\ &\qquad \qquad {}
        +
        \mu_\theta(x, \eta) \left( \frac{1}{\hat g_N(x)} - 1 \right)
        ,
        \eta(dx)
    \bigg\rangle .
    \end{split}
\end{align}
(Note that $f$ here differs from the $f$ used in Lemma~\ref{def: MP definition of limit}
so as to agree with standard usage in the literature on lookdown processes.)
The generator of $\lp^N_t$ is $A^N$, defined in equation~\eqref{eqn:lookdown_generator}.

\begin{lemma}
    \label{lem:averaged_generators}
    For all finite counting measures $\eta$ on $\IR^d$,
    if $f$ is of the form~\eqref{eqn:f_defn}, then
    \begin{align}
        \int A^N f(\lp) \Gamma^N(\eta, d\lp)
        &=
        \Pgen^N F^N_g(\eta) .
    \end{align}
\end{lemma}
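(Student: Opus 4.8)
The plan is to fix a test function $f$ of the product form~\eqref{eqn:f_defn}, recall that $\Gamma^N(\eta,\cdot)$ is the law of the point measure obtained from $\eta$ by attaching to each of its atoms an independent level, uniform on $[0,N]$, and then evaluate $\int A^Nf(\lp)\,\Gamma^N(\eta,d\lp)=\IE_U[A^Nf(\lp)]$ term by term, where $\IE_U$ denotes expectation over the levels. The generator $A^N$ from~\eqref{eqn:lookdown_generator} splits into a birth part (equations~\eqref{partition over level assignment}/\eqref{eqn:birth_generator}) and a level-motion part~\eqref{eqn:level_generator}, which I would treat separately.

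The mechanism making this work is the cancellation already built into $A^N$: every summand is $f(\lp)$ times an expression attached to a single individual at $(x,u)$, and whenever a factor $1/g(x,u)$ appears it cancels the corresponding factor of $f(\lp)=\prod_{(x',u')\in\lp}g(x',u')$, leaving a product over the \emph{remaining} individuals. Since the levels are independent under $\Gamma^N$, the expectation then factorizes: the product over the other individuals averages to $F^N_g(\eta)/\hat g_N(x)$ (or to $F^N_g(\eta)$ when no cancellation occurs and the factor $g(x,u)$ is kept), leaving a one-dimensional average of $u$ over $\mathrm{Unif}[0,N]$ together with the auxiliary integrations over the new level $u_1$ and the dispersal location $y$. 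Crucially, after taking expectations the outer sum $\sum_{(x,u)\in\lp}$ becomes a sum over individuals, which equals $N\langle\,\cdot\,,\eta(dx)\rangle$ because $\eta$ carries mass $1/N$ per atom; this is what produces the factor $N\theta$ appearing in $\Pgen^NF^N_g$ in~\eqref{eqn:pgen_defn_again}.

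For the birth part I would carry out the $u$- and $u_1$-integrals explicitly. The two sub-terms corresponding to ``the offspring gets the new level'' and ``the parent gets the new level'' combine, after a Fubini/symmetrization argument on the triangle $\{0\le u\le u_1\le N\}$, to $\tfrac{N}{2}\hat g_N(x)\hat g_N(y)$; the ``$-1$'' sub-term contributes $-N\hat g_N(x)+\tfrac1N\int_0^N u\,g(x,u)\,du$. For the level-motion part I would integrate by parts in $u$, using $g(x,N)=1$ for admissible test functions and the boundary identity $c_\theta(x,\eta)N^2-b_\theta(x,\eta)N=\theta N\mu_\theta(x,\eta)$ (immediate from \eqref{c_defn}, \eqref{b_defn} and~\eqref{eqn:mu_defn}); this yields $\theta\mu_\theta(x,\eta)+b_\theta(x,\eta)\hat g_N(x)-\tfrac{2c_\theta(x,\eta)}{N}\int_0^N u\,g(x,u)\,du$. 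The two terms involving $\int_0^N u\,g(x,u)\,du$ then cancel exactly — this is the concrete manifestation of the fact that the prescribed level dynamics~\eqref{differential equation for level} are designed to restore conditional uniformity. Substituting $b_\theta=\theta\gamma\int r\,q_\theta-\theta\mu_\theta$ and $c_\theta=\tfrac{\theta}{N}\gamma\int r\,q_\theta$ and collecting terms, the per-individual contribution reduces to $F^N_g(\eta)\big[\theta\gamma(x,\eta)\int r(y,\eta)(\hat g_N(y)-1)q_\theta(x,dy)+\theta\mu_\theta(x,\eta)(1/\hat g_N(x)-1)\big]$; summing over individuals (multiplying by $N$ and integrating against $\eta$) gives precisely $\Pgen^NF^N_g(\eta)$ as written in~\eqref{eqn:pgen_defn_again}.

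The computation is entirely elementary, so the main obstacle is bookkeeping: keeping straight which $g$-factors cancel in each sub-term, tracking the auxiliary variables $u_1$ and $y$, correctly carrying the boundary terms through the integration by parts, and not losing the factor $N$ in passing from a sum over points of $\lp$ to an integral against $\eta$. I would also note, briefly, that interchanging $\IE_U$ with the finite sum over individuals and with the $du_1$- and $q_\theta(x,dy)$-integrals is justified at once since $0\le g\le1$, $g\equiv1$ for large $u$, and $\gamma$, $r$ are bounded.
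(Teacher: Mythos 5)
Your proposal is correct and follows essentially the same route as the paper's proof: average over the independent uniform levels using the product structure of $f$, symmetrize the double integral over the triangle $\{u\le u_1\}$ to produce $\hat g_N(x)\hat g_N(y)$, integrate the level-drift term by parts, observe the exact cancellation of the two $\int_0^N u\,g(x,u)\,du$ contributions, and finish with $Nc_\theta-b_\theta=\theta\mu_\theta$. The only (cosmetic) difference is that you group the birth terms as in~\eqref{partition over level assignment} rather than via the split $A^N_1+A^N_2$ of~\eqref{eqn:birth_generator}, and your intermediate expressions carry slightly inconsistent normalizing factors of $2c_\theta$ and $N$ --- but you flag the bookkeeping yourself and the final per-individual expression matches~\eqref{eqn:pgen_defn_again} exactly.
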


For the limiting process,
recall that $\kappa(\lp)(\cdot) = \lim_{u \to \infty} \lp(\cdot \times [0, u)) / u$
is the ``spatial projection operator'',
and define the probability kernel $\Gamma : \measures \to \flpmeasures$
so that for $\eta \in \measures$,
defining $\widetilde{g}(x) = \int_0^\infty (g(x, u) - 1) du$,
\begin{align*}
    F_g(\eta)
    &:=
    \int f(\lp) \Gamma(\eta, d\lp) \\
    &=
    \exp\left(
        \big \langle
            \int_0^\infty (g(x, u) - 1) du,
            \eta(dx)
        \big \rangle
    \right) \\
    &=
    e^{ \langle \widetilde{g}(x), \eta(dx) \rangle } .
\end{align*}
i.e., $\Gamma(\eta, \cdot)$ is the distribution of a conditionally Poisson process
with intensity a product of $\eta$ and Lebesgue measure.
It again follows from Lemma~\ref{def: MP definition of limit} that for test functions of this form
the generator of $\eta_t$ is
\begin{align} \label{eqn:pgen_infty}
    \Pgen^\infty F_g(\eta)
    &=
    F_g(\eta)
    \left\langle
        \gamma(x, \eta)
        \DG( \widetilde{g}(\cdot) r(\cdot) )(x)
        + F(x, \eta) \widetilde{g}(x)
        + \alpha \gamma(x, \eta) r(x, \eta) \widetilde{g}^2(x)
        ,
        \eta(dx)
    \right\rangle .
\end{align}
The generator of $\lp_t$ is $A$, defined in equation~\eqref{eqn:limiting_lookdown_generator}.

\begin{lemma}
    \label{lem:averaged_generators_limit}
    For all $\eta \in \measures$,
    if $f$ is of the form~\eqref{eqn:f_defn}, then
    \begin{align}
        \int A f(\lp) \Gamma(\eta, d\lp)
        &=
        \Pgen^\infty F_g(\eta) .
    \end{align}
\end{lemma}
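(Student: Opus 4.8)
\textbf{Proof proposal for Lemma~\ref{lem:averaged_generators_limit}.}
The plan is to compute the left-hand side $\int A f(\lp)\,\Gamma(\eta,d\lp)$ directly, term by term, using the explicit form of $A$ in~\eqref{eqn:limiting_lookdown_generator} and the fact that under $\Gamma(\eta,\cdot)$ the measure $\lp$ is a conditionally Poisson random measure with intensity $\eta(dx)\times\lambda(du)$. The key computational tool is the following standard Palm/Mecke-type identity for a Poisson random measure $\lp$ with intensity $\nu$: for nonnegative $\phi$ and any suitable $\Phi$,
\[
\IE\Big[\, e^{-\langle \phi,\lp\rangle}\sum_{(x,u)\in\lp}\Phi(x,u)\,\Big]
=
e^{-\int(1-e^{-\phi})\,d\nu}\int e^{-\phi(x,u)}\Phi(x,u)\,\nu(dx,du).
\]
Here $\phi = -\log g$ (so $e^{-\phi}=g$ and $e^{-\int(1-e^{-\phi})d\nu}=F_g(\eta)$, using $\widetilde g(x)=\int_0^\infty(g(x,u)-1)du$), and for each of the three sums appearing in $Af(\lp)$ the summand $\Phi(x,u)$ has a factor $f(\lp)/g(x,u)$, which combines with the $e^{-\phi(x,u)}=g(x,u)$ weight in the identity so that the $g(x,u)$ in the denominator cancels cleanly and one is left with $F_g(\eta)$ times an integral against $\eta(dx)\,du$.

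First I would treat the third (level-motion) term of~\eqref{eqn:limiting_lookdown_generator}, namely
$f(\lp)\sum_{(x,u)\in\xi}\big(\alpha\gamma r\,u^2-\{\gamma\DG r+F\}u\big)\partial_u g(x,u)/g(x,u)$.
Applying the identity and then integrating by parts in $u$ (the boundary terms vanish since $g(x,u)=1$, hence $g-1=0$ and $\partial_u g=0$, for $u\ge u_g$, and the factor $u$ or $u^2$ kills the $u=0$ boundary), one gets
\[
F_g(\eta)\Big\langle \int_0^\infty\!\big(g(x,u)-1\big)\,\partial_u\!\big(\alpha\gamma r\,u^2-\{\gamma\DG r+F\}u\big)\,du,\ \eta(dx)\Big\rangle,
\]
and $\partial_u(\alpha\gamma r u^2-\{\gamma\DG r+F\}u)=2\alpha\gamma r\,u-\{\gamma\DG r+F\}$. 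The constant-in-$u$ piece $-\{\gamma\DG r+F\}$ integrates against $\int_0^\infty(g-1)du=\widetilde g(x)$ to give $-F_g(\eta)\langle(\gamma\DG r+F)\widetilde g,\eta\rangle$, while the $2\alpha\gamma r\,u$ piece yields $F_g(\eta)\langle 2\alpha\gamma r\int_0^\infty u(g(x,u)-1)du,\eta\rangle$. Next, the second (birth-level-creation) term $f(\lp)\sum 2\alpha\gamma r\int_u^\infty(g(x,u_1)-1)du_1$ gives, after the Poisson identity, $F_g(\eta)\langle 2\alpha\gamma r\int_0^\infty\int_u^\infty(g(x,u_1)-1)du_1\,du,\eta\rangle$; a Fubini swap turns $\int_0^\infty\int_u^\infty(g(x,u_1)-1)du_1du$ into $\int_0^\infty u_1(g(x,u_1)-1)du_1$, and this \emph{cancels} the leftover $2\alpha\gamma r u$ term from the level-motion computation, leaving only $\tfrac12\cdot$ something — in fact combining them one obtains exactly $F_g(\eta)\langle \alpha\gamma r\,\widetilde g^2,\eta\rangle$ once one also accounts for the $\tfrac12$-weighted quadratic contribution; I would check the bookkeeping that $2\int_0^\infty u(g-1)du$ minus the matching term equals $\big(\int_0^\infty(g-1)du\big)^2=\widetilde g(x)^2$ up to the sign and factor, which is where care is needed (this is the analogue, in the $\alpha\to$ limit, of the classical lookdown identity $2\int_u^\infty(g(u_1)-1)du_1$ averaging to $\widetilde g^2$).

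Finally the first (spatial dispersal) term $f(\lp)\sum\gamma(x,\eta)\big(\DG(g(\cdot,u)r(\cdot,\eta))(x)-g(x,u)\DG r(x,\eta)\big)/g(x,u)$ gives, via the identity, $F_g(\eta)\langle\gamma\int_0^\infty(\DG(g(\cdot,u)r)(x)-g(x,u)\DG r(x))du,\eta\rangle$; pulling the $u$-integral inside the linear operator $\DG$ and writing $\int_0^\infty(\DG(g(\cdot,u)r)-g(x,u)\DG r)du=\DG(\widetilde g r+r)(x)-\int_0^\infty g(x,u)du\,\DG r(x)=\DG(\widetilde g\,r)(x)$ (the $+r$ and the $\int g\,du=\widetilde g+\text{const}$ pieces cancel since $\DG$ annihilates... — more precisely $\int_0^\infty(g(\cdot,u)-1)du=\widetilde g$ and $\DG(\widetilde g r)-\widetilde g\DG r$ is not what appears; rather one directly gets $\DG(\widetilde g\, r)(x)$ after the cancellation $\int_0^\infty\DG r\,du$ against the $\int_0^\infty g\,\DG r\,du$ using $g-1$), one recovers $F_g(\eta)\langle\gamma\,\DG(\widetilde g\, r),\eta\rangle$. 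Summing the three contributions reproduces exactly~\eqref{eqn:pgen_infty}, which is $\Pgen^\infty F_g(\eta)$. The main obstacle I anticipate is purely the algebraic bookkeeping in matching the quadratic-in-level terms: making sure the $u^2$ term from level motion, the $\int_u^\infty du_1$ term from birth, and the integration-by-parts boundary contributions combine to give precisely the $\alpha\gamma r\,\widetilde g^2$ term with the correct constant, and verifying the $u$-integrals are finite and the integration by parts is justified — these follow from $g\le 1$, $g\equiv 1$ beyond $u_g$, and the bound $|Af(\lp)|\le c_f\psi(\lp)$ established in the proof of Proposition~\ref{thm:mmt_application}, which also legitimizes all the Fubini interchanges under $\Gamma(\eta,\cdot)$.
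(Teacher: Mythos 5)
Your overall route is exactly the one the paper takes: apply the Palm-type identity for conditionally Poisson measures (Lemma~\ref{lem:poisson_eqn}) to reduce $\int Af(\lp)\,\Gamma(\eta,d\lp)$ to $F_g(\eta)\big\langle\int_0^\infty(\ell_1+\ell_2+\ell_3)(x,u)\,du,\eta(dx)\big\rangle$, integrate by parts in $u$ for the level-drift term, and use Fubini to combine the quadratic-in-level contributions. But the bookkeeping as you have written it does not close, and the failures are precisely in the places you flagged. First, integration by parts (taking $g-1$ as the antiderivative of $\partial_u g$, so that the boundary terms vanish as you observe) carries a minus sign: the drift term contributes $+\{\gamma\DG r+F\}\widetilde g-2\alpha\gamma r\int_0^\infty u(g(x,u)-1)\,du$, i.e.\ the opposite signs to yours. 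The $+\gamma\DG r\,\widetilde g$ piece is needed to cancel the $-\gamma\widetilde g\,\DG r$ coming from the dispersal term, whose correct value is $\gamma\big(\DG(\widetilde g\,r)-\widetilde g\,\DG r\big)$, not $\gamma\DG(\widetilde g\,r)$: one must rewrite $\DG(g(\cdot,u)r)-g(x,u)\DG r=\DG((g(\cdot,u)-1)r)-(g(x,u)-1)\DG r$ \emph{before} integrating in $u$ (otherwise both pieces diverge, since $g\to1$), and the second piece does not vanish.

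Second, the birth term must retain the weight $g(x,u)$ when you apply the Palm identity (the summand in $A$ is $\ell_2(x,u)/g(x,u)$ with $\ell_2(x,u)=2g(x,u)\alpha\gamma r\int_u^\infty(g(x,v)-1)\,dv$); its contribution is $2\alpha\gamma r\int_0^\infty g(x,u)\int_u^\infty(g(x,v)-1)\,dv\,du$, not the unweighted integral you wrote. This is not cosmetic: since $\int_0^\infty\int_u^\infty(g(x,v)-1)\,dv\,du=\int_0^\infty u(g(x,u)-1)\,du$, dropping the weight makes the birth term cancel the $-2\alpha\gamma r\int_0^\infty u(g-1)\,du$ from the drift \emph{exactly}, and the $\alpha\gamma r\,\widetilde g^2$ term never appears. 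With the weight kept, the key identity is
\begin{equation*}
\int_0^\infty g(x,u)\int_u^\infty(g(x,v)-1)\,dv\,du-\int_0^\infty u(g(x,u)-1)\,du
=\int_0^\infty(g(x,u)-1)\int_u^\infty(g(x,v)-1)\,dv\,du
=\tfrac12\,\widetilde g(x)^2,
\end{equation*}
which produces $\alpha\gamma r\,\widetilde g^2$ with the correct constant. With these three corrections your three contributions sum to~\eqref{eqn:pgen_infty}, exactly as in the paper's proof.
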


\begin{proof}[Proof of Lemma~\ref{lem:averaged_generators}:]
First, break the generator $A^N$ into three parts, 
\begin{align*}
    A^N_1f(\lp)
    &=
    f(\lp) \sum_{(x,u)\in\lp} 2 c_\theta(x,\eta)
    \int_u^{N}\Bigg(
        \frac{1}{2}\frac{g(x,v_1)}{g(x,u)}
        \int_{\IR^d} (g(y,u)-g(x,u))q^m_{\theta}(x,dy,\eta)
    \Bigg)dv_1 ,
    \\
    A^N_2f(\lp)
    &=
    f(\lp) \sum_{(x,u)\in\lp} 2 c_\theta(x,\eta)
    \int_u^{N}\Bigg(
        \frac{1}{2}\int_{\IR^d}
            \left(\frac{g(y,v_1) + g(x,v_1)}{2} - 1\right)
        q^m_{\theta}(x,dy,\eta)
    \Bigg)dv_1 ,
    \\
    A^N_3f(\lp)
    &=
    f(\lp) \sum_{(x,u)\in\lp}\,
    \left( c_\theta(x,\eta) u^2 - b_{\theta}(x,\eta)u \right) \frac{\partial_u g(x,u)}{g(x,u)},
\end{align*}
where $q^m$ was defined in equation~\eqref{eqn:qm_defn},
so that 
$$ A^Nf(\lp) = A^N_1f(\lp) + A^N_2f(\lp) + A^N_3f(\lp). $$

We now integrate each piece against $\Gamma^N$.
First note that by the product form of $f$,
$$
    \int f(\lp) \sum_{(x,u)\in\lp} \frac{\ell(x,u)}{g(x,u)} \Gamma^N(\eta,d\lp)
    =
    F_g^N(\eta) \bigg\langle
        \frac{1}{N \hat g_N(x)} \int_0^N \ell(x,u) du,
        N \eta(dx)
    \bigg\rangle .
$$
Therefore,
\footnotesize
\begin{align*}
&
    \int A^N_1f(\lp) \Gamma^N(\eta,d\lp) 
\\ &\qquad =
    F_g^N(\eta) \bigg\langle \frac{c_\theta(x,\eta)}{\hat g_N(x)} \int_0^N \Bigg\{
        \int_u^N \Bigg(
            \frac{1}{2} g(x,v_1) \int_{\IR^d}(g(y,u)-g(x,u))q^m_{\theta}(x,dy,\eta)
        \Bigg) dv_1
    \Bigg\} du, \eta(dx) \bigg \rangle
\\ &\qquad =
    F_g^N(\eta) \bigg\langle \frac{c_\theta(x,\eta)}{\hat g_N(x)} \Bigg\{
    \int_{\IR^d} \left\{
        \int_{0}^{N} \int_u^N g(x,v_1) (g(y,u)-g(x,u)) dv_1 du
    \right\} q^m_{\theta}(x,dy,\eta)
    , \eta(dx) \bigg \rangle .
\end{align*}
\normalsize
For the second generator, we have
\footnotesize
\begin{align*}
&
    \int A^N_2 f(\lp) \Gamma^N(\eta, d\lp)
\\ &\qquad =
    F_g^N(\eta) \bigg\langle \frac{c_\theta(x,\eta)}{\hat g_N(x)} \int_0^N g(x, u) \Bigg\{
        \int_u^N\Bigg(\int_{\IR^d}
        \left(\frac{g(y,v_1)+g(x,v_1)}{2}-1\right) q^m_{\theta}(x,dy,\eta)
        \Bigg)dv_1 \Bigg\}
    , \eta(dx) \bigg\rangle
\\ &\qquad =
    F_g^N(\eta) \bigg\langle \frac{c_\theta(x,\eta)}{\hat g_N(x)}
    \int_{\IR^d}\Bigg\{ \int_{0}^{N}
    \int_u^N g(x,u) \left(
        g(y,v_1)+g(x,v_1)-2
    \right) dv_1 du\Bigg\} q^m_{\theta}(x,dy,\eta)
    , \eta(dx) \bigg\rangle .
\end{align*}  
\normalsize
For the third generator we have that
\begin{align*}
&
    \int A^N_3f(\xi)\Gamma^N(\eta,d\lp)
\\ & \qquad =
    F_g^N(\eta) \bigg\langle \frac{1}{\hat g_N(x)}
    \int_0^N
        \left( c_\theta(x,\eta) u^2 - b_\theta(x,\eta) u \right) \partial_u g(x,u)
    du
    , \eta(dx) \bigg\rangle 
\\ & \qquad =
    F_g^N(\eta) \bigg\langle \frac{1}{\hat g_N(x)}
    \int_0^N
        \left( b_\theta(x,\eta) - 2 c_\theta(x,\eta) u \right) (g(x,u) - 1)
    du
    , \eta(dx) \bigg\rangle .
\end{align*}

Note that $2 \int_0^N \int_u^N g(x,v_1) g(y,u) dv_1 du = N^2 \hat g_N(x) \hat g_N(y)$,
and so
\begin{align*}
&
    \int_{0}^{N} \int_u^N g(x,v_1) (g(y,u)-g(x,u)) dv_1 du
    + \int_{0}^{N} \int_u^N g(x,u) \left(g(y,v_1)+g(x,v_1)-2\right)dv_1 du
\\ &\qquad = 
    N^2 \hat{g_N}(x)(\hat{g_N}(y) - 2) + 2\int_{0}^{N} u g(x,u) du.
\end{align*}

Combining the last equations,
and using the fact that
$N c_\theta(x,\eta) - b_\theta(x,\eta) = \theta \mu_\theta(x,\eta)$,
we have 
\begin{align*}
&
    \int \Bigg( A^N_1 f(\xi) + A^N_2 f(\xi) + A^N_3 f(\xi) \Bigg) \Gamma^N(\eta,d\lp)
\\ &\qquad =
    F_g^N(\eta) \bigg\langle
        c_\theta(x,\eta) N^2 \int_{\IR^d} (\hat g_N(y) - 2) q^m(x,dy,\eta)
        +
        \frac{1}{\hat g_N(x)} c_\theta(x,\eta) \int_0^N 2u du
\\ &\qquad \qquad \qquad {}
        +
        \frac{1}{\hat g_N(x)} b_\theta(x,\eta) \int_0^N (g(x,u) - 1) du
    , \eta(dx) \bigg\rangle 
\\ &\qquad =
    F_g^N(\eta) \bigg\langle
        c_\theta(x,\eta) N^2 \int_{\IR^d} (\hat g_N(y) - 1) q^m(x,dy,\eta)
        +
        N^2 c_\theta(x,\eta) \left(\frac{1}{\hat g_N(x)} - 1\right)
\\ &\qquad \qquad \qquad {}
        +
        N b_\theta(x,\eta) \left(1 - \frac{1}{\hat g_N(x)}\right)
    , \eta(dx) \bigg\rangle 
\\ &\qquad =
    F_g^N(\eta) N \bigg\langle
        N c_\theta(x,\eta) \int_{\IR^d} (\hat g_N(y) - 1) q^m(x,dy,\eta)
        +
        \theta \mu_\theta(x,\eta) \left(\frac{1}{\hat g_N(x)} - 1\right)
    , \eta(dx) \bigg\rangle  .
\end{align*}
This matches equation~\eqref{eqn:pgen_defn_again}, as desired,
because $N c_\theta(x,\eta) q_m(x,dy,\eta) = \theta \gamma(x,\eta) q_\theta(x,dy)$.
\end{proof}

Before proving Lemma~\ref{lem:averaged_generators_limit},
we recall an important equality for conditionally
Poisson point processes (\cite{kurtz/rodrigues:2011} Lemma A.3).

\begin{lemma} \label{lem:poisson_eqn}
If $\lp = \sum_{i}\delta_{Z_i}$ is a Poisson random measure with mean measure $\nu$, 
then for $\ell \in L^{1}(\nu)$ and $g\geq0$ with $\log g \in L^{1}(\nu)$,
\begin{equation}
\IE\left[\sum_{j} \ell(Z_j) \prod_{i}g(Z_i)\right] = \int \ell g d\nu e^{\int (g-1) d \nu}.
\end{equation}
\end{lemma}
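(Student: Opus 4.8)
The plan is to reduce the identity to the probability generating functional of the Poisson random measure and then extract the stated formula by differentiating in an auxiliary parameter. The foundational fact is the generating (Laplace) functional: for any measurable $g \ge 0$ with $g-1 \in L^1(\nu)$,
\begin{equation}
\label{eqn:pgfl}
\IE\left[\prod_i g(Z_i)\right] = \exp\left(\int (g-1)\,d\nu\right).
\end{equation}
I would establish \eqref{eqn:pgfl} first for $g$ that are bounded, bounded away from $0$, and equal to $1$ outside a set of finite $\nu$-measure: writing $\log g$ (which is then bounded) as a uniform limit of simple functions $\sum_k a_k \ind_{B_k}$ over disjoint $B_k$ of finite $\nu$-measure, the counts $\lp(B_k)$ are independent Poisson variables with means $\nu(B_k)$, so $\IE[\prod_k e^{a_k \lp(B_k)}] = \prod_k \exp(\nu(B_k)(e^{a_k}-1))$, which is the simple-function version of \eqref{eqn:pgfl}; a monotone-class argument together with monotone (resp.\ dominated) convergence extends it to all $g\ge 0$ with $g-1 \in L^1(\nu)$. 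Under the hypothesis $\log g \in L^1(\nu)$ the product $\prod_i g(Z_i)$ converges almost surely and \eqref{eqn:pgfl} is finite.

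The key step is to introduce the one-parameter family $g_t(z) := g(z)\,e^{t\ell(z)}$ and set $\Phi(t) := \IE[\prod_i g_t(Z_i)]$. Applying \eqref{eqn:pgfl} with $g$ replaced by $g_t$ gives, for $t$ in a neighbourhood of $0$,
\begin{equation}
\label{eqn:phi_closed}
\Phi(t) = \exp\left(\int \left(g\,e^{t\ell}-1\right) d\nu\right),
\end{equation}
so that differentiating the explicit expression \eqref{eqn:phi_closed} at $t=0$ yields
\begin{equation*}
\Phi'(0) = \exp\left(\int (g-1)\,d\nu\right)\int \ell\,g\,d\nu,
\end{equation*}
which is exactly the right-hand side of the claimed identity. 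On the other hand, differentiating the product inside the expectation gives $\frac{d}{dt}\prod_i g_t(Z_i) = \big(\sum_j \ell(Z_j)\big)\prod_i g_t(Z_i)$, and evaluating at $t=0$ while interchanging derivative and expectation produces $\Phi'(0) = \IE\big[\big(\sum_j \ell(Z_j)\big)\prod_i g(Z_i)\big]$. Equating the two expressions for $\Phi'(0)$ completes the proof.

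The main obstacle is the analytic bookkeeping rather than the algebra: I must justify both that \eqref{eqn:pgfl} is valid for $g_t$ (i.e.\ that $g\,e^{t\ell}-1 \in L^1(\nu)$ for small $t$) and the interchange of $\frac{d}{dt}$ with $\IE$. I would handle this by first proving the identity for bounded $\ell$, where $g_t-1$ and the difference quotients are dominated uniformly in $t$ near $0$ (using $g-1\in L^1(\nu)$ and $\ell g\in L^1(\nu)$), so that dominated convergence applies; the general case $\ell \in L^1(\nu)$ then follows by truncating $\ell$ at level $\pm n$, splitting $\ell=\ell^+-\ell^-$, and passing to the limit on each term via monotone/dominated convergence together with the finiteness of $\IE[\prod_i g(Z_i)]$. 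As a consistency check, the same identity follows in one line from the Mecke (Slivnyak) formula $\IE[\sum_j H(Z_j,\lp)] = \int \IE[H(z,\lp+\delta_z)]\,\nu(dz)$ applied to $H(z,\lp)=\ell(z)\prod_{w\in\lp}g(w)$, since $H(z,\lp+\delta_z)=\ell(z)g(z)\prod_{w\in\lp}g(w)$ and \eqref{eqn:pgfl} evaluates the resulting expectation; this gives an independent route should one prefer to cite Mecke's formula directly.
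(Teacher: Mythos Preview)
The paper does not prove this lemma; it merely quotes it from \cite{kurtz/rodrigues:2011}, Lemma~A.3. Your argument is correct and standard: differentiating the Poisson generating functional $\Phi(t)=\IE[\prod_i g(Z_i)e^{t\ell(Z_i)}]=\exp(\int(ge^{t\ell}-1)\,d\nu)$ at $t=0$ gives the identity, and you handle the interchange of limit and expectation appropriately by truncation and dominated convergence. The alternative via the Mecke--Slivnyak formula that you sketch at the end is in fact the shorter and cleaner route, and is how the result is typically derived in the point-process literature; either argument would be acceptable here. One small remark: the hypothesis $\log g\in L^1(\nu)$ ensures $\sum_i\log g(Z_i)$ converges absolutely a.s.\ (Campbell's theorem), but does not by itself force $g-1\in L^1(\nu)$, so strictly speaking your appeal to \eqref{eqn:pgfl} needs the identity interpreted in $[0,\infty]$ when $\int(g-1)^+d\nu=\infty$; this causes no difficulty since both sides are then infinite.
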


\begin{proof}[Proof of Lemma~\ref{lem:averaged_generators_limit}:]
By Lemma~\ref{lem:poisson_eqn},
$$
    \int f(\lp) \sum_{(x,u)\in\lp} \frac{\ell(x,u)}{g(x,u)} \Gamma(\eta, \lp)
    =
    F_g(\eta) \bigg\langle \int_0^\infty \ell(x,u) du , \eta(dx) \bigg\rangle .
$$
Comparing this to the definition of $A$
(equation~\eqref{eqn:limiting_lookdown_generator}),
we see that
\begin{align*}
    \int Af(\lp) \Gamma(\eta,d\lp)
    &=
    F_g(\eta) \bigg\langle
        \int_0^\infty
        (\ell_1(x,u) + \ell_2(x,u) + \ell_3(x,u))
        du, \eta(dx)
    \bigg\rangle ,
\end{align*}
where
\begin{align*}
    \ell_1(x,u)
    &=
    \gamma(x,\eta)\left(
        \DG(g(\cdot,u) r(\cdot,\eta))(x) - g(x,u) \DG r(x,\eta)
    \right) \\
    &=
    \gamma(x,\eta)\left(
        \DG((g(\cdot,u)-1) r(\cdot,\eta))(x) - (g(x,u)-1) \DG r(x,\eta)
    \right)
\end{align*}
and
\begin{align*}
    \ell_2(x,u)
    =
    2 g(x,u) \alpha \gamma(x,\eta) r(x,\eta) \int_u^\infty(g(x,v) - 1)dv
\end{align*}
and
\begin{align*}
    \ell_3(x,u)
    =
    \left( \alpha \gamma(x,\eta)r(x,\eta)u^2 - \{\gamma(x,\eta)\DG r(x,\eta) + F(x,\eta)\} u \right)
    \partial_u g(x,u) .
\end{align*}

First note that since $\DG$ acts on space, it commutes with the integral over levels, and so
\begin{align*}
    \int_0^\infty \ell_1(x,u) du
    &=
    \gamma(x,\eta)\left(
        \DG(\widetilde{g}(\cdot) r(\cdot,\eta))(x) - \widetilde{g}(x) \DG r(x,\eta)
    \right) ,
\end{align*}
since $\widetilde{g}(x) = \int_0^\infty (g(x,u) - 1) du$.
Next,
\begin{align*}
    \int_0^\infty \ell_2(x,u) du
    &=
    \alpha \gamma(x,\eta) r(x,\eta) 2\int_0^\infty g(x,u) \int_u^\infty (g(x,v)-1) dv du .
\end{align*}
Finally, integrating by parts,
\begin{align*}
    \int_0^\infty \ell_3(x,u) du
    &=
    - \alpha \gamma(x,\eta) r(x,\eta) \int_0^\infty 2u(g(x,u) - 1) du
    \\ & \qquad {}
    + \{ \gamma(x,\eta) \DG r(x,\eta) + F(x,\eta) \} \widetilde{g}(x)
\end{align*}
Now, note that
\begin{align*}
&
    \int_0^\infty g(x,u) \int_u^\infty (g(x,v)-1) dv du - \int_0^\infty u(g(x,u)-1)du
\\ & \qquad =
    \int_0^\infty g(x,u) \int_u^\infty (g(x,v)-1) dv du - \int_0^\infty \int_v^\infty (g(x,u) -1) du dv
\\ & \qquad =
    \int_0^\infty (g(x,u)-1) \int_u^\infty (g(x,v)-1) dv du
\\ & \qquad =
    \widetilde{g}(x)^2/2 .
\end{align*}

Adding these together, we get that
\begin{align*}
    &
    \int_0^\infty (\ell_1(x,u) + \ell_2(x,u) + \ell_3(x,u)) du
    \\ &\qquad=
    \gamma(x,\eta) \DG(\widetilde{g}(\cdot)r(\cdot,\eta))(x)
    + F(x,\eta) \widetilde{g}(x) 
    + \alpha \gamma(x,\eta) r(x,\eta) \widetilde{g}(x)^2 ,
\end{align*}
which agrees with \eqref{eqn:pgen_infty}, as desired.
\end{proof}

\section{Technical Lemmas}

\subsection{Constraints on kernel widths}

\begin{lemma}
    \label{lem:gamma_bound}
    Suppose the first three conditions of Assumptions~\ref{def:model_setup} hold,
    and furthermore the kernels $\rho_r = p_{\epsilon_r^2}$ and $\rho_\gamma = p_{\epsilon_\gamma^2}$
    are each Gaussian with standard deviations $\epsilon_r$ and $\epsilon_\gamma$ respectively.
    Let $\lambda = \sup_x \sup_{y : \|y\| = 1} y^T \covq(x) y$ be the largest eigenvalue of $\covq(x)$
    across all $x$.
    If $\epsilon_{r}^2 + \frac{2\lambda}{\theta} < \epsilon_{\gamma}^2$, then 
    there is a $C < \infty$ such that
    for all $x\in\IR^d$, $\eta\in\measures$,
    \begin{equation}
        \label{eqn:first_moment_rho}
        \left|\theta \int_{\IR^d}
            ( \smooth{r} \eta(y)-\smooth{r} \eta(x) )
        q_\theta(x,dy)\right|
        \leq C \smooth{\gamma} \eta(x)
    \end{equation}
    and
    \begin{equation}
        \label{eqn:second_moment_rho}
        \theta \int_{\IR^d}
                \left( \smooth{r} \eta(y) - \smooth{r} \eta(x) \right)^2
        q_\theta(x,dy)
        \leq C \left(\smooth{\gamma}\eta(x) \right)^2 .
    \end{equation}
\end{lemma}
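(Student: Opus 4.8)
\textbf{Proof plan for Lemma~\ref{lem:gamma_bound}.}
The plan is to use the fact that all the kernels are Gaussian, so that all the convolutions appearing are explicit. Write $\sigma_r^2 = \epsilon_r^2$ and $\sigma_\gamma^2 = \epsilon_\gamma^2$, and recall $q_\theta(x,dy)$ is Gaussian with mean $x$ (since $\meanq \equiv 0$ is not assumed here, but the mean is $x + \meanq(x)/\theta$, which contributes a bounded-in-$\theta$ correction that can be absorbed) and covariance $\covq(x)/\theta$. The key observation is that $\smooth{r}\eta(y) = \int p_{\sigma_r^2}(y-z)\eta(dz)$, so that integrating against $q_\theta(x,dy)$ and using the semigroup/convolution property of Gaussians gives
\[
    \int \smooth{r}\eta(y) q_\theta(x,dy)
    =
    \int \tilde p(x - z)\,\eta(dz),
\]
where $\tilde p$ is the density of a Gaussian with covariance $\sigma_r^2 I + \covq(x)/\theta$. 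The hypothesis $\sigma_r^2 + 2\lambda/\theta < \sigma_\gamma^2$ is precisely what guarantees that $\sigma_r^2 I + \covq(x)/\theta \preceq \sigma_\gamma^2 I$ in the positive-definite order (since $\covq(x)/\theta \preceq \lambda I/\theta$, and we even have room to spare by a factor related to $\lambda/\theta$). First I would make this comparison precise: if $A \preceq B$ are two positive-definite covariance matrices, then the ratio of the corresponding Gaussian densities $p_A(v)/p_B(v)$ is bounded (by $\sqrt{\det B/\det A}$), so $p_A(v) \le C\, p_B(v)$ pointwise; applying this with $A = \sigma_r^2 I + \covq(x)/\theta$ and $B = \sigma_\gamma^2 I$ yields $\tilde p(v) \le C\, p_{\sigma_\gamma^2}(v)$ uniformly in $x$. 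Hence $\int \smooth{r}\eta(y) q_\theta(x,dy) \le C \smooth{\gamma}\eta(x)$, and since $\smooth{r}\eta(x) \ge 0$ we also get $\int(\smooth{r}\eta(y) - \smooth{r}\eta(x))q_\theta(x,dy) \le C\smooth{\gamma}\eta(x)$.

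For the lower bound in \eqref{eqn:first_moment_rho} (i.e., bounding $\smooth{r}\eta(x) - \int\smooth{r}\eta(y)q_\theta(x,dy)$ from above by $C\smooth{\gamma}\eta(x)/\theta$, hence certainly by $C\smooth{\gamma}\eta(x)$), the cleanest route is a Taylor/semigroup computation: $\int\smooth{r}\eta(y)q_\theta(x,dy) - \smooth{r}\eta(x)$ is, up to the drift term, of order $\tfrac{1}{\theta}\DG(\smooth{r}\eta)(x)$, and $\DG(\smooth{r}\eta)(x) = \int \DG p_{\sigma_r^2}(x-z)\eta(dz)$ with $|\DG p_{\sigma_r^2}(v)| \le C p_{2\sigma_r^2}(v) \le C' p_{\sigma_\gamma^2}(v)$ (again using $2\sigma_r^2 < \sigma_\gamma^2$, which follows from the hypothesis for $\theta$ large). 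Alternatively, and more robustly, one bounds $|\int\smooth{r}\eta(y)q_\theta(x,dy) - \smooth{r}\eta(x)|$ directly by $\int |\tilde p(x-z) - p_{\sigma_r^2}(x-z)|\eta(dz)$ and notes both $\tilde p$ and $p_{\sigma_r^2}$ are dominated by a constant times $p_{\sigma_\gamma^2}$, giving the bound with constant independent of $\theta$. The factor $\theta$ on the left is then harmless since the difference of the two densities is itself $O(1/\theta)$ in the relevant sense; but for the statement as written we only need the cruder bound, so I would present the direct domination argument.

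For \eqref{eqn:second_moment_rho}, expand the square:
\[
    \theta\int(\smooth{r}\eta(y) - \smooth{r}\eta(x))^2 q_\theta(x,dy)
    =
    \theta\int \smooth{r}\eta(y)^2 q_\theta(x,dy)
    - 2\theta\,\smooth{r}\eta(x)\int\smooth{r}\eta(y)q_\theta(x,dy)
    + \theta\,\smooth{r}\eta(x)^2.
\]
The last two terms combine with the first as in a completion of square and, crucially, one uses that $\smooth{r}\eta(y)^2 = \int\int p_{\sigma_r^2}(y-z_1)p_{\sigma_r^2}(y-z_2)\eta(dz_1)\eta(dz_2)$, so integrating the product of three Gaussians (the two kernels and $q_\theta$) against $y$ produces, by the Gaussian product identity $p_a(y-z_1)p_b(y-z_2) = p_{a+b}(z_1-z_2)\,p_{ab/(a+b)}(y - \text{(weighted mean)})$, a clean expression of the form $\int\int p_{2\sigma_r^2}(z_1 - z_2)\,\tilde q(x - \bar z)\,\eta(dz_1)\eta(dz_2)$ with $\tilde q$ Gaussian of covariance $\le \sigma_\gamma^2 I$ after using $\tfrac12\sigma_r^2 + \lambda/\theta < \sigma_\gamma^2$. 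Bounding $p_{2\sigma_r^2}(z_1-z_2) \le p_{2\sigma_r^2}(0) = C$ (a finite constant, this is where the $\theta$ prefactor is absorbed — one keeps one factor $\theta$ and the second-moment scaling of $q_\theta$ produces a compensating $1/\theta$, or more simply the $O(1/\theta)$ size of the variance of $\smooth{r}\eta\circ(\text{walk})$ kills the $\theta$), and then $\tilde q(x-\bar z) \le C p_{\sigma_\gamma^2}(x - \bar z)$, and finally $\int\int p_{\sigma_\gamma^2}(x-\bar z)\eta(dz_1)\eta(dz_2)$, after a change of variables absorbing the weighted-mean structure, is bounded by a constant times $(\smooth{\gamma}\eta(x))^2$ — or by $\smooth{\gamma}\eta(x)\langle 1,\eta\rangle$, which is not what we want, so care is needed to keep both convolutions localized at $x$. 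The main obstacle I anticipate is exactly this last bookkeeping in \eqref{eqn:second_moment_rho}: making sure that after all the Gaussian algebra, \emph{both} integration variables $z_1,z_2$ end up convolved with a kernel centred at $x$ (so one genuinely gets $(\smooth{\gamma}\eta(x))^2$ and not a cross term with total mass), which is where the strict inequality $\epsilon_r^2 + 2\lambda/\theta < \epsilon_\gamma^2$ (with the factor $2$, not $1$) is used — it leaves enough variance budget to split $\sigma_\gamma^2$ across the two factors. The drift terms $\meanq(x)/\theta$ in the mean of $q_\theta$ shift centres by $O(1/\theta)$ and are controlled by the uniform boundedness of $\meanq$ together with the slack in the strict inequality, so they do not affect the argument for $\theta$ large.
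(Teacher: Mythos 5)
Your overall strategy --- exploit the explicit Gaussian convolutions and dominate everything by the wider kernel $p_{\epsilon_\gamma^2}$ --- is the same as the paper's, but as written your argument for \eqref{eqn:first_moment_rho} does not prove the statement because of the factor of $\theta$. You correctly compute $\int \smooth{r}\eta(y)\, q_\theta(x,dy) = \int \tilde p(x-z)\,\eta(dz)$ with $\tilde p$ Gaussian of covariance $\epsilon_r^2 I + \covq(x)/\theta$, but the ``direct domination'' bound $\tilde p(v) + p_{\epsilon_r^2}(v) \le C p_{\epsilon_\gamma^2}(v)$ only yields
\[
\Big|\int \big(\smooth{r}\eta(y)-\smooth{r}\eta(x)\big)\, q_\theta(x,dy)\Big| \le C\,\smooth{\gamma}\eta(x),
\]
which after multiplying by $\theta$ is off by a factor of $\theta$ from the claim. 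Your remark that ``for the statement as written we only need the cruder bound'' is therefore wrong: the entire content of \eqref{eqn:first_moment_rho} is that the cancellation $\tilde p - p_{\epsilon_r^2} = O(1/\theta)$ holds \emph{uniformly dominated by} $p_{\epsilon_\gamma^2}$. The paper makes this precise by writing $\theta\big(\tilde p(v) - p_{\epsilon_r^2}(v)\big) = \partial_s p_{s,x}(v)$ for some $s\in[0,1/\theta]$, where $p_{s,x}$ is the Gaussian with mean $s\meanq(x)$ and covariance $\epsilon_r^2 I + s\covq(x)$; the derivative is an explicit quadratic polynomial times $p_{s,x}$, and the hypothesis forces the ratio to $p_{\epsilon_\gamma^2}$ to be bounded. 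Your alternative ``Taylor/semigroup'' route (the $\tfrac1\theta\DG(\smooth{r}\eta)$ term plus a remainder) is essentially the same idea and could be made to work, but note that your justification of $|\DG p_{\epsilon_r^2}(v)| \le C p_{\epsilon_\gamma^2}(v)$ via the intermediate kernel $p_{2\epsilon_r^2}$ invokes $2\epsilon_r^2 < \epsilon_\gamma^2$, which does \emph{not} follow from the hypothesis (for large $\theta$ it only gives $\epsilon_r^2 < \epsilon_\gamma^2$); you should dominate the polynomial-times-$p_{\epsilon_r^2}$ directly by $p_{\epsilon_\gamma^2}$, for which $\epsilon_r^2 < \epsilon_\gamma^2$ plus the slack suffices.

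For \eqref{eqn:second_moment_rho} you have correctly identified the two dangers (absorbing the $\theta$, and ending with $(\smooth{\gamma}\eta(x))^2$ rather than $\smooth{\gamma}\eta(x)\langle 1,\eta\rangle$), but your plan --- expand the square into three terms and hope the completion-of-square cancellation survives the Gaussian product identities --- leaves both unresolved, as you yourself acknowledge. The paper avoids the expansion entirely: applying Fubini first reduces the claim to the pointwise bound
\[
\theta\int\big(p_{\epsilon_r^2}(y-w)-p_{\epsilon_r^2}(x-w)\big)\big(p_{\epsilon_r^2}(y-v)-p_{\epsilon_r^2}(x-v)\big)\,q_\theta(x,dy)
\le K\,p_{\epsilon_\gamma^2}(x-w)\,p_{\epsilon_\gamma^2}(x-v),
\]
which simultaneously localizes each of $w$ and $v$ at $x$ and handles the $\theta$: the left-hand integral is a second-order difference, in the parameters $s,t\in\{0,1/\theta\}$, of the joint density of $(Y_1+\sqrt{s}Z+s\meanq(x),\,Y_2+\sqrt{t}Z+t\meanq(x))$ with $Y_1,Y_2,Z$ independent Gaussians, so one application of the mean value theorem absorbs the single factor of $\theta$, and the largest eigenvalue of the resulting $2d\times 2d$ block covariance is $\epsilon_r^2+(s+t)\lambda \le \epsilon_r^2 + 2\lambda/\theta < \epsilon_\gamma^2$ --- this eigenvalue computation, rather than a ``splitting of the variance budget,'' is where the factor $2$ enters. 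I would recommend restructuring your argument for both inequalities around this Fubini-first, difference-quotient form.
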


Note that the right hand side of each is the average density
over a wider region (since $\epsilon_\gamma > \epsilon_r$).
The key assumption here 
is that the spatial scale over which local density affects birth rate
is larger than the scale over which it affects establishment.
In the simple case of $\meanq = 0$ and $\covq = \sigma^2 I$,
the condition is simply that $\epsilon_r^2 + 2\sigma^2 / \theta < \epsilon_\gamma^2$.
This gives a yet more concrete situation in which 
Condition~\ref{control through gamma} of Lemma~\ref{lem:conditions on r} holds.

\begin{proof}[Proof of Lemma \ref{lem:gamma_bound}:]

    First we prove \eqref{eqn:first_moment_rho}.
    Recall that $\smooth{r} \eta(x) = \int p_{\epsilon^2_r}(x - w) \eta(dw)$,
    where $p_t$ is the density of a Gaussian with mean 0 and variance $t$,
    so that applying Fubini, \eqref{eqn:first_moment_rho} is
    \begin{align*}
        \left|
        \int_{\IR^d}
        \theta
            \int_{\IR^d}
                ( p_{\epsilon_r^2}(y - w) - p_{\epsilon_r^2}(x - w) )
            q_\theta(x, dy)
        \eta(dw)
        \right| .
    \end{align*}
    Write $p_{s, x}(\cdot)$ for the density of a Gaussian
    with mean $s \meanq(x)$ and covariance $\epsilon_r^2 I + s \covq(x)$,
    so that $\int p_{\epsilon_r^2}(y - w) q_\theta(x, dy) = p_{1/\theta, x}(w-x)$.
    It therefore suffices to show that for all $x$ and $w \in \IR^d$,
    there exists $K$ such that
    \begin{align*}
        \left|
        \theta
            \int_{\IR^d}
                ( p_{\epsilon_r^2}(y-w) - p_{\epsilon_r^2}(x-w) )
            q_\theta(x, dy)
        \right|
        &=
        \theta \left|
            p_{1/\theta, x}(w-x) - p_{0, x}(w-x)
        \right|
        \le
        K p_{\epsilon^2_\gamma}(w-x) .
    \end{align*}
    However,
    $\theta( p_{1/\theta, x}(z) - p_{0, x}(z) )
    = \partial_s p_{s, x}(z)$ for some $0 \le s \le 1/\theta$.
    Write $\Gamma(s,x) = \epsilon_r^2 I + s \covq(x)$,
    so that
    \begin{align*}
        p_{s, x}(z)
        &=
        \frac{1}{\left(2 \pi |\Gamma(s,x)|\right)^{d/2}}
        \exp\left(
            -\frac{1}{2} (z - s\meanq(x))^T \Gamma(s,x)^{-1} (z - s\meanq(x))
        \right) ,
    \end{align*}
    and note that if $\lambda_i$ are the eigenvalues of $\covq(x)$
    then $|\Gamma(s, x)| = \prod_i (\epsilon_r^2 + s \lambda_i)$,
    and
    $\partial_s |\Gamma(s, x)| = \sum_i \lambda_i |\Gamma(s, x)| / (\epsilon_r^2 + s \lambda_i)$.
    Therefore,
    \begin{align*}
        \partial_s p_{s, x}(z)
        &=
        \bigg(
            \meanq(x)^T \Gamma(s,x)^{-1} (z - s \meanq(x))
            +
            (z - s \meanq(x))^T
            \Gamma(s,x)^{-1} \covq(x) \Gamma(s,x)^{-1}
            (z - s \meanq(x))
        \\ & \qquad \qquad \qquad {}
            -
            \sum_i \frac{\lambda_i}{\epsilon_r^2 + s \lambda_i}
        \bigg)
        p_{s, x}(z) ,
    \end{align*}
    where $z^T$ is the transpose of $z$.
    This implies that
    \begin{align*}
        \frac{
        \theta
            \int_{\IR^d}
                ( p_{\epsilon_r^2}(y-w) - p_{\epsilon_r^2}(x-w) )
            q_\theta(x, dy)
        }{
            p_{\epsilon^2_\gamma}(x-w) .
        }
        =
        h(x-w) e^{k(x-w)},
    \end{align*}
    where $h(z)$ and $k(z)$ are quadratic polynomials in $z$
    whose coefficients depend on $s$ and $x$ but are uniformly bounded,
    and
    \begin{align*}
        k(z)
        =
        \frac{1}{2\epsilon_\gamma^2} \|z\|^2
        -\frac{1}{2} (z - s\meanq(x))^T \Gamma(s,x)^{-1} (z - s\meanq(x)) ,
    \end{align*}
    Since $\inf_z z^T \Gamma(s,x)^{-1} z / \|z\|^2 = 1 / (s \lambda(x) + \epsilon_r^2)$,
    where $\lambda(x) = \sup z^T \covq(x) z / \|z\|^2$ is the largest eigenvalue of $\covq(x)$,
    this is negative for all $z$ outside a bounded region, and so
    equation~\eqref{eqn:first_moment_rho} follows from the assumption
    that $\epsilon_r^2 + 2 \sup_x \lambda(x)/\theta < \epsilon_\gamma^2$.
    (Note that we do not yet need the factor of 2.)

    Next we prove equation~\eqref{eqn:second_moment_rho}, in a similar way.
    Again applying Fubini,
    \begin{align*}
        &
        \theta \int_{\IR^d}
                \left( \smooth{r} \eta(y) - \smooth{r} \eta(x) \right)^2
        q_\theta(x,dy)
        \\ &\qquad
        =
        \int_{\IR^d} \int_{\IR^d}
        \theta \int_{\IR^d}
            ( p_{\epsilon_r^2}(y - w) - p_{\epsilon_r^2}(x - w) )
            ( p_{\epsilon_r^2}(y - v) - p_{\epsilon_r^2}(x - v) )
        q_\theta(x, dy)
        \eta(dv) \eta(dw) ,
    \end{align*}
    and so as before, equation~\eqref{eqn:second_moment_rho} will follow if
    the integrand is bounded by $K p_\gamma(x-w) p_\gamma(x - v)$.
    Now, let $Y_1$, $Y_2$, and $Z$ be independent $d$-dimensional Gaussians with mean zero,
    where $Y_1$ and $Y_2$ have covariance $\epsilon^2_r I$,
    and $Z$ has covariance $\covq(x)$.
    Write $p_{s,t,x}(\cdot, \cdot)$ for the joint density of
    $Y_1 + \sqrt{s} Z + s \meanq(x)$ and $Y_2 + \sqrt{t} Z + t \meanq(x)$.
    Then, observe that
    \begin{align*}
        &
        \theta \int_{\IR^d}
            ( p_{\epsilon_r^2}(y - w) - p_{\epsilon_r^2}(x - w) )
            ( p_{\epsilon_r^2}(y - v) - p_{\epsilon_r^2}(x - v) )
        q_\theta(x, dy)
        \\ &\qquad
        =
        \theta \left(
            p_{1/\theta, 1/\theta, x}(x-w, x-v)
            - p_{0, 1/\theta, x}(x-w, x-v)
        \right. \\ &\qquad \qquad \left. {}
            - p_{1/\theta, 0, x}(x-w, x-v)
            + p_{0, 0, x}(x-w, x-v)
        \right) 
        \\ &\qquad 
        =
            \partial_s p_{s, 1/\theta, x}(x-w, x-v)
            - \partial_t p_{0, t, x}(x-w, x-v) ,
    \end{align*}
    for some $0 \le s, t \le 1/\theta$.
    As before,
    \begin{align*}
        \frac{
            \theta \int_{\IR^d}
                ( p_{\epsilon_r^2}(y - w) - p_{\epsilon_r^2}(x - w) )
                ( p_{\epsilon_r^2}(y - v) - p_{\epsilon_r^2}(x - v) )
            q_\theta(x, dy)
        }{
            p_{\epsilon_\gamma^2}(x - w)
            p_{\epsilon_\gamma^2}(x - v)
        }
        =
        h(x-w, x-v) e^{ k(x-w, x-v) },
    \end{align*}
    where $h(z_1, z_2)$ is a polynomial with uniformly bounded coefficients and
    \begin{align*}
        k(z_1, z_2)
        =
        (\|z_1\|^2 + \|z_2\|^2)/(2 \epsilon_\gamma^2)
            - \frac{1}{2} [z_1, z_2]^T \Gamma(s, t, x)^{-1} [z_1, z_2] ,
    \end{align*}
    where $[z_1, z_2]$ is the $\IR^{2d}$ vector formed by concatenating $z_1$ and $z_2$,
    and $\Gamma(s, t, x)$ is the block matrix
    \begin{align*}
        \Gamma(s, t, x)
        =
        \left[
        \begin{array}{cc}
            \epsilon_r^2 I + s \covq(x) & \sqrt{st} \covq(x) \\
            \sqrt{st} \covq(x) & \epsilon_r^2 I + t \covq(x) \\
        \end{array}
        \right] .
    \end{align*}
    If $\covq(x) u = a u$ for some $a \in \IR$,
    then $[u \sqrt{s}, u \sqrt{t}]$ is an eigenvector of $\Gamma(s, t, x)$
    with eigenvalue $\epsilon^2_r + (s+t) a$,
    and $[u \sqrt{t}, - u \sqrt{s}]$ is an eigenvector of $\Gamma(s, t, x)$
    with eigenvalue 0.
    This implies the largest eigenvalue of $\Gamma(s, t, x)$
    is equal to $\epsilon^2_r + (s+t) \lambda(x)$,
    where $\lambda(x)$ is again the largest eigenvalue of $\covq(x)$.
    Therefore, if $s + t \le 2 / \theta$,
    \begin{align*}
        &
        (\|z_1\|^2 + \|z_2\|^2) / \epsilon^2_\gamma
            - [z_1, z_2]^T \Gamma(s, t, x)^{-1} [z_1, z_2]
        \\ &\qquad \le
        (\|z_1\|^2 + \|z_2\|^2) \left(
            \frac{1}{\epsilon^2_\gamma}
            - \frac{1}{\epsilon^2_r +  2\lambda(x)/\theta}
        \right) ,
    \end{align*}
    which is negative by assumption.
    Therefore, there is a $K$ such that
    \begin{align*}
        \frac{ \left|
            \partial_s p_{s, 1/\theta, x}(x-w, x-v)
            - \partial_t p_{0, t, x}(x-w, x-v)
        \right| }{
            p_{\epsilon^2_\gamma}(x - w)
            p_{\epsilon^2_\gamma}(x - v)
        }
        \le K
    \end{align*}
    for all $\theta > 1$ and all $x$, $v$, and $w \in \IR^d$,
    proving equation~\eqref{eqn:second_moment_rho} and hence the lemma.
\end{proof}

\subsection{Tightness of processes}
\label{apx:aldous rebolledo}

Here we record, for completeness, the fact used above
that tightness for a family of processes, if determined by the Aldous-Rebolledo criterion,
extends to sums and products of those processes.
We first record for reference
one version of the Aldous-Rebolledo criteria for tightness of a sequence real-valued processes
(as it appears in Theorem~1.17 of \citet{etheridge2000introduction};
see also Theorem~4.13 of \citet{jacod/shiryaev:2013}):

\begin{theorem}[\citet{rebolledo:1980}]
    \label{thm:aldous_rebolledo}
    Let $\{Y^{(n)}\}_{n \ge 1}$ be a sequence of real-valued processes
    with c\`adl\`ag paths.
    Suppose that the following conditions are satisfied.
    \begin{enumerate}
        \item For each fixed $t \in [0,T]$, $\{Y_t^{(n)}\}_{n \ge 1}$  is tight.
        \item Given a sequence of stopping times $\tau_n$, bounded by $T$,
            for each $\epsilon > 0$ there exists $\delta > 0$ and $n_0$ such that
            \begin{align*}
                \sup_{n \ge n_0}
                \sup_{\theta \in [0, \min(\delta,T-\tau_n)]}
                \IP\left\{ \left|
                    Y^{(n)}_{\tau_n + \theta} - Y^{(n)}_{\tau_n}
                \right| > \epsilon
                \right\}
                \le \epsilon .
            \end{align*}
    \end{enumerate}
    Then the sequence $\{(Y^{(n)}_t)_{t=0}^T\}_{n \ge 1}$ is tight.
\end{theorem}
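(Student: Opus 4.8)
The plan is to reduce the statement to the standard characterisation of relative compactness in $D_{[0,T]}(\IR)$ with the Skorokhod $J_1$ topology and then invoke Prokhorov's theorem. Recall (e.g.\ \citet[Theorem 3.8.6]{ethier/kurtz:1986}) that a sequence of c\`adl\`ag real-valued processes is tight provided (i) for each $t$ in a dense subset of $[0,T]$ the family of one-dimensional marginals $\{Y^{(n)}_t\}_n$ is tight, and (ii) for every $\epsilon>0$,
\[
\lim_{\delta\to 0}\ \limsup_{n\to\infty}\ \IP\big\{\,w'(Y^{(n)},\delta,T)>\epsilon\,\big\}=0,
\]
where $w'$ is the c\`adl\`ag modulus of continuity (the same quantity used in the proof of Corollary~\ref{lem:density_tightness}). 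Hypothesis (1) is exactly (i), so the entire content is to extract the modulus estimate (ii) from the single-increment stopping-time hypothesis (2); this is Aldous's tightness criterion, and I would follow Aldous's original route.

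First I would establish a \emph{maximal oscillation estimate}: for every $\epsilon>0$ there are $\delta>0$ and $n_0$ so that $\sup_{n\ge n_0}\sup_{0\le s\le T}\IP\{\sup_{s\le t\le (s+\delta)\wedge T}|Y^{(n)}_t-Y^{(n)}_s|>\epsilon\}\le\epsilon$. To get this from (2), fix $s$, let $\tau_n=\inf\{t\ge s:|Y^{(n)}_t-Y^{(n)}_s|>\epsilon/2\}\wedge((s+\delta)\wedge T)$, which is a stopping time bounded by $T$; by right-continuity of the paths, on the event that the oscillation over $[s,(s+\delta)\wedge T]$ exceeds $\epsilon$ one has $|Y^{(n)}_{\tau_n}-Y^{(n)}_s|\ge\epsilon/2$ with $\tau_n$ within $\delta$ of $s$, and applying (2) at the two stopping times $s$ and $\tau_n$ (splitting the increment through an intermediate value so that only shifts of size at most $\delta$ appear) gives the bound. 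One then has to note that a naive union bound over a deterministic grid of mesh $\delta$ would turn this into a bound of order $(T/\delta)\,\epsilon$, which does not vanish; so the passage to (ii) is instead done by contradiction: if (ii) failed there would exist $\epsilon_0>0$, a subsequence, and --- using the structure of $w'$ --- stopping times $\tau_n\le\sigma_n\le T$ with $\sigma_n-\tau_n\to 0$ and $\liminf_n\IP\{|Y^{(n)}_{\sigma_n}-Y^{(n)}_{\tau_n}|>\epsilon_0\}>0$, contradicting (2) applied with $\tau=\tau_n$, $\theta=\sigma_n-\tau_n$ (along a further subsequence where $\sigma_n-\tau_n\le\delta$). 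Compact containment of $\{\sup_{t\le T}|Y^{(n)}_t|\}_n$, needed to promote tightness of the finite-dimensional distributions to tightness in $D_{[0,T]}(\IR)$, then follows from (1) at $t=0$ together with the same ingredients.

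The main obstacle is exactly this extraction of the stopping time pair $(\tau_n,\sigma_n)$ from the failure of (ii). Unlike the uniform modulus, $w'(Y,\delta,T)$ only requires small oscillation on \emph{one} block of some partition of mesh $>\delta$, so it tolerates finitely many large jumps; consequently a single large increment witnessing the failure of (ii) must be located --- measurably, and in a way that is adapted to the filtration so that $\tau_n$ and $\sigma_n$ are genuine stopping times --- inside the "bad" block, and one must rule out the possibility that the obstruction is merely a bounded number of isolated jumps that $w'$ ignores. This is the technical heart of Aldous's lemma. The remaining pieces --- the maximal estimate, the handling of the endpoint $t=T$, and the right-continuity bookkeeping needed to pass from "the path exceeds $\epsilon$ somewhere on an interval" to "it exceeds $\epsilon/2$ at a stopping time" --- are routine.
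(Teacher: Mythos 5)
The paper does not prove this statement: it is quoted verbatim (with attribution to \citet{rebolledo:1980}, in the form given in \citet{etheridge2000introduction}) precisely so that it can be \emph{cited} in the tightness arguments of Sections 5--7, so there is no in-paper proof to compare your attempt against. Judged on its own, your proposal is a faithful outline of Aldous's original argument --- reduce to the one-dimensional marginal condition plus the $w'$-modulus condition of \citet[Theorem 3.8.6]{ethier/kurtz:1986}, then argue by contradiction that failure of the modulus condition produces a pair of nearby stopping times with a non-vanishing large increment --- but it is an outline, not a proof. You yourself flag the central step as ``the technical heart of Aldous's lemma'' without carrying it out, and that step is genuinely where all the work lies: one must locate, measurably and adaptedly, a large increment inside the bad block of the partition realizing $w'$, and rule out that the obstruction consists only of finitely many isolated jumps that $w'$ is designed to ignore. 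As written, the proposal asserts that this can be done rather than doing it.

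There is a second, smaller gap earlier in your argument. Hypothesis (2) as stated controls $Y^{(n)}_{\tau_n+\theta}-Y^{(n)}_{\tau_n}$ only for \emph{deterministic} shifts $\theta$, whereas both your maximal oscillation estimate and your final contradiction require control of $Y^{(n)}_{\sigma_n}-Y^{(n)}_{\tau_n}$ for a second stopping time $\sigma_n$ with $\sigma_n-\tau_n\le\delta$ random. The passage from the deterministic-shift form to the two-stopping-time form is Aldous's Fubini trick (average the hypothesis over $\theta$ in an interval of length comparable to $\delta$ and use that, on the event $\{|Y^{(n)}_{\sigma_n}-Y^{(n)}_{\tau_n}|>\epsilon\}$, for a positive fraction of such $\theta$ at least one of $|Y^{(n)}_{\tau_n+\theta}-Y^{(n)}_{\tau_n}|$, $|Y^{(n)}_{\tau_n+\theta}-Y^{(n)}_{\sigma_n}|$ exceeds $\epsilon/2$); your phrase ``splitting the increment through an intermediate value'' gestures at this but does not supply the argument. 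Both gaps are fillable --- this is, after all, a classical theorem --- but to count as a proof rather than a plan, the proposal needs the Fubini lemma stated and proved, and the stopping-time extraction from $\{w'(Y^{(n)},\delta,T)>\epsilon\}$ written out.
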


\begin{lemma}
    \label{lem:product_tightness}
    Let $\{X^{(n)}\}_{n \ge 1}$ and $\{Y^{(n)}\}_{n \ge 1}$ be sequences of jointly defined
    real-valued processes with c\`adl\`ag paths
    satisfying the conditions of Theorem~\ref{thm:aldous_rebolledo}.
    Then $\{X^{(n)}Y^{(n)}\}_{n \ge 1}$ and $\{X^{(n)} + Y^{(n)}\}_{n \ge 1}$ also
    satisfy the conditions of Theorem~\ref{thm:aldous_rebolledo}.
\end{lemma}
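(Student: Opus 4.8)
The plan is to verify the two conditions of Theorem~\ref{thm:aldous_rebolledo} (tightness of one-dimensional marginals, and the ``small oscillation over small intervals'' condition) for the sequences $\{X^{(n)} + Y^{(n)}\}$ and $\{X^{(n)} Y^{(n)}\}$, given that both hold for $\{X^{(n)}\}$ and $\{Y^{(n)}\}$. The sum is essentially immediate: for fixed $t$, tightness of $\{X^{(n)}_t\}$ and $\{Y^{(n)}_t\}$ gives tightness of the pair $\{(X^{(n)}_t, Y^{(n)}_t)\}$ in $\IR^2$ (a product of tight families is tight), hence of $\{X^{(n)}_t + Y^{(n)}_t\}$ by continuity of addition; and for the oscillation condition one uses the triangle inequality
\[
    \left| (X^{(n)}_{\tau_n+\theta} + Y^{(n)}_{\tau_n+\theta}) - (X^{(n)}_{\tau_n} + Y^{(n)}_{\tau_n}) \right|
    \le
    \left| X^{(n)}_{\tau_n+\theta} - X^{(n)}_{\tau_n} \right|
    +
    \left| Y^{(n)}_{\tau_n+\theta} - Y^{(n)}_{\tau_n} \right| ,
\]
applying the hypothesis to each term with $\epsilon/2$ in place of $\epsilon$ and taking the smaller $\delta$ and larger $n_0$.

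For the product, the first condition again follows from tightness of $\{(X^{(n)}_t, Y^{(n)}_t)\}$ in $\IR^2$ together with continuity of multiplication. The second condition is where a little care is needed, because the increment of a product is not controlled by the increments of the factors alone --- one also needs control of the \emph{size} of the factors. The key algebraic identity I would use is
\[
    X^{(n)}_{\tau_n+\theta} Y^{(n)}_{\tau_n+\theta} - X^{(n)}_{\tau_n} Y^{(n)}_{\tau_n}
    =
    X^{(n)}_{\tau_n+\theta} \left( Y^{(n)}_{\tau_n+\theta} - Y^{(n)}_{\tau_n} \right)
    +
    Y^{(n)}_{\tau_n} \left( X^{(n)}_{\tau_n+\theta} - X^{(n)}_{\tau_n} \right) .
\]
So the plan is: first, use the already-established tightness of $\{(X^{(n)}_t)_{t \in [0,T]}\}$ and $\{(Y^{(n)}_t)_{t \in [0,T]}\}$ as c\`adl\`ag processes to get a compact containment bound --- namely, for each $\nu > 0$ there is $K < \infty$ and $n_0$ with $\sup_{n \ge n_0} \IP\{ \sup_{0 \le t \le T} (|X^{(n)}_t| \vee |Y^{(n)}_t|) > K \} < \nu$. (This is a standard consequence of tightness in $\mathcal{D}_{[0,\infty)}(\IR)$: a relatively compact set of c\`adl\`ag paths is uniformly bounded on compacts, so the paths stay in a compact, hence bounded, subset with high probability; alternatively it follows directly from condition~(1) of Theorem~\ref{thm:aldous_rebolledo} applied along with condition~(2).) Then on the event that both $\sup_t |X^{(n)}_t|$ and $\sup_t |Y^{(n)}_t|$ are $\le K$, the right-hand side of the identity above is at most $K$ times the sum of the two increments, and I apply the oscillation hypothesis for $\{X^{(n)}\}$ and $\{Y^{(n)}\}$ with threshold $\nu/(2K)$; combining with the $\nu$-probability exceptional set and a union bound gives the required estimate with $3\nu$ (say) in place of $\nu$, which suffices since $\nu$ is arbitrary.

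The main (and only real) obstacle is the bootstrap from tightness to the uniform-in-$n$ boundedness of $\sup_{t \le T} |X^{(n)}_t|$ and $\sup_{t \le T} |Y^{(n)}_t|$ needed to handle the factor $X^{(n)}_{\tau_n+\theta}$ (which is evaluated at a random, increment-dependent time). I would get this cleanly by invoking that the laws of $(X^{(n)}_t)_{t \in [0,T]}$ form a tight family in $\mathcal{D}_{[0,T]}(\IR)$ --- which is exactly what Theorem~\ref{thm:aldous_rebolledo} delivers under the stated hypotheses --- so for any $\nu$ there is a compact $\mathcal{K} \subset \mathcal{D}_{[0,T]}(\IR)$ with $\IP\{(X^{(n)}) \notin \mathcal{K}\} < \nu$ for all large $n$, and every element of $\mathcal{K}$ is bounded in supremum norm by some finite $K = K(\mathcal{K})$. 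Everything else is routine bookkeeping with the triangle inequality and union bounds.
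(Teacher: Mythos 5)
Your proposal is correct and follows essentially the same route as the paper's proof: the same product-increment decomposition $X_{\tau+\theta}(Y_{\tau+\theta}-Y_\tau)+Y_\tau(X_{\tau+\theta}-X_\tau)$, the same appeal to tightness of $\sup_{t\le T}|X^{(n)}_t|$ (the paper cites Remark~3.7.3 of \citet{ethier/kurtz:1986} for exactly the compact-containment step you flag as the only real obstacle), and the same union bound over the large-supremum and large-oscillation events. The only difference is that you write out the (routine) argument for the sum, which the paper omits.
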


By ``jointly defined'' we mean that $X^{(n)}$ and $Y^{(n)}$ are defined on the same
probability space, so that the products and sums make sense.

\begin{proof}[Proof of Lemma~\ref{lem:product_tightness}:]
    The proof for $X^{(n)} + Y^{(n)}$ is similar to but more straightforward than
    for $X^{(n)} Y^{(n)}$, so on only prove the Lemma for the latter.

    First, note that for any $\epsilon > 0$, by tightness of $(X^{(n)}_t)_{n\ge0}$ and $(Y^{(n)}_t)_{n\ge0}$
    there is a $K$ such that
    $\IP\{ X^{(n)}_t > \sqrt{K} \}$ and $\IP\{ Y^{(n)}_t > \sqrt{K} \}$
    are both less than $\epsilon/2$,
    and hence
    \begin{align*}
        \IP\{
            X^{(n)}_t Y^{(n)}_t > K
        \}
        \le
        \IP\{
            X^{(n)}_t > \sqrt{K}
        \}
        +
        \IP\{
            Y^{(n)}_t > \sqrt{K}
        \} 
        \le \epsilon.
    \end{align*}
    Therefore, $(X^{(n)}_t Y^{(n)}_t)_{n \ge 0}$ is tight.

    Next, note that for $0 \le \tau_n \le T$,
    \begin{align*}
        &
        \sup_{0 \le \theta \le \min(\delta, T-\tau_n)} 
        \left|
            X^{(n)}_{\tau_n + \theta} Y^{(n)}_{\tau_n + \theta} 
            -
            X^{(n)}_{\tau_n} Y^{(n)}_{\tau_n} 
        \right|
        \\ &\qquad 
        \le
        \sup_{0 \le \theta \le \min(\delta, T-\tau_n)} 
        \left| X^{(n)}_{\tau_n + \theta} \right|
        \left|
            Y^{(n)}_{\tau_n + \theta} 
            -
            Y^{(n)}_{\tau_n} 
        \right|
        +
        \left|
            X^{(n)}_{\tau_n + \theta}
            -
            X^{(n)}_{\tau_n}
        \right|
        \left| Y^{(n)}_{\tau_n} \right|
        \\ &\qquad 
        \le
        \sup_{0 \le t \le T}
            \left| X^{(n)}_t \right|
        \sup_{0 \le \theta \le \min(\delta, T-\tau_n)} 
        \left|
            Y^{(n)}_{\tau_n + \theta} 
            -
            Y^{(n)}_{\tau_n} 
        \right|
        +
        \sup_{0 \le \theta \le \min(\delta, T-\tau_n)} 
        \left|
            X^{(n)}_{\tau_n + \theta}
            -
            X^{(n)}_{\tau_n}
        \right|
        \sup_{0 \le t \le T}
        \left| Y^{(n)}_t \right| ,
    \end{align*}
    so that for any $C$,
    \begin{align} \label{eqn:xy_bound}
        \begin{split}
        &
        \IP\left\{
        \sup_{0 \le \theta \le \min(\delta, T-\tau_n)} 
        \left|
            X^{(n)}_{\tau_n + \theta} Y^{(n)}_{\tau_n + \theta} 
            -
            X^{(n)}_{\tau_n} Y^{(n)}_{\tau_n} 
        \right|
        > \epsilon
        \right\}
    \\ &\qquad \le
        \IP\left\{
            \sup_{0 \le t \le T}
                \left| X^{(n)}_t \right|
            > C
        \right\}
    +
        \IP\left\{
        \sup_{0 \le \theta \le \min(\delta, T-\tau_n)} 
        \left|
            Y^{(n)}_{\tau_n + \theta} 
            -
            Y^{(n)}_{\tau_n} 
        \right|
            > \epsilon/C
        \right\}
    \\ &\qquad {} +
        \IP\left\{
        \sup_{0 \le \theta \le \min(\delta, T-\tau_n)} 
        \left|
            X^{(n)}_{\tau_n + \theta} 
            -
            X^{(n)}_{\tau_n} 
        \right|
            > \epsilon/C
        \right\}
    +
        \IP\left\{
            \sup_{0 \le t \le T}
                \left| Y^{(n)}_t \right|
            > C
        \right\}
        \end{split}
    \end{align}
    Now, since $\max_{0 \le t \le T} X^{(n)}_t$ is tight (and likewise for $Y$)
    (see, e.g., Remark 3.7.3 in \citet{ethier/kurtz:1986}),
    we may choose a $C \ge 4$ for which
    $$
    \IP\left\{
        \sup_{0 \le t \le T}
            \left| X^{(n)}_t \right|
        > C
    \right\} \le \frac{\epsilon}{4} .
    $$
    Similarly, by assumption we can choose a $\delta$ for which
    $$
        \IP\left\{
        \sup_{0 \le \theta \le \min(\delta, T-\tau_n)} 
        \left|
            X^{(n)}_{\tau_n + \theta} 
            -
            X^{(n)}_{\tau_n} 
        \right|
            > \epsilon/C
        \right\}
        \le \frac{\epsilon}{C} .
    $$
    If we choose $C$ and $\delta$ that do this for both $X^{(n)}$ and $Y^{(n)}$,
    then each of the terms in equation~\eqref{eqn:xy_bound} are bounded by $\epsilon/4$,
    and condition (2) is satisfied for the product process.
\end{proof}


\end{document}